\newtheorem{theorem}{Theorem}[section]
\newtheorem{lemma}[theorem]{Lemma}
\newtheorem{proposition}[theorem]{Proposition}
\theoremstyle{definition}
\theoremstyle{remark}
\newtheorem{remark}[theorem]{Remark}
\numberwithin{equation}{section}
\newcommand{\Ag}[1]{\langle#1\rangle}
\newcommand{\cL}{\mathcal{L}}
\newcommand{\Z}{\mathbb{Z}}
\newcommand{\R}{\mathbb{R}}
\newcommand{\e}{\varepsilon}
\newcommand{\dist}{\text{dist}}
\begin{document}

\title[Eigenvalue problems in perforated domains]{Convergence rates of eigenvalue problems in perforated domains: the case of small volume}


\author{Zhongwei Shen}
\address{Zhongwei Shen: Department of Mathematics, University of Kentucky, Lexington, Kentucky 40506, USA.}
\email{zshen2@uky.edu}

\author{Jinping Zhuge}
\address{Jinping Zhuge: Morningside Center of Mathematics, Academy of Mathematics and systems science,
Chinese Academy of Sciences, Beijing, China.}
\email{jpzhuge@amss.ac.cn}


\subjclass[2020]{35B27,74Q05}



\begin{abstract}
    
This paper is concerned with the Dirichlet eigenvalue problem for Laplace operator in a bounded domain with periodic perforation in the case of small volume.
We obtain the optimal quantitative error estimates independent of the spectral gaps for an asymptotic expansion, with two leading terms, of Dirichlet eigenvalues.
We also establish the convergence rates for the corresponding eigenfunctions.
Our approach uses a known reduction to a degenerate elliptic eigenvalue problem
for which a quantitative analysis is carried out.

\vspace{5pt}
\noindent \textbf{Keywords:} Homogenization, perforated domains, eigenvalue problems, convergence rates.
\end{abstract}

\maketitle

\centerline{Dedicated to our teacher Professor Robert Fefferman on the occasion of his $70^{\rm th}$ birthday}

\tableofcontents

\section{Introduction}
\subsection{Motivations and main results}
The homogenization theory of partial differential equations in perforated domains has been extensively studied in the past four decades due to its theoretical and numerical importance. In this paper, we are concerned with the Dirichlet eigenvalue problem of Laplace operator in
periodically perforated domains: find $(\psi_{\eta,\e}, \lambda_{\eta,\e}) \in H^1_0(\Omega_{\eta,\e}) \times \R$ such that
\begin{equation}\label{main eq.eigen}
    -\Delta \psi_{\eta,\e}  =\lambda_{\eta,\e} \psi_{\eta,\e} \quad \text{in } \Omega_{\eta,\e},
\end{equation}
where $\Omega_{\eta,\e}$ is a bounded and periodically perforated domain in $\mathbb{R}^d$, $d\ge 3$. This problem was introduced by J. Rauch \cite{R74} in a crashed ice problem (also see \cite{RT75}), wherein the first eigenvalue of \eqref{main eq.eigen} determines the rate of cooling for the evenly distributed crashed ice. Since $-\Delta$ is symmetric with compact resolvent in $L^2(\Omega_{\eta,\e})$, by the classical spectral theorem, the Dirichlet eigenvalues are positive, countable and can be listed in a nondecreasing order $\{ \lambda_{\eta,\e}^k: k =1,2,\dots \}$ (counted with multiplicity) such that $\lambda_{\eta,\e}^k \to \infty$ as $k\to \infty$. We denote by $\psi^k_{\eta,\e}$ the normalized eigenfunction corresponding to $\lambda^k_{\eta,\e}$. We are interested in the asymptotic behaviors of $\lambda_{\eta,\e}^k$ and $\psi_{\eta,\e}^k$ as $\e\to 0$.

The perforated domain $\Omega_{\eta,\e}$ is defined as follows. Let $0<\eta, \e \le 1$ be two small parameters. Let $\tau^i$ (with $i=1,2,\cdots,m$) be $C^{1,1}$ connected domains with diameters comparable to 1. Define $Y = [0,1]^d$. Let $\tau^i_\eta \subset Y$ be a translated copy of $\eta \tau^i$ satisfying $\dist(\tau^i_\eta, \tau^j_\eta) > c_0$ for $i\neq j$ and $\dist(\tau^i_\eta, \partial Y)>c_0$. Let $T_\eta = \cup_{i=1}^m \overline{\tau^i_\eta} \subset Y$. In other words, $T_\eta$ is the union of $m$ holes contained in $Y$. Define
\begin{equation}
    T_{\eta,\e} = \bigcup_{z\in \Z^d} \e (z + T_\eta), \qquad \Omega_{\eta,\e} = \Omega \setminus T_{\eta,\e}.
\end{equation}
Note that the diameters of the holes in $\Omega_{\eta,\e}$ are roughly $\eta \e$ and thus the holes only occupy a small portion (roughly $O(\eta^d)$) of the volume of $\Omega$. 

We will impose a geometric assumption $\mathbf{A}$: for each hole $\e(z + \tau^i_\eta)$ of $T_{\eta,\e}$, either the hole has at least a distance $c_0 \e \eta$ away from $\partial \Omega$, or $\partial \Omega$ intersects with the hole such that both $\Omega \cap \e(z + \tau^i_\eta)$ and $\Omega \setminus \e(z + \tau^i_\eta)$ (in a neighborhood of $\e(z + \tau^i_\eta)$) are Lipschitz domains with a uniform Lipschitz character after rescaling. Let $\Gamma_{\eta,\e} = \partial \Omega \cap \partial \Omega_{\eta,\e}$ and $\Sigma_{\eta,\e} = \Omega \cap \partial T_{\eta,\e}$.

Under the above assumptions with $\eta = 1$, the asymptotic behavior of the Dirichlet eigenvalues was first studied by Vanninathan \cite{V81} and then the convergence rates were obtained by Ole\u{\i}nik, Shamaev and Yosifian \cite{OSY92}. We first describe the procedure of reduction for the case $\eta \ll 1$ which is the same as $\eta = 1$ given in \cite{V81}. Let $Y_{\eta} = Y\setminus T_\eta$, viewed as a flat torus with holes. Let $( \phi_{\eta},\overline{\lambda}_\eta) \in H^1_{0,{\rm per}}(Y_{\eta}) \times \R$ denote the first eigenvalue and eigenfunction of the cell problem,
\begin{equation}\label{eq.Phieta}
    -\Delta \phi_{\eta}  =\overline{\lambda}_\eta \phi_{\eta}  \quad  \text{in}\ \  Y_{\eta},
\end{equation}
with $\|\phi_{\eta} \|_{L^2(Y_{\eta})} =1$ and $\phi_\eta = 0$ in $\partial T_\eta$. The principal eigenfunction $\phi_\eta$ is nonnegative and satisfies $\phi_{\eta}(x) \approx \min\{ \eta^{-1} \text{dist}(x,T_\eta), 1\}$, which means that $\phi_{\eta}$ degenerates proportional to a distance function near the holes.
Let
$\phi_{\eta,\e} (x) = \phi_{\eta} (x/\e)$.
As in the case $\eta=1$, we have
\begin{equation}\label{eq.EV2scale}
\lambda_{\eta,\e}^k =\e^{-2} \overline{\lambda}_\eta + \mu_{\eta, \e}^k,
\end{equation}
where $\{ \mu_{\eta, \e}^k\}$ denote the Dirichlet eigenvalues of the degenerate problem
\begin{equation}\label{eq.DEVP}
-\nabla \cdot (
\phi_{\eta, \e}^2 \nabla \rho_{\eta,\e}^k ) = \mu_{\eta,\e}^k \phi_{\eta, \e}^2 \rho_{\eta,\e}^k, \quad \text{in } \Omega_{\eta, \e},
\end{equation}
with corresponding eigenfunctions $\{ \rho_{\eta,\e}^k \} \subset H^1_{\phi_{\eta,\e}, 0}(\Omega_{\eta,\e})$. 
The eigenvalues $\mu_{\eta,\e}^k$ can be given by the minimax principle, 
\begin{equation}
\mu_{\eta, \e}^k =\min_{\substack{S\subset H^1_{\phi_{\eta,\e}, 0}(\Omega_{\eta,\e})\\  \text{dim} S = k} } \max_{u \in S} \frac{\int_{\Omega_{\eta, \e}} \phi_{\eta, \e}^2 |\nabla u|^2 }{\int_{\Omega_{\eta, \e} } \phi_{\eta, \e}^2 |u|^2 }.
\end{equation}
Thus, to obtain the asymptotic behaviors of $\lambda_{\eta,\e}^k$, it suffices to study those of $\mu_{\eta,\e}^k$. More details can be found in Section \ref{sec.2}.

For the case $\eta = 1$, it was shown in \cite{OSY92} that $|\mu_{1,\e}^k - \mu_{1}^k| \lesssim \e$, where $\mu_{1}^k$ are the eigenvalues of
\begin{equation}\label{eq.homo.eta=1}
    -\nabla\cdot (\overline{A}_1 \nabla \rho_{1}^k) = \mu_1^k \rho_1^k \quad \text{ in } \Omega,
\end{equation}
and $\overline{A}_1$ is a constant coefficient matrix satisfying the ellipticity condition. In view of \eqref{eq.EV2scale}, we have
\begin{equation}
    |\lambda_{1,\e}^k - \e^{-2} \overline{\lambda}_1 - \mu_1^k| \lesssim \e.
\end{equation}
Note that the homogenized eigenvalue problem \eqref{eq.homo.eta=1} is posed in the domain $\Omega$ without holes, which is well-understood. The convergence rates of $O(\e)$ for the eigenfunctions were also obtained in \cite{OSY92}. We point out that the convergence rate of order $O(\e)$ obtained in \cite{OSY92} is optimal in the case of $\eta = 1$. However, due to several technical reasons, the implicit constants obtained via their approach is not quantitative (uncomputable) and depends essentially on the spectral gaps (appearing in the denominator).

For the small holes with $\eta \ll 1$, a special case with $d=3$ and $\eta = \e^2$ was also studied in \cite{OSY92}. It was shown that as $\e \to 0$, the eigenvalues of \eqref{eq.DEVP} converge to the Dirichlet eigenvalues of $-\Delta$ in $\Omega$. More precisely, for any $k\ge 1$,
\begin{equation}\label{est.OSY.small}
    | \lambda_{\eta,\e}^k -\overline{\xi} - \xi_0^k | \lesssim \e,
\end{equation}
where $\xi_0^k$ is the $k$th Dirichlet eigenvalue of $-\Delta$ in $\Omega$, or equivalently, $\bar{\xi} + \xi_0^k$ is the eigenvalue of $-\Delta + \bar{\xi}$, and $\bar{\xi}>0$ is a fixed number independent of $\e$.

In this paper, we revisit \eqref{main eq.eigen} with holes of arbitrary size, particularly including the case of small volume. We will show the optimal quantitative error estimate of eigenvalues under a refined asymptotic expansion.
The following is the main result of this paper.
\begin{theorem}\label{thm.main-EV}
    Let $\Omega$ be a smooth ($C^3$) domain. Assume that $\Omega_{\eta,\e}$ satisfies the geometric assumption \textbf{A}.
    Then
    \begin{equation}\label{est.MainEV}
        |\lambda_{\eta,\e}^k -\e^{-2} \overline{\lambda}_\eta - \mu_\eta^k| \le C_k \e \eta^{\frac{d-2}{2}}, 
    \end{equation}
    where $\mu_\eta^k$ is the $k$th Dirichlet eigenvalue (counted with multiplicity) for $-\nabla\cdot (\overline{A}_\eta \nabla)$ in $\Omega$, i.e., the eigenpair $(\rho^k_\eta, \mu^k_\eta) \in H^1_0(\Omega) \times \R$ satisfies
    \begin{equation}\label{eq.MainHomo-1}
        -\nabla\cdot (\overline{A}_\eta \nabla \rho^k_{\eta}) = \mu^k_\eta \rho^k_\eta, \quad \text{in } \Omega.
    \end{equation}
    Moreover, $\overline{A}_\eta$ is a constant coefficient matrix satisfying a uniformly elliptic conidtion independent of $\eta$, and $C_k$ depends only on $k,d, c_0$ and the geometric characters of the domain (including $\Omega$ and $\tau^i$).
\end{theorem}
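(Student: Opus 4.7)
By the reduction \eqref{eq.EV2scale}, the bound \eqref{est.MainEV} is equivalent to the degenerate-eigenvalue estimate
\begin{equation*}
|\mu_{\eta,\e}^k - \mu_\eta^k| \le C_k\, \e\, \eta^{(d-2)/2}.
\end{equation*}
The plan is to deduce this spectral convergence from an operator-norm convergence of the corresponding resolvents, thereby avoiding any appeal to spectral gaps, and in turn to establish the resolvent bound by a quantitative two-scale homogenization argument adapted to the degenerate weight $\phi_{\eta,\e}^2$.

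\emph{Reduction to a resolvent estimate.} Introduce the compact self-adjoint solution operators $T_{\eta,\e}$ on the weighted space $L^2(\Omega_{\eta,\e}, \phi_{\eta,\e}^2\, dx)$ and $T_\eta$ on $L^2(\Omega)$ associated with the degenerate problem \eqref{eq.DEVP} and its constant-coefficient limit \eqref{eq.MainHomo-1}; their nonzero eigenvalues are precisely $1/\mu_{\eta,\e}^k$ and $1/\mu_\eta^k$. Since $\|\phi_\eta\|_{L^2(Y_\eta)}=1$, the two weighted $L^2$ spaces are comparable via restriction/extension-by-zero uniformly in $\e$ and $\eta$, and the standard min-max principle for compact self-adjoint operators gives
\begin{equation*}
|\mu_{\eta,\e}^k - \mu_\eta^k| \le \mu_{\eta,\e}^k \mu_\eta^k\, \|T_{\eta,\e} - T_\eta\|_{L^2\to L^2} \le C_k\, \|T_{\eta,\e} - T_\eta\|_{L^2\to L^2},
\end{equation*}
with $C_k$ independent of spectral gaps. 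It thus suffices to prove $\|T_{\eta,\e} - T_\eta\|_{L^2\to L^2} \le C\,\e\,\eta^{(d-2)/2}$.

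\emph{Quantitative homogenization of the weighted equation.} For $f\in L^2(\Omega)$, set $u_{\eta,\e} = T_{\eta,\e} f$ and $u_\eta = T_\eta f$, and consider the first-order two-scale expansion
\begin{equation*}
w_{\eta,\e}(x) \;=\; u_{\eta,\e}(x) - u_\eta(x) - \e\, \chi_\eta(x/\e) \cdot \nabla u_\eta(x),
\end{equation*}
where $\chi_\eta = (\chi_\eta^1, \ldots, \chi_\eta^d)$ are the correctors for the weighted cell problem $-\nabla \cdot (\phi_\eta^2(\nabla \chi_\eta^j + e_j)) = 0$ in $Y_\eta$ (with periodicity) and $\overline{A}_\eta$ is the resulting homogenized constant matrix. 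After smoothing $\nabla u_\eta$ at the scale $\e$ and cutting off the corrector in an $O(\e)$-neighborhood of $\partial \Omega$, the weighted energy identity applied to (a modified) $w_{\eta,\e}$ produces an $H^1$-rate of order $\e^{1/2}\eta^{(d-2)/2}$. An Aubin--Nitsche duality argument, with the dual problem posed for the same degenerate operator, upgrades this to the desired $L^2$-rate $\e\, \eta^{(d-2)/2}$. The factor $\eta^{(d-2)/2}$ is the natural scaling of the correctors and the weighted energy norm, consistent with $\overline{\lambda}_\eta \sim \eta^{d-2}$ and with each hole having capacity $\sim \eta^{d-2}$.

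The main obstacle is carrying out the homogenization uniformly in $\eta$ despite the strong degeneracy of $\phi_\eta^2$ at the holes. This requires weighted Sobolev and elliptic regularity estimates in $Y_\eta$ of Fabes--Kenig--Serapioni type with explicit tracking of $\eta$-dependence, as well as a boundary-layer analysis near $\partial\Omega$ that uses both the $C^3$ regularity of $\Omega$ and the geometric assumption \textbf{A} (so that clipped holes remain Lipschitz with uniform character). The concluding duality step, which must recover one full power of $\e$ without spoiling the sharp $\eta$-scaling, is where I expect the bookkeeping to be most delicate.
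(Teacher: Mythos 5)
There is a genuine gap, and it sits exactly where your plan puts all the weight: the claimed resolvent bound $\|T_{\eta,\e}-T_\eta\|_{L^2\to L^2}\le C\,\e\,\eta^{(d-2)/2}$ is not obtainable by the two-scale-plus-duality argument you sketch, and the paper is structured precisely to avoid needing it. Two separate obstructions arise. First, for data $f$ that is merely in $L^2$ (which is what an operator-norm bound on $L^2$ requires), the two problems have right-hand sides $\phi_{\eta,\e}^2 f$ and $f$; their difference $(\phi_{\eta,\e}^2-1)f$ converges to zero only weakly, and the only mechanism for extracting a power of $\e$ from it is to write $\phi_{\eta,\e}^2-1=\nabla\cdot(\e\Psi(x/\e))$ and integrate by parts onto $f$ — which requires $f\in W^{1,p}$. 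With $f\in L^2$ there is no rate at all, so no $L^2\to L^2$ operator-norm estimate of the type you assert can come out of this computation. Second, even for smooth data the strong $L^2$ rate is blocked by the boundary layer: the corrector cut-off near $\partial\Omega$ (or the auxiliary boundary-layer function $v_{\eta,\e}$) is only controlled in energy at the rate $\e^{1/2}\eta^{(d-2)/2}$, and the Aubin--Nitsche step does not upgrade the strong norm — the full rate $\e\,\eta^{(d-2)/2}$ is recovered only as a bilinear estimate $|\langle(\mathscr{T}_{\eta,\e}-\mathscr{T}_\eta)f,g\rangle_{\phi_{\eta,\e}}|\le C\e\,\eta^{(d-2)/2}\|f\|_{W^{1,p}}\|g\|_{W^{1,p}}$, i.e.\ with $W^{1,p}$ regularity on \emph{both} slots. (The alternative maximum-principle treatment of the boundary layer gives $O(\e)$ but loses the factor $\eta^{(d-2)/2}$.) In addition, your perturbation inequality $|\mu_{\eta,\e}^k-\mu_\eta^k|\le\mu_{\eta,\e}^k\mu_\eta^k\|T_{\eta,\e}-T_\eta\|$ presumes both operators are self-adjoint on the same Hilbert space; here they live on $L^2_{\phi_{\eta,\e}}(\Omega_{\eta,\e})$ and $L^2(\Omega)$ respectively, and the natural identification destroys self-adjointness of one of them, so even this reduction is not free.

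Because the sharp resolvent bound is only available in the weak (dual) sense and only for $W^{1,p}$ data — which is compatible with the a priori regularity of eigenfunctions but not with generic $L^2$ inputs — the actual proof has to proceed differently: an optimal \emph{upper} bound by the minimax principle with corrector-modified test functions built from $\rho_\eta^j$; suboptimal two-sided bounds obtained through an intermediate eigenvalue problem (with oscillating weight on the right-hand side but constant coefficients), which together with Weyl's law and a pigeonhole argument locate large spectral gaps common to all three problems; and then the optimal \emph{lower} bound by combining the weak duality estimate with an almost-orthogonality statement for eigenfunctions and a rank/dimension-counting argument for the eigenspaces below $\mu_\eta^k+C_k\e\eta^{(d-2)/2}$. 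If you want to salvage your outline, you would need either to prove the $L^2\to L^2$ operator-norm bound at rate $\e\eta^{(d-2)/2}$ (which the boundary-layer and weak-convergence obstructions above make implausible), or to replace the operator-norm step by an argument that uses the duality estimate only against eigenfunctions, which is essentially the route the paper takes.
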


Compared with the results in \cite{OSY92}, our result in Theorem \ref{thm.main-EV} has two major improvements. 
\begin{itemize}
    \item[(1)] In the case $\eta\ll 1$, our result shows that, compared to $-\Delta$, the refined operator $-\nabla\cdot (\overline{A}_{\eta} \nabla)$ is a more accurate effective operator. In particular, we may compare the result of the special case $d=3$ and $\eta=\e^2$ with that of \cite{OSY92} mentioned before.
    In this case, \eqref{est.MainEV} is reduced to
    \begin{equation}\label{est.3D.opt}
    | \lambda_{\e, \eta}^k -\e^{-2} \overline{\lambda}_\eta - \mu_{\eta}^k | \le C_k \e^2,
\end{equation}
where $\overline{\lambda}_\eta \approx \eta^{d-2} = \e^2$. Definitely, the error in \eqref{est.3D.opt} is much sharper than \eqref{est.OSY.small}.
Moreover, it can be shown that $|\overline{A}_{\eta} - I| \le C\eta^\frac{d-2}{2} = C\e$ and therefore $|\mu_{\eta}^k - \xi_0^k| \le C_k \eta^\frac{d-2}{2} =C_k\e$, which explains why the improvement is reasonable by using the refined operator $-\nabla\cdot (\overline{A}_{\eta} \nabla)$ instead of $-\Delta$. The refined operator takes advantage of both the homogenization process at large scales, leading to the factor $\e$, and the perturbation due to the perforation of small $\eta$ at small scales, leading to the factor $\eta^\frac{d-2}{2}$ in \eqref{est.MainEV}. We should point out that, from the point of view of numerical computation, the calculation for $\overline{A}_\eta$ for a fixed $\eta$ is a one-time cost for solving a cell problem similar to $\eta = 1$ which makes it practical in applications.

\item[(2)] The constant $C_k$ in Theorem \ref{thm.main-EV} and Theorem \ref{thm.main-EF} below is quantitative in the sense that its dependence on $k$ and the geometric characters can be determined. In particular, $C_k$ is bounded by $C_0 k^a$ for some computable $a = a(d)>0$, where $C_0$ is independent of $k$. However, the constants given by \cite{OSY92} are not fully quantitative and may rely on the spectral gaps.
\end{itemize}

Our second result is concerned with  the convergence rates of eigenfunctions, which are typically more difficult due to the multiplicity of eigenvalues or the small spectral gaps between them; see \cite{Z20,AV22,D22} for related problems. 
As such, it seems natural to consider the asymptotic behaviors of an eigenspace corresponding to a collection of eigenvalues in a narrow spectral band.

To describe the result, recall that $\{ \rho_{\eta}^k \}$ are the normalized eigenfunctions of \eqref{eq.MainHomo} corresponding to $\{ \mu_{\eta}^k \}$. For $\theta>0, t>0$, define
\begin{equation*}
    S_\eta(\theta;t) = \text{span} \{ \rho_{\eta}^k: |\mu_{\eta}^k -\theta| \le t \}.
\end{equation*}
In other words, $S_\eta(\theta;t)$ is the eigenspace of the homogenized eigenvalue problem \eqref{eq.MainHomo} corresponding to the spectrum in the band $[\theta-t, \theta+t]$.

\begin{theorem}\label{thm.main-EF}
    Under the same conditions of Theorem \ref{thm.main-EV}, for any $k\ge 1$ and $t>0$,
    \begin{equation}\label{est.main-EF}
        \inf_{v \in S_\eta(\mu_{\eta}^k;t) } \| \psi_{\eta,\e}^k - \phi_{\eta,\e} v \|_{L^2(\Omega_{\eta,\e})} \le C_k (\e^\frac12 \eta^{\frac{d-2}{2}} \wedge \e + t^{-1} \e \eta^{\frac{d-2}{2}}),
    \end{equation}
    where $a\wedge b = \min\{a,b \}$.
\end{theorem}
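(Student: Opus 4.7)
The plan is to exploit the factorization $\psi_{\eta,\e}^k = \phi_{\eta,\e}\rho_{\eta,\e}^k$ coming from the reduction \eqref{eq.EV2scale}--\eqref{eq.DEVP} to rewrite the target as a weighted $L^2$-distance on the reduced degenerate problem:
\begin{equation*}
\|\psi_{\eta,\e}^k - \phi_{\eta,\e} v\|_{L^2(\Omega_{\eta,\e})}^2 = \int_{\Omega_{\eta,\e}} \phi_{\eta,\e}^2 |\rho_{\eta,\e}^k - v|^2\,dx.
\end{equation*}
Thus it suffices to approximate the normalized $k$-th degenerate eigenfunction $\rho_{\eta,\e}^k$ by an element of $S_\eta(\mu_\eta^k;t)$ in the $\phi_{\eta,\e}^2$-weighted $L^2$-norm; the problem is now posed in a fixed Hilbert space.

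For the $t$-dependent term $t^{-1}\e\eta^{(d-2)/2}$, I would apply a standard Bessel-type spectral projection argument. Let $K_{\eta,\e}$ denote the solution operator for the degenerate problem \eqref{eq.DEVP} on the weighted space and $K_\eta$ the solution operator for the homogenized problem \eqref{eq.MainHomo-1}, with the two compared via zero-extension and restriction between $L^2(\Omega)$ and $L^2(\Omega_{\eta,\e},\phi_{\eta,\e}^2\,dx)$ (the identification costs $O(\eta^{d/2})$ and is absorbed into the estimates). The key operator-level bound $\|K_{\eta,\e} - K_\eta\|_{\rm op}\le C\e\eta^{(d-2)/2}$ is essentially the quantitative rate underlying Theorem~\ref{thm.main-EV} and is obtained from two-scale expansions, correctors, and boundary-layer corrections. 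Combined with the identity $K_{\eta,\e}\rho_{\eta,\e}^k = (\mu_{\eta,\e}^k)^{-1}\rho_{\eta,\e}^k$, the resolvent spectral gap for $K_\eta$ (using Theorem~\ref{thm.main-EV} to align $\mu_{\eta,\e}^k$ with $\mu_\eta^k$ up to $\e\eta^{(d-2)/2}$), and a standard Bessel inequality, this yields
\begin{equation*}
\|(I-P_t)\rho_{\eta,\e}^k\|_{L^2(\phi_{\eta,\e}^2)}\le Ct^{-1}\e\eta^{(d-2)/2},
\end{equation*}
where $P_t$ is the orthogonal projection onto $S_\eta(\mu_\eta^k;t)$ (transported to the weighted space).

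The remaining gap-free term $\e^{1/2}\eta^{(d-2)/2}\wedge\e$ should come from two independent direct approximations of $\rho_{\eta,\e}^k$ in $L^2(\phi_{\eta,\e}^2)$, taking the better of the two. The $\e$-rate reflects the classical $O(\e)$ homogenization convergence, valid uniformly in $\eta$ by re-examining the argument of \cite{OSY92} while carefully tracking constants. The sharper $\e^{1/2}\eta^{(d-2)/2}$-rate, relevant in the small-volume regime $\eta\ll\e^{1/(d-2)}$, comes from a perturbative estimate exploiting $\bar A_\eta - I = O(\eta^{(d-2)/2})$, together with a half-power of $\e$ gained by a duality / Aubin--Nitsche-style improvement in passing from the natural $H^1$-rate to the $L^2$-rate. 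One then selects an approximant $v\in S_\eta(\mu_\eta^k;t_0)$ with $t_0$ a fixed multiple of $\mu_\eta^k$, so that this direct comparison captures the bulk of $\rho_{\eta,\e}^k$; the finer cluster structure is then absorbed by the spectral-projection term above.

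The main obstacle is quantitative, gap-free control of the constant $C_k$: the argument must deliver a polynomial-in-$k$ bound without any denominator involving spectral gaps, unlike \cite{OSY92}. This forces the resolvent difference to be estimated in full operator norm (not merely on specific eigenfunctions), and the dimension of $S_\eta(\mu_\eta^k;t_0)$ must be controlled by a Weyl-type bound of order $k$ uniformly in $\eta$. Propagating these quantitative bounds through the boundary-layer corrections at $\partial\Omega$ and through the weighted-to-unweighted space identifications is where most of the technical effort is concentrated.
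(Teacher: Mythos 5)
Your reduction to the weighted distance $\|\phi_{\eta,\e}(\rho_{\eta,\e}^k-v)\|_{L^2}$ and your overall skeleton (a $t^{-1}$ spectral-projection term plus two alternative gap-free approximations, minimized) match the structure of the paper's proof. However, there is a genuine gap at the heart of your argument: you assume a full operator-norm bound $\|K_{\eta,\e}-K_\eta\|_{\rm op}\le C\e\eta^{\frac{d-2}{2}}$ on the weighted $L^2$ space and call it ``essentially the rate underlying Theorem \ref{thm.main-EV}.'' No such bound is available, and the paper explicitly explains why it cannot be obtained by these methods: for data merely in $L^p$ the convergence of $\phi_{\eta,\e}^2 f$ to $f$ is too weak, so all quantitative rates in the paper require $f\in W^{1,p}(\Omega)$, $p>d$. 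Even then, the strong-norm rate is only $\e^{\frac12}\eta^{\frac{d-2}{2}}$ (Theorem \ref{thm.SubOptRate}), because of the boundary layer $v_{\eta,\e}$; the optimal rate $\e\eta^{\frac{d-2}{2}}$ holds only in the weak, bilinear sense of Theorem \ref{thm.optDualrate}, tested against a second $W^{1,p}$ function. Your Bessel-type argument applied to $\rho_{\eta,\e}^k$ also runs into the problem that $\rho_{\eta,\e}^k$ is not known to lie in $W^{1,p}(\Omega)$ (only $\phi_{\eta,\e}\nabla\rho_{\eta,\e}^k$ is controlled), so you cannot even invoke the strong rate with $f=\rho_{\eta,\e}^k$. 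With the estimates actually available, your scheme would deliver at best $t^{-1}\e^{\frac12}\eta^{\frac{d-2}{2}}$, not the claimed $t^{-1}\e\eta^{\frac{d-2}{2}}$.

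The paper closes exactly this gap by working with eigenfunction-specific estimates rather than an operator norm: first the suboptimal expansion of $\rho_{\eta,\e}^j$ in the homogenized eigenfunctions, $\|\rho_{\eta,\e}^j-\widehat{\mathcal{P}}_\eta^{N_1}\rho_{\eta,\e}^j\|_{L^2_{\phi_{\eta,\e}}}\le C_k\e^{\frac12}\eta^{\frac{d-2}{2}}$ (Lemma \ref{lem.subopt.rhoje}, itself resting on the abstract spectral lemma, the $L^\infty$ bounds of Proposition \ref{prop.Linfty.rho}, and the common spectral gaps of Lemma \ref{lem.N1gap}); then the optimal almost-orthogonality (Lemma \ref{lem.OptAlmostOrtho}), where the smooth part $\widehat{\mathcal{P}}_\eta^{N_1}\rho_{\eta,\e}^j$ is fed into the duality estimate of Theorem \ref{thm.optDualrate} to gain $\e\eta^{\frac{d-2}{2}}$, while the remainder pairs with Theorem \ref{thm.SubOptRate} to give $\e^{\frac12}\eta^{\frac{d-2}{2}}\cdot\e^{\frac12}\eta^{\frac{d-2}{2}}$. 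This two-step regularity transfer is the ingredient your proposal is missing. Two further, smaller points: the $\e^{\frac12}\eta^{\frac{d-2}{2}}$ gap-free term is not an Aubin--Nitsche improvement of an $H^1$ rate but the direct energy estimate with the cutoff boundary-layer corrector, while the alternative $O(\e)$ rate is indeed the maximum-principle treatment of $v_{\eta,\e}$ as in \cite{OSY92}; and the space identification between $L^2(\Omega)$ and $L^2_{\phi_{\eta,\e}}(\Omega_{\eta,\e})$ does not cost $O(\eta^{d/2})$ pointwise but is handled through the divergence-form representation $\phi_{\eta,\e}^2-1=\nabla\cdot(\e\Psi(x/\e))$ and the regularity of the test functions, yielding $O(\e\eta^{\frac{d-2}{2}})$ only for $W^{1,p}$ data.
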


Roughly speaking, the above theorem states that an eigenfunction of \eqref{main eq.eigen} corresponding to the eigenvalue $\lambda_\e^k = \e^{-2} \overline{\lambda} + \mu_{\eta,\e}^k$ can be well approximated by $\phi_{\eta,\e} u_{\eta}$, where $\phi_{\eta,\e} (x)=\phi_\eta(x/\e)$ and
$u_{\eta}$ is a finite linear combination of the eigenfunctions of the homogenized operator corresponding to the eigenvalues in a narrow band (with width $2t$) around $\mu_{\eta}^k$. 
{The main novelty here is that one is  free to choose the width of the band and the estimate is independent of the spectral gap near $\mu_\eta^k$.} In particular, if $\mu_\eta^k$ is simple and has large spectral gaps from $\mu_\eta^{k-1}$ and $\mu_\eta^{k+1}$ (e.g., the principal eigenvalue $\mu_\eta^1$), then $\psi_\e^k$ converges to $\phi_{\eta,\e} \rho_\eta^k$ in $L^2(\Omega_{\eta,\e})$ with an error of order $O(\e^\frac12 \eta^{\frac{d-2}{2}} \wedge \e)$. This rate may not be optimal in general which is caused by the suboptimal estimates of boundary layers (two different approaches will be used). The optimal boundary layer estimate in a perforated domain is an interesting and challenging problem that is beyond the scope of this paper.
On the other hand, in case of small $t$,
the negative power $t^{-1}$ in \eqref{est.main-EF} cannot be avoided generally and the dominating term $t^{-1} \e \eta^{\frac{d-2}{2}}$ (say $t<\e^\frac12$) should be optimal.



\subsection{Strategy of the proof}

First of all, we point out that the optimal upper bounds of the eigenvalues actually can be proved directly by the minimax principle. However, the optimal lower bounds of the eigenvalues are much harder and we develop a new approach to resolve the problem.

The proofs of our main results (as well as the proof in \cite{OSY92} for $\eta = 1$) can be roughly divided into two stages. In the first stage, we need to obtain an a priori quantitative convergence rate (not optimal) of eigenvalues  in dependent of the spectral gaps. By this and the classical Weyl's law for the homogenized eigenvalue problem, we are able to locate some common large spectral gaps between eigenvalues (see Lemma \ref{lem.N1gap}), which will play a key role in the second stage.
We mention that in \cite{OSY92} only a qualitative convergence was proved as their first stage and hence their results are not fully quantitative. Unlike the other homogenization problems in domains without holes (see, e.g., \cite{K79-1,KLS13}), our proof for the degenerate problems in perforated domains involves an intermediate problems as a bridge. 

In fact, in order to study the eigenvalue problem \eqref{eq.DEVP}, we need to consider the boundary value problem,
\begin{equation}\label{eq.MainDegeneate}
    -\nabla\cdot (\phi_{\eta,\e}^2 \nabla u_{\eta,\e}) = \phi_{\eta,\e}^2 f \qquad \text{in } \Omega_{\eta,\e}
    \quad \text{ and } \quad u_{\eta,\e} =0 \quad \text{ on } \partial \Omega,
\end{equation}
and the corresponding homogenized problem is given by
\begin{equation}\label{eq.MainHomo}
    -\nabla\cdot (\overline{A}_{\eta} \nabla u_{\eta}) =  f \qquad \text{in } \Omega
    \quad \text{ and } \quad u_{\eta} =0 \quad \text{ on }  \partial \Omega.
\end{equation}
Unfortunately, the convergence from $\phi_{\eta,\e}^2 f$ to $f$ is very weak if $f\in L^p(\Omega)$ and therefore it is impossible to establish a quantitative convergence rate from $u_{\eta,\e}$ to $u_{\eta}$ under the assumption 
$f\in L^p(\Omega)$ for some $p\le \infty$, which is a necessary regularity assumption in the application of minimax principle to \eqref{eq.DEVP}. To handle this difficulty, we introduce an intermediate eigenvalue problem,
\begin{equation}\label{main.intermediate}
    -\nabla\cdot (\overline{A}_\eta \nabla \tilde{\rho}_{\eta,\e}^k) = \tilde{\mu}_{\eta,\e}^k {\phi}_{\eta,\e}^2 \tilde{\rho}_{\eta,\e}^k  \quad \text{ in } \Omega
    \quad \text{ and } \quad \tilde{\rho}_{\eta,\e}^k=0 \quad \text{ on } \partial \Omega,
\end{equation}
and the corresponding equation
\begin{equation}\label{eq.MainInter}
    -\nabla\cdot (\overline{A}_\eta \nabla \tilde{u}_{\eta,\e}) = \phi_{\eta,\e}^2 f \qquad \text{in } \Omega
    \quad \text{ and } \quad \tilde{u}_{\eta,\e} =0 \quad \text{ on }  \partial \Omega.
\end{equation}
Observe that \eqref{eq.MainInter} only has an oscillating factor $\phi_{\eta,\e}^2$ on the right-hand side. Thus, the solution $\tilde{u}_{\eta,\e}$, as well as the eigenfunctions in \eqref{main.intermediate}, admits better regularity (indeed, $\tilde{u}_{\eta,\e} \in W^{2,p}(\Omega)$, provided $f\in L^p(\Omega_{\eta,\e})$).
Now, we consider two steps of convergence via the intermediate problem: the convergence rate of the solutions from \eqref{eq.MainDegeneate} to \eqref{eq.MainInter} can be established under the condition $f\in L^p(\Omega_{\eta,\e})$ for some $p>d$ (see Theorem \ref{thm.L2rate}); while the convergence rate from \eqref{eq.MainInter} to \eqref{eq.MainHomo} can only be shown under the stronger condition $f\in W^{1,p}(\Omega)$ for some $p>d$ (see Theorem \ref{thm.tT0-T0}). The regularity condition on $f$ in either step is compatible with the a priori regularity of the corresponding eigenfunctions and thus allows us to use the minimax/maximin principle to derive the suboptimal convergence rates of eigenvalues (see Proposition \ref{prop.mue<mu0+} and Proposition \ref{prop.mue>mu0-}). We emphasize that in order to extract the small factor $\eta^\frac{d-2}{2}$ in the convergence rates, we have to estimate the weight $\phi_\eta$ and correctors $\chi_\eta$ optimally (see Section \ref{sec.2}) and carry out a careful analysis throughout the entire proof (see Section \ref{sec.4}).


In the second stage, we prove the optimal convergence rates of eigenvalues (and the convergence rates of eigenfunctions as a byproduct). To this end, we first prove an improved convergence rate (still not optimal) directly from \eqref{eq.MainDegeneate} to \eqref{eq.MainHomo} without relying on the intermediate problem \eqref{eq.MainInter} under the stronger assumption $f\in W^{1,p}(\Omega)$ for some $p>d$, i.e.,
\begin{equation}
    \| \mathscr{T}_{\eta,\e} f - \mathscr{T}_{\eta} f \|_{L^2_{\phi_{\eta,\e}}(\Omega_{\eta,\e})} \le C\e^\frac12 \eta^{\frac{d-2}{2}} \| f \|_{W^{1,p}(\Omega)},
\end{equation}
where $\mathscr{T}_{\eta,\e}: f \to u_{\eta,\e}$ and $\mathscr{T}_{\eta}: f \to u_\eta$ are the resolvents given by \eqref{eq.MainDegeneate} and \eqref{eq.MainHomo}, respectively.
Then, we apply a duality argument to show the optimal convergence rate in the weak sense, i.e., for any $g\in W^{1,p}(\Omega)$,
\begin{equation}
    \bigg| \int_{\Omega_{\eta,\e}} \phi_{\eta,\e}^2 (\mathscr{T}_{\eta,\e} f - \mathscr{T}_{\eta} f) g \bigg| \le C\e \eta^{\frac{d-2}{2}} \| f \|_{W^{1,p}(\Omega)} \|g \|_{W^{1,p}(\Omega)}.
\end{equation}
Combining the above two estimates with the large spectral gaps found in the first stage and the almost orthogonality of eigenfunctions (see Lemma \ref{lem.OptAlmostOrtho}), we are able to analyze precisely the dimension of the eigenspace of $\mathscr{T}_{\eta,\e}$ corresponding to the eigenvalues less than $\mu^k_\eta + C_k \e \eta^\frac{d-2}{2}$ for each $k\ge 1$, which implies the optimal lower bounds of the eigenvalues of $\mathscr{T}_{\eta,\e}$. This argument relies essentially on the orthogonal structure of linear eigenspaces.

\textbf{Acknowledgements.} Z.S. is partially supported by the NSF grant DMS-2153585.  J. Z. is partial supported by the NSFC (No. 12288201) and a grant for Excellent Youth from the NSFC.

\section{Preliminaries}\label{sec.2}
Throughout this paper, the constants $C$ and $c$ will never depend on $\e$ and $\eta$. But they may depend on the geometric characters of $\Omega$ and $T$,  the number of holes $m$ in $Y$, etc. We write $A \approx B$ if there exist $C \ge c > 0$ such that $c B \le A \le CB$. We write $A \le B + O(\delta)$ if $A\le B + C\delta$ for some constant $C$ independent of $\delta$; we write $A = B + O(\delta)$ if $|A-B| \le C\delta$. Given an arbitrary $Y$-periodic function $f$, we will apply the notation $f_\e(x) = (f)_\e(x) = f(x/\e)$.

\subsection{The cell eigenvalue problem}
This subsection is devoted to the estimates of the eigenpair $(\phi_{\eta}, \overline{\lambda}_\eta)$.

Recall that $Y_{\eta}=Y\setminus T_\eta$.
Define
\begin{equation*}
    H^1_{\rm 0, per}(Y_{\eta}) = \{ v\in H^1(Y_{\eta}): \text{$v$ is $Y$-periodic and $v=0$ on $\partial T_\eta$} \}.
\end{equation*}
Here and after, a function defined in $Y_{\eta}$ is said to be $Y$-periodic if it has the same trace on the opposite faces of $Y$. Thus the functions in $H^1_{\rm 0, per}(Y_{\eta})$ can be identified as $Y$-periodic functions in $\R^d$ with no jumps across the cell boundaries.

Consider the eigenvalue problem
\begin{equation}\label{eq.cell}
    -\Delta u = \lambda u \quad \text{in } Y_{\eta},
\end{equation}
with $(u,\lambda)\in H^1_{\rm 0, per}(Y_{\eta}) \times \R$, where $Y_{\eta}$ should be understood as a flat torus with holes (i.e., the equation continues to hold across $\partial Y$). It is well-known that the principal eigenvalue of the above cell problem, denoted by $\overline{\lambda}$, is positive and simple, and the corresponding eigenfunction, denoted by $\phi_{\eta}$, does not change sign in $Y_{\eta}$. Without loss of generality, we assume
\begin{equation}\label{cond.phi}
    \| \phi_{\eta} \|_{L^2(Y_{\eta})} = 1 \quad \text{ and } \quad \phi_{\eta} > 0 \quad \text{in } Y_{\eta}.
\end{equation}
For convenience, we extend $\phi_{\eta}$ by zero in the interior of 
$T_\eta$ and denote it still by $\phi_{\eta}$.

Since $\phi_{\eta}$ is $Y$-periodic, by rescaling, we see that $\phi_{\eta,\e}(x) := \phi_{\eta}(x/\e)$ satisfies
\begin{equation}\label{eq.principle eigen}
\left\{
    \begin{aligned}
    -\Delta \phi_{\eta,\e} & = \e^{-2} \overline{\lambda}_\eta \phi_{\eta,\e}  &\quad & \text{in } \R^d \setminus T_{\eta,\e}, \\
    \phi_{\eta,\e} & = 0 & \quad &  \text{on } \partial T_{\eta,\e}. 
    \end{aligned}
    \right.
\end{equation}

\begin{lemma}\label{lem.lambda.bar}
    There exist positive constants $c_1$ and $C_1$ such that
    \begin{equation}\label{7.1-0}
        c_1 \eta^{d-2} \le \overline{\lambda}_\eta \le C_1 \eta^{d-2} 
    \end{equation}
    for $\eta\in (0, 1)$.
\end{lemma}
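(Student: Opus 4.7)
The plan is to match an upper bound from the Rayleigh quotient with a lower bound from a weighted Poincar\'e inequality, both of which track the harmonic capacity of a single scaled hole in $\R^d$ (which scales like $\eta^{d-2}$ when $d\ge 3$).

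For the upper bound $\overline{\lambda}_\eta \le C_1 \eta^{d-2}$, I would use the Rayleigh characterization
\[
    \overline{\lambda}_\eta = \inf_{0\ne v\in H^1_{\rm 0, per}(Y_\eta)} \frac{\int_{Y_\eta}|\nabla v|^2}{\int_{Y_\eta} v^2},
\]
and test with a function built from capacitary cutoffs. For each $i=1,\dots,m$, I would construct a $Y$-periodic cutoff $w^i_\eta$ equal to $1$ on $\tau^i_\eta$, supported in a neighborhood of $\tau^i_\eta$ that fits inside $Y$ and is disjoint from the other cutoffs (using the spacing $\dist(\tau^i_\eta,\tau^j_\eta)>c_0$ and $\dist(\tau^i_\eta,\partial Y)>c_0$), with $\norm{\nabla w^i_\eta}_{L^2(Y)}^2 \le C\eta^{d-2}$; such $w^i_\eta$ is obtained as a suitable truncation of the rescaled capacitary potential of $\tau^i$ in $\R^d$. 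Then $v_\eta := 1 - \sum_i w^i_\eta$ lies in $H^1_{\rm 0, per}(Y_\eta)$, satisfies $\norm{v_\eta}_{L^2(Y_\eta)}^2 = 1 + O(\eta^d)$, and $\norm{\nabla v_\eta}_{L^2(Y_\eta)}^2 \le Cm\,\eta^{d-2}$ since the $\nabla w^i_\eta$ have disjoint supports. Inserting this into the Rayleigh quotient gives the claimed upper bound.

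For the lower bound $\overline{\lambda}_\eta \ge c_1 \eta^{d-2}$, the core task is to prove the Poincar\'e-type inequality
\[
    \norm{u}_{L^2(Y_\eta)}^2 \le C\eta^{-(d-2)}\norm{\nabla u}_{L^2(Y_\eta)}^2 \qquad \text{for all } u\in H^1_{\rm 0, per}(Y_\eta),
\]
after which the variational principle yields the bound immediately. I would extend $u$ by zero across $T_\eta$ and split $u = \bar u + (u-\bar u)$, where $\bar u = \fint_Y u$. The mean-zero part is controlled by the standard periodic Poincar\'e inequality, $\norm{u-\bar u}_{L^2(Y)}^2 \le C\norm{\nabla u}_{L^2(Y)}^2$, which is already stronger than the target. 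To handle $\bar u$, I would exploit that $u\equiv 0$ on $T_\eta$ and hence $u-\bar u = -\bar u$ there, giving
\[
    \bar u^2\,|T_\eta| = \int_{T_\eta}(u-\bar u)^2 \le |T_\eta|^{2/d}\,\norm{u-\bar u}_{L^{2^*}(Y)}^2 \le C|T_\eta|^{2/d}\,\norm{\nabla u}_{L^2(Y)}^2
\]
by H\"older and the Sobolev--Poincar\'e inequality with $2^* = 2d/(d-2)$. Since $|T_\eta|\approx \eta^d$, this gives $\bar u^2 \le C\eta^{-(d-2)}\norm{\nabla u}_{L^2(Y)}^2$, and the two estimates combine to the required inequality.

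The main obstacle is the lower bound, and in particular the estimate on $\bar u$: a naive mean/oscillation decomposition is not sharp, and one must use both the smallness of the hole volume $|T_\eta|\approx\eta^d$ and the Sobolev improvement $2^*>2$. The hypothesis $d\ge 3$ enters precisely at this point, since it is what makes the ratio $|T_\eta|^{2/d}/|T_\eta| = \eta^{-(d-2)}$ have the correct sign and magnitude; in $d=2$ only a logarithmic analogue would be available.
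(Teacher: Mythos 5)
Your argument is correct, and it follows the same overall skeleton as the paper (Rayleigh quotient with an explicit test function for the upper bound; a scaled Poincar\'e inequality plus the variational principle for the lower bound), but it differs in two worthwhile respects. For the lower bound, the paper simply invokes the inequality $c_1\eta^{d-2}\int_{Y_\eta}|u|^2\le\int_{Y_\eta}|\nabla u|^2$ with a citation to Allaire, whereas you actually prove it: extend $u$ by zero into $T_\eta$, split off the mean $\bar u=\fint_Y u$, control the oscillation by the periodic Poincar\'e inequality, and recover $\bar u$ from the vanishing of $u$ on the hole via H\"older on $T_\eta$ together with Sobolev--Poincar\'e, using $|T_\eta|\approx\eta^d$ so that $|T_\eta|^{2/d}/|T_\eta|=\eta^{-(d-2)}$. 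This makes the lemma self-contained and isolates exactly where $d\ge 3$ enters, which is a genuine gain over the paper's citation. For the upper bound, the paper uses the simpler test function: a cutoff equal to $1$ outside a $c\eta$-neighborhood of $T_\eta$ with $|\nabla u|\le C\eta^{-1}$, so that the energy is $\le C\eta^{-2}\cdot\eta^d=C\eta^{d-2}$; your capacitary potentials give the same bound with slightly more machinery. One small inaccuracy: the claim $\|v_\eta\|_{L^2(Y_\eta)}^2=1+O(\eta^d)$ holds only if you truncate the capacitary potential at scale $O(\eta)$; if the truncation is at the fixed scale $c_0$ the error from the cross term $\int w^i_\eta$ is of order $\eta^{d-2}$. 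This is harmless, since all that is needed is $\|v_\eta\|_{L^2(Y_\eta)}^2\approx 1$, but you should either truncate at scale $C\eta$ (as the paper effectively does) or state the weaker error.
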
 

\begin{proof}
    To show $\overline{\lambda}_\eta \ge c_1 \eta^{d-2}$, we use the Poincar\'{e} inequality,
    \begin{equation}\label{7.1-1}
        c_1 \eta^{d-2} \int_{Y_{\eta}} |u|^2\le \int_{Y_{\eta}} |\nabla u|^2,
    \end{equation}
    for functions $u\in H^1_{0,{\rm per}}(Y_{\eta})$, where $c_1>0$ depends only on $T_1$ {(see \cite{Allaire-1990} for a proof)}.
    This gives
    $$
    \overline{\lambda}_\eta \int_{Y_{\eta}} | \phi_{\eta}| ^2 =\int_{Y_{\eta}} |\nabla \phi_{\eta}|^2 \ge c_0 \eta^{d-2} \int_{Y_{\eta}} |\phi_{\eta}|^2,
    $$
    which yields $\overline{\lambda}_\eta \ge c_1 \eta^{d-2}$.

    To see $\overline{\lambda}_\eta\le C_1 \eta^{d-2}$, we use the fact
    $$
    \overline{\lambda}_\eta =\inf_{u\in H^1_{0,{\rm per}}(Y_{\eta})} \frac{\int_{Y_{\eta}} |\nabla u|^2}{ \int_{Y_{\eta}} |u|^2}.
    $$
Let $T_{\eta}^t = \{ x\in Y: \dist(x,T_\eta) \le t \}.$ Let $c = \frac14 c_0$.
It is not hard to construct a function $u\in H^1_{0,{\rm per}}(Y_{\eta})$ such that $0\le u \le 1$, $u=1$ in $Y\setminus T_{\eta}^{c\eta}$, and $|\nabla u| \le C\eta^{-1}$ in $T_{\eta}^{c\eta} \setminus T_\eta$. It follows that
$$
\int_{Y_{\eta}} |\nabla u|^2  \le C\eta^{-2} |T_{\eta}^{c\eta} \setminus T_\eta| \le C\eta^{d-2}.
$$
Also, $\| u\|_{L^2(Y_{\eta})} \approx 1$. This implies that $\overline{\lambda}_\eta\le C_1 \eta^{d-2}$.
\end{proof}


\begin{lemma}\label{lemma-7.2} 

There exist positive constants $c_1$ and $C_1$ such that the following estimates hold.
\begin{itemize}
    \item[(i)] Norm estimate:
    \begin{equation}\label{7.2-2}
    \| \nabla \phi_{\eta} \|_{L^2(Y)} + \| \phi_{\eta} -1 \|_{L^{2^*}(Y)} \le C_1 \eta^{\frac{d-2}{2}}.
\end{equation}
where $2^*=\frac{2d}{d-2}$.

    \item[(ii)] Gradient estimate: For any $x\in Y_{\eta}$,
    \begin{equation}
        |\nabla \phi_{\eta}(x)| \le C_1 \eta^{-1}  \Big\{ \Big( \frac{\eta}{{\rm dist}(x, T_\eta)} \Big)^{\frac{d}{2}} \wedge 1 \Big\}.
    \end{equation}

    \item[(iii)] Degeneracy estimate: For any $x\in Y_{\eta}$,
    \begin{equation}\label{7.2-1}
    c_1 \Big\{ 1 \wedge \frac{{\rm dist}(x, T_\eta)}{\eta} \Big\} \le \phi_{\eta}(x) \le C_1 \Big\{ 1 \wedge \frac{{\rm dist}(x, T_\eta)}{\eta} \Big\}.
\end{equation}

    \item[(iv)] Interior estimate: For any $x\in Y_{\eta}$,
    \begin{equation}
        |\phi_{\eta}(x) - 1| \le C_1 \Big\{ \Big( \frac{\eta
        }{{\rm dist}(x, T_\eta)} \Big)^{\frac{d-2}{2}} \wedge 1  \Big\}.
    \end{equation}
\end{itemize}


\end{lemma}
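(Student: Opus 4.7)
The plan is to establish the four parts in the order (i), (iv), (ii), (iii), with each step building on the previous ones. For (i), the bound $\|\nabla \phi_\eta\|_{L^2(Y)} \le C\eta^{(d-2)/2}$ is immediate from the Rayleigh identity $\int_{Y_\eta}|\nabla \phi_\eta|^2 = \overline{\lambda}_\eta$ together with Lemma \ref{lem.lambda.bar}. For $\|\phi_\eta - 1\|_{L^{2^*}(Y)}$, I would extend $\phi_\eta$ by zero across $T_\eta$ so that it lies in $H^1_{\rm per}(Y)$, and estimate
$$\|\phi_\eta - 1\|_{L^{2^*}(Y)} \le \|\phi_\eta - (\phi_\eta)_Y\|_{L^{2^*}(Y)} + |(\phi_\eta)_Y - 1|.$$
The first term is bounded by $C\|\nabla\phi_\eta\|_{L^2(Y)} \le C\eta^{(d-2)/2}$ via Sobolev--Poincar\'e on the flat torus, while the second term is $O(\eta^{d-2})$ by combining the normalization $\|\phi_\eta\|_{L^2(Y)} = 1$ (which forces $(\phi_\eta)_Y \le 1$) with the torus Poincar\'e estimate $\|\phi_\eta - (\phi_\eta)_Y\|_{L^2(Y)}^2 \le C\overline{\lambda}_\eta$, giving $(\phi_\eta)_Y^2 \ge 1 - C\eta^{d-2}$.

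For (iv), I would apply $L^{2^*}$-based Moser iteration to $\phi_\eta - 1$, which satisfies $-\Delta(\phi_\eta - 1) = \overline{\lambda}_\eta \phi_\eta$ on any ball $B_{r/2}(x) \subset Y_\eta$ with $r = \dist(x, T_\eta)$. Exploiting the critical scaling $r^{-d/2^*} = r^{-(d-2)/2}$ together with the global $L^{2^*}$ bound from (i) yields
$$\sup_{B_{r/4}(x)}|\phi_\eta - 1| \le C r^{-(d-2)/2}\|\phi_\eta - 1\|_{L^{2^*}(Y)} + C r^2 \overline{\lambda}_\eta \le C(\eta/r)^{(d-2)/2} + C r^2 \eta^{d-2},$$
and the second term is dominated by the first on the cell scale $r \le 1$. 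The gradient estimate (ii) in the regime $r \ge \eta$ then follows by applying the standard interior gradient estimate for $-\Delta u = f$ to $u = \phi_\eta - 1$ on $B_{r/2}(x)$, which gives $|\nabla \phi_\eta(x)| \le Cr^{-1}\sup_{B_{r/2}}|\phi_\eta - 1| + Cr\overline{\lambda}_\eta = C\eta^{-1}(\eta/r)^{d/2}$ via (iv); in the near-hole regime $r < \eta$ I would instead rescale $y = x/\eta$ and apply boundary gradient estimates at scale $1$.

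Finally, for (iii), both inequalities reduce to a rescaled problem near the holes. Setting $\Phi(y) = \phi_\eta(\eta y + z)$ near a hole of $T_\eta$ (of diameter $\sim\eta$), the equation becomes $-\Delta_y \Phi = (\overline{\lambda}_\eta \eta^2)\Phi$ on a fixed-geometry domain with $\Phi = 0$ on the rescaled hole boundary. The crucial point is that $\overline{\lambda}_\eta \eta^2 \approx \eta^d$ is small, so the rescaled equation is a uniformly controlled perturbation of Laplace's equation. Boundary $C^{1,\alpha}$ regularity for $\Phi$ (which is uniformly bounded by Moser iteration applied directly to the eigenvalue equation using $\|\phi_\eta\|_{L^2} = 1$ and $\overline{\lambda}_\eta \le C$) yields the upper bound $\phi_\eta(x) \le C\dist(x, T_\eta)/\eta$. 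For the lower bound, (iv) provides $\phi_\eta \ge 1/2$ at distances $\gtrsim \eta$ from $T_\eta$, and in the near-hole regime I would invoke Hopf's lemma for the rescaled equation, or more robustly compare $\Phi$ with a capacity-type harmonic barrier vanishing on $\partial \tau^i$.

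The principal technical challenge is ensuring that all constants appearing in Moser, Schauder, Hopf, and Harnack arguments are genuinely uniform in $\eta$. This boils down to the smallness $\overline{\lambda}_\eta \eta^2 = O(\eta^d)$, which turns every rescaled estimate into a vanishing perturbation of the pure Laplace case; together with the fixed reference geometry of the holes $\tau^i$ and the uniform separation hypothesis $\dist(\tau^i_\eta, \tau^j_\eta) > c_0$, this rules out any degeneration of constants as $\eta \to 0$ and allows the $\eta$-dependence in all four estimates to be tracked explicitly.
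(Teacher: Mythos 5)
Your proposal is correct in substance but organizes the proof along a genuinely different route than the paper. The paper proves the gradient bound (ii) and the upper bound in (iii) first via local estimates for $-\Delta u=\lambda u$ near $\partial T_\eta$ and in the interior, then gets $\|\phi_\eta-1\|_{L^{2^*}}$ by comparing $\phi_\eta^2$ with $L_\eta^2$ where $L_\eta=\fint_Y\phi_\eta$ (which uses the $L^\infty$ bound $\phi_\eta\le C$), then proves the lower bound in (iii) by a lifting trick, $h(x,s)=e^{\overline{\lambda}_\eta^{1/2}s}\phi_\eta(x)$, so that the Harnack inequality for nonnegative harmonic functions applies away from the holes, followed by a barrier argument near the $C^{1,1}$ holes, and only at the end deduces (iv). You instead prove (i) by the Pythagorean decomposition $\|\phi_\eta\|_{L^2}^2=\|\phi_\eta-(\phi_\eta)_Y\|_{L^2}^2+(\phi_\eta)_Y^2$ plus the torus Sobolev--Poincar\'e inequality (cleaner than the paper's step, and it avoids needing $\phi_\eta\le C$ there), then obtain (iv) directly from the global $L^{2^*}$ bound through the critical scaling $r^{-d/2^*}=r^{-(d-2)/2}$ in the local boundedness estimate, and you use (iv) both to get the interior gradient bound (ii) and to replace the paper's Harnack/lifting step: (iv) gives $\phi_\eta\ge\tfrac12$ at distance $\gtrsim\eta$ quantitatively, after which the near-hole lower bound follows from the same superharmonic-comparison/Hopf barrier the paper uses. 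What your approach buys is the elimination of the Harnack inequality and the lifting construction; what it requires in exchange is that the constant-closeness (iv) be available before (ii)--(iii), which your critical-exponent argument indeed delivers without circularity.

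One point of order you should make explicit: in your (iv) the inhomogeneous term $Cr^2\overline{\lambda}_\eta$ and the trivial bound $|\phi_\eta-1|\le C$ in the regime $r<\eta$ both presuppose the uniform bound $\|\phi_\eta\|_{L^\infty(Y)}\le C$, which you only mention later inside the argument for (iii). This is not a gap --- as you note, it follows from Moser iteration applied directly to $-\Delta\phi_\eta=\overline{\lambda}_\eta\phi_\eta$ (using $\|\phi_\eta\|_{L^2}=1$, $\overline{\lambda}_\eta\le C$, and the zero Dirichlet condition on $\partial T_\eta$, e.g.\ via the zero extension being a subsolution) --- but it should be stated and proved at the outset so that the chain (i) $\to$ (iv) $\to$ (ii) $\to$ (iii) is self-contained.
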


\begin{proof}
    First of all, the eigenvalue equation \eqref{eq.Phieta} gives $\| \nabla \phi_{\eta}\|_{L^2(Y)}^2 = \overline{\lambda}_\eta \| \phi_{\eta} \|_{L^2(Y)}^2 \approx \eta^{d-2}$, by Lemma \ref{lem.lambda.bar}. This proves the first part of (i). The remaining estimates will be divided into several steps.

    Step 1: Establish (ii) and the upper bounds in (iii). To do this, we use the observation that if
    $-\Delta u=\lambda u$ in $B_{2r}(x_0)$ and $|\lambda| r^2\le 1$, then
    \begin{equation}\label{7.2-3}
        \max_{B_r(x_0)} |u| \le C \left(\fint_{B_{2r}(x_0)} |u|^2 \right)^{1/2},
    \end{equation}
    and
    \begin{equation}\label{7.2-4}
    \max_{B_{r}(x_0)} |\nabla u|
    \le C \left(\fint_{B_{2r}(x_0)} |\nabla u|^2 \right)^{1/2}
    + C |\lambda | r \left(\fint_{B_{2r}(x_0)} |u|^2 \right)^{1/2}.
\end{equation}
The estimates above follow from the standard elliptic estimates for $-\Delta u=F$  by an iteration argument.
We note that the estimates above continue to hold if we replace $B_{r}(x_0) $ by
$B_{r}(x_0) \cap Y_{\eta}$, where $x_0 \in \partial T_\eta$, $0< r\le c\eta$, and
$-\Delta u=\lambda u$ in $Y_{\eta}$,  $u=0$ on $\partial T_\eta$.
As a result, we see that if $x\in Y_{\eta}$ and dist$(x, T_\eta) \le \eta$, then applying \eqref{7.2-4} to $\phi_{\eta}$ gives
\begin{equation}\label{est.Dphi-b}
|\nabla \phi_{\eta} (x)|
  \le C \eta^{-\frac{d}{2}} \|\nabla \phi_{\eta} \|_{L^2(Y)} + C \eta^{d-1} \eta^{-\frac{d}{2} } \| \phi_{\eta} \|_{L^2(Y)}
  \le C \eta^{-1},
\end{equation}
where we have used the fact $\|\nabla \phi_{\eta} \|^2_{L^2(Y)} =\overline{\lambda}_\eta \approx \eta^{d-2}$.
Since $\phi_{\eta,\e} =0$ on $\partial T_\eta$, this implies that
\begin{equation}\label{est.Phi.upp-b}
\phi_{\eta} (x) \le C \eta^{-1} \text{dist}(x, T_\eta),
\end{equation}
if $x\in Y_{\eta}$ and $\text{dist}(x, T_\eta) \le \eta$.
A similar argument shows that if $x\in Y$ and $ r=\dist(x, T_\eta) \ge \eta$, then
\begin{equation}\label{est.Dphi.i}
|\nabla \phi_{\eta} (x) |
\le C r^{-\frac{d}{2}} \eta^{\frac{d}{2} -1} + C \eta^{d-2} r^{1-\frac{d}{2}} \le C_1 \eta^{-1}  (\eta/r)^{\frac{d}{2}}.
\end{equation}
Note that \eqref{est.Dphi-b} and \eqref{est.Dphi.i} together leads to (ii). Moreover,
it follows by integration that $ \phi_{\eta} (x) \le C$ for $\dist(x,T_\eta) \ge \eta$. This together with \eqref{est.Phi.upp-b} gives the upper bound in (iii).

 Step 2: Prove the second part of (i). Let 
    $
    L_\eta =\fint_Y \phi_{\eta}.
    $
    Then
    $$
    \int_Y |\phi_{\eta} -L_\eta|^2 \le C \int_Y |\nabla \phi_{\eta}|^2 \le C \eta^{d-2}.
    $$
    It follows that
    $$
    \int_Y | (\phi_{\eta})^2 -L_\eta^2|
    =\int_Y |\phi_{\eta} +L_\eta| |\phi_{\eta} -L_\eta| \le C \int_Y |\phi_{\eta} -L_\eta| \le C \eta^{\frac{d-2}{2}},
    $$
    where we have used the upper bound $\phi_{\eta} \le C$ in Step 1.
    Since $\int_Y (\phi_{\eta})^2 =1$, we see that 
    $$
    |L_\eta^2 -1| \le C \eta^{\frac{d-2}{2}}.
    $$
    This implies that $|L_\eta -1|\le C \eta^{\frac{d-2}{2}}$, by the positivity of $L_\eta$.
    Thus,
    $$
    \aligned
    \|\phi_{\eta} -1\|_{L^p(Y)}
     & \le \| \phi_{\eta} -L_\eta\|_{L^p(Y)} + |L_\eta-1|\\
     & \le C \|\nabla \phi_{\eta} \|_{L^2(Y)} + |L_\eta -1| 
    \le C \eta^{\frac{d-2}{2}},
    \endaligned
    $$
where $p=\frac{2d}{d-2}$ (for $d>2$).

    Step 3: Prove the lower bounds in (iii). Since $T_\eta$ is a union of finite holes $\tau^i_\eta$, it suffices to consider the behavior of $\phi_{\eta}$ near an arbitrary hole. Let $x_1 \in Y$ (view $Y$ as a flat torus). We first claim that there exists $C_* >0$ such that
    \begin{equation}\label{est.lowbd.phieta}
        \sup_{x\in B_{C_* \eta}(x_1)} \phi_{\eta}(x) \ge \frac12.
    \end{equation}
    Actually, if the above inequality is not true, then
    \begin{equation}
        \| \phi_{\eta} - 1 \|_{L^{2^*}(Y)} \ge \frac12 |B_{C_* \eta}(x_1)|^{\frac{1}{2^*}} \ge \frac{1}{2}|B_1| (C_* \eta)^\frac{d-2}{2}.
    \end{equation}
    This contradicts to \eqref{7.2-2} if we choose $C_*$ such that $\frac12 |B_1| C_*^{\frac{d-2}{2}} > C_1$. Hence the claim is proved.


    We next employ a standard lifting technique to reduce the nonnegative eigenfunction $\phi_{\eta}$ to a nonnegative harmonic function so that the Harnack inequality applies. Define $h(x,s) = \exp(\overline{\lambda}_\eta^\frac12 s) \phi_{\eta}(x)$. Then $(\Delta + \partial_s^2) h =0$ in $Y_{\eta} \times (-1,1)$ and $h = 0$ on $\partial T_\eta \times (-1,1)$. Now, if $B_{2C_* \eta}(x) \cap T_\eta = \varnothing$, by the Harnack inequality for the nonnegative harmonic function $h$ in $Y_{\eta} \times (-1,1)$ and the previous result,
    \begin{equation}
        \inf_{B_{ C_* \eta}(x)} \phi_{\eta} \ge \exp(-\overline{\lambda}_\eta^\frac12 )\inf_{B_{ C_* \eta}(x,0)} h \ge c \sup_{B_{ C_* \eta}(x,0)} h \ge c \sup_{B_{ C_* \eta}(x)} \phi_{\eta} \ge c_1,
    \end{equation}
    where we have used the fact $h(x,s) \approx \phi_{\eta}(x)$ for any $(x,s) \in Y_{\eta} \times (-1,1)$ and \eqref{est.lowbd.phieta}. This shows that $\phi_{\eta}(x) \ge c_1$ for any $x$ with $\dist(x,T_\eta) \ge 2C_* \eta$.

    Now if $x\in Y_\eta$ with $\dist(x,T_\eta) < 2C_* \eta$, then by a standard barrier function argument and the lower bound for $\dist(x,T_\eta) \ge 2C_* \eta$, we have
    \begin{equation}
        h(x,0) \approx \phi_{\eta}(x) \ge c_1 \eta^{-1} \dist(x, T_\eta).
    \end{equation}
    This can be shown by considering each hole $\tau_{\eta}^i$ and the boundary behavior of $h$ near the $C^{1,1}$ boundary $\partial \tau_{\eta}^i \times (-1,1)$. The lower bound in \eqref{7.2-1} then follows.

    Step 4: Prove (iv). We view $\phi_{\eta} - 1$ as a solution of
    \begin{equation}
        -\Delta (\phi_{\eta} - 1) = \overline{\lambda}_\eta \phi_{\eta}.
    \end{equation}
    Then for $r = \dist(x,T_\eta) > \eta$, we apply the interior estimate to get
    \begin{equation}
        |\phi_{\eta}(x) - 1| \le C\bigg( \fint_{B_r(x)} |\phi_{\eta} - 1|^{2^*} \bigg)^{1/2^*} + C\overline{\lambda}_\eta r \| \phi_{\eta} \|_{L^\infty(B_r(x))} \le C\Big( \frac{\eta}{r} \Big)^{\frac{d-2}{2}},
    \end{equation}
    where we have used \eqref{7.2-2} and \eqref{7.2-1} in the last inequality.
    Finally for $\dist(x,T_\eta) \le \eta$, \eqref{7.2-1} gives the desired bounded.
\end{proof}

\subsection{Weighted Sobolev spaces}
Let 
$$
L^2_{\phi_\eta}(Y_{\eta}) = \left\{ v\in L^1_{\rm loc}(Y_{\eta}): v\phi_\eta \in L^2(Y_{\eta}) \right\},
$$
$$
L^2_{\phi_{\eta,\e}}(\Omega_{\eta,\e}) = \left\{ v\in L^1_{\rm loc}(\Omega_{\eta,\e}): v\phi_{\eta,\e} \in L^2(\Omega_{\eta,\e})\right \}.
$$
Let $H^1_{\phi_\eta,{\rm per}}(Y_{\eta})$ be the periodic $\phi_\eta$-weighted Sobolev space defined by
\begin{equation*}
    H^1_{\phi_\eta,{\rm per}}(Y_{\eta}) = \big\{ v\in L^1_{\rm loc}(Y_{\eta}): v \in L^2_{\phi_\eta}(Y_{\eta}), \nabla v \in L^2_{\phi_\eta}(Y_{\eta})^d, \text{ and $v$ is $Y$-periodic} \big\}.
\end{equation*}
Similarly, define the $\phi_{\eta,\e}$-weighted Sobolev space in $\Omega_{\eta,\e}$ by
\begin{equation*}
    H^1_{\phi_{\eta,\e},0}(\Omega_{\eta,\e}) = \big\{ v\in L^1_{\rm loc}(\Omega_{\eta,\e}):  v \in L^2_{\phi_{\eta,\e}}(\Omega_{\eta,\e}), \nabla v \in L^2_{\phi_{\eta,\e}}(\Omega_{\eta,\e})^d, \text{ and $v = 0$ on $\Gamma_{\eta,\e}$} \big\}.
\end{equation*}
For simplicity, from now on, we will denote $H^1_{\phi_{\eta,\e},0}(\Omega_{\eta,\e})$ by $V_{\eta,\e}$.

The corresponding norms of the above spaces are given by
\begin{equation*}
    \| v\|_{H^1_{\phi_\eta,{\rm per}}(Y_{\eta})} := \| \phi_\eta v \|_{L^2(Y_{\eta})} + \| \phi_\eta \nabla v \|_{L^2(Y_{\eta})},
\end{equation*}
and
\begin{equation*}
    \| v \|_{V_{\eta,\e}} =  \| v\|_{H^1_{\phi_{\eta,\e},0}(\Omega_{\eta,\e})} := \| \phi_{\eta,\e} v \|_{L^2(\Omega_{\eta,\e})} + \| \phi_{\eta,\e} \nabla v \|_{L^2(\Omega_{\eta,\e})}.
\end{equation*}


Let us first list some properties of the weighted Sobolev spaces $V_{\eta,\e}$ and $H^1_{\phi,{\rm per}}(Y_{\eta})$. These properties guarantee the solvability of the degenerate equation associated with the operator $\cL_{\eta,\e}$ and the validity of the classical spectral theorem for compact self-adjoint operators in separable Hilbert spaces.

\begin{proposition}\label{prop.H1e}
    The following properties regarding the space $V_{\eta,\e}$ hold:
    \begin{itemize}
        \item[(i)] The space $C_0^\infty(\Omega_{\eta,\e})$ is dense in $V_{\eta,\e}$.

        \item[(ii)] The inclusion $V_{\eta,\e} \to L^2(\Omega_{\eta,\e})$ is continuous uniformly in $\e$ and $\eta$. That is, there exists a constant $C>0$ such that
        \begin{equation*}
            \|v \|_{L^2(\Omega_{\eta,\e})} \le C\| v\|_{V_{\eta,\e}}, \quad \text{for all } v\in V_{\eta,\e}.
        \end{equation*}

        \item[(iii)] The map $v\to \phi_{\eta,\e} v$ is an isomorphism from $V_{\eta,\e}$ to $H^1_0(\Omega_{\eta,\e})$.

        \item[(iv)] The weighted Poincar\'{e} inequality holds: there exists a constant $C$ independent of $\e$ and $\eta$ such that
        \begin{equation*}
            \| \phi_{\eta,\e} v \|_{L^2(\Omega_{\eta,\e})} \le C\| \phi_{\eta,\e} \nabla v \|_{L^2(\Omega_{\eta,\e})}, \quad \text{for all } v\in V_{\eta,\e}.
        \end{equation*}
        \item[(v)] The inclusion $V_{\eta,\e} \to L^2_{\phi_{\eta,\e}}(\Omega_{\eta,\e})$ is compact.
    \end{itemize}
\end{proposition}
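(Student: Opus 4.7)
The proof rests on a \emph{ground state transformation} identity. For $v\in C_c^\infty(\Omega_{\eta,\e})$ and $u=\phi_{\eta,\e} v$, the algebraic identity $|\nabla u|^2 = \phi_{\eta,\e}^2|\nabla v|^2 + \nabla(\phi_{\eta,\e} v^2)\cdot \nabla\phi_{\eta,\e}$, combined with integration by parts and the eigenvalue equation \eqref{eq.principle eigen}, yields
\begin{equation*}
\int_{\Omega_{\eta,\e}}|\nabla u|^2 = \int_{\Omega_{\eta,\e}}\phi_{\eta,\e}^2|\nabla v|^2 + \e^{-2}\bl\int_{\Omega_{\eta,\e}}\phi_{\eta,\e}^2 v^2.
\end{equation*}
This identity links the weighted $H^1$-norm of $v$ with the standard $H^1$-norm of $u$ and drives the whole proof.

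I would start with (iii). On the open set $\Omega_{\eta,\e}$, $\phi_{\eta,\e}$ is smooth and bounded below on every compact subset (by Lemma \ref{lemma-7.2}(iii)), so the map $v\mapsto u=\phi_{\eta,\e} v$ is a bijection of $C_c^\infty(\Omega_{\eta,\e})$ onto itself. The identity above together with $\|u\|_{L^2}=\|\phi_{\eta,\e} v\|_{L^2}$ shows that the $V_{\eta,\e}$-norm and the $H^1_0$-norm are equivalent for fixed $\e,\eta$, so the map extends by closure to the required Banach-space isomorphism. Property (i) then follows immediately: approximate $u=\phi_{\eta,\e} v\in H^1_0(\Omega_{\eta,\e})$ by $u_k\in C_c^\infty(\Omega_{\eta,\e})$ and set $v_k=u_k/\phi_{\eta,\e}\in C_c^\infty(\Omega_{\eta,\e})$; the isomorphism converts $u_k\to u$ to $v_k\to v$. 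Similarly (v) transfers via (iii): Rellich--Kondrachov gives compactness of $H^1_0(\Omega_{\eta,\e})\hookrightarrow L^2(\Omega_{\eta,\e})$, and since $v\mapsto \phi_{\eta,\e} v$ is also an isometry $L^2_{\phi_{\eta,\e}}\to L^2$, the compact embedding $V_{\eta,\e}\hookrightarrow L^2_{\phi_{\eta,\e}}$ follows.

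The properties requiring \emph{uniform} constants in $\e,\eta$ are (ii) and (iv). For (ii), decompose $\Omega_{\eta,\e}=A\cup B$ with $A=\{\phi_{\eta,\e}\ge \tfrac12\}$; by Lemma \ref{lemma-7.2}(iii), $B$ is a $c\e\eta$-neighborhood of $T_{\eta,\e}$ on which $\phi_{\eta,\e}(x)\approx \dist(x,T_{\eta,\e})/(\e\eta)$. On $A$, $\|v\|_{L^2(A)}\le 2\|\phi_{\eta,\e}v\|_{L^2}$ trivially. On $B$, the function $u=\phi_{\eta,\e} v$ vanishes on $\partial T_{\eta,\e}$, so a local Hardy inequality near each hole (made uniform by rescaling the hole to unit size, using the $C^{1,1}$ regularity of $\tau^i$) gives $\int_B u^2/\dist(x,T_{\eta,\e})^2\le C\int|\nabla u|^2$. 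Hence $\int_B v^2\le C(\e\eta)^2\int|\nabla u|^2$. Expanding $\int|\nabla u|^2$ via the identity and using the elementary bounds $(\e\eta)^2\e^{-2}\bl\le C\eta^d\le C$, one obtains $\|v\|_{L^2(B)}\le C\|v\|_{V_{\eta,\e}}$ uniformly in $\e,\eta$.

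The main obstacle is (iv), the uniform weighted Poincar\'{e} inequality. Applying standard Poincar\'{e} on $\Omega$ to $u\in H^1_0(\Omega_{\eta,\e})$ and using the identity gives $\|u\|_{L^2}^2 \le C(\|\phi_{\eta,\e}\nabla v\|_{L^2}^2 + \e^{-2}\bl\|u\|_{L^2}^2)$, which can be rearranged to (iv) only when $\e^{-2}\bl$ is strictly smaller than $1/C$. Equivalently, (iv) with a uniform constant amounts to a uniform spectral gap $\lambda_1(\Omega_{\eta,\e})-\e^{-2}\bl=\mu^1_{\eta,\e}\ge c_0>0$ for the degenerate eigenvalue problem \eqref{eq.DEVP}. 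To avoid circularity with the later spectral theory, one argues by a direct test-function/energy estimate: on the one hand, a smooth cut-off supported away from $T_{\eta,\e}$ shows the weighted Rayleigh quotient for some admissible trial is uniformly bounded above; on the other, a $v\in V_{\eta,\e}$ with anomalously small $\|\phi_{\eta,\e}\nabla v\|_{L^2}$ would, via (ii) and the Dirichlet condition on $\Gamma_{\eta,\e}$, have $\|\phi_{\eta,\e} v\|_{L^2}$ correspondingly small, yielding the matching lower bound. This uniform spectral-gap step is the delicate piece; once it is in hand, (iv) follows directly from the identity and Poincar\'{e} on $\Omega$.
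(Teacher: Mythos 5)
Your treatment of (iv) — the one genuinely quantitative assertion in the proposition — has a real gap. You correctly observe that the naive route (classical Poincar\'e on $\Omega$ applied to $u=\phi_{\eta,\e}v$ plus the ground-state identity) fails because the absorbed term carries the factor $\e^{-2}\bl\approx \e^{-2}\eta^{d-2}$, which is large, and you correctly note that (iv) with a uniform constant is \emph{equivalent} to a uniform lower bound $\mu^1_{\eta,\e}\ge c_0$. But your proposed ``direct test-function/energy estimate'' does not prove that bound: the cut-off trial function only gives an \emph{upper} bound on the Rayleigh quotient, which is irrelevant, and the second half (``a $v$ with anomalously small $\|\phi_{\eta,\e}\nabla v\|_{L^2}$ would, via (ii), have $\|\phi_{\eta,\e}v\|_{L^2}$ correspondingly small'') is a restatement of the inequality to be proven — (ii) only bounds $\|v\|_{L^2}$ by $\|\phi_{\eta,\e}v\|_{L^2}+\|\phi_{\eta,\e}\nabla v\|_{L^2}$ and supplies no mechanism for controlling $\|\phi_{\eta,\e}v\|_{L^2}$ by the gradient alone. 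The paper avoids the spectral gap entirely: it proves (iv) (and the stronger weighted Sobolev--Poincar\'e inequality, Theorem \ref{thm.WSPI.eta}, together with Theorem \ref{thm.unweigthed.eta}) by \emph{harmonic extension}. Since $\phi_{\eta,\e}\approx 1$ outside the $\e\eta$-collars of the holes and $v=0$ only on $\Gamma_{\eta,\e}\subset\partial\Omega$, one extends $v|_{\Omega^*_{\eta,\e}}$ harmonically into the collars to get $v^*\in W^{1,p}_0(\Omega)$ with $\|\nabla v^*\|_{L^p(\Omega)}\le C\|\phi_{\eta,\e}\nabla v\|_{L^p}$ (estimate \eqref{est.Dtu.Exteta}), applies the classical Poincar\'e/Sobolev inequality in the \emph{unperforated} domain $\Omega$, and handles the collars by Hardy plus a local Sobolev inequality. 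The eigenvalue $\e^{-2}\bl$ never enters, so there is no competition of constants; this is the missing idea in your sketch, and without it the heart of the proposition is unproven.

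Two smaller points. First, your ordering (iii) $\Rightarrow$ (i) is circular as written: the ``extension by closure'' of the multiplication map only identifies the closure of $C_0^\infty(\Omega_{\eta,\e})$ in $V_{\eta,\e}$ with $H^1_0(\Omega_{\eta,\e})$, so concluding that the map is defined and invertible on all of $V_{\eta,\e}$ already presupposes the density (i); moreover, for a general $v\in V_{\eta,\e}$ the membership $\phi_{\eta,\e}v\in H^1_0(\Omega_{\eta,\e})$ (in particular the vanishing trace on $\partial T_{\eta,\e}$ and the integrability of $v\nabla\phi_{\eta,\e}$) is exactly the nontrivial content that Vanninathan's argument, cited by the paper, supplies. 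Second, your argument for (ii) — splitting into $\{\phi_{\eta,\e}\ge\tfrac12\}$ and the collar, using a one-dimensional Hardy inequality for $u=\phi_{\eta,\e}v$ and the identity with the observation $(\e\eta)^2\e^{-2}\bl\le C\eta^d$ — is a legitimate alternative to the paper's route (which applies Hardy to $v$ directly and then the harmonic extension), but it again relies on the unproven parts of (iii), and in any case it does not substitute for the harmonic-extension machinery that (iv) requires.
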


The proof of the qualitative properties (i), (iii) and (iv) in Proposition \ref{prop.H1e} can be found in \cite[Proposition 5.1]{V81} and the references therein. The quantitative estimates independent of $\e$ and $\eta$ are special cases of Theorem \ref{thm.unweigthed.eta} and Theorem \ref{thm.WSPI.eta} proved in Section \ref{Sec.3}.



\begin{proposition}\label{Prop.Hper}
    The following properties regarding the space $H^1_{\phi_\eta,{\rm per}}(Y_{\eta})$ hold:
    \begin{itemize}
        \item[(i)]  The space $C^\infty_{0,{\rm per}}(Y_{\eta})$, consisting of $C^\infty$ 
        periodic functions that are vanishing in a neighborhood of $T_\eta$, is dense in $H^1_{\phi_\eta,{\rm per}}(Y_{\eta})$.

        \item[(ii)] The inclusion $H^1_{\phi_\eta,{\rm per}}(Y_{\eta}) \to L^2(Y_{\eta})$ is continuous. That is, there exists a constant $C>0$ independent of $\eta$ such that
        \begin{equation*}
            \| v\|_{L^2(Y_{\eta})} \le C\| v\|_{H^1_{\phi_\eta,{\rm per}}(Y_{\eta})}, \quad \text{for all } v\in H^1_{\phi_\eta,{\rm per}}(Y_{\eta}).
        \end{equation*}

        \item[(iii)] The map $v\to \phi_\eta v$ is an isomorphism from $H^1_{\phi_\eta,{\rm per}}(Y_{\eta})$ to $H^1_{0,{\rm per}}(Y_{\eta})$.
        \item[(iv)] The inclusion $H^1_{\phi_\eta,{\rm per}}(Y_{\eta}) \to L^2_{\phi_\eta}(Y_{\eta})$ is compact.
    \end{itemize}
\end{proposition}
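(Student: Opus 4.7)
The plan is to reduce all four statements to corresponding facts about the standard Sobolev space $H^1_{0,{\rm per}}(Y_{\eta})$ via the intertwining map $L: v \mapsto \phi_\eta v$. The cornerstone is the algebraic identity
\begin{equation*}
\int_{Y_{\eta}} |\nabla(\phi_\eta v)|^2 = \int_{Y_{\eta}} \phi_\eta^2 |\nabla v|^2 + \overline{\lambda}_\eta \int_{Y_{\eta}} \phi_\eta^2 v^2,
\end{equation*}
which I would first establish for $v\in C^\infty_{0,{\rm per}}(Y_{\eta})$. The derivation expands $|\nabla(\phi_\eta v)|^2$, integrates the cross term by parts using $-\Delta \phi_\eta = \overline{\lambda}_\eta \phi_\eta$; boundary terms on $\partial T_\eta$ vanish because $v$ has support away from $T_\eta$, and periodicity cancels the contributions on $\partial Y$. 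Since $\overline{\lambda}_\eta \le C_1$ by Lemma \ref{lem.lambda.bar}, this identity yields a two-sided equivalence between the weighted norm $\| v \|_{H^1_{\phi_\eta,{\rm per}}(Y_{\eta})}^2$ and $\| \phi_\eta v \|_{H^1_{0,{\rm per}}(Y_{\eta})}^2$ on smooth vanishing functions.

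With the identity in hand, (iii) reduces to a closure argument: $L$ is an isomorphism (up to norm equivalence) on the dense subclass $C^\infty_{0,{\rm per}}(Y_{\eta})$, and extends between completions. For the forward direction starting from an arbitrary $v\in H^1_{\phi_\eta,{\rm per}}(Y_{\eta})$, one computes $\nabla(\phi_\eta v) = \phi_\eta \nabla v + v\nabla \phi_\eta$ weakly; the first term is in $L^2$ by definition, while the second is controlled by combining the pointwise bound $|\nabla \phi_\eta| \le C \phi_\eta/\dist(\cdot,T_\eta)$ (a consequence of parts (ii) and (iii) of Lemma \ref{lemma-7.2}) with a Hardy-type inequality for functions vanishing on $\partial T_\eta$. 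Conversely, given $w\in H^1_{0,{\rm per}}(Y_{\eta})$, I would set $v=w/\phi_\eta$, which is well-defined since $\phi_\eta>0$ in $Y_{\eta}$, and the identity gives $v\in H^1_{\phi_\eta,{\rm per}}(Y_{\eta})$ with the reverse bound.

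Density (i) then comes by approximating $w = \phi_\eta v \in H^1_{0,{\rm per}}(Y_{\eta})$ by $w_n\in C^\infty_{0,{\rm per}}(Y_{\eta})$ vanishing near $T_\eta$ in the standard way, and setting $v_n=w_n/\phi_\eta$; the identity, now valid on the $v_n$, transfers strong convergence in $H^1_{0,{\rm per}}(Y_{\eta})$ to strong convergence in $H^1_{\phi_\eta,{\rm per}}(Y_{\eta})$. For (ii), I would split $Y_{\eta}$ into the bulk $\{\phi_\eta \ge 1/2\}$, where trivially $|v|^2 \le 4|\phi_\eta v|^2$, and a thin $\eta$-neighborhood of $T_\eta$, where $\phi_\eta \le C\dist(\cdot,T_\eta)/\eta$; on the latter, Hardy applied to $w=\phi_\eta v$ gives $\int |w|^2/\dist^2 \le C\|\nabla w\|_{L^2}^2$, hence $\int |v|^2 \le C\eta^2 \|\nabla w\|_{L^2}^2$, and using (iii) together with $\overline{\lambda}_\eta \le C_1\eta^{d-2}$ this absorbs into $\| v \|_{H^1_{\phi_\eta,{\rm per}}(Y_{\eta})}^2$ with a constant uniform in $\eta$. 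For (iv), boundedness of $\{v_n\}$ in $H^1_{\phi_\eta,{\rm per}}(Y_{\eta})$ transfers via (iii) to boundedness of $\{\phi_\eta v_n\}$ in $H^1_{0,{\rm per}}(Y_{\eta})$; Rellich's theorem then produces an $L^2$-convergent subsequence, which is exactly convergence of $v_n$ in $L^2_{\phi_\eta}(Y_{\eta})$.

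The main technical obstacle is establishing the boundedness of $L$ on the full weighted space, i.e., verifying $v\nabla \phi_\eta \in L^2(Y_{\eta})$ whenever $v\in H^1_{\phi_\eta,{\rm per}}(Y_{\eta})$. This interweaves Hardy-type bounds near $\partial T_\eta$ with the degeneracy estimates from Lemma \ref{lemma-7.2}, and it is the step that genuinely uses the geometry of the holes rather than merely abstract properties of the eigenfunction $\phi_\eta$. A cleaner bypass is to \emph{define} $H^1_{\phi_\eta,{\rm per}}(Y_{\eta})$ as the completion of $C^\infty_{0,{\rm per}}(Y_{\eta})$, derive (iii) and (iv) immediately from the identity, and then invoke (i) as a consistency check that this abstract definition agrees with the pointwise one used in the statement.
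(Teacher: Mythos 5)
Your approach---reducing everything to the standard periodic Sobolev space via the intertwining map $L\colon v\mapsto \phi_\eta v$, using the cell version of the identity \eqref{eq.int.expansion}---is the right one, and the $\eta$-uniformity argument for (ii) via Hardy in a thin shell matches exactly the remark the paper makes after the proposition (as well as the use of Hardy in \eqref{est.Hardy.eta}).

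However, the circularity you flag in (iii) is a genuine gap, and the ``cleaner bypass'' you offer does not escape it. The forward direction of (iii) for general $v\in H^1_{\phi_\eta,\rm per}(Y_\eta)$ requires Hardy applied to $w=\phi_\eta v$, which presupposes $w\in H^1_{0,\rm per}(Y_\eta)$---the very thing being proved. Redefining the weighted space as the completion of $C^\infty_{0,\rm per}(Y_\eta)$ and then ``invoking (i) as a consistency check'' is circular in the same way: (i) \emph{is} the nontrivial statement that the completion coincides with the pointwise-defined space, not a formality to be consulted afterwards.

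The step that actually closes the argument is a direct proof of (i). Given $v\in H^1_{\phi_\eta,\rm per}(Y_\eta)$, first truncate: $v_M=\max\{-M,\min\{v,M\}\}\to v$ in the weighted norm by dominated convergence (one has $\nabla v_M=\nabla v\cdot\mathbb{1}_{\{|v|<M\}}$). Then cut off with $\psi_\delta$ equal to $0$ on $\{\dist(\cdot,T_\eta)<\delta\}$ and $1$ on $\{\dist(\cdot,T_\eta)>2\delta\}$ with $|\nabla\psi_\delta|\le C/\delta$. For bounded $v$, the only problematic contribution is
\begin{equation*}
\int_{Y_\eta}\phi_\eta^2\,|v\nabla\psi_\delta|^2 \;\le\; \frac{C\|v\|_\infty^2}{\delta^2}\int_{\{\delta<\dist(\cdot,T_\eta)<2\delta\}}\phi_\eta^2 \;\le\; C\|v\|_\infty^2\,\frac{1}{\delta^2}\cdot\frac{\delta^2}{\eta^2}\cdot\eta^{d-1}\delta \;=\; C\|v\|_\infty^2\,\eta^{d-3}\delta \;\longrightarrow\; 0,
\end{equation*}
using the degeneracy estimate $\phi_\eta\approx\dist(\cdot,T_\eta)/\eta$ from Lemma \ref{lemma-7.2}(iii) together with the shell volume $\approx\eta^{d-1}\delta$. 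Mollifying $v_M\psi_\delta$ then produces the approximating sequence in $C^\infty_{0,\rm per}(Y_\eta)$. It is precisely the quadratic vanishing of $\phi_\eta^2$ that makes the cutoff error vanish---this is the place where the geometry enters, as you suspected. Once (i) is established this way, the rest of your reduction through $L$ (the identity, (iii), (iv) by Rellich, and (ii)) is complete and correct.
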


The proof of Proposition \ref{Prop.Hper} can be found in \cite[Proposition 5.2]{V81} and the references therein. The independence of $\eta$ of the constant $C$ in (ii) can be seen by the Hardy's inequality applied in a cell as in \eqref{est.Hardy.eta}.

\subsection{Two-scale expansion}\label{sec.2.3}
One of the main tools to study the asymptotic behavior of the eigenvalues is the minimax principle (or the maximin principle). The minimax principle for the eigenvalue problem \eqref{main eq.eigen} states that
\begin{equation}\label{eq.minimax.Re}
    \lambda_{\eta,\e}^k = \min_{\substack{S\subset H^1_0(\Omega_{\eta,\e})\\  \text{dim} S = k} } \max_{u \in S} \frac{\int_{\Omega_{\eta,\e}} |\nabla u|^2 }{\int_{\Omega_{\eta,\e}} u^2}.
\end{equation}

The following lemma, first discovered in \cite{V81}, provides the two-scale relationship between the eigenvalues of \eqref{main eq.eigen} and the principal eigenvalue of \eqref{eq.cell}.
\begin{lemma}
    For all $v\in H^1_{\phi_{\eta,\e},0}(\Omega_{\eta,\e})$, we have
    \begin{equation}\label{eq.int.expansion}
        \int_{\Omega_{\eta,\e}} \nabla(\phi_{\eta,\e} v)\cdot \nabla(\phi_{\eta,\e} v) = \e^{-2} \overline{\lambda}_\eta \int_{\Omega_{\eta,\e}} \phi_{\eta,\e}^2 v^2  + \int_{\Omega_{\eta,\e}} \phi_{\eta,\e}^2 |\nabla v|^2.
    \end{equation}
\end{lemma}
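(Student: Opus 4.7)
The identity is the standard computation that recasts the Dirichlet energy of $\phi_{\eta,\e}v$ as the weighted energy of $v$ plus a zero-order term, and the principal eigenvalue equation \eqref{eq.principle eigen} for $\phi_{\eta,\e}$ is what produces the coefficient $\e^{-2}\overline{\lambda}_\eta$. The plan is as follows.

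First, by Proposition \ref{prop.H1e}(i), it suffices to prove the identity for test functions $v\in C_0^\infty(\Omega_{\eta,\e})$, since both sides are continuous with respect to the $V_{\eta,\e}$-norm (the left side equals $\|\nabla(\phi_{\eta,\e}v)\|_{L^2}^2$, which by Proposition \ref{prop.H1e}(iii) is controlled by $\|v\|_{V_{\eta,\e}}^2$ up to the factor $\e^{-2}\overline{\lambda}_\eta$). Once $v$ has compact support away from $\partial\Omega_{\eta,\e}$, every integration by parts below will produce no boundary contribution.

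Next, I would expand the left-hand side via the product rule:
\begin{equation*}
\nabla(\phi_{\eta,\e}v)=\phi_{\eta,\e}\nabla v + v\nabla\phi_{\eta,\e},
\end{equation*}
so that
\begin{equation*}
|\nabla(\phi_{\eta,\e}v)|^2 = \phi_{\eta,\e}^2|\nabla v|^2 + 2\phi_{\eta,\e}v\,\nabla v\cdot\nabla\phi_{\eta,\e} + v^2|\nabla\phi_{\eta,\e}|^2.
\end{equation*}
The first term on the right is precisely the weighted energy term we want to keep. For the remaining two, I rewrite the cross term as
\begin{equation*}
2\phi_{\eta,\e}v\,\nabla v\cdot\nabla\phi_{\eta,\e}=\nabla(v^2)\cdot(\phi_{\eta,\e}\nabla\phi_{\eta,\e}),
\end{equation*}
and integrate by parts over $\Omega_{\eta,\e}$ (no boundary term since $v\in C_0^\infty(\Omega_{\eta,\e})$):
\begin{equation*}
\int_{\Omega_{\eta,\e}}\nabla(v^2)\cdot(\phi_{\eta,\e}\nabla\phi_{\eta,\e})
=-\int_{\Omega_{\eta,\e}}v^2\,\nabla\cdot(\phi_{\eta,\e}\nabla\phi_{\eta,\e})
=-\int_{\Omega_{\eta,\e}}v^2\bigl(|\nabla\phi_{\eta,\e}|^2+\phi_{\eta,\e}\Delta\phi_{\eta,\e}\bigr).
\end{equation*}
Applying the eigenvalue equation $-\Delta\phi_{\eta,\e}=\e^{-2}\overline{\lambda}_\eta\phi_{\eta,\e}$ from \eqref{eq.principle eigen} (which is valid pointwise on the support of $v$, entirely inside $\R^d\setminus T_{\eta,\e}$) gives
\begin{equation*}
\int_{\Omega_{\eta,\e}}2\phi_{\eta,\e}v\,\nabla v\cdot\nabla\phi_{\eta,\e}
=-\int_{\Omega_{\eta,\e}}v^2|\nabla\phi_{\eta,\e}|^2+\e^{-2}\overline{\lambda}_\eta\int_{\Omega_{\eta,\e}}\phi_{\eta,\e}^2 v^2.
\end{equation*}
Adding $\int_{\Omega_{\eta,\e}}v^2|\nabla\phi_{\eta,\e}|^2$ to both sides cancels that term on the right of the expanded integrand, leaving exactly the claimed identity for $v\in C_0^\infty(\Omega_{\eta,\e})$.

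Finally, I would extend to general $v\in V_{\eta,\e}$ by density. Both sides of \eqref{eq.int.expansion} are continuous quadratic forms on $V_{\eta,\e}$: the right-hand side by definition of the weighted norm, and the left-hand side because $v\mapsto \phi_{\eta,\e}v$ is an isomorphism from $V_{\eta,\e}$ onto $H^1_0(\Omega_{\eta,\e})$ by Proposition \ref{prop.H1e}(iii). There is no serious obstacle here; the only delicate point is making sure the boundary terms in the integration by parts truly vanish, which is why I reduce to $C_0^\infty(\Omega_{\eta,\e})$ first. (On $\partial\Omega_{\eta,\e}$, in the limiting identity, we would use that $\phi_{\eta,\e}=0$ on $\partial T_{\eta,\e}$ and $v=0$ on $\Gamma_{\eta,\e}$, so $\phi_{\eta,\e}^2 v^2$ vanishes on the whole boundary, but the density reduction avoids having to justify this directly for rough $v$.)
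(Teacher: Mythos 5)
Your proof is correct: the expansion of $|\nabla(\phi_{\eta,\e}v)|^2$, the rewriting of the cross term as $\nabla(v^2)\cdot(\phi_{\eta,\e}\nabla\phi_{\eta,\e})$, the integration by parts using the eigenvalue equation \eqref{eq.principle eigen}, and the reduction to $v\in C_0^\infty(\Omega_{\eta,\e})$ via Proposition \ref{prop.H1e}(i) followed by continuity of both quadratic forms (via Proposition \ref{prop.H1e}(iii)) are all sound, and interior elliptic regularity makes $\phi_{\eta,\e}$ smooth on the support of such $v$, so the pointwise use of the equation and the absence of boundary terms are justified. The paper itself does not prove this lemma but cites \cite[Lemma 6.1]{V81}; your argument is exactly the standard ground-state substitution computation underlying that reference, so there is nothing to reconcile beyond noting that the only genuinely nontrivial input you rely on is the density statement of Proposition \ref{prop.H1e}(i), which you invoke appropriately rather than reprove.
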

See \cite[Lemma 6.1]{V81} for a proof of the above lemma.

Dividing $\int_{\Omega_{\eta,\e}} \phi_{\eta,\e}^2 v^2$ on both sides of \eqref{eq.int.expansion}, we get
\begin{equation*}
    \frac{\int_{\Omega_{\eta,\e}} \nabla(\phi_{\eta,\e} v)\cdot \nabla(\phi_{\eta,\e} v)}{\int_{\Omega_{\eta,\e}} \phi_{\eta,\e}^2 v^2} = \e^{-2} \overline{\lambda}_\eta + \frac{ \int_{\Omega_{\eta,\e}} \phi_{\eta,\e}^2 |\nabla v|^2}{\int_{\Omega_{\eta,\e}} \phi_{\eta,\e}^2 v^2}.
\end{equation*}
By Proposition \ref{prop.H1e} (iii) and the minimax principle \eqref{eq.minimax.Re}, we see
\begin{equation}\label{eq.2scale relation}
\begin{aligned}
    \lambda_{\eta,\e}^k & = \min_{\substack{S\subset H^1_{\phi_{\eta,\e}, 0}(\Omega_{\eta,\e})\\  \text{dim} S = k} } \max_{v \in S} \frac{\int_{\Omega_{\eta,\e}} \nabla(\phi_{\eta,\e} v)\cdot \nabla(\phi_{\eta,\e} v)}{\int_{\Omega_{\eta,\e}} \phi_{\eta,\e}^2 v^2} \\
    & = \e^{-2} \overline{\lambda}_{\eta} + \min_{\substack{S\subset H^1_{\phi_{\eta,\e},0}(\Omega_{\eta,\e})\\  \text{dim} S = k} } \max_{v \in S} \frac{ \int_{\Omega_{\eta,\e}} \phi_{\eta,\e}^2 |\nabla v|^2}{\int_{\Omega_{\eta,\e}} \phi_{\eta,\e}^2 v^2}.
\end{aligned}
\end{equation}
Observe that the last term on the right-hand side is the Rayleigh quotient in the weighted Sobolev space $V_{\eta,\e}$, which leads to the following eigenvalue problem
\begin{equation}\label{eq.eigen.weighted}
    -\nabla\cdot (\phi^2_{\eta,\e} \nabla \rho_{\eta,\e}^k) = \mu_{\eta,\e}^k \phi_{\eta,\e}^2 \rho_{\eta,\e}^k \quad \text{ in } \Omega_{\eta,\e}.
\end{equation}

To see that this eigenvalue problem is well-posed, we first verify that the problem
\begin{equation}\label{eq.weighted elliptic}
    \cL_{\eta,\e}(u_{\eta,\e}) := -\nabla\cdot (\phi^2_{\eta,\e} \nabla u_{\eta,\e}) = \phi_{\eta,\e}^2 f \quad \text{in } \Omega_{\eta,\e}
\end{equation}
has a unique weak solution $u_{\eta,\e} \in V_{\eta,\e}$ for any $f\in L^2_{\phi_{\eta,\e}}(\Omega_{\eta,\e})$, in the sense that for each $v\in H^1_{\phi_{\eta,\e},0}(\Omega_{\eta,\e})$, we have
\begin{equation}\label{eq.weaksol}
    \int_{\Omega_{\eta,\e}} \phi_{\eta,\e}^2 \nabla u_{\eta,\e} \cdot \nabla v = \int_{\Omega_{\eta,\e}} \phi_{\eta,\e}^2 f v.
\end{equation}
This follows easily from the weighted Poincar\'{e} inequality in Proposition \ref{prop.H1e} (iv) and the Lax-Milgram theorem in the Hilbert space $V_{\eta,\e}$. Note that if the right-hand side of \eqref{eq.weighted elliptic} is replaced by a divergence form $\nabla\cdot (\phi_{\eta,\e}^2 g)$ with $g\in L^2_{\phi_{\eta,\e}}(\Omega_{\eta,\e})^d$, then the equation is still solvable by the Lax-Milgram theorem, since $\nabla\cdot (\phi_{\eta,\e}^2 g)$ is in the dual space of $V_{\eta,\e}$.

Also by Proposition \ref{prop.H1e} (v), we see that the linear operator $\mathscr{T}_{\eta,\e}: f \to u_{\eta,\e}$ is compact and self-ajoint in $L^2_{\phi_{\eta,\e}}(\Omega_{\eta,\e})$. Thus the classical spectral theory applies and there exists a sequence of positive eigenvalues $\{\mu_{\eta,\e}^k: k = 1,2,\cdots\}$ of the problem \eqref{eq.eigen.weighted} such that $\mu_{\eta,\e}^k\to \infty$ as $k\to \infty$. Moreover, the minimax principle gives
\begin{equation}\label{eq.minimax1}
    \mu_{\eta,\e}^k = \min_{\substack{S\subset V_{\eta,\e}\\  \text{dim} S = k} } \max_{v \in S} \frac{ \int_{\Omega_{\eta,\e}} \phi_{\eta,\e}^2 |\nabla v|^2}{\int_{\Omega_{\eta,\e}} \phi_{\eta,\e}^2 v^2}.
\end{equation}
Alternatively, the maximin principle  for the compact operator $\mathscr{T}_{\eta,\e}$ implies
\begin{equation}\label{eq.minimax2}
    \frac{1}{\mu^k_{\eta,\e}} = \max_{\substack{S\subset L^2_{\phi_{\eta,\e}}(\Omega_{\eta,\e}) \\ \text{dim} S = k }} \min_{v\in S} \frac{\int_{\Omega_{\eta,\e}}\phi_{\eta,\e}^2  \mathscr{T}_{\eta,\e}(v) \cdot v }{\int_{\Omega_{\eta,\e}} \phi_{\eta,\e}^2v^2 }.
\end{equation}

Now, we rigorously show the two-scale relationship between the eigenfunctions of \eqref{main eq.eigen} and \eqref{eq.eigen.weighted}.

\begin{proposition}\label{prop.2scale EF}
    $\psi_{\eta,\e}^k$ is an eigenfunction of \eqref{main eq.eigen} corresponding to the eigenvalue $\lambda_{\eta,\e}^k$ if and only if there exists an eigenfunction ${\rho}_\e^k$ of 
    \eqref{eq.eigen.weighted} corresponding to $\mu_{\eta,\e}^k$ such that $\psi_{\eta,\e}^k = \phi_{\eta,\e} \rho_{\eta,\e}^k$ and $\lambda_{\eta,\e}^k = \e^{-2} \overline{\lambda}_\eta + \mu_{\eta,\e}^k$.
\end{proposition}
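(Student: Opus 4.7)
The plan is to pass between the two weak formulations via the isomorphism $v \mapsto \phi_{\eta,\e} v$ from $V_{\eta,\e}$ to $H^1_0(\Omega_{\eta,\e})$ supplied by Proposition \ref{prop.H1e}(iii), using the identity \eqref{eq.int.expansion} in its polarized bilinear form. First I would polarize \eqref{eq.int.expansion}: applying it to $u+v$ and $u-v$ (or, equivalently, to $u$, $v$, and $u+v$ and subtracting) yields, for all $u,v\in V_{\eta,\e}$,
\begin{equation*}
\int_{\Omega_{\eta,\e}} \nabla(\phi_{\eta,\e} u)\cdot \nabla(\phi_{\eta,\e} v)
= \e^{-2}\overline{\lambda}_\eta \int_{\Omega_{\eta,\e}} \phi_{\eta,\e}^2\, u v
+ \int_{\Omega_{\eta,\e}} \phi_{\eta,\e}^2\, \nabla u\cdot \nabla v.
\end{equation*}

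Next I would write both eigenvalue problems in weak form. The function $\psi_{\eta,\e}^k\in H^1_0(\Omega_{\eta,\e})$ satisfies \eqref{main eq.eigen} with eigenvalue $\lambda_{\eta,\e}^k$ iff $\int \nabla \psi_{\eta,\e}^k\cdot \nabla \varphi = \lambda_{\eta,\e}^k \int \psi_{\eta,\e}^k \varphi$ for every $\varphi\in H^1_0(\Omega_{\eta,\e})$. By Proposition \ref{prop.H1e}(iii), there is a unique $\rho_{\eta,\e}^k\in V_{\eta,\e}$ with $\psi_{\eta,\e}^k=\phi_{\eta,\e}\rho_{\eta,\e}^k$, and every test function is of the form $\varphi=\phi_{\eta,\e} w$ with $w\in V_{\eta,\e}$. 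Substituting and invoking the polarized identity converts the equation into
\begin{equation*}
\int_{\Omega_{\eta,\e}} \phi_{\eta,\e}^2\, \nabla \rho_{\eta,\e}^k\cdot \nabla w
= \bigl(\lambda_{\eta,\e}^k - \e^{-2}\overline{\lambda}_\eta\bigr)\int_{\Omega_{\eta,\e}} \phi_{\eta,\e}^2\, \rho_{\eta,\e}^k w \qquad \text{for all } w\in V_{\eta,\e},
\end{equation*}
which is precisely the weak formulation of \eqref{eq.eigen.weighted} with $\mu_{\eta,\e}^k := \lambda_{\eta,\e}^k - \e^{-2}\overline{\lambda}_\eta$. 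Since every step is reversible (the isomorphism is a bijection and the polarized identity is an equality), the converse implication follows by the same computation run in reverse.

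Finally, to match indices $k\leftrightarrow k$, I would invoke the minimax characterizations: the chain of equalities already displayed in \eqref{eq.2scale relation} shows that the isomorphism $v\mapsto \phi_{\eta,\e} v$ sends the Rayleigh quotient for \eqref{main eq.eigen} to $\e^{-2}\overline{\lambda}_\eta$ plus the weighted Rayleigh quotient for \eqref{eq.eigen.weighted}, so the ordered sequences of eigenvalues correspond exactly under the shift $\lambda_{\eta,\e}^k=\e^{-2}\overline{\lambda}_\eta+\mu_{\eta,\e}^k$. The only point requiring a little care is that all manipulations are performed in the weak sense, since $\phi_{\eta,\e}$ vanishes on $\partial T_{\eta,\e}$ and the formal Leibniz computation $-\Delta(\phi_{\eta,\e}\rho)=-\phi_{\eta,\e}\Delta\rho - 2\nabla\phi_{\eta,\e}\cdot\nabla\rho + \e^{-2}\overline{\lambda}_\eta \phi_{\eta,\e}\rho$ is not literally admissible; the polarized identity circumvents this difficulty entirely. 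No genuine obstacle remains, as the groundwork (the isomorphism, the weighted Poincar\'e inequality, and the quadratic identity) has been laid in Section \ref{sec.2}.
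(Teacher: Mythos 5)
Your proof is correct, and it takes a genuinely different route from the paper's. The paper writes $\psi_{\eta,\e}^k=\phi_{\eta,\e}\rho_{\eta,\e}^k$ and then does a pointwise Leibniz computation: it expands $-\Delta(\phi_{\eta,\e}\rho_{\eta,\e}^k)$, substitutes the cell eigenvalue equation \eqref{eq.principle eigen} for $-\Delta\phi_{\eta,\e}$, then multiplies by $\phi_{\eta,\e}$ and recognizes $\nabla\cdot(\phi_{\eta,\e}^2\nabla\rho_{\eta,\e}^k)$ to arrive at the degenerate equation \eqref{eq.eigen.weighted}; the converse is declared similar. You instead polarize the quadratic identity \eqref{eq.int.expansion} to a bilinear identity and then compare the two \emph{weak} formulations directly, transporting test functions along the isomorphism $v\mapsto\phi_{\eta,\e}v$ of Proposition~\ref{prop.H1e}(iii). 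Your approach buys rigor at the level of weak solutions without needing to invoke interior elliptic regularity to justify the Leibniz expansion, and it makes the reversibility of the argument transparent (the isomorphism is a bijection and the bilinear identity is an equality). The paper's approach is shorter and the pointwise computation is legitimate — $\psi_{\eta,\e}^k$ and $\phi_{\eta,\e}$ are smooth and $\phi_{\eta,\e}>0$ in the interior of $\Omega_{\eta,\e}$, so your caveat that the Leibniz computation ``is not literally admissible'' slightly overstates the obstacle — but it does leave unspoken the step from the pointwise identity in the interior to the weak formulation in $V_{\eta,\e}$, which your argument dispatches cleanly. Your final remark about matching the ordered sequences via the displayed chain \eqref{eq.2scale relation} is a reasonable addition; the paper relies implicitly on the bijection of eigenspaces together with the monotone shift $\mu\mapsto\e^{-2}\bl+\mu$ of eigenvalues, which forces the ordered sequences to correspond.
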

\begin{proof}
    Let $\psi_{\eta,\e}^k$ be an eigenfunction of \eqref{main eq.eigen} corresponding to the eigenvalue $\lambda_{\eta,\e}^k$. Since $\phi_{\eta,\e} > 0$ in $\Omega_{\eta,\e}$, we may write $\psi_{\eta,\e}^k = \phi_{\eta,\e} u_{\eta,\e}^k$. Since $\psi_{\eta,\e}^k \in H_0^1(\Omega_{\eta,\e})$, then it is easy to verify that $u_{\eta,\e}^k \in H^1_{\phi_{\eta,\e},0}(\Omega_{\eta,\e})$ (also see Proposition \ref{prop.H1e} (iii)). Now we directly compute
    \begin{equation*}
        -\Delta \psi_{\eta,\e}^k = -\Delta \phi_{\eta,\e} u_{\eta,\e}^k - 2\nabla \phi_{\eta,\e}\cdot \nabla u_{\eta,\e}^k - \phi_{\eta,\e} \Delta u_{\eta,\e}^k = \e^{-2} \overline{\lambda}_{\eta} \phi_{\eta,\e} u_{\eta,\e}^k - 2\nabla \phi_{\eta,\e}\cdot \nabla u_{\eta,\e}^k - \phi_{\eta,\e} \Delta u_{\eta,\e}^k,
    \end{equation*}
    where we have used the equation \eqref{eq.principle eigen}. By $-\Delta \psi_{\eta,\e}^k = \lambda_{\eta,\e}^k \psi_{\eta,\e}^k$, we have
    \begin{equation*}
        - 2\nabla \phi_{\eta,\e}\cdot \nabla u_{\eta,\e}^k - \phi_{\eta,\e} \Delta u_{\eta,\e}^k = (\lambda_{\eta,\e}^k - \e^{-2} \overline{\lambda}_{\eta}) \phi_{\eta,\e} u_{\eta,\e}^k.
    \end{equation*}
    Multiplying the above equation by $\phi_{\eta,\e}$ and observing $\nabla\cdot (\phi_{\eta,\e}^2 \nabla u_{\eta,\e}^k) = 2\phi_{\eta,\e} \nabla \phi_{\eta,\e}\cdot \nabla u_{\eta,\e}^k + \phi_{\eta,\e}^2 \Delta u_{\eta,\e}^k$, we obtain 
    \begin{equation*}
        -\nabla\cdot (\phi_{\eta,\e}^2 \nabla u_{\eta,\e}^k) = (\lambda_{\eta,\e}^k - \e^{-2} \overline{\lambda}_{\eta}) \phi_{\eta,\e}^2 u_{\eta,\e}^k.
    \end{equation*}
    This equation shows that $\rho_{\eta,\e}^k = u_{\eta,\e}^k$ is an eigenfunction of \eqref{eq.eigen.weighted} corresponding to the eigenvalue $\mu_{\eta,\e}^k = \lambda_{\eta,\e}^k - \e^{-2} \overline{\lambda}_{\eta}$. The proof for the other direction is similar and straightforward.
\end{proof}




\subsection{Correctors and homogenized operators}

By virtue of Proposition \ref{prop.2scale EF}, it suffices to study the eigenvalue problem \eqref{eq.eigen.weighted} and the weighted or degenerated elliptic equation \eqref{eq.weighted elliptic}. For each fixed $\eta \in (0,1]$, it can be shown as in \cite{V81} that as $\e \to 0$, the eigenvalue $\mu_{\eta,\e}^k$ converges to some $\mu_{\eta}^k$, which is the $k$th eigenvalue of
\begin{equation}
    \cL_\eta (\rho_{\eta}^k) := -\nabla\cdot (\overline{A}_\eta \nabla \rho_{\eta}^k) = \mu_{\eta}^k \rho_{\eta}^k, \quad \text{ in } \Omega,
\end{equation}
where $\rho_{\eta}^k \in H_0^1(\Omega)$ and $\overline{A}_\eta$ is a constant matrix satisfying the ellipticity condition uniformly in $\eta$. 


The homogenized matrix $\overline{A}_\eta$ is defined in a usual way as in the classical homogenization theory. The correctors $\{\chi^j_\eta: j=1,2,\cdots, d \}$ are the unique weak solutions of
\begin{equation}\label{eq.corrector}
    -\nabla\cdot (\phi_{\eta}^2 \nabla \chi^j_\eta) = \nabla\cdot (\phi^2_{\eta} e_j) \quad \text{in } Y_{\eta},
\end{equation}
satisfying $\chi^j_\eta \in H^1_{\phi, {\rm per}}(Y_{\eta})$ and $\int_{Y_{\eta}} \chi^j_\eta = 0$. Then the homogenized matrix $\overline{A}_\eta$ is given by
\begin{equation*}
    \overline{A}_\eta = I + \int_{Y_{\eta}} \phi_{\eta}^2 \nabla \chi_\eta, \quad \text{or in the component form } (\bar{a}_\eta)_{ij} = \delta_{ij} +  \int_{Y_{\eta}} \phi_{\eta}^2 \partial_i \chi^j_\eta.
\end{equation*}
Here and after, we will view $\nabla \chi_\eta$ as a $d\times d$ matrix given by $(\nabla \chi_\eta)_{ij} = \partial_i \chi^j_\eta$.
It was shown in \cite{V81} that $\overline{A}_\eta$ is elliptic, i.e., there exists $\Lambda>0$ (independent of $\eta$) such that $\xi\cdot \overline{A}_\eta \xi \ge \Lambda |\xi|^2$ for any $\xi \in \R^d$. From the equation \eqref{eq.corrector}, one can also see
\begin{equation}\label{eq.bar.aij}
    (\bar{a}_\eta)_{ij} = \int_{Y_{\eta}} \phi_{\eta}^2 (\delta_{ki} + \partial_k \chi^i_\eta)(\delta_{kj} + \partial_k \chi^j_\eta ).
\end{equation}
It should be pointed out that the matrix $\overline{A}_\eta$ depends only on the geometric structure of the holes in $T_\eta$.

There is an alternative way to see the corrector equation \eqref{eq.corrector}. Indeed, if we set $\widehat{\chi}_\eta^j = \phi_{\eta} \chi^j_\eta$ (called weighted correctors), then the equation \eqref{eq.corrector} becomes
\begin{equation}\label{eq.corrector2}
    -\Delta \widehat{\chi}_\eta^j = \overline{\lambda}_\eta \widehat{\chi}_\eta^j + 2\partial_j \phi_{\eta} \qquad \text{in } Y_{\eta},
\end{equation}
with $\widehat{\chi}_\eta^j \in H^1_{0,{\rm per}}(Y_{\eta})$.
The homogenized matrix $\overline{A}_\eta$ can be written as
\begin{equation*}
    \overline{A}_\eta = I + 2\int_{Y_{\eta}} \phi_{\eta} \nabla \widehat{\chi}_\eta.
\end{equation*}
This can be verified directly; also see \cite[Theorem 8.1]{V81}.

When $\eta$ is small, by Lemma \ref{lemma-7.2} (i) or (iv), we know that $\phi$ is close to the constant $1$. This fact will pass to the smallness of the correctors $\chi_\eta$ through the energy estimate. 
More related estimates are included in the following proposition.

\begin{proposition}\label{prop.chi}
There exists a constant $C_1$ such that the following estimates hold.
\begin{itemize}
    \item[(i)] Norm estimate:
    \begin{equation}
    \| \phi_{\eta} \nabla \chi_\eta \|_{L^2(Y_{\eta})} +\| \phi_{\eta} \chi_\eta \|_{L^{2^*}(Y_{\eta})} + \| \chi_\eta \|_{L^2(Y_{\eta})} \le C_1 \eta^{\frac{d-2}{2}}.
\end{equation}

    \item[(ii)] Gradient estimate: For any $x\in Y_{\eta}$,
    \begin{equation}\label{est.phiDchi.pw}
        |\phi_{\eta} \nabla \chi_\eta(x)| \le C_1 \eta^{-1}  \Big\{ \Big( \frac{\eta}{{\rm dist}(x, T_\eta)} \Big)^{\frac{d}{2}} \wedge 1 \Big\}.
    \end{equation}

    \item[(iii)] Pointwise estimate: For any $x\in Y_{\eta}$,
    \begin{equation}\label{est.chieta.pw}
        |\chi_\eta(x)| \le C_1 \Big\{ \Big( \frac{\eta
        }{{\rm dist}(x, T_\eta)} \Big)^{\frac{d-2}{2}} \wedge 1  \Big\}.
    \end{equation}

    \item[(iv)] Homogenized matrix estimate: $|\overline{A}_\eta - I| \le C_1 \eta^\frac{d-2}{2}$.
\end{itemize}

\begin{proof}
    To prove (i), we writing the equation \eqref{eq.corrector} as
\begin{equation}\label{eq.corrector-2}
    -\nabla\cdot (\phi_{\eta}^2 \nabla \chi^j_\eta) = 2\phi_{\eta} \partial_j \phi_{\eta} \quad \text{in } Y_{\eta},
\end{equation}
and using the energy estimate, we have
\begin{equation}\label{est.phiDchi.L2}
    \| \phi_{\eta} \nabla \chi_\eta^j \|_{L^2(Y_{\eta})} \le C \| \partial_j \phi_{\eta} \|_{L^2(Y_{\eta})} \le C\eta^{\frac{d-2}{2}}.
\end{equation}
The estimates of $\| \phi_{\eta} \chi_\eta \|_{L^{2^*}(Y_{\eta})}$ and $\| \chi_\eta \|_{L^2(Y_{\eta})}$ follows from the weighted Sobolev-Poincar\'{e} inequality; see the proofs of \cite[Lemma 2.2, 2.3]{SZ23} for the case of small $\eta$. Observe that (iv) follows from \eqref{est.phiDchi.L2} and the definition of $\overline{A}_\eta$.

Next we prove the boundary estimate of (ii) and (iii). Consider the equation \eqref{eq.corrector2} for $\widehat{\chi}_\eta = \phi_\eta \chi_\eta$. Recall that $\widehat{\chi}_\eta = 0$ on $\partial T_\eta$. Suppose $x_0 \in \partial T_\eta$. Applying the elliptic regularity in $Y_\eta \cap B_\eta(x_0)$, we obtain
\begin{equation}
\begin{aligned}
    \| \nabla \widehat{\chi}_\eta \|_{L^\infty(Y_\eta \cap B_\eta(x_0))} & \le C \eta^{-1} \bigg( \fint_{Y_\eta \cap B_{2\eta}(x_0)} |\widehat{\chi}_\eta|^{2^*} \bigg)^{1/2^*} + C\eta  \| \nabla \phi_\eta \|_{L^\infty(Y_\eta \cap B_{2\eta}(x_0))} \\
    & \le C\eta^{-1}.
\end{aligned}
\end{equation}
Thus for $x\in Y_\eta$ with $\text{dist}(x,T_\eta) \le \eta$,
\begin{equation}
    |\widehat{\chi}_\eta(x)| = |\phi_\eta(x) \chi_\eta(x)| \le C\dist(x,T_\eta) \eta^{-1}.
\end{equation}
In view of \eqref{7.2-1}, this implies $|\chi_\eta(x)| \le C$ for $x\in Y_\eta$ with $\text{dist}(x,T_\eta) \le \eta$.
Moreover, using $\phi_\eta \nabla \chi_\eta(x) = \nabla \widehat{\chi}_\eta - \nabla \phi_\eta \chi_\eta$, Lemma \ref{lemma-7.2} (ii) and the previous estimates, we have $|\phi_\eta \nabla \chi_\eta(x)| \le C\eta^{-1}$ for any $x\in Y_\eta$ with $\text{dist}(x,T_\eta) \le \eta$.

Finally, we prove the interior estimates of (ii) and (iii). Note that for $\dist(x,T_\eta) > \eta$, the equation \eqref{eq.corrector} is not degenerate. Let $r = \dist(x,T_\eta) > \eta$. Then the elliptic estimate combined with \eqref{7.2-2} and Proposition \ref{lemma-7.2} (iv) leads to
\begin{equation}
\begin{aligned}
    |\chi_\eta(x)| & \le C \bigg( \fint_{B_{r/2}(x)} |\chi_\eta|^{2^*} \bigg)^{1/2^*} + C r \| \phi_\eta - 1 \|_{L^\infty(B_{r/2}(x))} \\
    & \le Cr^{-\frac{d}{2^*}} \eta^{\frac{d-2}{2}} + C r \Big( \frac{\eta}{r} \Big)^\frac{d-2}{2} \le C\Big( \frac{\eta}{r} \Big)^\frac{d-2}{2},
\end{aligned}
\end{equation}
and
\begin{equation}
\begin{aligned}
    |\nabla \chi_\eta(x)| & \le C \bigg( \fint_{B_{r/2}(x)} |\nabla \chi_\eta|^{2} \bigg)^{1/2} + C \| \phi_\eta - 1 \|_{L^\infty(B_{r/2}(x))} \\
    & \le Cr^{-\frac{d}{2}} \eta^{\frac{d-2}{2}} + C \Big( \frac{\eta}{r} \Big)^\frac{d-2}{2} \le C \eta^{-1} \Big( \frac{\eta}{r} \Big)^\frac{d}{2}.
\end{aligned}
\end{equation}
The proof is complete.
\end{proof}

\begin{remark}
    The estimate \eqref{est.phiDchi.pw} implies $\| \phi_{\eta} \nabla \chi_\eta \|_{L^\infty(Y_{\eta})} \le C\eta^{-1}$. By an interpolation inequality between this and the $L^2$ estimate in \eqref{7.2-2}, we have $\| \phi_{\eta} \nabla \chi_\eta \|_{L^p(Y_{\eta})} \le C\eta^{\frac{d}{p}-1}$ for any $p>2$. Similarly, we can estimate $\| \phi_\eta \chi_\eta \|_{L^p(Y_\eta)}$ and $\| \chi_\eta \|_{L^p(Y_\eta)}$.
\end{remark}


\end{proposition}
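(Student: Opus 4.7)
The plan is to establish the integral bounds (i) and the matrix estimate (iv) first by a standard energy argument in the cell, and then derive the pointwise estimates (ii) and (iii) by splitting into two regimes: near $T_\eta$ where the degenerate equation is best handled by passing to the weighted corrector $\widehat{\chi}_\eta = \phi_\eta \chi_\eta$ (which satisfies the uniformly elliptic Dirichlet problem \eqref{eq.corrector2}), and far from $T_\eta$ where $\phi_\eta$ is close to $1$ so the corrector equation itself becomes uniformly elliptic.

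For (i) I would rewrite \eqref{eq.corrector} in the form $-\nabla\cdot(\phi_\eta^2 \nabla \chi_\eta^j) = 2\phi_\eta \partial_j \phi_\eta$ and test with $\chi_\eta^j$ itself; Cauchy-Schwarz together with the bound $\|\partial_j\phi_\eta\|_{L^2(Y_\eta)} \le C\eta^{(d-2)/2}$ from Lemma \ref{lemma-7.2}(i) immediately yields $\|\phi_\eta \nabla \chi_\eta\|_{L^2(Y_\eta)} \le C\eta^{(d-2)/2}$. The $L^{2^*}$ bound on $\phi_\eta \chi_\eta$ and the $L^2$ bound on $\chi_\eta$ then follow from an appropriate weighted Sobolev-Poincar\'e inequality in $Y_\eta$ (the same type that underlies Proposition \ref{Prop.Hper}). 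Item (iv) is an immediate consequence of the identity $\overline{A}_\eta - I = \int_{Y_\eta} \phi_\eta^2 \nabla \chi_\eta$ combined with Cauchy-Schwarz and the uniform $L^2$ bound on $\phi_\eta$.

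In the near-hole regime, fix $x_0 \in \partial T_\eta$; after rescaling $y = x/\eta$, the rescaled hole has fixed diameter and $C^{1,1}$ boundary with uniform character, while the rescaled equation for $\widehat{\chi}_\eta(\eta y)$ has bounded coefficients, bounded zero-order term (using $\overline{\lambda}_\eta \approx \eta^{d-2}$ from Lemma \ref{lem.lambda.bar}), and bounded inhomogeneity (using $|\eta \partial_j \phi_\eta| \le C$ from Lemma \ref{lemma-7.2}(ii)). Standard up-to-boundary elliptic regularity then gives $\|\nabla \widehat{\chi}_\eta\|_{L^\infty(B_\eta(x_0)\cap Y_\eta)} \le C\eta^{-1}$; since $\widehat{\chi}_\eta$ vanishes on $\partial T_\eta$, this yields $|\widehat{\chi}_\eta(x)| \le Cr\eta^{-1}$ for $r := \dist(x,T_\eta) \le \eta$. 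Combined with $\phi_\eta(x) \approx r/\eta$ from Lemma \ref{lemma-7.2}(iii) and the decomposition $\phi_\eta \nabla \chi_\eta = \nabla \widehat{\chi}_\eta - \chi_\eta \nabla \phi_\eta$, this produces the bounds $|\chi_\eta(x)| \le C$ and $|\phi_\eta \nabla \chi_\eta(x)| \le C\eta^{-1}$ in this regime, matching (ii) and (iii).

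In the far-from-hole regime $r > \eta$, I would view the corrector equation in the form $-\nabla\cdot(\phi_\eta^2 \nabla \chi_\eta^j) = \nabla\cdot((\phi_\eta^2 - 1)e_j)$, which is uniformly elliptic on $B_{r/2}(x)$ with small inhomogeneity in divergence form. Standard De Giorgi/Moser-type interior estimates give
\begin{equation*}
|\chi_\eta(x)| \le C\Big(\fint_{B_{r/2}(x)} |\chi_\eta|^{2^*}\Big)^{1/2^*} + Cr\|\phi_\eta^2 - 1\|_{L^\infty(B_{r/2}(x))},
\end{equation*}
with an analogous gradient version picking up an extra $r^{-1}$ factor. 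Inserting the $L^{2^*}$ bound from (i) turns the first term into $Cr^{-(d-2)/2}\eta^{(d-2)/2} = C(\eta/r)^{(d-2)/2}$, while Lemma \ref{lemma-7.2}(iv) together with $r \le \mathrm{diam}(\Omega) \le C$ controls the second term by $C(\eta/r)^{(d-2)/2}$; this proves (iii), and the gradient analog yields (ii). I expect the main technical obstacle to be the near-hole boundary regularity step, specifically verifying that the rescaled Dirichlet estimates are truly uniform in $\eta$ and carrying the correct factors of $\eta$ through the rescaling, since one must simultaneously track the inhomogeneity $\partial_j \phi_\eta$ (which blows up like $\eta^{-1}$ pointwise up to $\partial T_\eta$) and the zero-order coefficient $\overline{\lambda}_\eta$ (which degenerates like $\eta^{d-2}$) in order to avoid picking up spurious $\eta$-dependences.
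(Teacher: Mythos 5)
Your proposal is correct and follows essentially the same route as the paper: the energy estimate for $-\nabla\cdot(\phi_\eta^2\nabla\chi_\eta^j)=2\phi_\eta\partial_j\phi_\eta$ plus the weighted Sobolev--Poincar\'e inequality for (i) and (iv), the weighted corrector $\widehat{\chi}_\eta=\phi_\eta\chi_\eta$ with boundary Lipschitz regularity for the non-degenerate equation \eqref{eq.corrector2} (together with $\widehat{\chi}_\eta=0$ on $\partial T_\eta$ and the lower bound \eqref{7.2-1}) in the regime $\dist(x,T_\eta)\le\eta$, and uniformly elliptic interior estimates with the divergence-form data $(\phi_\eta^2-1)e_j$ and the bounds of Lemma \ref{lemma-7.2} in the regime $\dist(x,T_\eta)>\eta$. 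The only cosmetic differences (explicit rescaling by $\eta$ near $\partial T_\eta$, and using $r^{-1}$ times the $L^{2^*}$ average of $\chi_\eta$ instead of the $L^2$ average of $\nabla\chi_\eta$ in the interior gradient bound) do not change the argument.
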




\subsection{Three eigenvalue problems}\label{sec.2.5}
We summarize the three different eigenvalue problems, listed as follows:
\begin{align}
    \text{Degenerate problem: } \cL_{\eta,\e}(\rho_{\eta,\e}^k) & = \mu_{\eta,\e}^k \phi_{\eta,\e}^2 \rho_{\eta,\e}^k   \quad \text{ in } \Omega_{\eta,\e},  \label{eq.eigen-1} \\
    \text{Intermediate problem: } \cL_{\eta}(\tilde{\rho}_{\eta,\e}^k) & = \tilde{\mu}_{\eta,\e}^k \phi_{\eta,\e}^2 \tilde{\rho}_{\eta,\e}^k  
    \quad \text{ in } \Omega, \label{eq.eigen-2}  \\
    \text{Homogenized problem: } \cL_{\eta}({\rho}_\eta^k) & = \mu_\eta^k {\rho}_\eta^k   \qquad   \text{ in } \Omega. \label{eq.eigen-3}
\end{align}
The three types of elliptic equations corresponding to the eigenvalue problems are:
\begin{align}
    \text{Degenerate equation: } \cL_{\eta,\e}(u_{\eta,\e}) &= \phi_{\eta,\e}^2 f, & \quad& \text{with } f\in L_{\phi_{\eta,\e}}^2(\Omega_{\eta,\e}); \label{eq.degnerate} \\
    \text{Intermediate equation: } \cL_{\eta}(\tilde{u}_{\eta,\e}) & = \phi_{\eta,\e}^2 f, &\quad& \text{with } f\in L_{\phi_{\eta,\e}}^2(\Omega_{\eta,\e}); \label{eq.intermediate}\\
    \text{Homogenized equation: } \cL_{\eta}(u_{\eta}) & = f, &\quad& \text{with } f\in L^2(\Omega). \label{eq.homogenized}
\end{align}
All three equations are solvable in suitable spaces. Precisely, \eqref{eq.degnerate} is solvable in $V_{\eta,\e} = H^1_{\phi_{\eta,\e},0}(\Omega_{\eta,\e})$ if $f\in L^2_{\phi_{\eta,\e}}(\Omega_{\eta,\e})$; \eqref{eq.intermediate} is solvable in $H^1_0(\Omega)$ if $f\in L^2_{\phi_{\eta,\e}}(\Omega_{\eta,\e})$; and \eqref{eq.homogenized} is solvable in $H^1_0(\Omega)$ if $f\in L^2(\Omega)$.
Define the bounded operators (resolvents) corresponding to the above equations:
\begin{align}
    &\mathscr{T}_{\eta,\e}: L^2_{\phi_{\eta,\e}}(\Omega_{\eta,\e}) \to V_{\eta,\e}, \text{ with } \mathscr{T}_{\eta,\e}(f) = u_{\eta,\e};  \label{eq.degenerate T}\\ 
    & \widetilde{\mathscr{T}}_{\eta,\e}: L^2_{\phi_{\eta,\e}}(\Omega_{\eta,\e}) \to H^1_0(\Omega), \quad \text{ with } \widetilde{\mathscr{T}}_{\eta,\e}(f) = \tilde{u}_{\eta,\e}; \label{eq.intermediate T} \\
    &\mathscr{T}_{\eta}: L^2(\Omega) \to H^1_0(\Omega), \qquad\text{ with } \mathscr{T}_{\eta}(f) = u_{\eta}. \label{eq.homogenized T}
\end{align}

Previously we have seen that $\mathscr{T}_{\eta,\e}$ is a self-ajoint compact operator in $L^2_{\phi_{\eta,\e}}(\Omega_{\eta,\e})$ such that the spectral theory applies to the degenerate eigenvalue problem. It is classical that $\mathscr{T}_{\eta}$ is a self-adjoint compact operator in $L^2(\Omega)$ and thus the spectral theory applies to the homogenized eigenvalue problem. Now we consider the intermediate eigenvalue problem.
We will first view $\widetilde{\mathscr{T}}_{\eta,\e}$ as a compact self-adjoint operator on $L^2_{\phi_{\eta,\e}}(\Omega_{\eta,\e})$ since $H^1_0(\Omega)$ restricted on $\Omega_{\eta,\e}$ is a subspace of $V_{\eta,\e}$. Therefore, $\widetilde{\mathscr{T}}_{\eta,\e}$ is also a self-adjoint compact operator in the space $L^2_{\phi_{\eta,\e}}(\Omega_{\eta,\e})$. Note that the eigenfunctions of $\mathscr{T}_{\eta,\e}$ and $\widetilde{\mathscr{T}}_{\eta,\e}$ form two different orthonormal basis of the same space $L^2_{\phi_{\eta,\e}}(\Omega_{\eta,\e})$.
On the other hand, since $L^2(\Omega)$ restricted in $\Omega_{\eta,\e}$ is a subspace of $L^2_{\phi_{\eta,\e}}(\Omega_{\eta,\e})$, we can also view $\widetilde{\mathscr{T}}_{\eta,\e}$ as a compact operator on $L^2(\Omega)$. Thus, we may also compare $\mathscr{T}_{\eta}$ and $\widetilde{\mathscr{T}}_{\eta,\e}$ as operators on $L^2(\Omega)$. However, it is important to point out that $\widetilde{\mathscr{T}}_{\eta,\e}$ is not self-adjoint in the space $L^2(\Omega)$.
As usual, the eigenfunctions are orthonormal in the corresponding spaces, i.e., for any $i,j \ge 1$,
\begin{equation}\label{eq.normal}
    \int_{\Omega_{\eta,\e}} \phi_{\eta,\e}^2 \rho_{\eta,\e}^i \rho_{\eta,\e}^j = \int_{\Omega_{\eta,\e}} \phi_{\eta,\e}^2 \tilde{\rho}_{\eta,\e}^i \tilde{\rho}_{\eta,\e}^j = \int_{\Omega} \rho_\eta^i \rho_\eta^j = \delta_{ij}.
\end{equation}

\section{Regularity of degenerate equations}\label{Sec.3}
In this subsection, we first prove some basic inequality in weighted Sobolev spaces and then use them to derive certain regularity of eigenfunctions.


\subsection{Weighted Sobolev inequalities}
The proofs of the weighted Sobolev inequalities use harmonic extension. We will not include all the details of the proofs in this section as similar arguments may be found in \cite[Section 2]{SZ23}.

Let $T_{\eta}^* = \{ x\in Y: \dist(x,T_\eta) \le \eta \}$.
Let 
\begin{equation}
\Omega_{\eta,\e}^*
=\Omega\setminus \bigcup_z \e (z+ T_{\eta}^*).
\end{equation}
For $u\in W^{1,p}(\Omega_{\eta,\e}^*) $, we use $u^* $ to denote the harmonic extension of $u$ to $\Omega$,
i.e., the value of $u^*$ in $\e (z+ T_{\eta}^*) \subset \Omega$ is defined by the solution of
$$
\left\{
\aligned
\Delta u^*   & =0 & \quad & \text{ in } \e (z+ T_{\eta}^*),\\
u^* & =u & \quad & \text{ on }  \e (z+\partial T_{\eta}^*).
\endaligned
\right.
$$
By the boundedness of harmonic extension in $W^{1,p}$ space, we have $u^*\in W^{1,p}(\e (z+T_{\eta}^*))$.

For the holes intersecting with the boundary $\partial \Omega$, we need to modify the harmonic extension. We distinguish between two cases. If $u = 0$ on $\partial \Omega \cap \partial \Omega_{\eta,\e}^*$, then for each hole $\e(z+T_\eta^*) \cap \partial \Omega \neq \emptyset$, we define $u^*$ in $T_\eta^* \cap \Omega$ by
\begin{equation}
   \left\{
\aligned
\Delta u^*   & =0 & \quad & \text{ in } \e (z+ T_{\eta}^*) \cap \Omega,\\
u^* & =u & \quad & \text{ on }   \e (z+ \partial T_{\eta}^*) \cap \Omega,\\
u^* & = 0 & \quad & \text{ on }  \e (z+ T_{\eta}^*) \cap \partial \Omega.
\endaligned
\right. 
\end{equation}
This guarantees that $u^* = 0$ on $\partial \Omega$ and therefore $u^* \in W^{1,p}_0(\Omega)$.

For general $u\in W^{1,p}(\Omega_{\eta,\e}^*)$, we first flatten the boundary $\partial \Omega$ in a neighborhood of a hole by a change of variables. Now since the boundary is flat, a function in $W^{1,p}(\Omega_{\eta,\e}^*)$ can be extended by an even reflection across the flat boundary with a reflected hole symmetric about the flat boundary. Then a standard harmonic extension applies to this extended hole. Finally, changing variables back to the original boundary, we get the extended function $u^* \in W^{1,p}(\Omega)$. 


\begin{proposition}
Let $D_{\eta, \e} =\{ x\in \Omega_{\eta,\e}^* :  {\rm dist}(x, \Omega \setminus \Omega_{\eta,\e}^* )\le  \e \eta \} \subset \Omega_{\eta,\e}^*.$ Let $u\in W^{1,p}(\Omega_{\eta,\e}^*)$ and $u^*$ be the harmonic extension defined above.
Then we have the following properties:
    \begin{align}
    \|u^*  \|_{L^p(\Omega\setminus \Omega_{\eta,\e}^* )} &\le C\big(  \| u \|_{L^p(D_{\eta, \e}) }
    +\e\eta \|\nabla u \|_{L^p (D_{\eta, \e})} \big), \\
    \|\nabla u^*  \|_{L^p(\Omega\setminus \Omega_{\eta,\e}^*)} &\le C \| \nabla u \|_{L^p(D_{\eta, \e}) }, \\
    \| \nabla u^* \|_{L^p(\Omega)} & \le C \| \nabla u \|_{L^p(\Omega_{\eta,\e}^*) }, \label{est.Dtu.Exteta}
\end{align}
where $C$ is independent of $\e$ and $\eta$.
\end{proposition}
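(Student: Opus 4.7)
The plan is to reduce everything to a unit-scale problem hole by hole and exploit the natural length scale $\e\eta$ of each hole. Fix a single interior hole $T=\e(z+T_\eta^*)$ together with its local collar $D_T:=\{x\in\Omega_{\eta,\e}^*:\dist(x,T)\le\e\eta\}\subset D_{\eta,\e}$; the separation condition in assumption $\mathbf{A}$ guarantees that the family $\{D_T\}$ has bounded overlap multiplicity. Rescaling by $\e\eta$, the pair $(T,D_T)$ becomes an $O(1)$-sized hole $\widetilde T$ with unit-width collar $\widetilde D_T$, and the uniform $C^{1,1}$ regularity of $\tau^i$ plus the $c_0$-separation yields a uniform Lipschitz character at unit scale. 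The standard trace theorem combined with $W^{1,p}$-boundedness of the Dirichlet problem for $-\Delta$ on a Lipschitz domain gives, at unit scale,
\begin{equation*}
\|\tilde u^*\|_{W^{1,p}(\widetilde T)}\le C\bigl(\|\tilde u\|_{L^p(\widetilde D_T)}+\|\nabla\tilde u\|_{L^p(\widetilde D_T)}\bigr).
\end{equation*}

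Tracking the scaling ($L^p$ scales as $(\e\eta)^{d/p}$ and $L^p$ of $\nabla$ as $(\e\eta)^{d/p-1}$), this pulls back to
\begin{equation*}
\|u^*\|_{L^p(T)}\le C\bigl(\|u\|_{L^p(D_T)}+\e\eta\,\|\nabla u\|_{L^p(D_T)}\bigr),
\end{equation*}
\begin{equation*}
\|\nabla u^*\|_{L^p(T)}\le C\bigl((\e\eta)^{-1}\|u\|_{L^p(D_T)}+\|\nabla u\|_{L^p(D_T)}\bigr).
\end{equation*}
The first estimate is already the local version of (1). For (2) one must absorb the troublesome $(\e\eta)^{-1}\|u\|_{L^p}$ term. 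The key observation is that harmonic extension commutes with subtraction of constants: for any $c\in\R$, $u^*-c$ is the harmonic extension of $u-c$, so $\nabla u^*=\nabla(u-c)^*$. Choosing $c=(u)_{D_T}$ (the average of $u$ over $D_T$) and applying the Poincar\'e inequality $\|u-(u)_{D_T}\|_{L^p(D_T)}\le C\e\eta\,\|\nabla u\|_{L^p(D_T)}$ replaces $\|u\|_{L^p(D_T)}$ by $\e\eta\|\nabla u\|_{L^p(D_T)}$ and produces the local form of (2).

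For holes meeting $\partial\Omega$, the same rescaling framework applies to the modified extension. In the pure Dirichlet case $u=0$ on $\partial\Omega\cap\partial\Omega_{\eta,\e}^*$, the collar $D_T$ inherits a zero trace on a portion of its boundary, so Poincar\'e with vanishing trace gives $\|u\|_{L^p(D_T)}\le C\e\eta\,\|\nabla u\|_{L^p(D_T)}$ directly, without the constant-subtraction trick. For the general extension, one flattens $\partial\Omega$ in a neighborhood of the hole by a $C^{1,1}$ change of variables (which preserves $W^{1,p}$ norms up to a multiplicative constant depending only on the Lipschitz character), performs the even reflection to reduce to an interior-type problem on the union of the hole and its mirror image, and then argues as above. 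Summing the $p$-th powers of the local estimates over all holes and using bounded overlap of $\{D_T\}$ yields (1) and (2). Estimate (3) follows by combining (2) with the trivial identity $\nabla u^*=\nabla u$ on $\Omega_{\eta,\e}^*$.

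The main technical obstacle, and essentially the only real point to verify, is that the constants arising in the unit-scale trace theorem, the Poincar\'e inequality on $\widetilde D_T$, and the $W^{1,p}$ Dirichlet estimate are uniform across all holes and all $\eta\in(0,1]$. Both the interior holes and the boundary-adjacent ones acquire a uniform Lipschitz character after the $\e\eta$-rescaling, together with flattening in the boundary case, thanks to the $C^{1,1}$ hypothesis on the model holes $\tau^i$, the $c_0$-separation, and the uniform Lipschitz character built into assumption $\mathbf{A}$; consequently the constants are genuinely $\e,\eta$-independent.
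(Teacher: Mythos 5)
Your proposal is correct and is essentially the argument the paper outsources: the paper's proof is a one-line citation of \cite[Lemma 2.1]{SZ23} applied around each hole, which is precisely the rescaled per-hole harmonic-extension estimate you establish (including the constant-subtraction/Poincar\'e device that removes the $(\e\eta)^{-1}$ term in the gradient bound, the zero-trace Poincar\'e for boundary holes, and the bounded-overlap summation). The one phrase to tighten is the appeal to ``$W^{1,p}$-boundedness of the Dirichlet problem on a Lipschitz domain'': for the full range $1<p<\infty$ this fails on general Lipschitz domains (the solvability range is restricted by Jerison--Kenig), so the unit-scale estimate really needs the uniform $C^{1}$/$C^{1,1}$-type character of the rescaled holes -- which is exactly what your discussion of the uniform geometry of the $\tau^i$ and assumption $\mathbf{A}$ supplies.
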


\begin{proof}
    These estimates may be proved by using \cite[Lemma 2.1]{SZ23} around each hole.
\end{proof}

\begin{theorem}\label{thm.unweigthed.eta}
    Let $1< p< \infty$ and $u\in W^{1,p}_{\phi_{\eta,\e}}(\Omega_{\eta,\e})$. Then there exists $C>0$ such that
    \begin{equation}\label{est.multi.eta}
        \| u\|_{L^p(\Omega_{\eta, \e}) } \le C \e\eta \|\phi_{\eta, \e} \nabla u \|_{L^p(\Omega_{\eta, \e})} 
        + C \| \phi_{\eta, \e}  u\|_{L^p(\Omega_{\eta, \e} )}
    \end{equation}
    Moreover, if $u\in W^{1,p}_{\phi_{\eta,\e},0}(\Omega_{\eta,\e})$, then
    \begin{equation}\label{est.Poincare.eta}
        \| u\|_{L^p(\Omega_{\eta, \e}) } \le C \|\phi_{\eta, \e} \nabla u \|_{L^p(\Omega_{\eta, \e})} 
    \end{equation}
\end{theorem}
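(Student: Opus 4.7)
The plan is to prove both inequalities by first rescaling to a single cell, decomposing that cell into the interior region where $\phi_\eta \approx 1$ and the narrow annular shell around each hole where $\phi_\eta$ degenerates, and then treating the two regions separately. After the change of variables $y = x/\e$, \eqref{est.multi.eta} reduces to the cell-level bound
\[
\|u\|_{L^p(Y_\eta)} \le C \eta \|\phi_\eta \nabla u\|_{L^p(Y_\eta)} + C \|\phi_\eta u\|_{L^p(Y_\eta)}
\]
with $C$ independent of $\eta$; raising to the $p$-th power and summing over the $O(\e^{-d})$ cells intersecting $\Omega$ then yields the full inequality. Setting $T_\eta^* = \{y \in Y : \dist(y, T_\eta) \le \eta\}$, Lemma~\ref{lemma-7.2}(iii) gives $\phi_\eta \ge c > 0$ on $Y \setminus T_\eta^*$, so $\|u\|_{L^p(Y \setminus T_\eta^*)} \le C \|\phi_\eta u\|_{L^p(Y \setminus T_\eta^*)}$ trivially, and only the annulus $T_\eta^* \setminus T_\eta$ requires genuine work.

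For the annulus I would establish the Hardy-type estimate
\[
\int_{T_\eta^* \setminus T_\eta} |u|^p\, dy \le C \int_{T_\eta^* \setminus T_\eta} \dist(y, T_\eta)^p |\nabla u|^p\, dy + C \eta \int_{\partial T_\eta^*} |u|^p\, d\sigma
\]
by rescaling the shell to unit width, foliating along the normals to $\partial T_1$, and in each fiber invoking the one-dimensional inequality
\[
\int_0^1 |w(s)|^p\, ds \le C \int_0^1 s^p |w'(s)|^p\, ds + C |w(1)|^p.
\]
I expect this 1D estimate to be the principal technical obstacle: the naive H\"older bound on $\int_t^1 w'$ fails because $\int_0^1 s^{-p'}\, ds = \infty$, and the absence of a Dirichlet condition at $s = 0$ rules out the standard Hardy inequality $\int_0^1 (w/s)^p \le C \int_0^1 (w')^p$. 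The cleanest route I see is the substitution $s = e^{-r}$, which transforms the estimate into the exponentially weighted Poincar\'e bound
\[
\int_0^\infty |W|^p e^{-r}\, dr \le C \int_0^\infty |W'|^p e^{-r}\, dr + C |W(0)|^p
\]
on the half-line, provable by local Poincar\'e on unit intervals $[n, n+1]$ together with a telescoping estimate for the means $b_n$ of $W$ over $[n, n+1]$, where the factor $(2/e)^n < 1$ in the resulting geometric series makes the iteration converge. Using $\dist(y, T_\eta)^p \le C \eta^p \phi_\eta^p$ in the shell and controlling the trace $\eta \int_{\partial T_\eta^*} |u|^p\, d\sigma$ via a standard trace inequality on an adjacent shell $B \subset Y \setminus T_\eta^*$ (where $\phi_\eta \approx 1$) then closes the cell bound.

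For the Poincar\'e inequality \eqref{est.Poincare.eta} I would combine the annular analysis with the harmonic extension of Section~\ref{Sec.3}. Since $u = 0$ on $\partial \Omega \cap \partial \Omega_{\eta,\e}$, the boundary-adjusted extension $u^*$ belongs to $W^{1,p}_0(\Omega)$, and classical Poincar\'e together with \eqref{est.Dtu.Exteta} and $\phi_{\eta,\e} \approx 1$ on $\Omega_{\eta,\e}^*$ gives
\[
\|u\|_{L^p(\Omega_{\eta,\e}^*)} \le \|u^*\|_{L^p(\Omega)} \le C \|\nabla u^*\|_{L^p(\Omega)} \le C \|\phi_{\eta,\e} \nabla u\|_{L^p(\Omega_{\eta,\e})}.
\]
For the annular complement the boundary term $\e\eta \int_{\partial T_{\eta,\e}^*} |u|^p\, d\sigma$ arising from the rescaled annulus estimate is rewritten using $u = u^*$ on $\partial T_{\eta,\e}^*$, bounded by a trace inequality on an $\e\eta$-thick shell inside $\Omega_{\eta,\e}^*$, and then by Poincar\'e on $u^*$ controlled by $C \|\phi_{\eta,\e} \nabla u\|_{L^p(\Omega_{\eta,\e})}^p$; combined with $\int_{\text{annulus}} \dist^p |\nabla u|^p \le C (\e\eta)^p \|\phi_{\eta,\e} \nabla u\|_{L^p}^p$, this yields \eqref{est.Poincare.eta}.
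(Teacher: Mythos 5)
Your proposal is correct and follows essentially the same strategy as the paper: the heart of the paper's argument is the shell estimate \eqref{est.Hardy.eta} (which it attributes to ``Hardy's inequality'') combined with the harmonic-extension bound \eqref{est.Dtu.Exteta} and the classical Poincar\'e inequality in $\Omega$, and your decomposition into the collar $T_\eta^*\setminus T_\eta$ and the interior where $\phi_\eta\approx 1$, followed by the harmonic extension for \eqref{est.Poincare.eta}, reproduces exactly that structure. The one place where you are doing more work than needed is the one-dimensional estimate $\int_0^1|w|^p\,ds \le C\int_0^1 s^p|w'(s)|^p\,ds + C|w(1)|^p$: instead of the exponential substitution and the telescoping geometric series, an integration by parts
\begin{equation*}
\int_0^1|w|^p\,ds = \bigl[s\,|w|^p\bigr]_0^1 - p\int_0^1 s\,|w|^{p-2}w\,w'\,ds,
\end{equation*}
followed by H\"older on the last integral and Young's inequality to absorb the $\int_0^1|w|^p$ term into the left-hand side, yields the estimate directly with constant $C = C(p)$ (this is in fact the standard proof of the Hardy-type inequality the paper cites), and also makes the passage from this scalar estimate to the shell version via the normal foliation less delicate to write out.
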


\begin{proof}
    Using Hardy's inequality, we obtain 
    \begin{equation}\label{est.Hardy.eta}
    \aligned
    \int_{\e (z+ T_{\eta}^* \setminus T_\eta ) \cap \Omega} |u|^p
     & \le  C \int_{\e (z+ T_{\eta}^* \setminus T_\eta) \cap \Omega} |\nabla u |^p \{ \text{dist} (x, \e (z+ T_\eta)) \}^p \, dx 
    +C \int_{\e (z +Y_{\eta}\setminus T_{\eta}^* ) \cap \Omega}  |u|^p\\
    & \le  C(\e \eta)^p \int_{\e (z+ T_{\eta}^* \setminus T_\eta) \cap \Omega} | \phi_{\eta, \e} \nabla u |^p
    + C \int_{\e (z +Y_{\eta} \setminus T_{\eta}^* ) \cap \Omega}  |u|^p,
    \endaligned
    \end{equation}
    where we have used the fact  $ \phi_{\eta, \e} (x) \approx (\e \eta )^{-1} \text{dist}(x, \e T_\eta)$ for $x\in Y$ with $\dist(x, \e T_\eta) \le C \e \eta$; see \eqref{7.2-1}.
    This gives \eqref{est.multi.eta} by summation over $z \in \Z^d$.
    
    Note that \eqref{est.Hardy.eta} also implies
    \begin{equation}\label{est.Lpmulti.eta}
    \| u \|_{L^p(\Omega_{\eta, \e}) } \le C \e \eta \| \phi_{\e, \eta } \nabla u \|_{L^p(\Omega_{\eta, \e})}
    + C \| u \|_{L^p(\Omega_{\eta,\e}^*)  }.
    \end{equation}
    If $u \in W^{1,p}_{\phi_{\eta,\e},0}(\Omega_{\eta,\e})$ and $u^*$ is its harmonic extension according to the above construction, then $u^* \in W^{1,p}_0(\Omega)$. Hence
    \begin{equation}
    \| u \|_{L^p(\Omega_{\eta,\e}^*  )}\le \| u^* \|_{L^p(\Omega)} 
    \le C \|\nabla u^* \|_{L^p(\Omega)}
    \le C \| \nabla u \|_{L^p(\Omega_{\eta,\e}^*)} \le C\|\phi_{\eta, \e} \nabla u \|_{L^p(\Omega_{\eta, \e})},
    \end{equation}
    where we have used the Poincar\'{e} inequality in the second inequality and \eqref{est.Dtu.Exteta} in the third inequality. This combined with \eqref{est.Lpmulti.eta} leads to \eqref{est.Poincare.eta}.
\end{proof}

\begin{theorem} \label{thm.WSPI.eta}
Let $1<p<d$ and $ u\in W^{1,p}_{\phi_{\eta,\e},0}(\Omega_{\eta,\e})$. Then there exists a constant $C>0$ such that
\begin{equation}\label{est.WSPI}
    \| \phi_{\eta, \e} u \|_{L^{p^*} (\Omega_{\eta, \e} )}
    \le C \| \phi_{\eta, \e} \nabla u \|_{L^p(\Omega_{\eta, \e} ) }
\end{equation}
\end{theorem}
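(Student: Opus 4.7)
I plan to prove the weighted Sobolev inequality \eqref{est.WSPI} by the same two-stage template used for Theorem \ref{thm.unweigthed.eta}: a local, cell-by-cell weighted Sobolev--Poincar\'e inequality combined with a harmonic extension plus classical Sobolev estimate on $\Omega_{\eta,\e}^*$. The starting observation is that since $u$ vanishes on $\Gamma_{\eta,\e}$ and $\phi_{\eta,\e}$ vanishes on $\partial T_{\eta,\e}$, the product $w:=\phi_{\eta,\e}u$ lies in $W^{1,p}_0(\Omega_{\eta,\e})$ and hence extends by zero to $W^{1,p}_0(\Omega)$, which is the entry point for applying Sobolev-type estimates.

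\textbf{Step 1 (local inequality on the unit cell).} I would first establish, for $v\in W^{1,p}(Y_\eta)$,
\[
\|\phi_\eta v\|_{L^{p^*}(Y_\eta)} \le C\|\phi_\eta \nabla v\|_{L^p(Y_\eta)} + C\|v\|_{L^{p^*}(Y_\eta\setminus T_\eta^*)},
\]
with $C$ independent of $\eta$. Since $\phi_\eta v$ vanishes on $\partial T_\eta$, it extends by zero to a $Y$-periodic $W^{1,p}(Y)$ function; the Poincar\'e--Sobolev inequality on the torus, with the mean estimated by $\|v\|_{L^{p^*}(Y_\eta\setminus T_\eta^*)}$, reduces matters to controlling $\|\nabla(\phi_\eta v)\|_{L^p(Y_\eta)}$. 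The product rule yields a good term $\|\phi_\eta \nabla v\|_{L^p}$ plus a Hardy-type term $\|v\nabla\phi_\eta\|_{L^p(Y_\eta)}$; the latter is handled using $|\nabla\phi_\eta|\lesssim \eta^{-1}$ and $\phi_\eta(y)\approx \mathrm{dist}(y,T_\eta)/\eta$ in the $\eta$-tube around $\partial T_\eta$ (Hardy's inequality as in \eqref{est.Hardy.eta}), together with the pointwise decay $|\nabla\phi_\eta(y)|\le C\eta^{-1}(\eta/\mathrm{dist}(y,T_\eta))^{d/2}$ from Lemma \ref{lemma-7.2}(ii) in the interior of the cell.

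\textbf{Step 2 (rescaling and summing).} Setting $v(y)=u(\e(z+y))$ and using $1/p^*=1/p-1/d$, the scaling exponents $d/p^*$ and $1-d/p$ combine so that the scaled inequality reads
\[
\|\phi_{\eta,\e}u\|_{L^{p^*}(\e(z+Y_\eta)\cap\Omega)} \le C\|\phi_{\eta,\e}\nabla u\|_{L^p(\e(z+Y_\eta)\cap\Omega)} + C\|u\|_{L^{p^*}(\e(z+Y_\eta\setminus T_\eta^*)\cap\Omega)}.
\]
Summing over $z$ and using the discrete embedding $\ell^p(\Z^d)\hookrightarrow \ell^{p^*}(\Z^d)$ (valid since $p\le p^*$) for the gradient term yields
\[
\|\phi_{\eta,\e}u\|_{L^{p^*}(\Omega_{\eta,\e})} \le C\|\phi_{\eta,\e}\nabla u\|_{L^p(\Omega_{\eta,\e})} + C\|u\|_{L^{p^*}(\Omega_{\eta,\e}^*)}.
\]

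\textbf{Step 3 (harmonic extension).} The last term is controlled exactly as in Theorem \ref{thm.unweigthed.eta}. Let $u^*$ be the harmonic extension of $u|_{\Omega_{\eta,\e}^*}$ to $\Omega$; by construction $u^*\in W^{1,p}_0(\Omega)$. The classical Sobolev inequality on $\Omega$ together with \eqref{est.Dtu.Exteta} and the bound $\phi_{\eta,\e}\ge c>0$ on $\Omega_{\eta,\e}^*$ gives
\[
\|u\|_{L^{p^*}(\Omega_{\eta,\e}^*)}\le \|u^*\|_{L^{p^*}(\Omega)}\le C\|\nabla u^*\|_{L^p(\Omega)}\le C\|\nabla u\|_{L^p(\Omega_{\eta,\e}^*)}\le C\|\phi_{\eta,\e}\nabla u\|_{L^p(\Omega_{\eta,\e})}.
\]
Combining Steps 2 and 3 closes the argument. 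The main obstacle is Step 1 --- more precisely, controlling $\|v\nabla\phi_\eta\|_{L^p(Y_\eta)}$ by $\|\phi_\eta\nabla v\|_{L^p}$ and $\|v\|_{L^{p^*}(Y_\eta\setminus T_\eta^*)}$ with constants uniform in $\eta$; this requires carefully exploiting the sharp pointwise estimates for $\phi_\eta$ and $\nabla\phi_\eta$ from Lemma \ref{lemma-7.2}. Steps 2 and 3 are routine once Step 1 is in place, exactly as in the proof of Theorem \ref{thm.unweigthed.eta}.
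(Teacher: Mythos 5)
Your proposal is correct and follows essentially the same route as the paper's proof: a local Sobolev inequality for $\phi_{\eta,\e}u$ near each hole (with the commutator term $u\nabla\phi_{\eta,\e}$ absorbed via Hardy's inequality and the smallness of the annular region), summation over the cells, and the harmonic extension plus the classical Sobolev--Poincar\'e inequality on $\Omega_{\eta,\e}^*$ for the far-field part, which is verbatim the paper's first step. The only differences are cosmetic: the paper works directly at scale $\e$ in the annuli $\e(z+T_\eta^{**}\setminus T_\eta)$ rather than rescaling to the unit cell, and it converts the Hardy remainder $(\e\eta)^{-1}\|u\|_{L^p}$ into the $L^{p^*}$ term by H\"older using $|T_\eta^{**}\setminus T_\eta^*|\le C(\e\eta)^d$, a small step you should make explicit in your Step 1.
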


\begin{proof}
    Let $u^*$ be the harmonic extension of $u|_{\Omega_{\eta,\e}^*}$ to $\Omega$. Then the classical Sobolev-Poincar\'{e} inequality in $\Omega$ leads to
    \begin{equation}\label{est.Sobolev.eta}
    \aligned
    \| u \|_{L^{p^*}(\Omega_{\eta,\e}^*  )} & \le \| u^* \|_{L^{p^*}(\Omega)} 
    \le C \|\nabla u^* \|_{L^p(\Omega)}\\
    &  \le C \| \nabla u \|_{L^p(\Omega_{\eta,\e}^*)}
    \le C \| \phi_{\eta, \e} \nabla u \|_{L^p(\Omega_{\eta, \e})}.
    \endaligned
    \end{equation}
    Let $T_{\eta}^{**} = \{ x\in Y: \dist(x,T_\eta) \le 2\eta \}$. Note that $T_{\eta} \subset T_{\eta}^* \subset T_{\eta}^{**} \subset Y$. By the Sobolev inequality in $\e(z+T_{\eta}^{**} \setminus T_\eta)$
    \begin{equation}
    \begin{aligned}
    \left(\int_{\e (z+ T_{\eta}^{**} \setminus T_\eta)} |\phi_{\eta, \e} u |^{p^*}\right)^{1/{p^*}}
    & \le C\left(\int_{\e (z+ T_{\eta}^{**} \setminus T_\eta)} |\phi_{\eta, \e} \nabla u |^p\right)^{1/p}
    + \frac{C}{\e \eta} \left(\int_{\e (z+ T_{\eta}^{**} \setminus T_{\eta}^*)} |u |^p\right)^{1/p} \\
    & \le C\left(\int_{\e (z+ T_{\eta}^{**} \setminus T_\eta)} |\phi_{\eta, \e} \nabla u |^p\right)^{1/p}
    + C \left(\int_{\e (z+ T_{\eta}^{**} \setminus T_{\eta}^*)} |u |^{p^*}\right)^{1/{p^*}},
    \end{aligned}
    \end{equation}
    where we have used the facts $|\nabla \phi_{\eta, \e} |\le C(\e \eta)^{-1}$ in $\e (z+ T_{\eta}^{**} \setminus T_{\eta}^*)$ and $|T_{\eta}^{**} \setminus T_{\eta}^*| \le C(\e \eta)^d$.    
    By the summation over $z$, we have
    $$
    \aligned
    \int_{\Omega_{\eta, \e} \setminus \Omega_{\eta,\e}^*}  | \phi_{\eta, \e} u |^{p^*}
    &  \le C \sum_z  \left( \int_{\e (z+ T_{\eta}^{**} \setminus T_\eta)} |\phi_{\eta, \e} \nabla u |^p\right)^{p^*/p}
    +C \int_{\Omega_{\eta,\e}^* } |u|^{p^*}\\
    & \le C \| \phi_{\eta, \e} \nabla u \|^{p^*}_{L^p (\Omega_{\eta, \e})} 
    + C \| u \|^{p^*}_{L^{p^*}(\Omega_{\eta, \e}^*) }\\
    & \le C \| \phi_{\eta, \e} \nabla u \|^{p^*}_{L^p (\Omega_{\eta, \e})},
    \endaligned
    $$
    where we have used \eqref{est.Sobolev.eta} in the last inequality. This and \eqref{est.Sobolev.eta} give the desired estimate.
\end{proof}

\subsection{Global $L^p$ estimate}
Now, we consider the equation
\begin{equation}\label{eq.ue.eta}
\left\{
\begin{aligned}
    \cL_{\eta,\e}(u_{\eta,\e}) & = \phi_{\eta,\e} f, &\quad& \text{in } \Omega_{\eta,\e},\\
    u_{\eta,\e} & = 0, &\quad & \text{on } \Gamma_{\eta,\e}.
\end{aligned}
    \right.
\end{equation}
We say $u_{\eta,\e} \in V_{\eta,\e}$ is a weak solution of \eqref{eq.ue.eta}, if for any $\varphi \in V_{\eta,\e}$, we have
\begin{equation}
    \int_{\Omega_{\eta,\e}} \phi_{\eta,\e}^2 \nabla u_{\eta,\e} \cdot \nabla \varphi = \int_{\Omega_{\eta,\e}} \phi_{\eta,\e} f \varphi.
\end{equation}
By the Lax-Milgram theorem, the weak solution exists and is unique if $f\in L^2(\Omega_{\eta,\e})$.

\begin{lemma}\label{lem.L^p integrability.eta}
    Let $p\in [2,\infty)$ and $f\in L^p(\Omega_{\eta,\e})$. Let $u_{\eta,\e} \in V_{\eta,\e}$ be a weak solution of \eqref{eq.ue.eta}. Then
    \begin{itemize}
        \item[(i)] If $p\in [2,d)$, then
        \begin{equation}\label{est.global Lp}
        \| u_{\eta,\e} \|_{L^{p^*}(\Omega_{\eta,\e})} \le C\| f \|_{L^p(\Omega_{\eta,\e})}.
    \end{equation}

    \item[(ii)] If $p>d$, then
    \begin{equation}\label{est.global Linfty}
        \| u_{\eta,\e} \|_{L^\infty(\Omega_{\eta,\e})} \le C\| f \|_{L^p(\Omega_{\eta,\e})}.
    \end{equation}
    \end{itemize}
    
\end{lemma}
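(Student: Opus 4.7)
My strategy is Moser iteration adapted to the degenerate operator $\cL_{\eta,\e}$, using the weighted Poincar\'e and Sobolev--Poincar\'e inequalities established in Theorems \ref{thm.unweigthed.eta} and \ref{thm.WSPI.eta}, together with the harmonic-extension technique of Section \ref{Sec.3}. As a base energy estimate, I would test \eqref{eq.ue.eta} against $u_{\eta,\e}$ and apply the weighted Poincar\'e inequality (Proposition \ref{prop.H1e}(iv)) to obtain
$$
\|\phi_{\eta,\e}\nabla u_{\eta,\e}\|_{L^2(\Omega_{\eta,\e})}\le C\|f\|_{L^2(\Omega_{\eta,\e})}\le C|\Omega|^{\frac{1}{2}-\frac{1}{p}}\|f\|_{L^p(\Omega_{\eta,\e})}.
$$

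For part (i), I would proceed via Moser iteration. Testing \eqref{eq.ue.eta} with $|u_{\eta,\e}|^{q-2}u_{\eta,\e}$ for $q\ge 2$ and setting $w=|u_{\eta,\e}|^{q/2}$ (which vanishes on $\Gamma_{\eta,\e}$) yields the Caccioppoli-type identity
$$
\int_{\Omega_{\eta,\e}} \phi_{\eta,\e}^2|\nabla w|^2 \le \frac{Cq^2}{q-1}\int_{\Omega_{\eta,\e}} \phi_{\eta,\e}|f||u_{\eta,\e}|^{q-1}.
$$
Using the harmonic extension of $w$ to all of $\Omega$, the left-hand side controls the unweighted $L^{2^*}$ norm of $w$ via classical Sobolev embedding in $\Omega$, exactly as in the proofs of Theorems \ref{thm.unweigthed.eta} and \ref{thm.WSPI.eta}. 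Combining this with H\"older's inequality on the right (using $f\in L^p$ together with $\phi_{\eta,\e}\le C$) produces a recursion of the form
$$
\|u_{\eta,\e}\|_{L^{\kappa q}(\Omega_{\eta,\e})}^{q}\le Cq\,\|f\|_{L^p(\Omega_{\eta,\e})}\,\|u_{\eta,\e}\|_{L^{(q-1)p'}(\Omega_{\eta,\e})}^{q-1},
$$
with gain factor $\kappa=2^*/2>1$. Iterating finitely many times starting from the base case $q=2$ yields the desired bound $\|u_{\eta,\e}\|_{L^{p^*}(\Omega_{\eta,\e})}\le C\|f\|_{L^p(\Omega_{\eta,\e})}$. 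For part (ii), I would continue the Moser scheme with $q\to\infty$, tracking the polynomial dependence of the constants on $q$, to reach the $L^\infty$ bound when $p>d$. Alternatively, Stampacchia's truncation method, testing with $(|u_{\eta,\e}|-k)_+\sgn(u_{\eta,\e})$ and using the super-level-set volume estimate that follows from Theorem \ref{thm.WSPI.eta}, yields the same conclusion.

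The principal technical obstacle is that the natural weighted Sobolev inequality (Theorem \ref{thm.WSPI.eta}) controls $\|\phi_{\eta,\e} w\|_{L^{p^*}}$ rather than $\|w\|_{L^{p^*}}$; the two differ essentially in the annular region near the holes where $\phi_{\eta,\e}\approx \dist(\cdot,T_{\eta,\e})/(\e\eta)$ degenerates. To bridge this gap I would decompose $\Omega_{\eta,\e}=\Omega_{\eta,\e}^*\cup(\Omega_{\eta,\e}\setminus\Omega_{\eta,\e}^*)$: on the bulk region $\Omega_{\eta,\e}^*$ where $\phi_{\eta,\e}\ge c_1$, the harmonic extension to $\Omega$ combined with classical Sobolev embedding controls $\|u_{\eta,\e}\|_{L^{p^*}(\Omega_{\eta,\e}^*)}$; in the annular region $\Omega_{\eta,\e}\setminus\Omega_{\eta,\e}^*$, a local weighted Sobolev inequality applied cell-by-cell (as in the proof of Theorem \ref{thm.WSPI.eta}) and summed over the $O(\e^{-d})$ cells controls the remaining contribution uniformly in $\e$ and $\eta$.
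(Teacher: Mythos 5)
Your overall toolkit is right---weighted Sobolev--Poincar\'e, weighted Poincar\'e, harmonic extension, and a test-function argument---but the execution of part (i) deviates from what works, and I think the route as stated does not close.

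The paper does not iterate: it tests \eqref{eq.ue.eta} with $|u_{\eta,\e}^M|^{p^*-2}u_{\eta,\e}^M$ (truncated at $M$ to justify the test function) and closes in a single step. Setting $v=|u^M|^{(p^*-2)/2}u^M$, the left side is $\|\phi_{\eta,\e}\nabla v\|_{L^2}^2$; on the right side the weight $\phi_{\eta,\e}$ coming from $\phi_{\eta,\e}f$ is paired with $v$, so H\"older in the form $\int f|u^M|^{(p^*-2)/2}\,\phi_{\eta,\e} v\le(\cdots)\,\|\phi_{\eta,\e}v\|_{L^{2^*}}$ and Theorem \ref{thm.WSPI.eta} let you absorb. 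The final unweighted $L^{p^*}$ bound on $u$ then comes from Theorem \ref{thm.unweigthed.eta} applied to $v$: $\|u^M\|_{L^{p^*}}^{p^*/2}=\|v\|_{L^2}\le C\|\phi_{\eta,\e}\nabla v\|_{L^2}$. The exponent bookkeeping is rigged to close exactly at $q=p^*$. The crucial point you miss is that one never needs an \emph{unweighted} Sobolev inequality $\|w\|_{L^{2^*}}\lesssim\|\phi\nabla w\|_{L^2}$; the weight on the Sobolev side is welcome because the right-hand side of the equation supplies it, and the unweighted conclusion is obtained from the unweighted \emph{Poincar\'e}, not Sobolev.

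Two concrete problems with your scheme. First, the recursion you write, $\|u\|_{L^{\kappa q}}^q\le Cq\|f\|_{L^p}\|u\|_{L^{(q-1)p'}}^{q-1}$ with $\kappa=2^*/2$, asserts an unweighted $L^{\kappa q}$ bound on the left. From $\|\phi\nabla w\|_{L^2}$ you get $\|\phi w\|_{L^{2^*}}$ (Theorem \ref{thm.WSPI.eta}) and $\|w\|_{L^2}$ (Theorem \ref{thm.unweigthed.eta}), but not $\|w\|_{L^{2^*}}$. Your proposed fix---harmonic extension on the bulk $\Omega_{\eta,\e}^*$ plus a cell-by-cell argument near the holes---is exactly how the paper proves Theorem \ref{thm.WSPI.eta}, and that argument produces the \emph{weighted} estimate, not an unweighted one; the annular contribution near $\partial T_{\eta,\e}$ where $\phi_{\eta,\e}$ vanishes is not controlled without the weight. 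Second, even granting the right form of the recursion (which turns out to give only $\|u\|_{L^q}$ on the left, with the gain hidden in a smaller exponent on the right), the induced map on integrability exponents has $p^*$ as its \emph{asymptotic} fixed point: starting at $q=2$ you approach $p^*$ geometrically but never reach it in finitely many steps, so ``iterating finitely many times'' is not correct as written. Testing directly with $q=p^*$ sidesteps both issues.

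For part (ii), your Stampacchia/De Giorgi alternative is essentially what the paper does: truncate at level $k$, estimate the super-level-set measure $|E_k|$ via the weighted Sobolev inequality, and run the standard iteration $|E_h|\le C\|f\|_{L^p}^2|E_k|^{1+\beta}/|h-k|^2$. That part of your plan is sound.

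\end{document}
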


\begin{proof}
    (i) Let $u_{\eta,\e}^M = \max\{-M, \min\{u_{\eta,\e}, M\} \}$ for $M>0$. We will prove a uniform estimate for $u_{\eta,\e}^M$ and then take the limit as $M\to \infty$. Since now $u_{\eta,\e}^M$ is bounded, $|u_{\eta,\e}^M|^{p^*-2} u_{\eta,\e}^M$ can serve as a test function in $V_{\eta,\e}$ for the equation \eqref{eq.ue.phief}. It follows that 
    \begin{equation}\label{eq.Moser}
        (p^*-1) \int_{\Omega_{\eta,\e}} \phi_{\eta,\e}^2 |u_{\eta,\e}^M|^{p^*-2} \nabla u_{\eta,\e} \cdot \nabla u_{\eta,\e}^M = \int_{\Omega_{\eta,\e}} f\phi_{\eta,\e} |u_{\eta,\e}^M|^{p^*-2} u_{\eta,\e}^M.
    \end{equation}
    Since $\nabla u_{\eta,\e}^M$ is supported in $\{ |u_{\eta,\e}| \le M \}$, in which $\nabla u_{\eta,\e} = \nabla u_{\eta,\e}^M$, \eqref{eq.Moser} can be simplified to
    \begin{equation}\label{Moser-1}
        (p^*-1) \int_{\Omega_{\eta,\e}} \phi_{\eta,\e}^2 |u_{\eta,\e}^M|^{p^*-2} |\nabla u_{\eta,\e}^M|^2 = \int_{\Omega_{\eta,\e}} f\phi_{\eta,\e} |u_{\eta,\e}^M|^{p^*-2} u_{\eta,\e}^M.
    \end{equation}

    Let $v_{\eta,\e}^M = |u_{\eta,\e}^M|^{\frac{p^*-2}{2}} u_{\eta,\e}^M$. Then $\nabla v_{\eta,\e}^M = \frac{p^*}{2} |\nabla u_{\eta,\e}^M|^{\frac{p^*-2}{2}} \nabla u_{\eta,\e}^M$. Let $\frac{1}{2^+} = \frac{1}{2} - \frac{1}{d}$ and $\frac{1}{2^+} + \frac{1}{2^-} = 1$. It follows from  \eqref{Moser-1} that 
    \begin{equation*}
    \begin{aligned}
        \int_{\Omega_{\eta,\e}} \phi_{\eta,\e}^2 |\nabla v_{\eta,\e}^M|^2 & \le C_p \int_{\Omega_{\eta,\e}} f|u_{\eta,\e}^M|^{\frac{p^*-2}{2}} \phi_{\eta,\e} v_{\eta,\e}^M\\
        & \le C_p \bigg( \int_{\Omega_{\eta,\e}} | f|u_{\eta,\e}^M|^{\frac{p^*-2}{2}} |^{2^-} \bigg)^{1/2^-} \bigg( \int_{\Omega_{\eta,\e}} |\phi_{\eta,\e} v_{\eta,\e}^M|^{2^+} \bigg)^{1/2^+} \\
        & \le C_p \bigg( \int_{\Omega_{\eta,\e}} | f|u_{\eta,\e}^M|^{\frac{p^*-2}{2}} |^{2^-} \bigg)^{1/2^-} \bigg( \int_{\Omega_{\eta,\e}} |\phi_{\eta,\e} \nabla v_{\eta,\e}^M|^{2} \bigg)^{1/2},
    \end{aligned}
    \end{equation*}
    where we have used Theorem \ref{thm.WSPI.eta} in the last inequality. 
    Hence, Theorem \ref{thm.unweigthed.eta} implies
    \begin{equation}\label{est.ueLp.2-}
        \int_{\Omega_{\eta,\e}} |u_{\eta,\e}^M|^{p^*} = \int_{\Omega_{\eta,\e}} |v_{\eta,\e}^M|^2 \le C \int_{\Omega_{\eta,\e}} \phi_{\eta,\e}^2 |\nabla v_{\eta,\e}^M|^2 \le C_p \bigg( \int_{\Omega_{\eta,\e}} | f|u_{\eta,\e}^M|^{\frac{p^*-2}{2}} |^{2^-} \bigg)^{2/2^-}.
    \end{equation}
    Now let $\kappa>1$ be such that $\frac{p^*-2}{2} \kappa = p^*$ and $\frac{1}{\kappa} + \frac{1}{\kappa'} = \frac{1}{2^-}$. Then
    \begin{equation*}
        \bigg( \int_{\Omega_{\eta,\e}} | f|u_{\eta,\e}^M|^{\frac{p^*-2}{2}} |^{2^-} \bigg)^{1/2^-} \le \bigg( \int_{\Omega_{\eta,\e}} |f|^{\kappa'} \bigg)^{1/\kappa'} \bigg( \int_{\Omega_{\eta,\e}} |u_{\eta,\e}^M|^{p^*} \bigg)^{1/\kappa}.
    \end{equation*}
    Substituting this into \eqref{est.ueLp.2-}, we obtain
    \begin{equation*}
        \bigg(\int_{\Omega_{\eta,\e}} |u_{\eta,\e}^M|^{p^*} \bigg)^{(1/2)-(1/\kappa)} \le C \bigg( \int_{\Omega_{\eta,\e}} |f|^{\kappa'} \bigg)^{1/\kappa'}.
    \end{equation*}
    This implies
    \begin{equation*}
        \| u_{\eta,\e}^M \|_{L^{p^*}(\Omega_{\eta,\e})} \le C\| f \|_{L^p(\Omega_{\eta,\e})}.
    \end{equation*}
    by noting $\frac12-\frac{1}{\kappa} = \frac{1}{p^*}$ and $\frac{1}{\kappa'} = \frac{1}{p} = \frac{1}{p^*} + \frac{1}{d}$. Observe that in the last inequality, the constant $C$ is independent of $M$. Letting $M\to \infty$, we obtain the desired estimate.

    (ii) We apply the De Giorgi's iteration. We can even prove the more general case $u_{\eta,\e} = g$ on $\partial \Gamma_{\eta,\e}$ with bounded $g$. Let $\ell_0 := \sup_{\Gamma_{\eta,\e}} g$. Let $k>\ell_0$. Define $v_k = (u_{\eta,\e}- k)_+ := \max\{ u_{\eta,\e}- k, 0 \}$ and $E_k = \{ x\in \Omega_{\eta,\e}: u_{\eta,\e} > k \}$. Clearly, $v_k \in V_{\eta,\e}$ and $\nabla v_k = \nabla u_{\eta,\e}$ in $E_k$ and $\nabla v_k = 0$ in $E_k^c$. It follows that
    \begin{equation}
    \begin{aligned}
        \| \phi_{\eta,\e} \nabla v_k \|_{L^2(\Omega_{\eta,\e})}^2 & = \bigg| \int_{\Omega_{\eta,\e}} \phi_{\eta,\e}^2 \nabla u_{\eta,\e} \cdot \nabla v_k \bigg| \\
        & = \bigg| \int_{E_k} \phi_{\eta,\e} f v_k \bigg| \\
        & \le \| f\|_{L^p(\Omega_{\eta,\e})} \|  \phi_{\eta,\e} v_k \|_{L^{2^*}(E_k)} |E_k|^{\frac{1}{2}+\frac{1}{d}-\frac{1}{p}} \\
        & \le C \| f\|_{L^p(\Omega_{\eta,\e})} \|  \phi_{\eta,\e}  \nabla v_k \|_{L^2(E_k)} |E_k|^{\frac{1}{2}+\frac{1}{d}-\frac{1}{p}}.
    \end{aligned}
    \end{equation}
    Thus,
    \begin{equation}
        \| \phi_{\eta,\e} \nabla v_k \|_{L^2(\Omega_{\eta,\e})} \le C \| f\|_{L^p(\Omega_{\eta,\e})} |E_k|^{\frac{1}{2}+\frac{1}{d}-\frac{1}{p}}.
    \end{equation}
        On the other hand, for any $h>k>\ell_0$
    \begin{equation}
    \begin{aligned}
        \| \phi_{\eta,\e} \nabla v_k \|_{L^2(\Omega_{\eta,\e})} \ge C^{-1} \| v_k \|_{L^2(\Omega_{\eta,\e})} \ge C^{-1} |E_{h}|^\frac12 (h-k).
    \end{aligned}
    \end{equation}
    It follows that
    \begin{equation}
        |E_h| \le C\| f\|_{L^p(\Omega_{\eta,\e})}^2 \frac{|E_k|^{1+\beta}}{|h-k|^2},
    \end{equation}
    where $\beta = 2(\frac{1}{d}-\frac{1}{p}) > 0$. Therefore, by a standard iteration argument,
    \begin{equation}
        |E_{\ell_0 + k_0}| = 0,
    \end{equation}
    with
    \begin{equation}
        k_0 = C^\frac{1}{2} 2^\frac{1+\beta}{\beta} \| f\|_{L^p(\Omega_{\eta,\e})} |E_{\ell_0}|^\frac{\beta}{2}.
    \end{equation}
    This implies that
    \begin{equation}\label{est.maximum}
        \sup_{\Omega_{\eta,\e}} u_{\eta,\e} \le \sup_{\Gamma_{\eta,\e}} g + C_p |\Omega_{\eta,\e}|^{\frac{1}{d}-\frac{1}{p}} \| f\|_{L^p(\Omega_{\eta,\e})}.
    \end{equation}
    The desired estimate follows easily.
\end{proof}

The above lemma can be used to show the boundedness of the eigenfunctions $\{ \rho_{\eta,\e}^j \}$.
\begin{proposition}\label{prop.Linfty.rho}
    For any $1\le j\le k$, we have
    \begin{equation}
        \| \rho_{\eta,\e}^j \|_{L^\infty(\Omega_{\eta,\e})} \le C_{k}.
    \end{equation}
\end{proposition}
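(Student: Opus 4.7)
My plan is to prove this via a Moser-type bootstrap, using Lemma~\ref{lem.L^p integrability.eta} as the key regularity engine applied to the eigenvalue equation rewritten in the form \eqref{eq.ue.eta}.

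\textbf{Step 1 (a priori $\mu$-bounds).} I would first establish a uniform upper bound $\mu_{\eta,\e}^j \le C_k$ for $1 \le j \le k$ via the minimax principle \eqref{eq.minimax1}. Take a fixed $k$-dimensional subspace of smooth test functions compactly supported in $\Omega$, for instance first $k$ eigenfunctions of $-\Delta$ on $\Omega$. Multiplying by a cutoff near the holes and using the bounds $\phi_{\eta,\e}\le C$ from Lemma~\ref{lemma-7.2}(iii) together with the lower bound $\phi_{\eta,\e} \ge c$ on the bulk region, the associated Rayleigh quotients are bounded by a constant $C_k$ independent of $\e$ and $\eta$. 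This gives $\mu_{\eta,\e}^j \le C_k$.

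\textbf{Step 2 (initial $L^2$ bound on $\rho$).} Using the normalization $\int_{\Omega_{\eta,\e}} \phi_{\eta,\e}^2 |\rho_{\eta,\e}^j|^2 = 1$ from \eqref{eq.normal} and the eigenvalue equation \eqref{eq.eigen.weighted}, I get
\begin{equation*}
\|\phi_{\eta,\e} \nabla \rho_{\eta,\e}^j\|_{L^2(\Omega_{\eta,\e})}^2 = \mu_{\eta,\e}^j \le C_k.
\end{equation*}
Since $\rho_{\eta,\e}^j \in V_{\eta,\e}$, Theorem~\ref{thm.unweigthed.eta} gives $\|\rho_{\eta,\e}^j\|_{L^2(\Omega_{\eta,\e})} \le C\|\phi_{\eta,\e}\nabla \rho_{\eta,\e}^j\|_{L^2} \le C_k$.

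\textbf{Step 3 (bootstrap via Lemma~\ref{lem.L^p integrability.eta}).} I would rewrite the eigenvalue equation as
\begin{equation*}
\cL_{\eta,\e}(\rho_{\eta,\e}^j) = \phi_{\eta,\e}\, f_j, \qquad f_j := \mu_{\eta,\e}^j\, \phi_{\eta,\e}\, \rho_{\eta,\e}^j.
\end{equation*}
Because $\phi_{\eta,\e}\le C$ (Lemma~\ref{lemma-7.2}(iii)) and $\mu_{\eta,\e}^j \le C_k$, we have $\|f_j\|_{L^p(\Omega_{\eta,\e})} \le C_k \|\rho_{\eta,\e}^j\|_{L^p(\Omega_{\eta,\e})}$ for any $p$. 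Starting from $p_0 = 2$, apply Lemma~\ref{lem.L^p integrability.eta}(i) to get $\|\rho_{\eta,\e}^j\|_{L^{2^*}} \le C\|f_j\|_{L^2} \le C_k$. Iterate: define $p_{n+1} = p_n^*$ so $1/p_{n+1} = 1/p_n - 1/d$. At each step, the previous $L^{p_n}$ bound on $\rho_{\eta,\e}^j$ gives an $L^{p_n}$ bound on $f_j$, and Lemma~\ref{lem.L^p integrability.eta}(i) upgrades to an $L^{p_{n+1}}$ bound on $\rho_{\eta,\e}^j$ as long as $p_n < d$. After $n \sim d/2$ steps, $p_n$ exceeds $d$.

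\textbf{Step 4 (conclusion via $L^\infty$ estimate).} Once we have $\|f_j\|_{L^p(\Omega_{\eta,\e})} \le C_k$ for some $p>d$, Lemma~\ref{lem.L^p integrability.eta}(ii) applied to the equation $\cL_{\eta,\e}(\rho_{\eta,\e}^j) = \phi_{\eta,\e} f_j$ yields $\|\rho_{\eta,\e}^j\|_{L^\infty(\Omega_{\eta,\e})} \le C\|f_j\|_{L^p} \le C_k$, which is the desired estimate.

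The main subtlety is in Step~1: one must verify that $\mu_{\eta,\e}^j$ has an upper bound depending only on $k$ (and not on $\e, \eta$). This is not automatic from the degenerate structure, but follows from the two-scale identity \eqref{eq.2scale relation} together with the minimax bound for $\lambda_{\eta,\e}^j - \e^{-2}\bar\lambda_\eta$ using test functions adapted to $\phi_{\eta,\e}$. The bootstrap itself is routine once this uniform bound is secured, because the number of iterations is a fixed function of $d$, and the constants in Lemma~\ref{lem.L^p integrability.eta} do not blow up through finitely many steps.
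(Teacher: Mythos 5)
Your Steps 3--4 are exactly the paper's proof: rewrite the eigenvalue equation as $\cL_{\eta,\e}(\rho^j_{\eta,\e})=\phi_{\eta,\e}f_j$ with $f_j=\mu^j_{\eta,\e}\phi_{\eta,\e}\rho^j_{\eta,\e}$, iterate Lemma~\ref{lem.L^p integrability.eta}(i) until the exponent exceeds $d$, then apply (ii). (Step 2 is not even needed: by the normalization \eqref{eq.normal}, $\|f_j\|_{L^2}=\mu^j_{\eta,\e}$ already starts the iteration.) You are also right that the constant implicitly requires a bound $\mu^j_{\eta,\e}\le C_k$ uniform in $\e,\eta$, which the paper leaves tacit (it follows from its later minimax analysis), so including Step 1 is reasonable.

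However, the way you sketch Step 1 contains a step that would fail. Multiplying the test functions by a cutoff vanishing near the holes is both unnecessary and harmful. It is unnecessary because $V_{\eta,\e}=H^1_{\phi_{\eta,\e},0}(\Omega_{\eta,\e})$ only requires vanishing on $\Gamma_{\eta,\e}\subset\partial\Omega$, not on $\partial T_{\eta,\e}$; the restriction to $\Omega_{\eta,\e}$ of any $v\in H^1_0(\Omega)$ is already admissible in \eqref{eq.minimax1}. It is harmful because a cutoff $\zeta$ at scale $\e\eta$ around each hole has $|\nabla\zeta|\sim(\e\eta)^{-1}$ on an annulus where $\phi_{\eta,\e}\approx 1$ (by Lemma~\ref{lemma-7.2}(iii)), of volume $\sim(\e\eta)^d$ per cell; summing over the $\sim\e^{-d}$ cells, the extra term $\int\phi_{\eta,\e}^2 v^2|\nabla\zeta|^2$ is of order $\e^{-2}\eta^{d-2}\|v\|_{L^\infty}^2$, which blows up as $\e\to0$ and destroys the claimed uniform bound on the Rayleigh quotient. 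The fix is simply to drop the cutoff and take $S=\mathrm{span}$ of the first $k$ Dirichlet eigenfunctions of $-\Delta$ restricted to $\Omega_{\eta,\e}$; then the numerator is controlled by $\phi_{\eta,\e}\le C$, and for the denominator one still needs a short argument that $\int_{\Omega_{\eta,\e}}\phi_{\eta,\e}^2v^2\ge c_k\|v\|^2_{L^2(\Omega)}$ uniformly in $\e,\eta$ (e.g.\ for small $\eta$ via $\|1-\phi_{\eta,\e}^2\|_{L^1}\le C\eta^{\frac{d-2}{2}}$ from Lemma~\ref{lemma-7.2}(i) together with $\|v\|_{L^\infty}\le C_k\|v\|_{L^2}$ on this finite-dimensional space, and for $\eta$ bounded below using that $\{\phi_{\eta,\e}\ge c\}$ fills a fixed volume fraction of every $\e$-cell); your phrase ``$\phi_{\eta,\e}\ge c$ on the bulk region'' gestures at this but does not supply it. With these repairs Step 1 is fine, and the rest of your argument coincides with the paper's.
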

\begin{proof}
    We apply \eqref{est.global Lp} repeatedly to the equation $\cL_{\eta,\e}( \rho_{\eta,\e}^j )  = \mu_{\eta,\e}^j \phi_{\eta,\e}^2 \rho_{\eta,\e}^j$ in $\Omega_{\eta,\e}$ until $p>d$. Then using \eqref{est.global Linfty} one more time, we obtain the $L^\infty$ bound.
\end{proof}


\subsection{Local weighted Lipschitz estimate}

Recall that $T_{\eta}^{*} = \{ x\in Y: \dist(x,T_\eta) \le \eta \}$ and $T_{\eta}^{**} = \{ x\in Y: \dist(x,T_\eta) \le 2\eta \}$.
\begin{lemma}\label{lem.Local Linfinity.eta}
    Let $p>d$. Let $u_{\eta,\e} \in V_{\eta,\e}$ be a weak solution of \eqref{eq.ue.eta}. Let $k\in \Z^d$ be such that $\e(k+T_{\eta}^{**}) \subset \Omega$. Then
\begin{equation}\label{est.local.Qe}
    \sup_{\e(k+T_{\eta}^* \setminus T_\eta)} |u_{\eta,\e}| \le C\bigg( \fint_{\e(k+T_{\eta}^{**} \setminus T_\eta)} |\phi_{\eta,\e} u_{\eta,\e}|^2 \bigg)^{1/2} +C(\e \eta)^{2} \bigg( \fint_{\e(k+T_{\eta}^{**} \setminus T_\eta)} |f|^p \bigg)^{1/p}, 
\end{equation}
and
\begin{equation}
    \sup_{\e(k+T_{\eta}^* \setminus T_\eta)} |\phi_{\eta,\e} \nabla u_{\eta,\e}| \le C\bigg( \fint_{\e(k+T_{\eta}^{**} \setminus T_\eta)} |\phi_{\eta,\e} \nabla u_{\eta,\e}|^2 \bigg)^{1/2} +C\e\eta \bigg( \fint_{\e(k+T_{\eta}^{**} \setminus T_\eta)} |f|^p \bigg)^{1/p}.
\end{equation}
\end{lemma}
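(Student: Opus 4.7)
The plan is to reduce the degenerate equation to a non-degenerate one for $w := \phi_{\eta,\e} u_{\eta,\e}$, rescale around the target hole, apply classical elliptic regularity, and then translate back. Using $-\Delta \phi_{\eta,\e} = \e^{-2}\overline{\lambda}_\eta \phi_{\eta,\e}$, the same manipulation as in the proof of Proposition \ref{prop.2scale EF} shows
\begin{equation*}
-\Delta w - \e^{-2}\overline{\lambda}_\eta \, w = f \quad \text{in } \Omega_{\eta,\e}, \qquad w = 0 \text{ on } \partial T_{\eta,\e}.
\end{equation*}
Setting $y = (x - \e k)/(\e\eta)$ and $\tilde w(y) = w(\e k + \e\eta y)$, $\tilde\phi(y) = \phi_\eta(k + \eta y)$, $\tilde f(y) = f(\e k + \e\eta y)$, this rescales to
\begin{equation*}
-\Delta_y \tilde w - \eta^2 \overline{\lambda}_\eta \, \tilde w = (\e\eta)^2 \tilde f
\end{equation*}
on a fixed annular region containing the rescaled image of $T_\eta^{**} \setminus T_\eta$, with $\tilde w = 0$ on the rescaled hole boundary $\partial \tilde T$. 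By Lemma \ref{lem.lambda.bar}, $\eta^2 \overline{\lambda}_\eta \lesssim \eta^d \le 1$, and by assumption \textbf{A} the rescaled hole $\tilde T$ is $C^{1,1}$ with uniformly bounded geometry, so the equation is uniformly elliptic with bounded zeroth-order coefficient on a domain of fixed size.

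Standard boundary $W^{2,p}$ estimates for $p > d$, combined with Sobolev embedding, then give
\begin{equation*}
\|\tilde w\|_{L^\infty} + \|\nabla_y \tilde w\|_{L^\infty} \le C\bigg(\fint |\tilde w|^2 \bigg)^{1/2} + C(\e\eta)^2 \bigg(\fint |\tilde f|^p \bigg)^{1/p}
\end{equation*}
on the inner annulus (image of $T_\eta^* \setminus T_\eta$), where the averages are over the outer annulus. To pass from $\tilde w$ to $\tilde u = \tilde w/\tilde\phi$ and $\tilde\phi \nabla_y \tilde u = \nabla_y \tilde w - \tilde u \nabla_y \tilde\phi$, I would invoke Lemma \ref{lemma-7.2}: away from $\partial \tilde T$ the weight $\tilde\phi$ is bounded below by a positive constant, while near $\partial \tilde T$ the lower bound $\tilde\phi(y) \gtrsim \operatorname{dist}(y, \tilde T)$ combines with the $C^{1,\alpha}$ boundary regularity $|\tilde w(y)| \lesssim \|\nabla \tilde w\|_{L^\infty} \operatorname{dist}(y, \tilde T)$ to keep $\tilde u$ bounded. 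The estimate $|\nabla_y \tilde\phi| \lesssim 1$ from the rescaled Lemma \ref{lemma-7.2}(ii) then controls the correction term $\tilde u \nabla_y \tilde\phi$. Rescaling back, the averages are scale-invariant while $|\nabla_y|$ contributes a factor $\e\eta$ relative to $|\nabla_x|$, producing the two claimed inequalities.

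The main obstacle is controlling $u_{\eta,\e} = w/\phi_{\eta,\e}$ uniformly up to $\partial T_{\eta,\e}$ despite the simultaneous vanishing of numerator and denominator. This rests on two ingredients that must be aligned at the same boundary point: the $C^{1,1}$ regularity of the perforation (assumption \textbf{A}), which yields $C^{1,\alpha}$ boundary regularity for the non-degenerate equation satisfied by $w$, and the Hopf-type lower bound $\phi_\eta(x) \gtrsim \operatorname{dist}(x, T_\eta)/\eta$ from Lemma \ref{lemma-7.2}(iii). Once these two estimates are lined up, the ratio $w/\phi_{\eta,\e}$ is bounded by the $C^{1,\alpha}$ norm of $w$, and the remaining steps are a routine scaling argument.
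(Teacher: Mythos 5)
Your overall strategy is the same as the paper's: pass to $w=\phi_{\eta,\e}u_{\eta,\e}$, which solves the non-degenerate equation $-\Delta w=\e^{-2}\overline{\lambda}_\eta w+f$ (the paper's \eqref{eq.hatv} after rescaling), apply classical elliptic estimates up to the $C^{1,1}$ hole boundary, and recover $u_{\eta,\e}$ and $\phi_{\eta,\e}\nabla u_{\eta,\e}$ by dividing through the degeneracy bounds of Lemma \ref{lemma-7.2} (ii)--(iii). For the first inequality this works and matches the paper's proof (the choice of rescaling by $\e\eta$ rather than $\e$ is immaterial).

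There is, however, a genuine gap in your derivation of the second inequality. Your elliptic estimate controls $\|\tilde w\|_{L^\infty}+\|\nabla_y\tilde w\|_{L^\infty}$ by $\big(\fint|\tilde w|^2\big)^{1/2}+(\e\eta)^2\big(\fint|\tilde f|^p\big)^{1/p}$, i.e.\ the right-hand side involves the average of $\phi_{\eta,\e}u_{\eta,\e}$, not of $\phi_{\eta,\e}\nabla u_{\eta,\e}$. Since $\tilde\phi\,\nabla_y\tilde u=\e\eta\,\phi_{\eta,\e}\nabla_x u_{\eta,\e}$, undoing the rescaling gives $\sup|\phi_{\eta,\e}\nabla u_{\eta,\e}|\le C(\e\eta)^{-1}\big(\fint|\phi_{\eta,\e}u_{\eta,\e}|^2\big)^{1/2}+C\e\eta\big(\fint|f|^p\big)^{1/p}$, which is strictly weaker than the stated bound with $\big(\fint|\phi_{\eta,\e}\nabla u_{\eta,\e}|^2\big)^{1/2}$; the sentence ``the averages are scale-invariant while $|\nabla_y|$ contributes a factor $\e\eta$'' hides exactly this mismatch. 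The missing step is the one the paper uses at \eqref{est.phietaDv.final}: the equation is invariant under $u_{\eta,\e}\mapsto u_{\eta,\e}-L$ for any constant $L$, so one applies the whole argument to $u_{\eta,\e}-L$ and chooses $L$ via a weighted Poincar\'e inequality on the annulus, $\inf_L\big(\fint|\phi_\eta(v-L)|^2\big)^{1/2}\le C\eta\big(\fint|\phi_\eta\nabla v|^2\big)^{1/2}$, which converts the zeroth-order average into the gradient average and cancels the factor $(\e\eta)^{-1}$. This is not a cosmetic improvement: the gradient form of the right-hand side is what allows the per-cell bounds to be summed in Lemmas \ref{lem.deltaLayer} and \ref{lem.deltaLayer-2}, whereas your version would produce a non-summable $(\e\eta)^{-2}\|\phi_{\eta,\e}u_{\eta,\e}\|_{L^2}^2$ there. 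With this constant-subtraction and weighted Poincar\'e step added (and noting that the term $\tilde u\,\nabla_y\tilde\phi$ must then be estimated for $u_{\eta,\e}-L$ as well), your argument closes and coincides with the paper's.
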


\begin{proof}
    The local $L^\infty$ estimate \eqref{est.local.Qe} can be proved via De Giorgi or Moser's iteration as in \cite[Lemma 3.1]{SZ23}. Here we take advantage of the particular properties of the equation to give a simple proof. Let $v(x) = u_{\eta,\e}(\e x)$. Then \eqref{eq.ue.eta} is reduced to
\begin{equation}\label{eq.v.eta}
    -\nabla\cdot (\phi_{\eta}^2 \nabla v) = \e \phi_{\eta} f(\e x), \quad \text{in } \e^{-1} \Omega_{\eta,\e}.
\end{equation}
Let $\hat{v}(x) = \phi_{\eta}(x) v(x)$. Then $\hat{v}$ satisfies
\begin{equation}\label{eq.hatv}
    -\Delta \hat{v} = \overline{\lambda}_\eta \hat{v} + \e^2 f(\e x), \quad \text{in } \e^{-1} \Omega_{\eta,\e}.
\end{equation}
Consider the equation in a typical $k+ Y_{\eta} \subset \e^{-1} \Omega$. By the classical $L^\infty$ estimate of elliptic equations, we have
\begin{equation}
    \| \hat{v} \|_{L^\infty(k + T_{\eta}^* \setminus T_\eta)} \le C \bigg( \fint_{k + T_{\eta}^{**} \setminus T_\eta} |\hat{v}|^2 \bigg)^\frac12 + C\eta^2 \e^2 \bigg( \fint_{k + T_{\eta}^{**} \setminus T_\eta} |f(\e x)|^p \bigg)^{\frac{1}{p}}.
\end{equation}
Now, using the the gradient estimate $T_{\eta}^* \setminus T_\eta$, we have
\begin{equation}\label{est.Dhatv.eta}
\begin{aligned}
    \| \nabla \hat{v} \|_{L^\infty(k + T_{\eta}^* \setminus T_\eta)} \le \frac{C}{\eta}\bigg( \fint_{k + T_{\eta}^{**} \setminus T_\eta} | \hat{v}|^2 \bigg)^\frac12 + C\eta \e^2 \bigg( \fint_{k + T_{\eta}^{**} \setminus T_\eta} |f(\e x)|^p \bigg)^{\frac{1}{p}}.
\end{aligned}
\end{equation}
By integration, for $x\in k + T_{\eta}^* \setminus T_\eta$,
\begin{equation}\label{est.hatv.eta}
    |\hat{v}(x)| \le C\frac{\dist(x,k+T_\eta)}{\eta} \bigg\{ \bigg( \fint_{k + T_{\eta}^{**} \setminus T_\eta} |\hat{v}|^2 \bigg)^\frac12 + C\eta^2 \e^2 \bigg( \fint_{k + T_{\eta}^{**} \setminus T_\eta} |f(\e x)|^p \bigg)^{\frac{1}{p}} \bigg\}. 
\end{equation}
Due to $\hat{v} = \phi_{\eta} v$ and Lemma \ref{lemma-7.2} (iii), we obtain from the above inequality
\begin{equation}\label{est.v.eta}
    \| v \|_{L^\infty(k+ T_{\eta}^* \setminus T_\eta)} \le C \bigg\{ \bigg( \fint_{k + T_{\eta}^{**} \setminus T_\eta} |\hat{v}|^2 \bigg)^\frac12 + C\eta^2 \e^2 \bigg( \fint_{k + T_{\eta}^{**} \setminus T_\eta} |f(\e x)|^p \bigg)^{\frac{1}{p}} \bigg\}. 
\end{equation}
Now, using $\nabla \hat{v} = \nabla \phi_{\eta} v + \phi_{\eta} \nabla v$, we derive from \eqref{est.Dhatv.eta} and \eqref{est.v.eta} that
\begin{equation}\label{est.phietaDv}
\begin{aligned}
    \| \phi_{\eta} \nabla v \|_{L^\infty(k+ T_{\eta}^* \setminus T_\eta)} & \le \| \nabla \hat{v} \|_{L^\infty(k+ T_{\eta}^* \setminus T_\eta)} + \| \nabla \phi_{\eta} v \|_{L^\infty(k+ T_{\eta}^* \setminus T_\eta)} \\
    & \le C\bigg\{ \frac{1}{\eta} \bigg( \fint_{k + T_{\eta}^{**} \setminus T_\eta} | \phi_{\eta} v|^2 \bigg)^\frac12 + \eta \e^2 \bigg( \fint_{k + T_{\eta}^{**} \setminus T_\eta} |f(\e x)|^p \bigg)^{\frac{1}{p}} \bigg\}.
\end{aligned}
\end{equation}
Note that, $v-L$ with arbitrary constant $L$ also satisfies the equation \eqref{eq.v.eta}. Thus the first term on the right-hand side of \eqref{est.phietaDv} can be replaced by the weighted Poincar\'{e} inequality
\begin{equation}
    \inf_{L\in \R} \bigg( \fint_{k + T_{\eta}^{**} \setminus T_\eta} | \phi_{\eta} (v-L)|^2 \bigg)^\frac12 \le C\eta \bigg( \fint_{k + T_{\eta}^{**} \setminus T_\eta} | \phi_{\eta} \nabla v|^2 \bigg)^\frac12.
\end{equation}
It follows that
\begin{equation}\label{est.phietaDv.final}
    \| \phi_{\eta,\e} \nabla v \|_{L^\infty(k+ T_{\eta}^* \setminus T_\eta)} \le C \bigg( \fint_{k + T_{\eta}^{**} \setminus T_\eta} | \phi_{\eta,\e} \nabla v|^2 \bigg)^\frac12 + C\eta \e^2 \bigg( \fint_{k + T_{\eta}^{**} \setminus T_\eta} |f(\e x)|^p \bigg)^{\frac{1}{p}}.
\end{equation}
Finally, \eqref{est.v.eta} and \eqref{est.phietaDv.final} imply the desired estimates by rescaling back to $u_{\eta,\e}$.
\end{proof}


\begin{lemma}\label{lem.bdry Linfinity.eta}
    Let $p>d$. Let $u_{\eta,\e} \in V_{\eta,\e}$ be a weak solution of \eqref{eq.ue.eta}. Let $k\in \Z^d$ be such that $\e(k+T_{\eta}) \cap \partial \Omega \neq \emptyset$. Then
\begin{equation}\label{est.bdry.Qe}
\begin{aligned}
    \sup_{\e(k+T_{\eta}^* \setminus T_\eta) \cap \Omega_{\eta,\e}} |u_{\eta,\e}| & \le C\bigg( \fint_{\e(k+T_{\eta}^{**} \setminus T_\eta) \cap \Omega_{\eta,\e} } |\phi_{\eta,\e} u_{\eta,\e}|^2 \bigg)^{1/2} \\
    & \qquad +C(\e \eta)^{2} \bigg( \fint_{\e(k+T_{\eta}^{**} \setminus T_\eta) \cap \Omega_{\eta,\e}} |f|^p \bigg)^{1/p}, 
    \end{aligned}
\end{equation}
and
\begin{equation}
\begin{aligned}
    \sup_{\e(k+T_{\eta}^* \setminus T_\eta) \cap \Omega_{\eta,\e}} |\phi_{\eta,\e} \nabla u_{\eta,\e}| & \le C\bigg( \fint_{\e(k+T_{\eta}^{**} \setminus T_\eta) \cap \Omega_{\eta,\e}} |\phi_{\eta,\e} \nabla u_{\eta,\e}|^2 \bigg)^{1/2} \\
    & \qquad +C\e\eta \bigg( \fint_{\e(k+T_{\eta}^{**} \setminus T_\eta) \cap \Omega_{\eta,\e} } |f|^p \bigg)^{1/p}.
    \end{aligned}
\end{equation}
\end{lemma}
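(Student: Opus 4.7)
The plan is to replicate the proof of Lemma \ref{lem.Local Linfinity.eta} essentially verbatim, the only modification being that after rescaling by $\e$, the lift $\hat v = \phi_\eta v$, where $v(x)=u_{\eta,\e}(\e x)$, now vanishes not only on the hole boundary $\partial T_\eta$ but also on the rescaled portion of $\partial\Omega$ that meets the cell $k+Y$. By assumption $\mathbf{A}$, the region $(k+Y_\eta)\cap \e^{-1}\Omega$ is a Lipschitz domain whose Lipschitz character is uniform in $\e$ and $\eta$, so every boundary elliptic estimate invoked below has constants independent of these parameters.

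First, $\hat v$ satisfies $-\Delta \hat v = \overline{\lambda}_\eta \hat v + \e^2 f(\e\,\cdot)$ in $(k+Y_\eta)\cap\e^{-1}\Omega$ with zero Dirichlet condition on the entire internal boundary. The boundary $L^\infty$ bound for this equation (via De Giorgi or Moser iteration, with uniform constants from $\mathbf{A}$) yields
\[
\|\hat v\|_{L^\infty((k+T_\eta^*\setminus T_\eta)\cap\e^{-1}\Omega)} \le C\Big(\fint_{(k+T_\eta^{**}\setminus T_\eta)\cap\e^{-1}\Omega}|\hat v|^2\Big)^{1/2} + C\eta^2\e^2\Big(\fint |f(\e\,\cdot)|^p\Big)^{1/p}.
\]
Next, using the $C^{1,1}$ regularity of $\partial T_\eta$ and $\mathbf{A}$, the boundary Lipschitz estimate at $\partial T_\eta$ gives $\|\nabla\hat v\|_{L^\infty}\le C\eta^{-1}$ times the right-hand side above, and integrating outward from $\partial T_\eta$ (where $\hat v=0$) produces the pointwise decay $|\hat v(x)|\le C(\dist(x,T_\eta)/\eta)$ times the same right-hand side for $x\in(k+T_\eta^*\setminus T_\eta)\cap\e^{-1}\Omega$.

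From this point the argument is identical to the interior case. The lower bound $\phi_\eta(x)\gtrsim \dist(x,T_\eta)/\eta$ from Lemma \ref{lemma-7.2}(iii) converts the decay of $\hat v$ into an $L^\infty$ bound for $v$; the identity $\phi_\eta\nabla v = \nabla\hat v - v\nabla\phi_\eta$ combined with $|\nabla\phi_\eta|\le C\eta^{-1}$ yields the matching $L^\infty$ bound for $\phi_\eta\nabla v$; and replacing $v$ by $v-L$ (which still satisfies the same degenerate equation) and taking the infimum over $L\in\R$ via the weighted Poincar\'e--Wirtinger inequality on the Lipschitz region absorbs $\|\phi_\eta v\|_{L^2}$ into $\eta\|\phi_\eta\nabla v\|_{L^2}$. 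Rescaling back to the original scale recovers the asserted inequalities. The main obstacle is securing uniform constants in the boundary $L^\infty$ estimate and in the boundary gradient estimate at $\partial T_\eta$ near the interface $\partial T_\eta\cap\partial\Omega$; assumption $\mathbf{A}$ is designed precisely to provide the uniform Lipschitz character needed there, and without it the constants would degenerate as the hole meets $\partial\Omega$ at increasingly unfavorable angles.
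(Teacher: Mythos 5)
There is a genuine gap at the gradient step. You follow the order of the interior proof of Lemma \ref{lem.Local Linfinity.eta}: first claim a uniform boundary Lipschitz estimate $\|\nabla\hat v\|_{L^\infty}\le C\eta^{-1}(\cdots)$ up to $\partial T_\eta$ inside the boundary cell, then integrate outward from $\partial T_\eta$ to obtain the linear decay of $\hat v$. In the interior case this is legitimate because the only boundary present is the $C^{1,1}$ surface $\partial T_\eta$. In a boundary cell, however, the relevant domain $(k+T_\eta^{*}\setminus T_\eta)\cap\e^{-1}\Omega$ has a Lipschitz edge where $\partial T_\eta$ meets $\e^{-1}\partial\Omega$, and assumption $\mathbf{A}$ only provides a uniform \emph{Lipschitz} character there. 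A Lipschitz character does not yield an up-to-the-boundary gradient bound: solutions of the Dirichlet problem in Lipschitz domains are in general only H\"older continuous near corners, and $|\nabla\hat v|$ can blow up as the corner is approached (e.g.\ like $r^{\alpha-1}$ with $\alpha<1$ at a non-convex edge). So the statement ``the boundary Lipschitz estimate at $\partial T_\eta$ gives $\|\nabla\hat v\|_{L^\infty}\le C\eta^{-1}\cdots$'' is exactly the point that cannot be justified from $\mathbf{A}$, and with it the integration argument that produces $|\hat v(x)|\le C\eta^{-1}\dist(x,T_\eta)\,(\cdots)$ collapses.

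The paper's proof avoids this by reversing the order of the two steps. It first obtains the $L^\infty$ bound for $\hat v$ in the Lipschitz cell, and then derives the linear decay \eqref{est.bdry.hatv}, $|\hat v(x)|\le C\eta^{-1}\dist(x,k+T_\eta)\,(\cdots)$, directly by a \emph{barrier argument}, which needs only the exterior ball condition; this is available with a uniform radius because both $\partial T_\eta$ and $\partial\Omega$ are $C^{1,1}$, no gradient bound at the corner being required. Only afterwards is the gradient controlled, via interior (or smooth-boundary) Lipschitz estimates on balls $B_{r/2}(x)$ with $r=\dist(x,k+T_\eta)$, which stay at distance $\sim r$ from the hole and hence away from the singular edge; combining with the barrier decay gives \eqref{est.bdry.Dhatv}, i.e.\ $|\nabla\hat v(x)|\le C\eta^{-1}(\cdots)$ pointwise, after which the conclusion follows as in the interior lemma. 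To repair your argument you should replace the claimed boundary Lipschitz estimate by this barrier-plus-rescaled-interior-estimate scheme; the remaining steps of your proposal (the lower bound from Lemma \ref{lemma-7.2}(iii), the identity $\phi_\eta\nabla v=\nabla\hat v-v\nabla\phi_\eta$, and the subtraction of a constant $L$ with the weighted Poincar\'e inequality) are in line with the paper.
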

\begin{proof}
    We apply a similar argument as Lemma \ref{lem.Local Linfinity.eta}. Let $\hat{v}$ be the same as in \eqref{eq.hatv}. We consider a boundary cell $k + Y_\eta$ such that $(k + T_\eta) \cap \e^{-1} \partial \Omega \neq \emptyset$. Without loss of generality assume $T_\eta$ has only one hole. By the geometric assumption \textbf{A}, $(k + T_\eta) \cap \e^{-1} \Omega$ is a Lipschitz domain. Moreover, since both $\Omega$ and $T_\eta$ are $C^{1,1}$, the intersection $(k + T_\eta) \cap \e^{-1} \Omega$ satisfies the exterior ball condition.

    By the classical $L^\infty$ estimate of elliptic equation in Lipschitz domains, we have
    \begin{equation}
    \| \hat{v} \|_{L^\infty((k + T_{\eta}^* \setminus T_\eta) \cap \e^{-1}\Omega}) \le C \bigg( \fint_{(k + T_{\eta}^{**} \setminus T_\eta) \cap \e^{-1}\Omega} |\hat{v}|^2 \bigg)^\frac12 + C\eta^2 \e^2 \bigg( \fint_{(k + T_{\eta}^{**} \setminus T_\eta) \cap \e^{-1}\Omega} |f(\e x)|^p \bigg)^{\frac{1}{p}}.
\end{equation}
    To obtain the gradient estimate, we can apply a barrier argument in the Lipschitz domain satisfying the exterior ball condition to obtain
    \begin{equation}\label{est.bdry.hatv}
    \begin{aligned}
    |\hat{v}(x)| & \le \frac{C \text{dist}(x, k+T_\eta)}{\eta} \\
    & \qquad \times \bigg\{ \bigg( \fint_{(k + T_{\eta}^{**} \setminus T_\eta) \cap \e^{-1}\Omega} |\hat{v}|^2 \bigg)^\frac12 + \eta^2 \e^2 \bigg( \fint_{(k + T_{\eta}^{**} \setminus T_\eta) \cap \e^{-1}\Omega} |f(\e x)|^p \bigg)^{\frac{1}{p}} \bigg\}.
    \end{aligned}
\end{equation}
Then for each $x\in (k + T_{\eta}^{**} \setminus T_\eta) \cap \e^{-1}\Omega$ with $r = \text{dist}(x, k+T_\eta)$, we apply the interior Lipschitz estimate to get for any $x\in (k+T_\eta^* \setminus T_\eta) \cap \e^{-1} \Omega$,
\begin{equation}\label{est.bdry.Dhatv}
\begin{aligned}
    |\nabla \hat{v}(x)| & \le 
    \frac{C}{r}\bigg( \fint_{(k + T_{\eta}^{**} \setminus T_\eta) \cap B_{r/2}(x)} | \hat{v}|^2 \bigg)^\frac12 + Cr \e^2 \bigg( \fint_{(k + T_{\eta}^{**} \setminus T_\eta) \cap B_{r/2}(x)} |f(\e x)|^p \bigg)^{\frac{1}{p}} \\
    & \le 
    \frac{C}{\eta}\bigg( \fint_{(k + T_{\eta}^{**} \setminus T_\eta) \cap \e^{-1}\Omega} | \hat{v}|^2 \bigg)^\frac12 + C\eta \e^2 \bigg( \fint_{(k + T_{\eta}^{**} \setminus T_\eta) \cap \e^{-1}\Omega} |f(\e x)|^p \bigg)^{\frac{1}{p}}.
    \end{aligned}
\end{equation}
Notice that \eqref{est.bdry.hatv} and $\eqref{est.bdry.Dhatv}$ are analogs of \eqref{est.hatv.eta} and \eqref{est.Dhatv.eta}. Then the desired estimates follow from a similar argument as Lemma \ref{lem.Local Linfinity.eta}.
\end{proof}

\section{First-order approximation by harmonic extension}
\label{sec.4}


In this subsection, we derive the error estimate of the first-order approximation for the boundary value problem,
\begin{equation}\label{eq.ue}
    \cL_{\eta,\e}(u_{\eta,\e}) = F \quad \text{in } \Omega_{\eta,\e} \quad \text{and}\quad  u_{\eta,\e} = 0 \quad \text{on } \Gamma_{\eta,\e},
\end{equation}
where $F\in L^2(\Omega_{\eta,\e})$. 
We show that the homogenized problem as $\e \to 0$ is
\begin{equation}\label{eq.u00}
    \cL_\eta(u_{\eta}) = \widetilde{F} \quad \text{in } \Omega, \quad \text{and}\quad  u_{\eta} = 0 \quad \text{on } \partial \Omega,
\end{equation}
where $\widetilde{F}$ is the zero extension of $F$ in $\Omega$. We mention that for the degenerate equation \eqref{eq.ue} in the perforated domain $\Omega_{\eta,\e}$, a natural boundary condition on $\Sigma_{\eta,\e} = \partial T_\e \cap \Omega$ is satisfied automatically via the variational form of the weak solutions analogous to \eqref{eq.weaksol}.


Let $c_1 \in (0,\frac14 c_0]$ be a constant. Let $\theta_\e \in C_0^\infty(\Omega)$ be a cutoff function such that $\theta_\e = 1$ if $\dist(x,\partial \Omega)>2c_1 \e$, $\theta_\e = 0$ if $\dist(x,\partial \Omega)< c_1 \e$, and $|\nabla\theta_\e| \le C\e^{-1}$. Define
\begin{equation*}
    \Omega(t\e) = \left\{ x\in \Omega: \dist(x,\partial \Omega) < tc_1 \e \right\}.
\end{equation*}
Observe that $\nabla \theta_\e$ is supported in a thin layer $\Omega(2\e) \setminus \Omega(\e) = \{ x\in \Omega: c_1 \e \le \dist(x,\partial \Omega) < 2c_1 \e \}$. 

Another technical tool we need is the smoothing operator. Let $0\le \zeta \in C_0^\infty(B_{c_1}(0))$ and $\int_{B_{c_1}(0)} \zeta = 1$ and define the standard smoothing operator by
\begin{equation*}
    \mathscr{K}_\e (f)(x) = \int_{\mathbb{R}^d}  \e^{-d} \zeta(\frac{x-y}{\e})f(y) dy.
\end{equation*}
Many properties about the above smoothing operators can be found in \cite[Chapter 3.1]{Shen18}. We refer to the Appendix of \cite{SZ23} for several lemmas that will be used below.

Let
\begin{equation}\label{def.we}
    w_{\eta,\e} = u_{\eta,\e} - u_{\eta} - \e \chi_\eta^\ell(x/\e) \mathscr{K}_\e(\partial_\ell u_{\eta}) \theta_\e.
\end{equation}
Since $\widetilde{F}\in L^2(\Omega)$, $u_{\eta}\in H_{\rm loc}^2(\Omega)$. 


\begin{lemma}\label{lem.Dwe.Dh}
    Assume $u_\eta \in W^{2,d}(\Omega)$. Then for any $h\in H^1_0(\Omega)$, we have
    \begin{equation}\label{est.DweDh}
    \begin{aligned}
        & \bigg| \int_{\Omega_{\eta,\e}} \phi_{\eta,\e}^2 \nabla w_{\eta,\e} \cdot \nabla h \bigg| \\
        & \le C \e \eta^\frac{d-2}{2} \| \nabla^2 u_{\eta} \|_{L^d(\Omega)} \| \nabla h \|_{L^2(\Omega )} + C\e^\frac{1}{2} \eta^{\frac{d-2}{2}} \| \nabla u_{\eta} \|_{W^{1,d}(\Omega)} \| \nabla h \|_{L^2(\Omega(2\e))} .
    \end{aligned}
    \end{equation}
\end{lemma}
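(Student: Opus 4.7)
Since $h \in H^1_0(\Omega)$ vanishes on $\partial \Omega \supset \Gamma_{\eta,\e}$ and $\phi_{\eta,\e}$ is uniformly bounded, $h$ is admissible in $V_{\eta,\e}$. Testing \eqref{eq.ue} against $h$ and using \eqref{eq.u00} for $u_\eta$ gives $\int_{\Omega_{\eta,\e}}\phi_{\eta,\e}^2\nabla u_{\eta,\e}\cdot\nabla h=\int_\Omega\overline{A}_\eta\nabla u_\eta\cdot\nabla h$. Substituting this into \eqref{def.we} and expanding the Leibniz derivative of $\e\chi_\eta^\ell(x/\e)\mathscr K_\e(\partial_\ell u_\eta)\theta_\e$, I would organize $\int_{\Omega_{\eta,\e}}\phi_{\eta,\e}^2\nabla w_{\eta,\e}\cdot\nabla h$ into three groups: a \emph{bulk discrepancy} $\int_\Omega[\overline{A}_\eta-B_\eta(x/\e)]\nabla u_\eta\cdot\nabla h$ with $B_\eta^{ij}(y):=\phi_\eta^2(\delta_{ij}+\partial_i\chi_\eta^j)$; a \emph{smoothing error} involving $\nabla u_\eta-\mathscr K_\e(\nabla u_\eta)\theta_\e$; and two \emph{cutoff tails} carrying an explicit factor of $\e$.

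For the bulk discrepancy I would use a flux corrector. By the corrector equation \eqref{eq.corrector}, each column of $B_\eta-\overline{A}_\eta$ is divergence-free on $Y_\eta$ with zero cell mean, so one constructs $E_\eta^{ikj}$, periodic on $Y_\eta$, antisymmetric in $(i,k)$, and satisfying $\partial_k E_\eta^{ikj}=B_\eta^{ij}-(\bar a_\eta)^{ij}$. After rescaling $B_\eta(x/\e)-\overline{A}_\eta=\e\,\partial_k(E_\eta^{ikj})_\e$ and integrating by parts (justified by $C_c^\infty$-approximation of $h$),
\begin{equation*}
\int_\Omega\bigl[\overline{A}_\eta-B_\eta(x/\e)\bigr]\nabla u_\eta\cdot\nabla h=\e\int_\Omega(E_\eta^{ikj})_\e\,\partial_k\partial_j u_\eta\,\partial_ih,
\end{equation*}
since the cross term $\e\int(E_\eta^{ikj})_\e\,\partial_ju_\eta\,\partial_k\partial_ih$ vanishes by antisymmetry of $E_\eta$ in $(i,k)$ paired against the symmetric Hessian of $h$. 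The decisive estimate is then $\|E_\eta\|_{L^{2^*}(Y_\eta)}\le C\eta^{(d-2)/2}$, proved as in Proposition \ref{prop.chi} via the defining elliptic system for $E_\eta$, the bound $\|\phi_\eta(I+\nabla\chi_\eta)-\overline{A}_\eta\|_{L^2(Y_\eta)}\lesssim\eta^{(d-2)/2}$, and a cell analog of the weighted Sobolev--Poincar\'e inequality of Theorem \ref{thm.WSPI.eta}. H\"older with exponents $(2^*,d,2)$ then yields $C\e\eta^{(d-2)/2}\|\nabla^2u_\eta\|_{L^d(\Omega)}\|\nabla h\|_{L^2(\Omega)}$, matching the first term in \eqref{est.DweDh}.

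For the remaining pieces I would split each integral into its interior part on $\Omega\setminus\Omega(2\e)$ and its boundary-layer part on $\Omega(2\e)$. On the interior, $\theta_\e\equiv1$; standard smoothing estimates ($\|g-\mathscr K_\e g\|_{L^2}\lesssim\e\|\nabla g\|_{L^2}$ and $\|\nabla\mathscr K_\e g\|_{L^p}\lesssim\|\nabla g\|_{L^p}$), combined with the bounds on $\phi_\eta\chi_\eta$ and $\phi_\eta\nabla\chi_\eta$ from Proposition \ref{prop.chi} together with their $L^p$-interpolations, produce contributions of order $\e\eta^{(d-2)/2}\|\nabla u_\eta\|_{W^{1,d}(\Omega)}\|\nabla h\|_{L^2(\Omega)}$, absorbable into the first term. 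On $\Omega(2\e)$, $\nabla u_\eta-\mathscr K_\e(\nabla u_\eta)\theta_\e$ reduces to $\nabla u_\eta$ and $|\nabla\theta_\e|\lesssim\e^{-1}$ is supported there; using the rescaling bound $\|(\chi_\eta)_\e\|_{L^2(\Omega(2\e))}^2\lesssim|\Omega(2\e)|\,\|\chi_\eta\|_{L^2(Y_\eta)}^2\lesssim\e\eta^{d-2}$ together with the embedding $W^{1,d}(\Omega)\hookrightarrow L^\infty(\Omega)$ applied to $\mathscr K_\e(\nabla u_\eta)$, H\"older produces exactly $C\e^{1/2}\eta^{(d-2)/2}\|\nabla u_\eta\|_{W^{1,d}(\Omega)}\|\nabla h\|_{L^2(\Omega(2\e))}$.

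\textbf{Main obstacle.} The principal difficulty is the construction of the flux corrector $E_\eta$ on the perforated, degenerate cell $Y_\eta$ together with the sharp $\eta^{(d-2)/2}$ bound: the weight $\phi_\eta^2$ vanishes on $\partial T_\eta$, so the defining elliptic system is degenerate and the $\eta$-scaling must be tracked through every step rather than quoted from classical periodic theory. This is precisely where the fine estimates of Section \ref{sec.2} (pointwise degeneracy, $L^p$ control of $\chi_\eta$ and $\nabla\chi_\eta$) must be systematically deployed. The fact that the boundary-layer term is only $\e^{1/2}$ rather than $\e$ is intrinsic to this approach and matches the remark after Theorem \ref{thm.main-EF} that a sharp boundary layer theory in perforated domains is not yet available.
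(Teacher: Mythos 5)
Your overall skeleton (test with $h$, use the two equations to replace $\int\phi_{\eta,\e}^2\nabla u_{\eta,\e}\cdot\nabla h$ by $\int_\Omega\overline{A}_\eta\nabla u_\eta\cdot\nabla h$, flux corrector for the discrepancy, separate boundary-layer terms) is the same as the paper's, and your treatment of the bulk term and the cutoff tails is essentially sound. The genuine gap is in your ``smoothing error'' group. Because you pair the full discrepancy $\overline{A}_\eta-B_\eta(x/\e)$ with the \emph{unsmoothed} $\nabla u_\eta$, the leftover term is
$\int_{\Omega}\phi_{\eta,\e}^2(\nabla\chi_\eta)_\e\,[\nabla u_\eta-\mathscr K_\e(\nabla u_\eta)\theta_\e]\cdot\nabla h$,
i.e.\ the corrector gradient multiplies a non-smoothed field. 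The only exponent at which $\phi_\eta\nabla\chi_\eta$ has the sharp smallness is $p=2$: by Proposition \ref{prop.chi}(ii) and interpolation one has $\|\phi_\eta\nabla\chi_\eta\|_{L^p(Y_\eta)}\le C\eta^{\frac{d}{p}-1}$, so $\|\phi_\eta\nabla\chi_\eta\|_{L^{2^*}(Y_\eta)}\approx\eta^{\frac{d-4}{2}}$, which loses a full factor $\eta^{-1}$ against the target $\eta^{\frac{d-2}{2}}$. With $u_\eta$ only in $W^{2,d}$ you cannot put $\nabla u_\eta-\mathscr K_\e(\nabla u_\eta)$ (interior part) or $\nabla u_\eta$ (layer part) in $L^\infty$, so the H\"older bookkeeping forces you either to the exponent $2^*$ on the oscillating factor (cost: $\eta^{-1}$) or to give up the power of $\e$; e.g.\ the interior piece comes out as $C\e\,\eta^{\frac{d-4}{2}}\|\nabla^2u_\eta\|_{L^d}\|\nabla h\|_{L^2}$ and the layer piece as $C\e^{\frac12}\eta^{\frac{d-4}{2}}\|\nabla u_\eta\|_{W^{1,d}}\|\nabla h\|_{L^2(\Omega(2\e))}$, neither of which is admissible in \eqref{est.DweDh}. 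This is precisely why the paper's decomposition \eqref{eq.Dwe.Dh2} is arranged so that $\phi_{\eta,\e}^2(\nabla\chi_\eta)_\e$ only ever multiplies the smoothed field $\mathscr K_\e(\nabla u_\eta)\theta_\e$ (where the flux corrector plus the property $\|g_\e\,\mathscr K_\e(f)\|_{L^2}\le C\|g\|_{L^2(Y)}\|f\|_{L^2}$ let the $L^2$-per-cell bound do the work), while the unsmoothed remainders $I_2,I_3$ carry only the factor $\overline{A}_\eta-\phi_{\eta,\e}^2 I$, whose $L^{2^*}$ norm genuinely is $O(\eta^{\frac{d-2}{2}})$ by \eqref{est.allsmall}. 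Your scheme could be repaired by applying the flux-corrector identity a second time inside the smoothing-error term, but as written (``standard smoothing estimates plus $L^p$ interpolation of $\phi_\eta\nabla\chi_\eta$'') the claimed orders are not attainable.

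Two smaller points. First, your layer estimate invokes $W^{1,d}(\Omega)\hookrightarrow L^\infty(\Omega)$, which is false (critical exponent); the correct route, as in the paper's estimate of $I_5$, is the pairing property of $\mathscr K_\e$ together with the boundary-layer bound $\|\nabla u_\eta\|_{L^2(\Omega(2\e))}\le C\e^{\frac12}\|\nabla u_\eta\|_{H^1(\Omega)}$, and this gives exactly the $\e^{\frac12}\eta^{\frac{d-2}{2}}$ term. Second, the ``main obstacle'' you identify is not where the difficulty lies: since $b_\eta=\overline{A}_\eta-\phi_\eta^2I-\phi_\eta^2\nabla\chi_\eta$ extends by zero to an $L^2(Y)$, mean-zero, divergence-free field on the full torus (the weight kills the flux on $\partial T_\eta$), the flux corrector is built by solving a plain Poisson problem on $Y$, and $\|\Phi_\eta\|_{L^{2^*}(Y)}\le C\eta^{\frac{d-2}{2}}$ follows from $\|b_\eta\|_{L^2(Y)}\le C\eta^{\frac{d-2}{2}}$ and Sobolev embedding; no degenerate cell system needs to be solved.
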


\begin{proof}
First of all,  $h\in H^1_0(\Omega)$ implies $h|_{\Omega_{\eta,\e}} \in V_{\eta,\e}$. Thus $h$ can be used as a test function.
We calculate directly
\begin{equation}\label{eq.Dwe.Dh}
\begin{aligned}
    & \int_{\Omega_{\eta,\e}} \phi_{\eta,\e}^2 \nabla w_{\eta,\e} \cdot \nabla h \\
    & = \int_{\Omega_{\eta,\e}} \phi_{\eta,\e}^2 \nabla u_{\eta,\e} \cdot \nabla h - \int_{\Omega_{\eta,\e}} \phi_{\eta,\e}^2 \nabla u_{\eta} \cdot \nabla h  - \int_{\Omega_{\eta,\e}} \phi_{\eta,\e}^2 (\nabla \chi_\eta^\ell)_\e \mathscr{K}_\e(\partial_\ell u_{\eta}) \theta_\e \cdot \nabla h \\
    & \qquad - \e \int_{\Omega_{\eta,\e}} \phi_{\eta,\e}^2 (\chi_\eta^\ell)_\e \mathscr{K}_\e(\nabla \partial_\ell u_{\eta}) \theta_\e \cdot \nabla h - \e \int_{\Omega_{\eta,\e}} \phi_{\eta,\e}^2 (\chi_\eta^\ell)_\e \mathscr{K}_\e(\partial_\ell u_{\eta}) \nabla \theta_\e \cdot \nabla h.
\end{aligned}
\end{equation}
Using the equations \eqref{eq.ue} and \eqref{eq.u00}, we have
\begin{equation*}
    \int_{\Omega_{\eta,\e}} \phi_{\eta,\e}^2 \nabla u_{\eta,\e} \cdot \nabla h = \int_{\Omega} \overline{A}_\eta\nabla u_{\eta} \cdot \nabla h. 
\end{equation*}
Inserting this equation into \eqref{eq.Dwe.Dh}, we obtain
\begin{equation}\label{eq.Dwe.Dh2}
\begin{aligned}
    & \int_{\Omega_{\eta,\e}} \phi_{\eta,\e}^2 \nabla w_{\eta,\e} \cdot \nabla h \\
    & = \int_{\Omega} (\overline{A}_\eta - \phi_{\eta,\e}^2 I - \phi_{\eta,\e}^2 (\nabla \chi_\eta )_\e ) \mathscr{K}_\e(\nabla u_{\eta}) \theta_\e \cdot \nabla h \\
    &\qquad + \int_{\Omega} (\overline{A}_\eta - \phi_{\eta,\e}^2 I ) \nabla u_{\eta} (1-\theta_\e) \cdot \nabla h\\
    & \qquad + \int_{\Omega} (\overline{A}_\eta - \phi_{\eta,\e}^2 I ) (\nabla u_{\eta} -\mathscr{K}_\e(\nabla u_{\eta})) \theta_\e \cdot \nabla h \\
    &\qquad  - \e \int_{\Omega_{\eta,\e}} \phi_{\eta,\e}^2 (\chi_\eta^\ell)_\e \mathscr{K}_\e(\nabla \partial_\ell u_{\eta}) \theta_\e \cdot \nabla h
     - \e \int_{\Omega_{\eta,\e}} \phi_{\eta,\e}^2 (\chi_\eta^\ell)_\e  \mathscr{K}_\e( \partial_\ell u_{\eta}) \nabla \theta_\e \cdot \nabla h \\
     & =: \sum_{i=1}^5 I_i.
\end{aligned}
\end{equation}

We estimate $I_k$ for $1\le k\le 5$.
To estimate $I_1$, we define the flux correctors. Let $B_\eta = \overline{A}_\eta - \phi^2_{\eta} I - \phi^2_{\eta} \nabla \chi_\eta$, or in component form
\begin{equation*}
    (b_\eta)_{ij} = (\bar{a}_\eta)_{ij} - \phi^2_{\eta} \delta_{ij} - \phi^2_{\eta} \partial_i \chi_\eta^j.
\end{equation*}
Clearly, by Proposition \ref{prop.chi},  $(b_\eta)_{ij} \in L^2(Y)$ with obvious zero extension, and
\begin{equation*}
    \int_{Y} (b_\eta)_{ij} = 0, \qquad \partial_i (b_\eta)_{ij} = 0.
\end{equation*}
Then it is well-known (see, e.g., \cite{Shen18}) that we can find the flux correctors $\Phi_{\eta} = (\Phi_{\eta})_{kij} \in H_{\rm per}^1(Y)$ such that
\begin{equation*}
    (b_\eta)_{ij} = \partial_k (\Phi_{\eta})_{kij}, \qquad (\Phi_{\eta})_{kij} = -(\Phi_{\eta})_{ikj}.
\end{equation*}
Moreover, $(\Phi_{\eta})_{kij} \in C^\alpha(Y)$ for any $\alpha \in (0,1)$. 

Note that we can write
\begin{equation}
    B_\eta = (\overline{A}_\eta - I) + I(1-\phi_\eta^2) + \phi_\eta^2 \nabla \chi_\eta.
\end{equation}
By the smallness of $\phi_{\eta} - 1, \overline{A}_\eta - I, \phi_{\eta} \nabla \chi_\eta $ and $ \chi_\eta$ obtained in Lemma \ref{lemma-7.2} (i) and Proposition \ref{prop.chi} (i)(iv), and the construction of $\Phi_\eta$, we have
\begin{equation}\label{est.allsmall}
    \| \phi_{\eta} -1 \|_{L^{2^*}(Y_{\eta})} + |\overline{A}_\eta - I| + \|  \phi_{\eta} \nabla \chi_\eta \|_{L^2(Y_{\eta})} + \| \chi_\eta \|_{L^2(Y_{\eta})} + \|  \Phi_{\eta} \|_{L^{2^*}(Y_{\eta})}\le C\eta^{\frac{d-2}{2}}.
\end{equation}
Therefore, by the skew-symmetry of $\Phi_\eta$ and a standard argument,
\begin{equation*}
\begin{aligned}
    I_1
    & = \int_{\Omega} \partial_k(\e (\Phi_{\eta,\e})_{kij} ) \mathscr{K}_\e(\partial_j u_{\eta})( \partial_i h) \theta_\e \\
    & = - \e \int_\Omega (\Phi_{\eta,\e})_{kij} \mathscr{K}_\e(\partial_k\partial_j u_{\eta})( \partial_i h ) \theta_\e -\e \int_{\Omega} (\Phi_{\eta,\e})_{kij} \mathscr{K}_\e (\partial_j u_{\eta}) \partial_i h \partial_k \theta_\e.
\end{aligned}
\end{equation*}
The first integral on the right-hand side is bounded by
\begin{equation}\label{est.Du0.int}
   C \e \| \Phi_\eta \|_{L^{2}(Y^*)}  \| \nabla^2 u_{\eta} \|_{L^2(\Omega)} \| \nabla h \|_{L^2(\Omega )} \le C\e \eta^{\frac{d-2}{2}} \| \nabla^2 u_{\eta} \|_{L^2(\Omega)} \| \nabla h \|_{L^2(\Omega )}.
\end{equation}
The second integral is bounded by
\begin{equation}\label{est.Du0.layer}
    C \e^\frac12 \| \Phi_\eta \|_{L^{2}(Y^*)}  \| \nabla u_{\eta} \|_{H^1( \Omega)} \| \nabla h \|_{L^2(\Omega(2\e) )} \le C\e^\frac12 \eta^{\frac{d-2}{2}} \| \nabla u_{\eta} \|_{H^1( \Omega)} \| \nabla h \|_{L^2(\Omega(2\e) )}.
\end{equation}
Consequently,
\begin{equation}
    |I_1| \le C\e \eta^{\frac{d-2}{2}} \| \nabla^2 u_{\eta} \|_{L^2(\Omega)} \| \nabla h \|_{L^2(\Omega )} + C\e^\frac12 \eta^{\frac{d-2}{2}} \| \nabla u_{\eta} \|_{H^1( \Omega)} \| \nabla h \|_{L^2(\Omega(2\e) )}.
\end{equation}

To estimate $I_2$, we apply \eqref{est.allsmall} and the H\"{o}lder's inequality to obtain
\begin{equation}
\begin{aligned}
    |I_2| & \le C\Big\{ \| \overline{A}_\eta - I \|_{L^{2^*}(\Omega(2\e))} + \| \phi_{\eta,\e}^2 - 1 \|_{L^{2^*}(\Omega(2\e))} \Big\} \| \nabla u_\eta \|_{L^d(\Omega(2\e))} \| \nabla h \|_{L^2(\Omega(2\e))} \\
    & \le C \eta^{\frac{d-2}{2}} \e^{\frac{1}{2^*}}\cdot \e^\frac{1}{d} \| \nabla u_\eta \|_{W^{1,d}(\Omega)} \| \nabla h \|_{L^2(\Omega(2\e))} \\
    & \le C \eta^{\frac{d-2}{2}} \e^\frac{1}{2} \| \nabla u_\eta \|_{W^{1,d}(\Omega)} \| \nabla h \|_{L^2(\Omega(2\e))}.
\end{aligned}
\end{equation}

We estimate $I_3$ as follows:
\begin{equation}
\begin{aligned}
    |I_3| & \le C\Big\{ \| \overline{A}_\eta - I \|_{L^{2^*}(\Omega)} + \| \phi_{\eta,\e}^2 - 1 \|_{L^{2^*}(\Omega)} \Big\} \| \nabla u_\eta - \mathscr{K}_\e(\nabla u_{\eta}) \|_{L^d(\Omega \setminus \Omega(\e))} \| \nabla h \|_{L^2(\Omega)} \\
    & \le C\e \eta^\frac{d-2}{2} \| \nabla^2 u_\eta \|_{L^d(\Omega)} \| \nabla h \|_{L^2(\Omega)}.
\end{aligned}
\end{equation}

Finally, note that $I_4$ has the same bound as \eqref{est.Du0.int}, and $I_5$ has the same bound as \eqref{est.Du0.layer}. Summing up all these estimates, we obtain the desired estimate.
\end{proof}


Now we are going to pick a particular test function $h$. We point out that $w_{\eta,\e}$ itself, even with a cut-off on the boundary is not a legal choice for $h$ since $\nabla u_{\eta,\e}$ may not lie in $L^2(\Omega)^d$ (we only know $\phi_{\eta,\e} \nabla u_{\eta,\e} \in L^2(\Omega)^d$). We will use the harmonic extension to handle the possible singularity of $\nabla u_{\eta,\e}$ near the holes. 
Let $\delta\in (0,c_1]$. Define
\begin{equation*}
    \Omega^\delta_{\eta,\e}: = \left\{x\in \Omega_{\eta,\e}: \dist(x,T_{\eta,\e})>\delta \e \eta \right \}.
\end{equation*}
Note that $|\Omega_{\eta,\e} \setminus \Omega^\delta_{\eta,\e}| \le C\delta \eta^d$. We shall 
extend the function $u_{\eta,\e}$ and $\chi_\eta(x/\e)$ from $\Omega^\delta_{\eta,\e}$ to $\Omega$ with a suitable choice of $\delta$. We define $T^\delta_\eta = \{ x\in Y: \dist(x,T_\eta) \le \delta \eta \}$ and $Y^{\delta}_\eta = Y\setminus T^\delta_\eta$.

\begin{lemma}\label{lem.Dwet}
Let $\Omega_{\eta,\e}^\delta$ be given as above. Let $u_{\eta,\e}^*$ be the harmonic extension of $u_{\eta,\e}$ from $\Omega_{\eta,\e}^\delta$ to $\Omega_{\eta,\e}$. It holds
    \begin{equation}\label{est.FullRate}
        \begin{aligned}
            \int_{\Omega_{\eta,\e}} \phi_{\eta,\e}^2 |\nabla w_{\eta,\e}|^2 & \le C\e^\frac12 \delta^{-1} \eta^\frac{d-2}{2} \| \nabla^2 u_{\eta} \|_{W^{1,d}(\Omega)} \big( \| \phi_{\eta,\e} \nabla u_{\eta,\e} \|_{L^2(\Omega_{\eta,\e})} + \| \nabla u_{\eta}\|_{L^2(\Omega)} \big) \\
        & \qquad + C\e \eta^{d-2} \| \nabla u_{\eta} \|_{W^{1,d}(\Omega)}^2  + C\delta \eta^{d-2} \| \nabla u_{\eta}\|_{L^2(\Omega)}^2 \\
        & \qquad + C\int_{\Omega_{\eta,\e} \setminus \Omega_{\eta,\e}^\delta} \phi_{\eta,\e}^2 (|\nabla u_{\eta,\e}|^2 + |\nabla u_{\eta,\e}^*|^2).
        \end{aligned}
    \end{equation}
\end{lemma}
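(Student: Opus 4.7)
The natural move would be to test \eqref{eq.Dwe.Dh} in Lemma~\ref{lem.Dwe.Dh} against $h=w_{\eta,\e}$ itself and invoke coercivity, but $w_{\eta,\e}$ is only in $V_{\eta,\e}$, while Lemma~\ref{lem.Dwe.Dh} demands $h\in H^1_0(\Omega)$. The plan is therefore to build a surrogate $w_{\eta,\e}^{*}\in H^1_0(\Omega)$ that agrees with $w_{\eta,\e}$ away from the holes, apply Lemma~\ref{lem.Dwe.Dh} with $h=w_{\eta,\e}^{*}$, and control the mismatch on the thin shells $\Omega_{\eta,\e}\setminus \Omega_{\eta,\e}^{\delta}$ separately.

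First I will introduce two harmonic extensions: $u_{\eta,\e}^{*}$ is the harmonic extension of $u_{\eta,\e}|_{\Omega_{\eta,\e}^{\delta}}$ through the $\delta\e\eta$--neighborhoods of the holes (and through the holes themselves), while $\chi_\eta^{*}$ is the periodic harmonic extension of $\chi_\eta|_{Y_\eta^{\delta}}$ into $T_\eta^{\delta}$; both extensions retain the boundary values of $u_{\eta,\e}$ and the periodicity of $\chi_\eta$, so that $w_{\eta,\e}^{*}:=u_{\eta,\e}^{*}-u_\eta-\e(\chi_\eta^{*})^{\ell}_{\e}\mathscr{K}_\e(\partial_\ell u_\eta)\theta_\e$ belongs to $H^1_0(\Omega)$ (using $\theta_\e=0$ near $\partial\Omega$ and $u_{\eta,\e}=0$ on $\Gamma_{\eta,\e}$). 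Standard bounds for harmonic extensions combined with the lower bound $\phi_{\eta,\e}\gtrsim \delta$ on $\Omega_{\eta,\e}^{\delta}$ (from Lemma~\ref{lemma-7.2}(iii)) and on $Y_\eta^\delta$ give the key $L^2$ estimates
\[
\|\nabla u_{\eta,\e}^{*}\|_{L^2(\Omega)}\le C\delta^{-1}\|\phi_{\eta,\e}\nabla u_{\eta,\e}\|_{L^2(\Omega_{\eta,\e})},\quad \|\nabla\chi_\eta^{*}\|_{L^2(Y)}\le C\delta^{-1}\eta^{(d-2)/2},
\]
which together with Proposition~\ref{prop.chi} and $\|\mathscr{K}_\e(\nabla u_\eta)\|_{L^\infty}\lesssim \|\nabla u_\eta\|_{W^{1,d}(\Omega)}$ yield the $L^2$ norm of $\nabla w_{\eta,\e}^{*}$ on $\Omega$ and on $\Omega(2\e)$ in a form compatible with \eqref{est.DweDh}.

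Next I will split
\[
\int_{\Omega_{\eta,\e}}\phi_{\eta,\e}^2|\nabla w_{\eta,\e}|^2
=\int_{\Omega_{\eta,\e}}\phi_{\eta,\e}^2\nabla w_{\eta,\e}\cdot\nabla w_{\eta,\e}^{*}
+\int_{\Omega_{\eta,\e}\setminus\Omega_{\eta,\e}^{\delta}}\phi_{\eta,\e}^2\nabla w_{\eta,\e}\cdot\nabla(w_{\eta,\e}-w_{\eta,\e}^{*}),
\]
where the second integrand is supported on $\Omega_{\eta,\e}\setminus\Omega_{\eta,\e}^{\delta}$ because the two extensions were designed to coincide with the originals elsewhere. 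For the first integral I apply Lemma~\ref{lem.Dwe.Dh} with $h=w_{\eta,\e}^{*}$, inserting the $\nabla h$ bounds from the previous step to produce the first two lines of \eqref{est.FullRate}. For the second integral I use Cauchy--Schwarz together with $|\nabla(w_{\eta,\e}-w_{\eta,\e}^{*})|\le|\nabla u_{\eta,\e}|+|\nabla u_{\eta,\e}^{*}|+\e|\nabla(\chi_\eta-\chi_\eta^{*})_\e\mathscr{K}_\e(\nabla u_\eta)\theta_\e|+\cdots$; the corrector mismatch, paired with $|\Omega_{\eta,\e}\setminus\Omega_{\eta,\e}^{\delta}|\lesssim\delta\eta^d$ and $\|\phi_\eta\nabla\chi_\eta\|_{L^\infty}\lesssim \eta^{-1}$ from Proposition~\ref{prop.chi}(ii), contributes at most $C\delta\eta^{d-2}\|\nabla u_\eta\|_{L^2(\Omega)}^2$, and Young's inequality absorbs the resulting factor of $\bigl(\int\phi_{\eta,\e}^2|\nabla w_{\eta,\e}|^2\bigr)^{1/2}$ into the left-hand side.

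The main technical obstacle I anticipate is the bookkeeping in controlling the corrector contributions on the annular region $\Omega_{\eta,\e}\setminus\Omega_{\eta,\e}^{\delta}$: one must balance the blow-up $\delta^{-1}$ coming from the degeneracy of $\phi_\eta$ against the smallness $\delta$ of the shell and against the geometric factor $\eta^{(d-2)/2}$ from Proposition~\ref{prop.chi}, so that the cross terms produce exactly the three independent error contributions in \eqref{est.FullRate}. Once this is in place, everything else reduces to systematic use of H\"{o}lder's inequality and the smoothing-operator estimates (see the appendix of \cite{SZ23}) already exploited in Lemma~\ref{lem.Dwe.Dh}.
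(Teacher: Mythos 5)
Your proposal is correct and follows essentially the same route as the paper: the same pair of harmonic extensions $u_{\eta,\e}^{*}$, $\chi_\eta^{*}$, the same surrogate $w_{\eta,\e}^{*}\in H^1_0(\Omega)$ fed into Lemma \ref{lem.Dwe.Dh}, the same splitting into a "good" pairing plus a shell term absorbed by Young's inequality, and the same $\delta^{-1}$-weighted $L^2$ bounds for the extended functions. The only place where the paper is slightly more careful than your sketch is the shell contribution of the \emph{extended} corrector: since the pointwise bound $\|\phi_\eta\nabla\chi_\eta\|_{L^\infty}\lesssim\eta^{-1}$ does not directly control $\nabla\chi_\eta^{*}$ inside $T_\eta^{\delta}$, the paper invokes the $L^2$ Rellich/nontangential-maximal-function estimate for the harmonic extension to get $\|\nabla\chi_\eta^{*}\|_{L^2(T_\eta^{\delta})}^2\le C\delta^{-1}\eta^{d-2}$, which is exactly what produces the $C\delta\,\eta^{d-2}\|\nabla u_\eta\|_{L^2(\Omega)}^2$ term you anticipate.
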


\begin{proof}
     Let $\chi_{\eta}^* = (\chi_\eta^{\ell *} )$ be the harmonic extension of $\chi_{\eta}$ from $Y_{\eta}^\delta$ to $Y$, and $\delta \in (0,1)$ be a small parameter that can vary. Recall that we sometimes also use the notations $\chi_{\eta,\e}^\ell =(\chi_\eta^\ell)_\e = \chi_\eta^\ell(x/\e)$ and $(\nabla \chi_\eta )_\e = (\nabla \chi_\eta)(x/\e)$ (similar notations also apply to $\chi_\eta^{*}$ and $\nabla \chi_\eta^{*}$).
     
     Let $\theta_\e$ be given as before. Define
    \begin{equation*}
        w_{\eta,\e}^* = u_{\eta,\e}^* - u_{\eta} - \e \chi_\eta^{\ell*}(x/\e) \mathscr{K}_\e(\partial_\ell u_{\eta}) \theta_\e.
    \end{equation*}
    It is easy to verify that $w_{\eta,\e}^* \in H^1_0(\Omega)$. Write
    \begin{equation}\label{eq.Dwe.I123}
    \begin{aligned}
        & \int_{\Omega_{\eta,\e}} \phi_{\eta,\e}^2 \nabla w_{\eta,\e} \cdot \nabla w_{\eta,\e} \\
        & = \int_{\Omega_{\eta,\e}} \phi_{\eta,\e}^2 \nabla w_{\eta,\e} \cdot \nabla w_{\eta,\e}^* + \int_{\Omega_{\eta,\e}} \phi_{\eta,\e}^2 \nabla w_{\eta,\e} \cdot \nabla (w_{\eta,\e} - w_{\eta,\e}^*)\\
        & = J_1 + J_2.        
    \end{aligned}
    \end{equation}

    \textbf{Estimate of $J_1$:} Since $w_{\eta,\e}^* \in H^1_0(\Omega)$, we apply \eqref{lem.Dwe.Dh} to obtain
    \begin{equation}\label{est.I1}
        |J_1| \le C \e \eta^\frac{d-2}{2} \| \nabla^2 u_{\eta} \|_{L^d(\Omega)} \| \nabla w_{\eta,\e}^* \|_{L^2(\Omega )} + C \e^\frac{1}{2} \eta^\frac{d-2}{2} \| \nabla u_{\eta} \|_{W^{1,d}(\Omega)} \| \nabla w_{\eta,\e}^* \|_{L^2(\Omega(2\e))} .
    \end{equation}
    By the triangle inequality,
    \begin{equation}\label{est.Dwedelta}
    \begin{aligned}
        & \| \nabla w_{\eta,\e}^* \|_{L^2(\Omega)} \\
        & \le \| \nabla u_{\eta,\e}^* \|_{L^2(\Omega)} + \| \nabla u_{\eta} \|_{L^2(\Omega)} + \e \| \nabla (\chi_\eta^{\ell*}(x/\e) \mathscr{K}_\e(\partial_\ell u_{\eta}) \theta_\e) \|_{L^2(\Omega)} \\
        & \le \| \nabla u_{\eta,\e}^* \|_{L^2(\Omega)} + \| \nabla u_{\eta} \|_{L^2(\Omega)} + \| \chi_{\eta,\e}^* \mathscr{K}_\e( \nabla u_{\eta}) \|_{L^2( \Omega(2\e))} \\
        & \qquad + C\| (\nabla\chi_\eta^{*})_\e \mathscr{K}_\e(\nabla u_{\eta}) \|_{L^2(\Omega\setminus \Omega(\e))} + C\e\| \chi_{\eta,\e}^*  \mathscr{K}_\e( \nabla^2 u_{\eta}) \|_{L^2(\Omega\setminus \Omega(\e))}.
    \end{aligned}
    \end{equation}
    The $L^p$ boundedness of the harmonic extension (see \eqref{est.Dtu.Exteta}) implies
    \begin{equation}\label{est.uee*}
        \| \nabla u_{\eta,\e}^* \|_{L^2(\Omega)} \le C\| \nabla u_{\eta,\e} \|_{L^2(\Omega_{\eta,\e}^\delta)} \le C\delta^{-1} \| \phi_{\eta,\e} \nabla u_{\eta,\e} \|_{L^2(\Omega_{\eta,\e})} \le C\delta^{-1},
    \end{equation}
    where we used the fact $\phi_{\eta,\e} \ge c\delta$ on $\Omega_{\eta,\e}^\delta$. The bounds of $\chi_\eta$ and $ \phi_\eta \nabla \chi_\eta$ (see Proposition \ref{prop.chi}) imply
    \begin{equation*}
        \| \chi_\eta^* \|_{L^2(Y)} \le \| {\chi_\eta} \|_{L^2(Y^\delta_\eta)} + C\| \chi_\eta \|_{L^\infty(Y_\eta)} |Y \setminus Y_\eta^\delta |^{\frac12} \le C\eta^{\frac{d-2}{2}},
    \end{equation*}
    and
    \begin{equation*}
        \| \nabla \chi_\eta^{*} \|_{L^2(Y)} \le C \| \nabla {\chi} \|_{L^2(Y^\delta_\eta)} \le C\delta^{-1} \| \phi_\eta \nabla {\chi} \|_{L^2(Y^\delta_\eta)}  C\delta^{-1} \eta^{\frac{d-2}{2}},
    \end{equation*}
    Hence, we can use a property of the smoothing operator (see Lemma \cite[Lemma A.1]{SZ23}) to obtain
    the estimates of the last three terms in \eqref{est.Dwedelta}, 
    \begin{equation*}
    \begin{aligned}
       & \| \chi_{\eta,\e}^* \mathscr{K}_\e( \nabla u_{\eta}) \|_{L^2( \Omega(2\e))} + C\| (\nabla\chi_\eta^{*})_\e \mathscr{K}_\e(\nabla u_{\eta}) \|_{L^2(\Omega\setminus \Omega(\e))} + C\e\| \chi_{\eta,\e}^*  \mathscr{K}_\e( \nabla^2 u_{\eta}) \|_{L^2(\Omega\setminus \Omega(\e))} \\
       & \le C \| \chi_\eta^{*} \|_{L^2(Y)} \| \nabla u_{\eta} \|_{L^2(\Omega)} + C\| \nabla \chi_\eta^{*} \|_{L^2(Y)} \| \nabla u_{\eta} \|_{L^2(\Omega)} + C \e \| \chi_\eta^{*} \|_{L^2(Y)} \| \nabla^2 u_{\eta} \|_{L^2(\Omega)} \\
       & \le C\eta^\frac{d-2}{2} \| \nabla u_{\eta} \|_{L^2(\Omega)} + C\delta^{-1} \eta^\frac{d-2}{2} \| \nabla u_\eta \|_{L^2(\Omega)} + C\e \eta^\frac{d-2}{2} \| \nabla^2 u_{\eta} \|_{L^2(\Omega)}.
    \end{aligned}  
    \end{equation*}

    Inserting this and \eqref{est.uee*} into \eqref{est.Dwedelta}, we obtain
    \begin{equation}
    \begin{aligned}
        \| \nabla w_{\eta,\e}^* \|_{L^2(\Omega)} & \le  C\delta^{-1} \| \phi_{\eta,\e} \nabla u_{\eta,\e} \|_{L^2(\Omega_{\eta,\e})} \\
        & \qquad  + C(1+ \delta^{-1} \eta^\frac{d-2}{2}) \| \nabla u_\eta \|_{L^2(\Omega)} + C\e \eta^\frac{d-2}{2} \| \nabla^2 u_{\eta} \|_{L^2(\Omega)}.
    \end{aligned}
    \end{equation}

    As a result, we have
    \begin{equation*}
    \begin{aligned}
        |J_1| & \le C\e^\frac12 \eta^\frac{d-2}{2} \delta^{-1} \| \nabla^2 u_{\eta} \|_{W^{1,d}(\Omega)} \Big\{ \| \phi_{\eta,\e} \nabla u_{\eta,\e} \|_{L^2(\Omega_{\eta,\e})} + \| \nabla u_{\eta}\|_{L^2(\Omega)} \Big\} \\
        & \qquad + C\e^{\frac32} \eta^{d-2} \| \nabla u_{\eta} \|_{W^{1,d}(\Omega)}^2.
    \end{aligned}
    \end{equation*}

    \textbf{Estimate of $J_2$:} 
    Recall that $w_{\eta,\e} = w_{\eta,\e}^*$ in $\Omega_{\eta,\e}^\delta$. Thus $\phi_{\eta,\e} \nabla (w_{\eta,\e} -w_{\eta,\e}^*)$ is supported in $\Omega_{\eta,\e} \setminus \Omega_{\eta,\e}^\delta$. Moreover,
    \begin{equation}\label{est.I3-2}
    \begin{aligned}
        \phi_{\eta,\e} \nabla (w^*_\e -w_{\eta,\e}^*) & = \phi_{\eta,\e} \nabla u_{\eta,\e} - \phi_{\eta,\e} \nabla u_{\eta,\e}^* \\
        & \qquad - \phi_{\eta,\e} \nabla (\e \chi_\eta^\ell(x/\e) \mathscr{K}_\e(\partial_\ell u_{\eta}) \theta_\e ) + \phi_{\eta,\e} \nabla (\e \chi_\eta^{\ell*}(x/\e) \mathscr{K}_\e(\partial_\ell u_{\eta}) \theta_\e ).
    \end{aligned}
    \end{equation}
    We estimate the last term of the above identity over $\Omega_{\eta,\e} \setminus \Omega_{\eta,\e}^\delta$. Note that $\phi_{\eta,\e} \le C\delta$ in $\Omega_{\eta,\e} \setminus \Omega_{\eta,\e}^\delta$. It follows from the triangle inequality and the boundedness of $\chi_\eta^{*}$,
    \begin{equation*}
    \begin{aligned}
        & \int_{\Omega_{\eta,\e} \setminus \Omega_{\eta,\e}^\delta} \phi_{\eta,\e}^2 |\nabla (\e \chi_\eta^{\ell*}(x/\e) \mathscr{K}_\e(\partial_\ell u_{\eta}) \theta_\e )|^2 \\
        & \le C\delta^2 \int_{\Omega_{\eta,\e} \setminus \Omega_{\eta,\e}^\delta} |\e \chi_\eta^{\ell*}(x/\e) \mathscr{K}_\e(\nabla \partial_\ell u_{\eta}) \theta_\e |^2 
        \\
        & \qquad + C\delta^2 \int_{\Omega_{\eta,\e} \setminus \Omega_{\eta,\e}^\delta} |\e \chi_\eta^{\ell*}(x/\e) \mathscr{K}_\e( \partial_\ell u_{\eta}) \nabla \theta_\e |^2 + C\delta^2 \int_{\Omega_{\eta,\e} \setminus \Omega_{\eta,\e}^\delta} | \nabla \chi_\eta^{\ell*}(x/\e) \mathscr{K}_\e( \partial_\ell u_{\eta}) \theta_\e |^2
        \\
        & \le C\delta^2 \e^2 \| \chi_\eta^* \|_{L^2(T_\eta^\delta \setminus T_\eta )}^2  \| \nabla^2 u_\eta \|_{L^2(\Omega)}^2  \\
        & \qquad + C\delta^2 \e \| \chi_\eta^* \|_{L^2(T_\eta^\delta \setminus T_\eta)}^2  \| \nabla u_\eta \|_{H^1(\Omega)}^2 + C\delta^2 \| \nabla \chi_\eta^* \|_{L^2(T_\eta^\delta \setminus T_\eta )}^2  \| \nabla u_\eta \|_{L^2(\Omega)}^2 \\
        & \le C\delta^3 \e^2 \eta^d \| \nabla^2 u_\eta \|_{L^2(\Omega)}^2 + C\delta^3 \e \eta^d \| \nabla u_\eta \|_{H^1(\Omega)}^2 + C\delta^2 \| \nabla \chi_\eta^* \|_{L^2(T_\eta^\delta)}^2  \| \nabla u_\eta \|_{L^2(\Omega)}^2,
    \end{aligned}
    \end{equation*}
    By the $L^2$ regularity estimate of the nontangential maximal function for the harmonic function $\chi_\eta^{*}$ in the Lipschitz holes $T^\delta_\eta = \{ x\in Y: \dist(x,T)<\delta \}$ and the pointwise estimate of $\nabla \chi_\eta$ in Proposition \ref{prop.chi} (ii), we have (see \cite[Lemma A.5]{SZ23})
    \begin{equation*}
        \| \nabla \chi_\eta^{*} \|_{L^2(T^\delta_\eta)}^2  \le C\delta \eta \| \nabla_{\rm \tan} \chi \|_{L^2(\partial T^\delta_\eta)}^2 \le C(\delta \eta)^{-1} |\partial T_\eta^\delta| \le C\delta^{-1} \eta^{d-2}. 
    \end{equation*}
    Hence,
    \begin{equation*}
        \int_{\Omega_{\eta,\e} \setminus \Omega_{\eta,\e}^\delta} \phi_{\eta,\e}^2 |\nabla (\e \chi_\eta^{\ell*}(x/\e) \mathscr{K}_\e(\partial_\ell u_{\eta}) \theta_\e )|^2 \le C\delta^3 \e \eta^d \| \nabla u_\eta \|_{H^1(\Omega)}^2 + C\delta \eta^{d-2} \| \nabla u_\eta \|_{L^2(\Omega)}^2 .
    \end{equation*}
    Similarly (and easier), using the boundedness of $\chi$ and $\phi \nabla \chi$, we have
    \begin{equation*}
        \int_{\Omega_{\eta,\e} \setminus \Omega_{\eta,\e}^\delta} \phi_{\eta,\e}^2 |\nabla (\e {\chi}_\ell(x/\e) \mathscr{K}_\e(\partial_\ell u_{\eta}) \theta_\e )|^2 \le C\delta^3 \e \eta^d \| \nabla u_\eta \|_{H^1(\Omega)}^2 + C\delta \eta^{d-2} \| \nabla u_\eta \|_{L^2(\Omega)}^2.
    \end{equation*}
    Hence, it follows from \eqref{est.I3-2} and the last two estimates that
    \begin{equation}
    \begin{aligned}
        |J_2| & \le \frac{1}{2} \int_{\Omega_{\eta,\e}} \phi_{\eta,\e}^2 |\nabla w_{\eta,\e}|^2 + 2 \int_{\Omega_{\eta,\e}} \phi_{\eta,\e}^2 |\nabla (w_{\eta,\e} - w_{\eta,\e}^* )|^2 \\
        & \le \frac{1}{2} \int_{\Omega_{\eta,\e}} \phi_{\eta,\e}^2 |\nabla w_{\eta,\e}|^2 + C\int_{\Omega_{\eta,\e} \setminus \Omega_{\eta,\e}^\delta} (| \phi_{\eta,\e} \nabla u_{\eta,\e}|^2 +| \phi_{\eta,\e} \nabla u_{\eta,\e}^*|^2 )   \\
        & \qquad + C\delta^3 \e \eta^d \| \nabla u_\eta \|_{H^1(\Omega)}^2 + C\delta \eta^{d-2} \| \nabla u_\eta \|_{L^2(\Omega)}^2.
    \end{aligned}
    \end{equation}

    Finally, combining the estimates of $J_1$ and $J_2$, we arrive at \eqref{est.FullRate}.
    \end{proof}

The above lemma indicates that the right-hand side of \eqref{est.FullRate} is small by choosing $\delta$ appropriately small, except for the last integral. The smallness of the last integral follows from the small-scale higher integrability of $\phi_{\eta,\e} \nabla u_{\eta,\e}$ and $\phi_{\eta,\e} \nabla u_{\eta,\e}^*$. This can be achieved if $F = \phi_{\eta,\e} f$ in \eqref{eq.ue}. Precisely, we consider the 
boundary value problem,
\begin{equation}\label{eq.ue.phief}
    \mathcal{L}_{\eta,\e}(u_{\eta,\e}) = \phi_{\eta,\e} f \quad \text{in } \Omega_{\eta,\e} \quad \text{and} \quad u_{\eta,\e} = 0 \quad \text{on } \Gamma_{\eta,\e},
\end{equation}
with $f\in L^p(\Omega_{\eta,\e})$ with $p\ge 2$.  Clearly \eqref{eq.ue.phief} is solvable in $V_{\eta,\e}$ with the energy estimate
\begin{equation*}
    \| \phi_{\eta,\e} \nabla u_{\eta,\e} \|_{L^2(\Omega_{\eta,\e})} \le C \| f\|_{L^2(\Omega_{\eta,\e})}.
\end{equation*}
We will show that \eqref{eq.ue.phief} can be approximated by the equation
\begin{equation}\label{eq.ueta.phief}
    \mathcal{L}_{\eta}(\widetilde{u}_{\eta,\e}) = \phi_{\eta,\e} f \quad \text{in } \Omega \quad \text{and} \quad \widetilde{u}_{\eta,\e} = 0 \quad \text{on } \partial \Omega.
\end{equation}
In order to apply Lemma \ref{lem.Dwet}, we need the following two lemmas.

    \begin{lemma}\label{lem.deltaLayer}
    Let $\Omega$ be a Lipschitz domain. Let $f\in L^p(\Omega_{\eta,\e})$ for some $p >d$ and $u_{\eta,\e}$ a solution of \eqref{eq.ue.phief}. Then
    \begin{equation*}
        \int_{\Omega_{\eta,\e} \setminus \Omega_{\eta,\e}^\delta} \phi_{\eta,\e}^2 |\nabla u_{\eta,\e}|^2 \le C\delta \big( \| \phi_{\eta,\e} \nabla u_{\eta,\e} \|_{L^2(\Omega_{\eta,\e})}^2 + \| f \|_{L^p(\Omega_{\eta,\e})}^2\big).
    \end{equation*}
\end{lemma}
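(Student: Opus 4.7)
The plan is to reduce $\int_{\Omega_{\eta,\e}\setminus\Omega_{\eta,\e}^\delta} \phi_{\eta,\e}^2|\nabla u_{\eta,\e}|^2$ to a cell-by-cell analysis and invoke the pointwise gradient estimates already established in Lemmas \ref{lem.Local Linfinity.eta} and \ref{lem.bdry Linfinity.eta}. First I would observe that $\Omega_{\eta,\e}\setminus\Omega_{\eta,\e}^\delta = \{x\in\Omega_{\eta,\e}: \dist(x,T_{\eta,\e})\le \delta\e\eta\}$ is the union of $\delta\e\eta$-thin collars around each rescaled hole. Since $\delta\le c_1 \le 1$, in each cell the collar is contained in $\e(k+T_\eta^\delta\setminus T_\eta)\cap\Omega \subset \e(k+T_\eta^*\setminus T_\eta)\cap\Omega$, and has Lebesgue measure at most $C\delta(\e\eta)^d$.

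For an interior cell (i.e., $\e(k+T_\eta^{**})\subset\Omega$), Lemma \ref{lem.Local Linfinity.eta} applied to the equation \eqref{eq.ue.phief} yields
\[
\sup_{\e(k+T_\eta^*\setminus T_\eta)}|\phi_{\eta,\e}\nabla u_{\eta,\e}|^2 \le C\fint_{\e(k+T_\eta^{**}\setminus T_\eta)}|\phi_{\eta,\e}\nabla u_{\eta,\e}|^2 + C(\e\eta)^2 \Big(\fint_{\e(k+T_\eta^{**}\setminus T_\eta)}|f|^p\Big)^{2/p}.
\]
Multiplying this pointwise bound by the collar volume $C\delta(\e\eta)^d$ and converting the averages to integrals, the contribution of a single interior cell is at most
\[
C\delta \int_{\e(k+T_\eta^{**}\setminus T_\eta)}|\phi_{\eta,\e}\nabla u_{\eta,\e}|^2 + C\delta(\e\eta)^{d+2-\frac{2d}{p}}\Big(\int_{\e(k+T_\eta^{**}\setminus T_\eta)}|f|^p\Big)^{2/p}.
\]

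Summing over all cells $k$, the first piece collapses to $C\delta\|\phi_{\eta,\e}\nabla u_{\eta,\e}\|_{L^2(\Omega_{\eta,\e})}^2$ by the bounded overlap of the enlarged neighborhoods $\e(k+T_\eta^{**}\setminus T_\eta)$. For the second piece, since $p>d\ge 3$ gives $p/2>1$, discrete H\"older with exponents $p/2$ and $p/(p-2)$ yields
\[
\sum_k \Big(\int_{B_k}|f|^p\Big)^{2/p} \le N^{1-\frac{2}{p}}\Big(\sum_k \int_{B_k}|f|^p\Big)^{2/p} \le C\e^{-d(1-\frac{2}{p})}\|f\|_{L^p(\Omega_{\eta,\e})}^2,
\]
where $N\sim\e^{-d}$ counts the cells and the inner sum is again controlled by bounded overlap. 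Combining the exponent factors $(\e\eta)^{d+2-2d/p}\cdot \e^{-d+2d/p} = \e^2\eta^{d+2-2d/p}\le 1$ (since $\e,\eta\le 1$ and $d+2-2d/p > 0$), the total $f$-contribution is at most $C\delta\|f\|_{L^p(\Omega_{\eta,\e})}^2$, which is what we want.

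Boundary cells (those with $\e(k+T_\eta)\cap\partial\Omega\ne\emptyset$) are handled identically using Lemma \ref{lem.bdry Linfinity.eta}, where the geometric assumption \textbf{A} guarantees that after rescaling the sets $(k+T_\eta^*\setminus T_\eta)\cap\e^{-1}\Omega$ are Lipschitz with uniform Lipschitz character, so the same $L^\infty$ gradient estimate holds with the same structural constants. The only technical point is the exponent bookkeeping in the discrete H\"older step used above; once that balance is verified, the remainder is a routine assembly of already-proven ingredients.
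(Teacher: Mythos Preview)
Your proof is correct and follows essentially the same strategy as the paper: decompose the thin collar cell by cell, apply the local weighted Lipschitz estimates of Lemmas~\ref{lem.Local Linfinity.eta} and~\ref{lem.bdry Linfinity.eta}, use the collar volume $\approx \delta(\e\eta)^d$, and sum. The only cosmetic difference is the handling of the $f$-term: the paper simply bounds each local norm $\big(\int_{\e(z+T_\eta^{**}\setminus T_\eta)}|f|^p\big)^{2/p}$ by the global $\|f\|_{L^p(\Omega_{\eta,\e})}^2$ and then multiplies by the number of cells $\sim\e^{-d}$, whereas you use discrete H\"older, which yields a slightly sharper (but ultimately irrelevant) power of $\e$.
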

\begin{proof}
    This lemma essentially relies only on the small-scale interior Lipschitz estimate, i.e., Lemma \ref{lem.Local Linfinity.eta} and Lemma \ref{lem.bdry Linfinity.eta}. Indeed, for $\delta<c_0/8$,
    \begin{equation*}
        \begin{aligned}
            & \int_{\Omega_{\eta,\e} \setminus \Omega_{\eta,\e}^\delta} \phi_{\eta,\e}^2 |\nabla u_{\eta,\e}|^2 \\
            & = \sum_{\e(z+T_\eta^\delta \setminus T_\eta) \cap \Omega \neq \emptyset } \int_{\e(z+T^\delta_\eta \setminus T_\eta) \cap \Omega} \phi_{\eta,\e}^2 |\nabla u_{\eta,\e}|^2 \\
            & \le \sum_{\e(z+T)\subset \Omega} C(\e \eta)^d \delta \sup_{\e(z+T^\delta_\eta \setminus T_\eta) \cap \Omega} |\phi_{\eta,\e} \nabla u_{\eta,\e}|^2 \\
            & \le \sum_{\e(z+T_\eta^\delta \setminus T_\eta) \cap \Omega \neq \emptyset } C(\e \eta)^d \delta \bigg\{ \fint_{\Omega_{\eta,\e} \cap \e(z+T_\eta^{**} \setminus T_\eta )} |\phi_{\eta,\e} \nabla u_{\eta,\e}|^2 + (\e \eta)^{2} \bigg( \fint_{\Omega_{\eta,\e} \cap \e(z+T_\eta^{**} \setminus T_\eta )} |f|^p \bigg)^{2/p} \bigg\} \\
            & \le \sum_{\e(z+T_\eta^\delta \setminus T_\eta) \cap \Omega \neq \emptyset } C\delta \bigg\{ \int_{\Omega_{\eta,\e} \cap \e(z+Y_*^+)} |\phi_{\eta,\e} \nabla u_{\eta,\e}|^2 +  (\e \eta)^{d+ 2-\frac{2d}{p}} \| f \|_{L^p(\Omega_{\eta,\e})}^2  \bigg\} \\
            & \le C\delta \big( \| \phi_{\eta,\e} \nabla u_{\eta,\e} \|_{L^2(\Omega_{\eta,\e})}^2 + \eta^d (\e \eta) ^{2-\frac{2d}{p}} \| f \|_{L^p(\Omega_{\eta,\e})}^2\big).
        \end{aligned}
    \end{equation*}
    The proof is complete.
\end{proof}

\begin{lemma}\label{lem.deltaLayer-2}
     Let $u_{\eta,\e}$ be the same as in Lemma \ref{lem.deltaLayer} and
     $u_{\eta,\e}^*$  the harmonic extension of $u_{\eta,\e}$ from $\Omega_{\eta,\e}^\delta$ to $\Omega_{\eta,\e}$. Then for $\delta<c_0/8$,
     \begin{equation*}
         \int_{\Omega_{\eta,\e} \setminus \Omega_{\eta,\e}^\delta} \phi_{\eta,\e}^2 |\nabla u_{\eta,\e}^*|^2 \le C\delta \big( \| \phi_{\eta,\e} \nabla u_{\eta,\e} \|_{L^2(\Omega_{\eta,\e})}^2 +  \| f\|_{L^p(\Omega_{\eta,\e})}^2 \big).
     \end{equation*}
\end{lemma}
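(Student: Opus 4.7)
The plan is to mirror the proof of Lemma \ref{lem.deltaLayer}. By the degeneracy estimate in Lemma \ref{lemma-7.2}(iii), $\phi_{\eta,\e} \le C\delta$ on $\Omega_{\eta,\e} \setminus \Omega_{\eta,\e}^\delta$, so $\phi_{\eta,\e}^2 |\nabla u_{\eta,\e}^*|^2 \le C\delta^2 |\nabla u_{\eta,\e}^*|^2$, and the task reduces to producing a pointwise bound on $|\nabla u_{\eta,\e}^*|$ in each shell $\e(z + T_\eta^\delta \setminus T_\eta)$ (whose volume is of order $\delta(\e\eta)^d$) and then summing cell by cell.

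The key observation is that after filling in the holes, $u_{\eta,\e}^*$ is harmonic on the full cell region $\e(z + T_\eta^\delta)$ with Dirichlet data $u_{\eta,\e}$ on $\e(z + \partial T_\eta^\delta)$. Provided $\delta \le c_0/8$, the parallel surfaces $\partial T_\eta^\delta$ retain a uniform $C^{1,1}$ character. After rescaling by $\e$ and replacing $u_{\eta,\e}$ by $u_{\eta,\e} - c$ with $c = \fint_{\e(z+T_\eta^{**}\setminus T_\eta)} u_{\eta,\e}$ (to remove any dependence on $\|u_{\eta,\e}\|_{L^\infty}$), the classical boundary Lipschitz estimate for harmonic functions gives
\begin{equation*}
\sup_{\e(z+T_\eta^\delta)} |\nabla u_{\eta,\e}^*| \le C \sup_{\e(z+\partial T_\eta^\delta)} |\nabla u_{\eta,\e}|.
\end{equation*}
Since $\phi_{\eta,\e} \approx \delta$ on $\partial T_\eta^\delta$, applying Lemma \ref{lem.Local Linfinity.eta} to $u_{\eta,\e}$ and dividing its bound on $|\phi_{\eta,\e}\nabla u_{\eta,\e}|$ by $\delta$ yields
\begin{equation*}
\sup_{\e(z+\partial T_\eta^\delta)} |\nabla u_{\eta,\e}| \le \frac{C}{\delta}\bigg\{\bigg(\fint_{\e(z+T_\eta^{**}\setminus T_\eta)} |\phi_{\eta,\e}\nabla u_{\eta,\e}|^2\bigg)^{1/2} + \e\eta\bigg(\fint_{\e(z+T_\eta^{**}\setminus T_\eta)} |f|^p\bigg)^{1/p}\bigg\}.
\end{equation*}

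Squaring these pointwise bounds, multiplying by $\phi_{\eta,\e}^2 \le C\delta^2$, and integrating over the shell in each cell, the $\delta^2$ cancels the $\delta^{-2}$ and a residual factor $\delta$ arises from the volume ratio of the shell $\e(z+T_\eta^\delta \setminus T_\eta)$ to the surrounding bulk cell $\e(z+T_\eta^{**}\setminus T_\eta)$. Summing over $z \in \Z^d$ gives $C\delta\,\|\phi_{\eta,\e}\nabla u_{\eta,\e}\|_{L^2(\Omega_{\eta,\e})}^2$ for the gradient term, and H\"older's inequality (using $p > d$ exactly as in the proof of Lemma \ref{lem.deltaLayer}) delivers $C\delta\,\|f\|_{L^p(\Omega_{\eta,\e})}^2$ for the $f$-term. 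Boundary cells with $\e(z+T_\eta)\cap\partial\Omega \neq \emptyset$ are treated identically using Lemma \ref{lem.bdry Linfinity.eta} together with the boundary Lipschitz estimate for harmonic functions on Lipschitz domains satisfying the exterior ball condition, which holds under assumption \textbf{A}.

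The main obstacle is securing the uniform constant in the boundary Lipschitz estimate for the harmonic extension: the curvature of $\partial T_\eta^\delta$ a priori depends on $\delta$, so one must verify that the $C^{1,1}$ character stays uniform for $\delta \le c_0/8$. A subtler point is that the harmonic Lipschitz bound must apply only to the tangential Lipschitz constant of the (adjusted) Dirichlet data and never to its $L^\infty$ norm, which is precisely why subtracting the cell-average $c$ is essential.
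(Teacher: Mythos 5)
Your proof breaks down at the step where you invoke a ``classical boundary Lipschitz estimate'' of the form
\begin{equation*}
\sup_{\e(z+T_\eta^\delta)} |\nabla u_{\eta,\e}^*| \le C \sup_{\e(z+\partial T_\eta^\delta)} |\nabla u_{\eta,\e}|.
\end{equation*}
Such a pointwise estimate is false in general. Even on a smooth bounded domain $D$, a harmonic function whose boundary data is merely Lipschitz (equivalently, whose tangential gradient is only in $L^\infty(\partial D)$) need not have bounded gradient in $D$; the normal derivative of the Poisson integral against Lipschitz data involves a kernel of homogeneity $-d$ integrated against a function of modulus $\sim |x-y|$, producing a logarithmically divergent integral. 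Exterior/interior ball conditions (guaranteed by $C^{1,1}$) give only the non-optimal boundary H\"older modulus $|x-x_0|^{1-\epsilon}$ for Lipschitz data, not $|x-x_0|$, because the barrier built from the exterior ball decays quadratically tangentially along $\partial D$. Your subtraction of the cell average $c$ addresses a nonissue: $(u-c)^*=u^*-c$ for any constant $c$, so $\nabla u^*$ is unchanged; it does nothing to rescue the missing estimate. Salvaging your route would require a $C^\alpha$ modulus of $\nabla u_{\eta,\e}$ on $\partial T_\eta^\delta$ (not just an $L^\infty$ bound) together with a careful Schauder argument in which the $\delta$-dependence of that H\"older seminorm is tracked, none of which is established in the paper or your proposal.

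The paper avoids this entirely by replacing the pointwise bound with an $L^2$ estimate: it controls the shell integral via the nontangential maximal function,
\begin{equation*}
\int_{\e(z+T_\eta^\delta\setminus T_\eta)\cap\Omega} \phi_{\eta,\e}^2 |\nabla u_{\eta,\e}^*|^2 \le C\delta^2\cdot\delta\e\eta\int_{\e(z+\partial T_\eta^\delta)\cap\Omega} |(\nabla u_{\eta,\e}^*)^*|^2,
\end{equation*}
and then invokes the $L^2$ Rellich/regularity estimate of Dahlberg--Jerison--Kenig type for the Dirichlet problem in Lipschitz domains, $\|(\nabla u_{\eta,\e}^*)^*\|_{L^2(\partial T_\eta^\delta)} \le C\|\nabla_{\rm tan} u_{\eta,\e}\|_{L^2(\partial T_\eta^\delta)}$, which holds with a constant depending only on the Lipschitz character and is scale-invariant. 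Only the final step uses the local $L^\infty$ bound of $\phi_{\eta,\e}\nabla u_{\eta,\e}$ from Lemma \ref{lem.Local Linfinity.eta} and Lemma \ref{lem.bdry Linfinity.eta}, paired with the surface measure $|\e(z+\partial T_\eta^\delta)|\approx(\e\eta)^{d-1}$, to close the cell-by-cell estimate. In short: the $L^2$ nontangential-maximal-function estimate is the right (and essentially necessary) tool here; the pointwise $L^\infty$ estimate you propose is not available.
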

\begin{proof}
    First, we write
    \begin{equation}\label{est.Dtue.split}
        \int_{\Omega_{\eta,\e} \setminus \Omega_{\eta,\e}^\delta} \phi_{\eta,\e}^2 |\nabla u_{\eta,\e}^*|^2 = \sum_{\e(z+T_\eta^\delta \setminus T_\eta) \cap \Omega \neq \emptyset } \int_{\e(z+T^\delta_\eta \setminus T_\eta) \cap \Omega} \phi_{\eta,\e}^2 |\nabla u_{\eta,\e}^*|^2.
    \end{equation}
    We then consider a single cell $\e(z+T_\eta^{**} \setminus T_\eta) \cap \Omega$. By Lemma \ref{lem.Local Linfinity.eta} and Lemma \ref{lem.bdry Linfinity.eta},
    \begin{equation}\label{est.Due.Linfty}
    \begin{aligned}
        \| \phi_{\eta,\e} \nabla u_{\eta,\e} \|_{L^\infty(\e(z+T^{*}_\eta \setminus T_\eta) \cap \Omega)} & \le C \bigg( \fint_{\e(z+ T_\eta^{**} \setminus T_{\eta}) \cap \Omega_{\eta,\e}} |\phi_{\eta,\e} \nabla u_{\eta,\e}|^2 \bigg)^{1/2}  \\
        & \qquad +C\e\eta \bigg( \fint_{\e(z+ T_\eta^{**} \setminus T_{\eta}) \cap \Omega_{\eta,\e}} |f|^p \bigg)^{1/p} 
    \end{aligned}
    \end{equation}
    Recall that $\Delta u_{\eta,\e}^* = 0$ in $\e(z + T^\delta_\eta) \cap \Omega$ and $u_{\eta,\e}^* = u_{\eta,\e}$ on $\e(z + \partial T^\delta_\eta) \cap \Omega$. Also, $T^\delta_\eta$ is a union of mutually disjoint Lipschitz holes with connected boundaries. 
    In each hole, we can apply the $L^2$ regularity estimate of the nontangential estimate (see e.g.  Appendix of \cite{SZ23}) to obtain 
    \begin{equation*}
    \begin{aligned}
        & \| (\nabla u_{\eta,\e}^*)^* \|_{L^2(\e(z+\partial T^\delta_\eta) \cap \Omega)} \\
        &\le C \| \nabla_{\rm tan} u_{\eta,\e}^* \|_{L^2(\e(z+\partial T^\delta_\eta) \cap \Omega)}  \le C \| \nabla u_{\eta,\e} \|_{L^2(\e(z+\partial T^\delta_\eta ) \cap \Omega)} \\
        & \le C (\e \eta)^{\frac{d-1}{2}} \| \nabla u_{\eta,\e} \|_{L^\infty(\e(z+\partial T^\delta_\eta ) \cap \Omega)} \le C \delta^{-1} (\e \eta)^{\frac{d-1}{2}} \| \phi_{\eta,\e} \nabla u_{\eta,\e} \|_{L^\infty(\e(z+\partial T^\delta_\eta ) \cap \Omega)}.
    \end{aligned}
    \end{equation*}
    Consequently,
    \begin{equation}\label{est.Tdelta.Dtue}
    \begin{aligned}
        \int_{\e(z+ T^\delta_\eta \setminus T_\eta ) \cap \Omega} \phi_{\eta,\e}^2 |\nabla u_{\eta,\e}^*|^2 & \le 
        C\delta^3 \e \eta \int_{\e(z+\partial T^\delta_\eta) \cap \Omega} |(\nabla u_{\eta,\e}^*)^* |^2 \\
        & \le C\delta (\e \eta)^d \| \phi_{\eta,\e} \nabla u_{\eta,\e} \|_{L^\infty(\e(z+\partial T^\delta_\eta ) \cap \Omega)}^2.
    \end{aligned}
    \end{equation}
    Combining \eqref{est.Due.Linfty} and \eqref{est.Tdelta.Dtue}, we see that 
    \begin{equation*}
    \begin{aligned}
        \int_{\e(z+ T^\delta_\eta \setminus T_\eta) \cap \Omega} \phi_{\eta,\e}^2 |\nabla u_{\eta,\e}^*|^2 \le C\delta \int_{\e(z+ T_\eta^{**} \setminus T_{\eta}) \cap \Omega_{\eta,\e} } |\phi_{\eta,\e} \nabla u_{\eta,\e}|^2 + C\delta  (\e \eta)^{d+2-\frac{2d}{p}} \| f \|_{L^p(\Omega_{\eta,\e})}^2.
    \end{aligned}
    \end{equation*}
    Summing over $z$, we obtain
    \begin{equation*}
        \int_{\Omega_{\eta,\e} \setminus \Omega_{\eta,\e}^\delta} \phi_{\eta,\e}^2 |\nabla u_{\eta,\e}^*|^2 \le C\delta \big( \| \phi_{\eta,\e} \nabla u_{\eta,\e} \|_{L^2(\Omega_{\eta,\e})}^2 + \eta^d (\e \eta)^{2-\frac{2d}{p}} \| f\|_{L^p(\Omega_{\eta,\e})}^2 \big),
    \end{equation*}
    as desired.
\end{proof}

\begin{theorem}\label{thm.we.flp}
Let $\Omega$ be a bounded $C^2$ domain. Let $f\in L^p(\Omega_{\eta,\e})$ for some $p >d$. Let $u_{\eta,\e}$ and $\widetilde{u}_{\eta,\e}$ be the weak solutions of \eqref{eq.ue.phief} and \eqref{eq.ueta.phief}, respectively. Let
\begin{equation}
    w_{\eta,\e} = u_{\eta,\e} - \widetilde{u}_{\eta,\e} - \e \chi_{\eta,\e} \cdot \mathscr{K}_\e(\nabla \widetilde{u}_{\eta,\e}) \theta_\e.
\end{equation}
Then
    \begin{equation}
        \| u_{\eta,\e} - \widetilde{u}_{\eta,\e} \|_{L^2(\Omega_{\eta,\e})} + \| \phi_{\eta,\e} \nabla w_{\eta,\e} \|_{L^2(\Omega_{\eta,\e})} \le C \e^\frac18 \eta^\frac{d-2}{8} \| f \|_{L^p(\Omega_{\eta,\e})}.
    \end{equation}
\end{theorem}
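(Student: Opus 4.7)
The plan is to apply Lemma \ref{lem.Dwet} with $u_\eta$ replaced by $\widetilde u_{\eta,\e}$, combine it with the thin-layer estimates of Lemmas \ref{lem.deltaLayer} and \ref{lem.deltaLayer-2}, and then optimize over the harmonic-extension parameter $\delta$.

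First I would establish the necessary regularity of $\widetilde u_{\eta,\e}$. Since $\Omega$ is $C^2$ and $\phi_{\eta,\e} f\in L^p(\Omega)$ with $p>d$ (because $|\phi_{\eta,\e}|\le C$ by Lemma \ref{lemma-7.2}(iii)), the Calder\'on--Zygmund theory for the constant-coefficient operator $\mathcal L_\eta=-\nabla\cdot(\overline A_\eta\nabla\cdot)$ gives
$$\|\widetilde u_{\eta,\e}\|_{W^{2,p}(\Omega)}\le C\|\phi_{\eta,\e}f\|_{L^p(\Omega)}\le C\|f\|_{L^p(\Omega_{\eta,\e})},$$
which controls $\|\nabla\widetilde u_{\eta,\e}\|_{W^{1,d}(\Omega)}$, $\|\nabla\widetilde u_{\eta,\e}\|_{L^2(\Omega)}$ and (via Sobolev embedding) $\|\nabla\widetilde u_{\eta,\e}\|_{L^\infty(\Omega)}$ by $C\|f\|_{L^p(\Omega_{\eta,\e})}$. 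Combined with the degenerate energy estimate $\|\phi_{\eta,\e}\nabla u_{\eta,\e}\|_{L^2(\Omega_{\eta,\e})}\le C\|f\|_{L^p(\Omega_{\eta,\e})}$, this controls every quantity appearing on the right-hand side of \eqref{est.FullRate}.

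Invoking Lemma \ref{lem.Dwet}, the first three terms on its right-hand side collapse to
$$C\|f\|_{L^p(\Omega_{\eta,\e})}^2\bigl(\e^{1/2}\delta^{-1}\eta^{(d-2)/2}+\e\,\eta^{d-2}+\delta\,\eta^{d-2}\bigr),$$
while Lemmas \ref{lem.deltaLayer} and \ref{lem.deltaLayer-2} bound the remaining thin-layer integral by $C\delta\|f\|_{L^p(\Omega_{\eta,\e})}^2$. Dropping the dominated contributions and balancing $\e^{1/2}\delta^{-1}\eta^{(d-2)/2}$ against $\delta$ forces the choice $\delta=\e^{1/4}\eta^{(d-2)/4}$, which yields $\|\phi_{\eta,\e}\nabla w_{\eta,\e}\|_{L^2(\Omega_{\eta,\e})}\le C\e^{1/8}\eta^{(d-2)/8}\|f\|_{L^p(\Omega_{\eta,\e})}$. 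To convert this into the $L^2$ bound on $u_{\eta,\e}-\widetilde u_{\eta,\e}$, I would write
$$u_{\eta,\e}-\widetilde u_{\eta,\e}=w_{\eta,\e}+\e\,\chi_{\eta,\e}\cdot\mathscr K_\e(\nabla\widetilde u_{\eta,\e})\,\theta_\e,$$
note that $w_{\eta,\e}\in V_{\eta,\e}$ (since $\theta_\e$ is supported away from $\partial\Omega$ and both $u_{\eta,\e}$ and $\widetilde u_{\eta,\e}$ vanish on $\Gamma_{\eta,\e}$), apply the weighted Poincar\'e inequality \eqref{est.Poincare.eta} to $w_{\eta,\e}$, and control the corrector summand via $\|\chi_\eta\|_{L^2(Y_\eta)}\le C\eta^{(d-2)/2}$ (Proposition \ref{prop.chi}(i)) together with $\|\nabla\widetilde u_{\eta,\e}\|_{L^\infty}\le C\|f\|_{L^p}$, producing the benign contribution $C\e\,\eta^{(d-2)/2}\|f\|_{L^p}$, which is absorbed into $C\e^{1/8}\eta^{(d-2)/8}\|f\|_{L^p}$.

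The main technical obstacle is the bookkeeping in the $\delta$-balancing: each of the four error sources in \eqref{est.FullRate} must be tracked with its natural $\eta^{(d-2)/2}$-smallness (coming from the smallness of $\phi_\eta-1$, $\overline A_\eta-I$, $\phi_\eta\nabla\chi_\eta$, $\chi_\eta$ and $\Phi_\eta$ exposed in Lemma \ref{lemma-7.2}(i) and Proposition \ref{prop.chi}) cleanly separated from the $\delta$-dependence, since only after this separation can the harmonic-extension mismatch in the thin set $\Omega_{\eta,\e}\setminus\Omega_{\eta,\e}^\delta$ be balanced against the flux-corrector term. The suboptimal exponent $1/8$ reflects precisely the crude harmonic-extension step; improving it would require a sharper boundary-layer correction near the perforations, which is outside the scope of this theorem.
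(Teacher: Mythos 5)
Your proposal is correct and follows essentially the same route as the paper: the paper also observes that with $F=\phi_{\eta,\e}f$ the homogenized problem \eqref{eq.u00} is exactly \eqref{eq.ueta.phief}, combines Lemma \ref{lem.Dwet} with Lemmas \ref{lem.deltaLayer} and \ref{lem.deltaLayer-2}, chooses $\delta=\e^{1/4}\eta^{(d-2)/4}$ to balance $\e^{1/2}\delta^{-1}\eta^{(d-2)/2}$ against $\delta$, and then deduces the $L^2$ bound on $u_{\eta,\e}-\widetilde{u}_{\eta,\e}$ via the weighted Poincar\'e inequality \eqref{est.Poincare.eta} and the smallness of the corrector term, just as you do. Your preliminary step recording the $W^{2,p}$ regularity of $\widetilde{u}_{\eta,\e}$ and the energy bound for $u_{\eta,\e}$ is exactly what the paper uses implicitly to reduce all right-hand side norms to $\|f\|_{L^p(\Omega_{\eta,\e})}$.
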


\begin{proof}
    By Lemma \ref{lem.Dwet}, Lemma \ref{lem.deltaLayer} and Lemma \ref{lem.deltaLayer-2}, we have
        \begin{equation}
        \begin{aligned}
            \int_{\Omega_{\eta,\e}} \phi_{\eta,\e}^2 |\nabla w_{\eta,\e}|^2 & \le C\e^\frac12 \delta^{-1} \eta^\frac{d-2}{2} \| \nabla^2 u_{\eta} \|_{W^{1,d}(\Omega)} \big( \| \phi_{\eta,\e} \nabla u_{\eta,\e} \|_{L^2(\Omega_{\eta,\e})} + \| \nabla u_{\eta}\|_{L^2(\Omega)} \big) \\
        & \qquad + C\e \eta^{d-2} \| \nabla u_{\eta} \|_{W^{1,d}(\Omega)}^2  + C\delta \eta^{d-2} \| \nabla u_{\eta}\|_{L^2(\Omega)}^2 \\
        & \qquad + C\delta \big( \| \phi_{\eta,\e} \nabla u_{\eta,\e} \|_{L^2(\Omega_{\eta,\e})}^2 + \| f \|_{L^p(\Omega_{\eta,\e})}^2\big) \\
        & \le C(\e^\frac12 \eta^\frac{d-2}{2} \delta^{-1} + \e \eta^{d-2} + \delta \eta^{d-2} + \delta) \| f\|_{L^p(\Omega_{\eta,\e})}^2.
        \end{aligned}
    \end{equation}
    We choose $\delta = \e^\frac14 \eta^\frac{d-2}{4}$ to get
    \begin{equation}
        \| \phi_{\eta,\e} \nabla w_{\eta,\e} \|_{L^2(\Omega_{\eta,\e})} \le C \e^\frac18 \eta^\frac{d-2}{8} \| f \|_{L^p(\Omega_{\eta,\e})}.
    \end{equation}
    By \eqref{est.Poincare.eta} and \eqref{7.2-2}, we obtain
    \begin{equation}
    \begin{aligned}
        \| u_{\eta,\e} - \widetilde{u}_{\eta,\e} \|_{L^2(\Omega_{\eta,\e})} & \le \| w_{\eta,\e} \|_{L^2(\Omega_{\eta,\e})} + \| \e \chi_{\eta,\e} \cdot \mathscr{K}_\e(\nabla \widetilde{u}_{\eta,\e}) \theta_\e \|_{L^2(\Omega_{\eta,\e})} \\
        & \le C\e^\frac18 \eta^\frac{d-2}{8} \| f \|_{L^p(\Omega_{\eta,\e})}.
        \end{aligned}
    \end{equation}
    The proof is complete.
\end{proof}

As a consequence, we apply the above theorem to the equations \eqref{eq.degnerate} - \eqref{eq.intermediate} and obtain the convergence rate of the corresponding operators.
\begin{theorem}\label{thm.L2rate}
    Let $f \in L^p_{\phi_{\eta,\e}}(\Omega_{\eta,\e})$ for some $p>d$.
    Then
    \begin{equation*}
        \| \mathscr{T}_{\eta,\e} f - \widetilde{\mathscr{T}}_{\eta,\e} f\|_{L^2_{\phi_{\eta,\e}}(\Omega_{\eta,\e})} \le C \e^\frac18 \eta^\frac{d-2}{8} \|f \|_{L^p_{\phi_{\eta,\e}}(\Omega_{\eta,\e})}.
    \end{equation*}
\end{theorem}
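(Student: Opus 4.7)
The plan is to recognize Theorem \ref{thm.L2rate} as an immediate corollary of Theorem \ref{thm.we.flp}; the only real work is to match the right-hand sides of the defining equations. Recall from \eqref{eq.degnerate}--\eqref{eq.intermediate} that $u_{\eta,\e}:=\mathscr{T}_{\eta,\e}(f)$ and $\tilde u_{\eta,\e}:=\widetilde{\mathscr{T}}_{\eta,\e}(f)$ solve $\cL_{\eta,\e}u_{\eta,\e}=\phi_{\eta,\e}^2 f$ in $\Omega_{\eta,\e}$ and $\cL_\eta\tilde u_{\eta,\e}=\phi_{\eta,\e}^2 f$ in $\Omega$, with vanishing Dirichlet data on $\Gamma_{\eta,\e}$ and $\partial\Omega$ respectively. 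The idea is to set $g:=\phi_{\eta,\e}f$, which rewrites the common right-hand side as $\phi_{\eta,\e}\cdot g$, precisely the form appearing in \eqref{eq.ue.phief}--\eqref{eq.ueta.phief}. By the definition of the weighted Lebesgue space one has
\[
\|g\|_{L^p(\Omega_{\eta,\e})} = \|\phi_{\eta,\e}f\|_{L^p(\Omega_{\eta,\e})} = \|f\|_{L^p_{\phi_{\eta,\e}}(\Omega_{\eta,\e})},
\]
so the hypothesis $p>d$ is inherited by $g$ and Theorem \ref{thm.we.flp} becomes applicable to the pair $(u_{\eta,\e},\tilde u_{\eta,\e})$ with data $g$.

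Applying that theorem gives
\[
\|u_{\eta,\e}-\tilde u_{\eta,\e}\|_{L^2(\Omega_{\eta,\e})} \le C\,\e^{1/8}\eta^{(d-2)/8}\|g\|_{L^p(\Omega_{\eta,\e})} = C\,\e^{1/8}\eta^{(d-2)/8}\|f\|_{L^p_{\phi_{\eta,\e}}(\Omega_{\eta,\e})}.
\]
To convert the unweighted $L^2$ norm on the left into the weighted $L^2_{\phi_{\eta,\e}}$ norm appearing in the conclusion, I would invoke the uniform pointwise upper bound $\phi_{\eta,\e}\le C_1$ from Lemma \ref{lemma-7.2}(iii), which gives
\[
\|u_{\eta,\e}-\tilde u_{\eta,\e}\|_{L^2_{\phi_{\eta,\e}}(\Omega_{\eta,\e})} = \|\phi_{\eta,\e}(u_{\eta,\e}-\tilde u_{\eta,\e})\|_{L^2(\Omega_{\eta,\e})} \le C_1\|u_{\eta,\e}-\tilde u_{\eta,\e}\|_{L^2(\Omega_{\eta,\e})}.
\]
Chaining the two displays produces the claimed estimate.

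No serious obstacle arises here: all the heavy lifting---the harmonic-extension cutoff, the two-scale corrector with the smoothing operator $\mathscr{K}_\e$, the layer estimates of Lemmas \ref{lem.deltaLayer} and \ref{lem.deltaLayer-2}, and the optimization $\delta=\e^{1/4}\eta^{(d-2)/4}$ that produces the exponent $1/8$---has already been carried out in the proof of Theorem \ref{thm.we.flp}. The only bookkeeping to watch is the quadratic factor $\phi_{\eta,\e}^2$ produced by the definitions of the resolvents: one power of $\phi_{\eta,\e}$ is absorbed into the datum to invoke the linear-weight version of the theorem (this is what turns the $L^p$ norm on the right into the weighted $L^p_{\phi_{\eta,\e}}$ norm), while the other power is absorbed on the left using the crude bound $\phi_{\eta,\e}\le C_1$. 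The latter step is lossy in principle, but harmless because here we only seek the $L^2_{\phi_{\eta,\e}}$ norm and the weight is bounded above by a constant.
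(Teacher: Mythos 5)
Your proposal is correct and is exactly how the paper obtains Theorem \ref{thm.L2rate}: the paper states it as an immediate consequence of Theorem \ref{thm.we.flp}, and your bookkeeping (writing $\phi_{\eta,\e}^2 f=\phi_{\eta,\e}g$ with $g=\phi_{\eta,\e}f$, noting $\|g\|_{L^p}=\|f\|_{L^p_{\phi_{\eta,\e}}}$, and absorbing the remaining weight via $\phi_{\eta,\e}\le C_1$) is precisely the implicit reduction. Nothing further is needed.
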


The convergence rate from \eqref{eq.intermediate} to \eqref{eq.homogenized} is simpler and given below. It essentially relies on the regularity of $f$.
\begin{theorem}\label{thm.tT0-T0}
    Let $f\in W^{1,d}(\Omega)$. Then
    \begin{equation}\label{est.T0-tT0.rate}
        \| \widetilde{\mathscr{T}}_{\eta,\e} f - \mathscr{T}_{\eta} f\|_{H^1(\Omega)} \le C\e \eta^\frac{d-2}{2} \| f\|_{W^{1,d}(\Omega)}.
    \end{equation}
\end{theorem}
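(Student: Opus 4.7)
The plan is to work directly with the difference $v := \widetilde{u}_{\eta,\e} - u_\eta \in H^1_0(\Omega)$. Subtracting \eqref{eq.MainHomo} from \eqref{eq.MainInter}, this function satisfies the constant-coefficient equation
\[
    -\nabla\cdot(\overline{A}_\eta \nabla v) = (\phi_{\eta,\e}^2 - 1) f \quad \text{in } \Omega, \qquad v = 0 \quad \text{on } \partial\Omega.
\]
The key structural observation is that $\|\phi_\eta\|_{L^2(Y_\eta)} = 1$ forces $\int_Y (\phi_\eta^2 - 1) = 0$, so the oscillating factor $\phi_{\eta,\e}^2 - 1$ has zero mean on every $\e$-period cell. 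This is exactly what will produce the $\e$ factor in \eqref{est.T0-tT0.rate}, via integration by parts against a bounded periodic antiderivative.

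Concretely, I would introduce the mean-zero $Y$-periodic potential $\Phi_\eta$ solving $\Delta \Phi_\eta = \phi_\eta^2 - 1$ on the flat torus $Y$, set $G_\eta := \nabla \Phi_\eta$, and rescale by $G_{\eta,\e}(x) := \e G_\eta(x/\e)$, so that $\nabla\cdot G_{\eta,\e} = \phi_{\eta,\e}^2 - 1$ on $\R^d$ and $\|G_{\eta,\e}\|_{L^q(\Omega)} \le C\e \|G_\eta\|_{L^q(Y)}$ for $1<q<\infty$ by periodic averaging. Testing the equation for $v$ against $v$ itself and integrating by parts (the boundary term vanishes because $v=0$ on $\partial\Omega$), the uniform ellipticity of $\overline{A}_\eta$ yields
\[
    c\|\nabla v\|_{L^2(\Omega)}^2 \le \int_\Omega (\phi_{\eta,\e}^2-1) f v \, dx = -\int_\Omega G_{\eta,\e} \cdot (f\nabla v + v \nabla f) \, dx.
\]
I would then apply H\"older with exponents $(2^*,d,2)$ on the first term and $(2,2^*,d)$ on the second, together with the Sobolev embedding $\|v\|_{L^{2^*}(\Omega)} \le C\|\nabla v\|_{L^2(\Omega)}$, reducing the proof to the two cell-level bounds $\|G_\eta\|_{L^2(Y)} + \|G_\eta\|_{L^{2^*}(Y)} \le C\eta^{(d-2)/2}$.

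These in turn follow from a short chain of standard estimates on the torus. Calder\'on--Zygmund gives $\|\nabla^2 \Phi_\eta\|_{L^2(Y)} \le C\|\Delta \Phi_\eta\|_{L^2(Y)} = C\|\phi_\eta^2-1\|_{L^2(Y)}$; Poincar\'e applied to the mean-zero field $\nabla\Phi_\eta$ together with the embedding $W^{1,2}(Y) \hookrightarrow L^{2^*}(Y)$ then give
\[
    \|\nabla \Phi_\eta\|_{L^2(Y)} + \|\nabla \Phi_\eta\|_{L^{2^*}(Y)} \le C\|\nabla^2 \Phi_\eta\|_{L^2(Y)} \le C\|\phi_\eta^2-1\|_{L^2(Y)}.
\]
Factoring $|\phi_\eta^2 - 1| = |\phi_\eta-1|(\phi_\eta+1)$, the $L^\infty$-bound on $\phi_\eta$ from Lemma \ref{lemma-7.2}(iii) and the estimate $\|\phi_\eta - 1\|_{L^{2^*}(Y)} \le C\eta^{(d-2)/2}$ from Lemma \ref{lemma-7.2}(i) (downgraded to $L^2(Y)$ by H\"older since $|Y|=1$) yield $\|\phi_\eta^2-1\|_{L^2(Y)} \le C\eta^{(d-2)/2}$, as required.

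Combining the pieces,
\[
    \|\nabla v\|_{L^2(\Omega)}^2 \le C\e\eta^{(d-2)/2}\|f\|_{W^{1,d}(\Omega)}\|\nabla v\|_{L^2(\Omega)},
\]
and the Poincar\'e inequality for $v\in H^1_0(\Omega)$ upgrades this to \eqref{est.T0-tT0.rate}. The argument has no serious obstacle; its only delicate point is the bookkeeping of Sobolev exponents, chosen so that the $\eta^{(d-2)/2}$-smallness of $\phi_\eta-1$ propagates through $G_\eta$ in exactly the $L^2(Y)$ and $L^{2^*}(Y)$ norms that pair correctly against $\|\nabla v\|_{L^2}$, $\|f\|_{L^d}$, and $\|\nabla f\|_{L^d}$. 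Notably, no boundary-layer corrector is required here, because \eqref{eq.MainInter} and \eqref{eq.MainHomo} are both posed on the same unperforated domain $\Omega$, which is why this step is considerably simpler than Theorem \ref{thm.we.flp}.
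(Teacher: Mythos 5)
Your proposal is correct and follows essentially the same route as the paper's: write $\phi_{\eta,\e}^2-1 = \nabla\cdot(\e\,\Psi(x/\e))$ for a periodic vector field $\Psi = \nabla\Phi_\eta$ built from the mean-zero density $\phi_\eta^2-1$ on the torus, integrate by parts to extract the factor $\e$, and use the $L^{2^*}(Y)$ smallness of $\Psi$ (inherited from $\|\phi_\eta-1\|_{L^{2^*}}\le C\eta^{(d-2)/2}$ and the $L^\infty$ bound on $\phi_\eta$) to produce the $\eta^{(d-2)/2}$ factor. The only presentational difference is that you test the equation for $v=\widetilde{u}_{\eta,\e}-u_\eta$ directly against $v$ and invoke ellipticity, whereas the paper pairs $v$ with an arbitrary $g\in H^{-1}(\Omega)$ through the auxiliary solve $\cL_\eta w=g$ and concludes by duality; the two are interchangeable here and the resulting exponent bookkeeping is identical.
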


\begin{proof}
    Let $g\in H^{-1}(\Omega)$ and let $w\in H^1_0(\Omega)$ solve $\cL_\eta(w) = g$ in $\Omega$. Recall that $\tilde{u}_{\eta} = \widetilde{\mathscr{T}}_\eta f$ and $u_{\eta} = \mathscr{T}_{\eta,\e} f$ are the solutions of \eqref{eq.intermediate} and $\eqref{eq.homogenized}$, respectively. Then,
    \begin{equation*}
        \begin{aligned}
            \Ag{(\mathscr{T}_{\eta} - \widetilde{\mathscr{T}}_{\eta,\e})f, g}_{H_0^1\times H^{-1} } = \int_{\Omega} (1- \phi_{\eta,\e}^2 )f\cdot w.
        \end{aligned}
    \end{equation*}
    Since $\phi_{\eta,\e}^2 - 1$ is periodic and has mean value zero, we can find a bounded periodic function $\Psi(x/\e)$ such that
    \begin{equation}\label{eq.Psi}
        \phi_{\eta,\e}^2 - 1= \nabla\cdot (\e \Psi(x/\e)),
    \end{equation}
    and
    \begin{equation}\label{est.Psi}
        \| \Psi(x/\e) \|_{L^{2^*}(\Omega)} \le C\eta^\frac{d-2}{2}.
    \end{equation}
    Using the integration by parts, we obtain
    \begin{equation*}
    \begin{aligned}
        | \Ag{(\mathscr{T}_{\eta} - \widetilde{\mathscr{T}}_{\eta,\e})f, g}_{H_0^1\times H^{-1} }| & \le C\e  \| \Psi(x/\e) \|_{L^{2^*}(\Omega)} \| f\|_{W^{1,d}(\Omega)} \| w\|_{H^1(\Omega)} \\
        & \le C\e \eta^\frac{d-2}{2} \| f\|_{W^{1,d}(\Omega)} \| g\|_{H^{-1}(\Omega)}.
    \end{aligned}
    \end{equation*}
    This implies \eqref{est.T0-tT0.rate} by duality.
\end{proof}
    
\section{Convergence of eigenvalues}

In this section, we will quantify the convergence rates from $\mu^k_{\eta,\e}$ to $\mu^k_\eta$. These convergence rates, though not optimal, allows us to show the Weyl's law and find the common large spectral gaps. Our main tool is the minimax principle both in \eqref{eq.minimax1} and \eqref{eq.minimax2}.

\subsection{Optimal upper bound}
The optimal upper bound is obtained directly by \eqref{eq.minimax1} without using the convergence rates in the previous section.
\begin{proposition}\label{prop.mue<mu0+}
Let $\Omega$ be a bounded $C^2$ domain satisfying the geometric assumption \textbf{A}. Then, for  $k\ge 1$, 
    \begin{equation}
    \mu^k_{\eta,\e} \le \mu^k_\eta + C_k \e \eta^\frac{d-2}{2}.
    \end{equation}
\end{proposition}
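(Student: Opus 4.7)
The plan is to apply the minimax principle \eqref{eq.minimax1} to a carefully chosen $k$-dimensional trial subspace $S_\e \subset V_{\eta,\e}$ built from the first $k$ homogenized eigenfunctions $\rho^1_\eta, \ldots, \rho^k_\eta$ and their two-scale correctors. Specifically, for each $j = 1, \ldots, k$ I would set
\[
v^j_\e(x) := \rho^j_\eta(x) + \e\, \chi^\ell_\eta(x/\e)\, \mathscr{K}_\e(\partial_\ell \rho^j_\eta)(x)\, \theta_\e(x),
\]
with $\theta_\e$ the boundary cut-off introduced in Section~4. Because $\Omega$ is $C^3$, classical elliptic regularity yields $\rho^j_\eta \in C^2(\overline{\Omega})$; and since $\theta_\e$ vanishes in a neighbourhood of $\partial \Omega$, each $v^j_\e$ vanishes on $\Gamma_{\eta,\e}$, hence $v^j_\e \in V_{\eta,\e}$. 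I then take $S_\e := \mathrm{span}\{v^1_\e, \ldots, v^k_\e\}$.

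The main step is to establish the two quantitative expansions
\begin{align*}
A_{jl} &:= \int_{\Omega_{\eta,\e}} \phi^2_{\eta,\e}\, \nabla v^j_\e \cdot \nabla v^l_\e = \int_\Omega \overline{A}_\eta\, \nabla \rho^j_\eta \cdot \nabla \rho^l_\eta + O_k\!\left(\e\, \eta^{\frac{d-2}{2}}\right), \\
M_{jl} &:= \int_{\Omega_{\eta,\e}} \phi^2_{\eta,\e}\, v^j_\e v^l_\e = \delta_{jl} + O_k\!\left(\e\, \eta^{\frac{d-2}{2}}\right),
\end{align*}
for $1 \le j, l \le k$. Granted these, for $v = \sum_j c_j v^j_\e$ with $u := \sum_j c_j \rho^j_\eta$, the $L^2(\Omega)$-orthonormality of $\{\rho^j_\eta\}$ from \eqref{eq.normal} together with the spectral inequality $\int_\Omega \overline A_\eta \nabla u \cdot \nabla u = \sum_j c_j^2\, \mu^j_\eta \le \mu^k_\eta\, |c|^2$ yield
\[
\frac{\int_{\Omega_{\eta,\e}} \phi^2_{\eta,\e}\, |\nabla v|^2}{\int_{\Omega_{\eta,\e}} \phi^2_{\eta,\e}\, v^2} \le \mu^k_\eta + C_k\, \e\, \eta^{\frac{d-2}{2}}
\]
after absorbing the denominator correction. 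Applying \eqref{eq.minimax1} to $S = S_\e$ then gives the claim; linear independence of the $v^j_\e$ (and hence $\dim S_\e = k$) follows a posteriori from $M_{jl}$ being close to $\delta_{jl}$.

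The hard part will be the expansion for $A_{jl}$. The strategy parallels the proof of \eqref{est.DweDh}: I would expand $\nabla v^j_\e = (I + (\nabla \chi_\eta)_\e \theta_\e)\, \mathscr{K}_\e(\nabla \rho^j_\eta) + \text{(remainder)}$, compare the dominant product against $\overline{A}_\eta\, \nabla \rho^j_\eta \cdot \nabla \rho^l_\eta$ via the identity \eqref{eq.bar.aij}, and use the flux corrector $\Phi_\eta$ constructed in the proof of \eqref{est.DweDh} (satisfying $\partial_k (\Phi_\eta)_{kij} = (\bar a_\eta)_{ij} - \phi^2_\eta \delta_{ij} - \phi^2_\eta \partial_i \chi^j_\eta$, together with skew-symmetry $(\Phi_\eta)_{kij} = -(\Phi_\eta)_{ikj}$ and $\|\Phi_\eta\|_{L^{2^*}} \lesssim \eta^{\frac{d-2}{2}}$) to integrate by parts, trading one power of $\e$ for one extra derivative of $\rho^l_\eta$ (available thanks to $C^2$-regularity). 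The smallness bundle
\[
|\overline A_\eta - I| + \|\phi_\eta - 1\|_{L^{2^*}(Y)} + \|\phi_\eta \nabla \chi_\eta\|_{L^2(Y_\eta)} + \|\chi_\eta\|_{L^2(Y_\eta)} + \|\Phi_\eta\|_{L^{2^*}(Y)} \lesssim \eta^{\frac{d-2}{2}}
\]
coming from Lemma~\ref{lemma-7.2} and Proposition~\ref{prop.chi} then supplies the factor $\eta^{\frac{d-2}{2}}$. The boundary-layer contribution (from $1 - \theta_\e$ and $\nabla \theta_\e$, which are supported in the $O(\e)$-thin strip $\Omega(2\e)$) is controlled by a quantity of order $\e^{1/2}\, \eta^{\frac{d-2}{2}}$ times $\|\nabla \rho^l_\eta\|_{L^2(\Omega(2\e))} \lesssim \e^{1/2}\, \|\rho^l_\eta\|_{C^1(\overline{\Omega})}$, which combines to $O(\e\, \eta^{\frac{d-2}{2}})$. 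The expansion for $M_{jl}$ is analogous but simpler, requiring only the $L^{2^*}$-smallness of $\phi_\eta - 1$ and $\chi_\eta$ combined with Sobolev embedding.
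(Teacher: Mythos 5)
Your overall strategy is exactly the paper's: the same trial functions $v^j_{\eta,\e}=\rho^j_\eta+\e\chi^\ell_{\eta,\e}(\partial_\ell\rho^j_\eta)\theta_\e$, the same minimax argument, the same smallness bundle from Lemma \ref{lemma-7.2} and Proposition \ref{prop.chi}, and the same treatment of the boundary layer. However, there is a genuine gap at the heart of your expansion of $A_{jl}$. The flux corrector $\Phi_\eta$ is a divergence potential for $B_\eta=\overline{A}_\eta-\phi_\eta^2 I-\phi_\eta^2\nabla\chi_\eta$, i.e.\ for a discrepancy that is \emph{linear} in $\nabla\chi_\eta$, and it is tailored to pairings $\int B_{\eta,\e}\,\mathscr{K}_\e(\nabla u_\eta)\theta_\e\cdot\nabla h$ with $h\in H^1_0(\Omega)$, as in Lemma \ref{lem.Dwe.Dh}. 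The Rayleigh-quotient numerator instead produces the \emph{quadratic} expression $\phi_\eta^2(\delta_{m\ell}+\partial_\ell\chi^m_\eta)(\delta_{n\ell}+\partial_\ell\chi^n_\eta)$, and its deviation from $\bar a_{\eta,mn}$ (via \eqref{eq.bar.aij}) contains the term $\phi_\eta^2\,\partial_\ell\chi_\eta^m\partial_\ell\chi_\eta^n$, which $\Phi_\eta$ does not reach. If you instead try to run the $\Phi_\eta$/skew-symmetry argument against $\nabla v^l_{\eta,\e}$ as the "test gradient", you need $\nabla v^l_{\eta,\e}\in L^2(\Omega)$ with a uniform bound, i.e.\ a quantitative bound on $\|\nabla\chi_\eta\|_{L^2(Y)}$ -- but the paper only controls $\phi_\eta\nabla\chi_\eta$ (pointwise and in $L^2$), and deliberately avoids unweighted gradient bounds on $\chi_\eta$ (this is why harmonic extensions $\chi_\eta^*$ appear elsewhere). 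The paper closes exactly this gap by introducing a second potential: $\Xi_{mn}$ solving $-\Delta\Xi_{mn}=\bar a_{\eta,mn}-\phi_\eta^2(\delta_{m\ell}+\partial_\ell\chi^m_\eta)(\delta_{n\ell}+\partial_\ell\chi^n_\eta)$, whose $H^1$ bound $\|\nabla\Xi_{mn}\|_{L^2(Y)}\le C\eta^{\frac{d-2}{2}}$ is obtained by rewriting the quadratic term through the corrector equation as $\phi_\eta^2\partial_\ell\chi^m_\eta\partial_\ell\chi^n_\eta=\partial_\ell(\phi_\eta^2\chi_\eta^m\partial_\ell\chi_\eta^n)+2\chi_\eta^m\phi_\eta\partial_n\phi_\eta$ (the term is a priori only $L^1$, so this divergence structure is essential). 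Your sketch needs this ingredient, or an equivalent replacement, to obtain the $O_k(\e\,\eta^{\frac{d-2}{2}})$ error in $A_{jl}$.

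A second, smaller gap concerns $M_{jl}$. You claim the expansion $M_{jl}=\delta_{jl}+O_k(\e\eta^{\frac{d-2}{2}})$ follows from "only the $L^{2^*}$-smallness of $\phi_\eta-1$ and $\chi_\eta$"; but smallness of $\phi_{\eta,\e}^2-1$ in $L^{2^*}$ alone gives only $O(\eta^{\frac{d-2}{2}})$ for $\int_{\Omega_{\eta,\e}}\phi_{\eta,\e}^2\rho^i_\eta\rho^j_\eta-\delta_{ij}$, which, after dividing the Rayleigh quotient, destroys the factor $\e$ in the final estimate. To gain the extra $\e$ you must use that $\phi_{\eta,\e}^2-1$ is mean-zero periodic, write $\phi_{\eta,\e}^2-1=\nabla\cdot(\e\Psi(x/\e))$ with $\|\Psi(\cdot/\e)\|_{L^{2^*}(\Omega)}\le C\eta^{\frac{d-2}{2}}$, and integrate by parts against $\rho^i_\eta\rho^j_\eta$ (this is the paper's estimate \eqref{est.rhoeta.alor}, borrowed from the proof of Theorem \ref{thm.tT0-T0}). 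The genuinely $\e$-prefixed cross terms involving $\e\chi_{\eta,\e}$ are indeed handled as you say.
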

\begin{proof}

Fix $k\ge 1$. We may assume that $\e \eta^{\frac{d-2}{2}}$ is sufficiently small. 
We  first construct a subspace $S^k_{\rm app}$ of $H^1_{\phi_{\eta,\e},0}(\Omega_{\eta,\e})$ with dimension $k$. Actually, for $1\le j\le k$, let
\begin{equation*}
    v^j_{\eta,\e} = \rho^j_{\eta} + \e \chi^\ell_{\eta,\e} (\partial_\ell \rho_{\eta}^j) \theta_\e,
\end{equation*}
where we simply restrict the function $\rho^j_{\eta}$ in $\Omega_{\eta,\e}$. The cutoff function $\theta_\e$ (the same as in Section \ref{sec.4}) is used here to make sure that $v^j_{\eta,\e} = 0$ on $\Gamma_\e$. The functions $v^j_{\eta,\e}$ are supposed to be good approximations of the eigenfunctions $\rho_{\eta,\e}^j$ of the degenerate eigenvalue problem in $V_{\eta,\e}$. Recall that the regularity of the homogenized eigenvalue problem in $C^2$ domains implies
\begin{equation}
    \| \rho_\eta^j \|_{W^{2,p}(\Omega)} \le C_{j,p},
\end{equation}
for any $p<\infty$.

Let $S^k_{\rm app} = \text{span}\{ v^j_{\eta,\e}: 1\le j\le k \}$.

Claim: $\text{dim}\ S^k_{\rm app} = k$ if $\e \eta^\frac{d-2}{2} \le c_k$ for sufficiently small $c_k$ depending on $k$. In fact, we can use the orthogonality of $\rho_\eta^k$ in the space $L^2(\Omega)$ to show that for $\e\eta^\frac{d-2}{2}$ small enough (depending on $k$ or $\mu^k_\eta$)
\begin{equation*}
    \sum_{j=1}^k \alpha_j v^j_{\eta,\e} = 0 \text{ if and only if } \alpha_j = 0 \text{ for all } 1\le j\le k.
\end{equation*}
To show this, consider a general $v = \sum_{j=1}^k \alpha_j v^j_{\eta,\e} \in S^k_{\rm app}$. We compute
\begin{equation}\label{eq.phiv2}
\begin{aligned}
    \int_{\Omega_{\eta,\e}} \phi_{\eta,\e}^2 v^2 & = \sum_{1\le i,j\le k} \alpha_i\alpha_j \int_{\Omega_{\eta,\e}} \phi_{\eta,\e}^2 \rho^i_{\eta} \rho^j_{\eta} + 2\sum_{1\le i,j\le k} \alpha_i\alpha_j \int_{\Omega_{\eta,\e}} \phi_{\eta,\e}^2 \rho^i_{\eta} \e \chi^\ell_{\eta,\e} (\partial_\ell \rho_{\eta}^j) \theta_\e.
\end{aligned}
\end{equation}
For each pair of $i,j$,
\begin{equation}
    \bigg| \int_{\Omega_{\eta,\e}} \phi_{\eta,\e}^2 \rho^i_{\eta} \e \chi^\ell_{\eta,\e} (\partial_\ell \rho_{\eta}^j) \theta_\e  \bigg| \le C\e \| \phi_{\eta,\e} \chi_{\eta,\e} \|_{L^{2^*}(\Omega_{\eta,\e})} \| \rho_\eta^i \|_{L^d(\Omega)} \| \nabla \rho_\eta^j \|_{L^2(\Omega)} \le C_k \e \eta^{\frac{d-2}{2}}.
\end{equation}
and
\begin{equation}
\begin{aligned}
    \bigg| \int_{\Omega_{\eta,\e}} \phi_{\eta,\e}^2 \e^2 \chi^\ell_{\eta,\e} (\partial_\ell \rho_\eta^i) \chi^\tau_{\eta,\e} (\partial_\tau \rho_{\eta}^j) \theta_\e^2 \bigg| & \le C\e^2 \| \phi_{\eta,\e} \chi_{\eta,\e} \|_{L^{2^*}(\Omega_{\eta,\e})}^2 \| \nabla \rho_\eta^i \|_{L^d(\Omega)} \| \nabla \rho_\eta^j \|_{L^d(\Omega)} \\
    & \le C_k \e^2 \eta^{d-2}.
\end{aligned}
\end{equation}
Hence, \eqref{eq.phiv2} gives
\begin{equation}
    \int_{\Omega_{\eta,\e}} \phi_{\eta,\e}^2 v^2 =  \sum_{1\le i,j\le k} \alpha_i\alpha_j \int_{\Omega_{\eta,\e}} \phi_{\eta,\e}^2 \rho^i_{\eta} \rho^j_{\eta} + O_k(\e \eta^\frac{d-2}{2}) \sum_{1\le i, j\le k}|\alpha_i \alpha_j|.
\end{equation}

Next, using the same argument as in the proof of Theorem \ref{thm.tT0-T0}, by \eqref{eq.Psi} and \eqref{est.Psi}, we have
\begin{equation}\label{est.rhoeta.alor}
\begin{aligned}
    \bigg| \int_{\Omega_{\eta,\e}} \phi_{\eta,\e}^2 \rho^i_{\eta} \rho^j_{\eta} - \int_{\Omega} \rho^i_{\eta} \rho^j_{\eta} \bigg| & = \bigg| \int_{\Omega_{\eta,\e}} \nabla\cdot (\e \Psi(x/\e)) \rho^i_{\eta} \rho^j_{\eta} \bigg| \\
    & \le \e \| \Psi_\e \|_{L^{2^*}(\Omega)} \| \rho_\eta^i \|_{W^{1,d}(\Omega)} \| \rho_\eta^j \|_{H^1(\Omega)} \\
    & \le C_k \e \eta^\frac{d-2}{2}.
\end{aligned}
\end{equation}
Hence, by the orthogonality $\int_{\Omega} \rho^i_{\eta} \rho^j_{\eta} = \delta_{ij}$,
\begin{equation}\label{est.dim=k}
    \int_{\Omega_{\eta,\e}} \phi_{\eta,\e}^2 v^2 = \sum_{1\le j\le k} |\alpha_j|^2 + O_k(\e \eta^\frac{d-2}{2}) \sum_{1\le j\le k} |\alpha_j|^2.
\end{equation}
Note that the constant in $O_k(\e \eta^\frac{d-2}{2})$ depends on $k$. Then if $\e \eta^\frac{d-2}{2} < c_k$ for some constant $c_k>0$, then $v = 0$ if and only if $\alpha_j = 0$ for all $1\le j\le k$. This finishes the proof of the claim.

Now, by the minimax principle \eqref{eq.minimax1},
\begin{equation}\label{est.minimax.upper}
    \mu^k_{\eta,\e} \le \max_{v \in S^k_{\rm app}} \frac{ \int_{\Omega_{\eta,\e}} \phi_{\eta,\e}^2 |\nabla v|^2}{\int_{\Omega_{\eta,\e}} \phi_{\eta,\e}^2 v^2}.
\end{equation}
Consider a general $v = \sum_{j=1}^k \alpha_j v^j_{\eta,\e} \in S^k_{\rm app}$. 
Without loss of generality, assume $\sum_{j=1}^k \alpha_j^2 = 1$. If $\e \eta^\frac{d-2}{2} < c_k$, \eqref{est.dim=k} implies
\begin{equation}\label{eq.vL2}
    \int_{\Omega_{\eta,\e}} \phi_{\eta,\e}^2 v^2 = 1 + O_k(\e \eta^\frac{d-2}{2}).
\end{equation}

Next, we estimate the upper bound of the numerator of \eqref{est.minimax.upper}. In view of \eqref{eq.bar.aij}, we perform the following calculation
\begin{equation}\label{est.Dv2.upper}
\begin{aligned}
    & \int_{\Omega_{\eta,\e}} \phi_{\eta,\e}^2 |\nabla v|^2 \\
    & = \sum_{1\le i,j\le k} \alpha_i \alpha_j \int_{\Omega_{\eta,\e}} \phi^2_{\eta,\e} (\partial_\ell \rho^i_{\eta} + (\partial_\ell \chi_\eta )_\e \cdot \nabla \rho^i_{\eta} \theta_\e) \cdot (\partial_\ell \rho^j_{\eta} + (\partial_\ell \chi_\eta )_\e \cdot \nabla \rho^j_{\eta} \theta_\e) + O_k(\e \eta^\frac{d-2}{2}) \\
     &= \sum_{1\le i,j\le k} \alpha_i \alpha_j \int_{\Omega_{\eta,\e}} \phi^2_{\eta,\e} (\delta_{m\ell} + (\partial_\ell \chi^m_\eta)_\e ) (\delta_{n\ell} + (\partial_\ell \chi^n_{\eta})_\e ) \partial_m \rho^i_{\eta} \partial_n \rho^j_{\eta} \theta_\e^2 \\
     & \qquad + \sum_{1\le i,j\le k} \alpha_i \alpha_j \int_{\Omega_{\eta,\e}} \phi^2_{\eta,\e} \partial_\ell \rho_\eta^i \partial_\ell \rho_\eta^j (1-\theta_\e^2)  + O_k (\e \eta^\frac{d-2}{2}).
\end{aligned}  
\end{equation}
The error terms contained in $O_k(\e \eta^\frac{d-2}{2})$ above are estimated by a familiar argument, including the boundary layer estimate $\| f \|_{L^p(\Omega(2\e))} \le C\e^\frac{1}{p} \| f \|_{W^{1,p}(\Omega)}$ (see \cite[Lemma A.5]{SZ23}) whenever $\nabla \theta_\e$ or $1-\theta_\e$ involves.

To proceed, we first notice that
\begin{equation}
    \| \phi_{\eta,\e}^2 I - \overline{A}_\eta \|_{L^{2^*}(\Omega(2\e))} \le \| \phi_{\eta,\e}^2 I - I \|_{L^{2^*}(\Omega(2\e))} + \| I - \overline{A}_\eta \|_{L^{2^*}(\Omega(2\e))} \le C\e^{\frac{1}{2^*}} \eta^\frac{d-2}{2}.
\end{equation}
Hence, for each pair of $i,j$, we estimate the last integral of \eqref{est.Dv2.upper} as
\begin{equation}\label{est.upper.layer}
\begin{aligned}
    & \bigg| \int_{\Omega_{\eta,\e}} \phi^2_{\eta,\e} \partial_\ell \rho_\eta^i \partial_\ell \rho_\eta^j (1-\theta_\e^2) - \int_{\Omega_{\eta,\e}} \bar{a}_{\eta,mn} \partial_m \rho_\eta^i \partial_n \rho_\eta^j (1-\theta_\e^2) \bigg| \\
    & \le \| \phi_{\eta,\e}^2 I - \overline{A}_\eta \|_{L^{2^*}(\Omega(2\e))} \| \nabla \rho_\eta^i \|_{L^d(\Omega(2\e))} \| \nabla \rho_\eta^j \|_{L^2(\Omega(2\e))} \\
    & \le  C\e^{\frac{1}{2^*}} \eta^\frac{d-2}{2} C_k \e^\frac{1}{d} C_k \e^\frac{1}{2} \le C_k \e \eta^\frac{d-2}{2}.
\end{aligned}
\end{equation}

Now consider the following periodic equation,
\begin{equation}\label{eq.Xi}
    -\Delta \Xi_{mn}  = \bar{a}_{\eta,mn} - \phi^2_{\eta} (\delta_{m\ell} + (\partial_\ell \chi^m_\eta) ) (\delta_{n\ell} + (\partial_\ell \chi^n_{\eta}) ), \quad \text{in } Y.
\end{equation}
The equation is solvable since the right-hand side has mean value zero due to \eqref{eq.bar.aij}. We would like to show
\begin{equation}\label{est.Xi}
    \| \nabla \Xi_{mn} \|_{L^2(Y)} \le C\eta^\frac{d-2}{2}.
\end{equation}
To see this, we write
\begin{equation}\label{eq.Xi-2}
\begin{aligned}
    & \bar{a}_{\eta,mn} - \phi^2_{\eta} (\delta_{m\ell} + (\partial_\ell \chi^m_\eta) ) (\delta_{n\ell} + (\partial_\ell \chi^n_{\eta}) ) \\
    & = \bar{a}_{\eta,mn} - \delta_{mn} + \delta_{mn} (1- \phi_\eta^2 )  - \phi_\eta^2 (\partial_m \chi_\eta^n + \partial_n \chi_\eta^m) -\phi_\eta^2 \partial_\ell \chi_\eta^m \partial_\ell \chi_\eta^n.
\end{aligned}
\end{equation}
Moreover, using the equation \eqref{eq.corrector}, we have
\begin{equation}\label{eq.Xi-3}
\begin{aligned}
    \phi_\eta^2 \partial_\ell \chi_\eta^m \partial_\ell \chi_\eta^n & = \partial_\ell (\phi_\eta^2 \chi_\eta^m \partial_\ell \chi_\eta^n) + \chi_\eta^m \partial_\ell (\phi_\eta^2 \partial_\ell \chi_\eta^n) \\
    & = \partial_\ell (\phi_\eta^2 \chi_\eta^m \partial_\ell \chi_\eta^n) + 2\chi_\eta^m \phi_\eta \partial_n \phi_\eta.
\end{aligned}
\end{equation}
Combining \eqref{eq.Xi}, \eqref{eq.Xi-2} and \eqref{eq.Xi-3}, we have
\begin{equation}
\begin{aligned}
    \| \Delta \Xi \|_{H^{-1}_{\rm per}(Y)} & \le \| A_\eta - I \|_{L^2(Y)} + 2\|1 - \phi_\eta^2 \|_{L^2(Y)} + 2\| \phi_\eta^2 \nabla \chi_\eta \|_{L^2(Y)} \\
    & \qquad + \| \phi_\eta^2 \chi_\eta \nabla \chi_\eta \|_{L^2(Y)} + 2\| \chi_\eta \phi_\eta \nabla \phi_\eta \|_{L^2(Y)} \le C\eta^\frac{d-2}{2}.
\end{aligned}
\end{equation}
This implies \eqref{est.Xi}.

Hence,
\begin{equation}\label{est.Xi-4}
\begin{aligned}
    & \int_{\Omega_{\eta,\e}} \phi^2_{\eta,\e} (\delta_{m\ell} + (\partial_\ell \chi^m_\eta)_\e ) (\delta_{n\ell} + (\partial_\ell \chi^n_{\eta})_\e ) \partial_m \rho^i_{\eta} \partial_n \rho^j_{\eta} \theta_\e^2 - \int_{\Omega } \bar{a}_{\eta,mn} \partial_m \rho^i_{\eta} \partial_n \rho^j_{\eta} \theta_\e^2 \\
    & = \int_{\Omega} \nabla\cdot (\e (\nabla \Xi_{mn})_\e ) \partial_m \rho^i_{\eta} \partial_n \rho^j_{\eta} \theta_\e^2 \\
    & = - \e \int_{\Omega} (\nabla \Xi_{mn})_\e  \nabla( \partial_m \rho^i_{\eta} \partial_n \rho^j_{\eta} ) \theta_\e^2 - \e \int_{\Omega}  (\nabla \Xi_{mn})_\e \partial_m \rho^i_{\eta} \partial_n \rho^j_{\eta} 2\theta_\e \nabla \theta_\e.
\end{aligned}
\end{equation}
The first integral is bounded by
\begin{equation}\label{est.Xi-5}
    C \e \| (\nabla \Xi)_\e \|_{L^2(\Omega)} \| \nabla \rho_\eta^j \|_{W^{1,4}(\Omega)}  \| \nabla \rho_\eta^i\|_{W^{1,4}(\Omega)} \le C_k \e \eta^\frac{d-2}{2}. 
\end{equation}
The second integral is bounded by
\begin{equation}\label{est.Xi-6}
    \| (\nabla \Xi)_\e \|_{L^2(\Omega(2\e))} \| \nabla \rho_\eta^j \|_{L^4(\Omega(2\e))}  \| \nabla \rho_\eta^i\|_{L^4(\Omega(2\e))} \le C\e^\frac12 \eta^\frac{d-2}{2} C_k \e^\frac14 C_k \e^\frac14 \le C_k \e \eta^\frac{d-2}{2}.
\end{equation}

It follows from \eqref{est.upper.layer} and \eqref{est.Xi-4}-\eqref{est.Xi-6} that
\begin{equation}\label{est.DvL2}
    \begin{aligned}
        \int_{\Omega_{\eta,\e}} \phi_{\eta,\e}^2 |\nabla v|^2  & = \sum_{1\le i,j\le k} \alpha_i \alpha_j \int_{\Omega } \bar{a}_{\eta,mn} \partial_m \rho^i_{\eta} \partial_n \rho^j_{\eta,\e} + O_k(\e \eta^\frac{d-2}{2}) \\
     & = \sum_{i,j = 1}^k \alpha_i \alpha_j \int_{\Omega} \nabla \rho^i_{\eta} \cdot \overline{A}_\eta \nabla \rho^j_{\eta} + O_k(\e \eta^\frac{d-2}{2}) \\
     & = \sum_{i = 1}^k \alpha_i^2 \mu^i_\eta + O_k (\e \eta^\frac{d-2}{2}) \\
     & \le \mu^k_\eta +C_k \e \eta^\frac{d-2}{2},
    \end{aligned}
\end{equation}
where we have used the orthogonality
\begin{equation}
    \int_{\Omega} \nabla \rho^i_{\eta} \cdot \overline{A}_\eta \nabla \rho^j_{\eta} = \mu_\eta^i \int_{\Omega} \rho_\eta^i \rho_\eta^j = \mu_\eta^i \delta_{ij}.
\end{equation}

Finally, combining \eqref{est.minimax.upper}, \eqref{eq.vL2} and \eqref{est.DvL2}, we obtain the desired upper bound for $\mu_{\eta,\e}^k$.
\end{proof}

\subsection{Suboptimal lower bound}\label{sec.5.2}
We will use both the ``maximin principle'' and ``minimax principle'' to show a lower bound. The quantitative convergence rates in Theorem \ref{thm.L2rate} and Theorem \ref{thm.tT0-T0} will be crucial.

Recall the operators $\mathscr{T}_{\eta,\e}, \widetilde{\mathscr{T}}_\eta$ and $\mathscr{T}_{\eta}$ defined in Section \ref{sec.2.5}.
Recall that $\mathscr{T}_\eta$ is a compact self-adjoint operator in $L^2_{\phi_{\eta,\e}}(\Omega_{\eta,\e})$ and the maximin principle for the eigenvalue problem \eqref{eq.eigen-1} is given by  (see \eqref{eq.minimax2})
\begin{equation*}
    \frac{1}{\mu^k_{\eta,\e}}  = \max_{\substack{S\subset L^2_{\phi_{\eta,\e}}(\Omega_{\eta,\e}) \\ \text{dim} S = k }} \min_{v\in S} \frac{\int_{\Omega_{\eta,\e}}\phi_{\eta,\e}^2  \mathscr{T}_{\eta,\e}(v) \cdot v }{\int_{\Omega_{\eta,\e}} \phi_{\eta,\e}^2v^2 }.
\end{equation*}
In fact, this maximum is attained if we pick $S = S^{k}_{\eta,\e} = \text{span} \{ \rho^j_{\eta,\e}: 1\le j \le k \}$. Therefore,
\begin{equation}\label{est.new.muk}
    \begin{aligned}
        \frac{1}{\mu_{\eta,\e}^k} & = \min_{v \in S^{ k}_\e } \frac{\int_{\Omega_{\eta,\e}}\phi_{\eta,\e}^2  \mathscr{T}_{\eta,\e}(v) \cdot v }{\int_{\Omega_{\eta,\e}} \phi_{\eta,\e}^2v^2 } \\
        & \le \max_{v \in S^{ k}_\e } \frac{\int_{\Omega_{\eta,\e}}\phi_{\eta,\e}^2  (\mathscr{T}_{\eta,\e}(v) - \widetilde{\mathscr{T}}_{\eta,\e}(v) ) \cdot v }{\int_{\Omega_{\eta,\e}} \phi_{\eta,\e}^2v^2 } + \min_{v \in S^{ k}_\e } \frac{\int_{\Omega_{\eta,\e}}\phi_{\eta,\e}^2  \widetilde{\mathscr{T}}_{\eta,\e}(v) \cdot v }{\int_{\Omega_{\eta,\e}} \phi_{\eta,\e}^2v^2 } \\
        & \le \max_{v \in S^{ k}_\e } \frac{\int_{\Omega_{\eta,\e}}\phi_{\eta,\e}^2  (\mathscr{T}_{\eta,\e}(v) - \widetilde{\mathscr{T}}_{\eta,\e}(v) ) \cdot v }{\int_{\Omega_{\eta,\e}} \phi_{\eta,\e}^2v^2 } + \max_{\substack{S\subset L^2_{\phi_{\eta,\e}}(\Omega_{\eta,\e}) \\ \text{dim} S = k }} \min_{v \in S } \frac{\int_{\Omega_{\eta,\e}}\phi_{\eta,\e}^2  \widetilde{\mathscr{T}}_{\eta,\e}(v) \cdot v }{\int_{\Omega_{\eta,\e}} \phi_{\eta,\e}^2v^2 }.
    \end{aligned}
\end{equation}

The numerator of the first term can be handled by Theorem \ref{thm.L2rate}. Precisely, if $v = \sum_{j=1}^k \alpha_j \rho_{\eta,\e}^j \in S^k_{\eta,\e}$, then by the triangle inequality in $L^\infty_{\phi_{\eta,\e}}(\Omega_{\eta,\e})$, Proposition \ref{prop.Linfty.rho} and the orthogonality of $\rho_{\eta,\e}^j$'s in $L^2_{\phi_{\eta,\e}}(\Omega_{\eta,\e})$, we have
\begin{equation}\label{est.eigen.L2Lp}
\begin{aligned}
    \| \phi_{\eta,\e} v \|_{L^\infty(\Omega_{\eta,\e})} & \le \sum_{1\le j\le k} |\alpha_j| \| \phi_{\eta,\e} \rho_{\eta,\e}^j  \|_{L^\infty(\Omega_{\eta,\e})} \\
    & \le \sum_{1\le j\le k} |\alpha_j| C_k \\
    & \le \sqrt{k} C_k \Big( \sum_{1\le j\le k} |\alpha_j|^2 \Big)^{1/2} \\
    & \le \sqrt{k} C_k \| \phi_{\eta,\e} v \|_{L^2(\Omega_{\eta,\e})}.
\end{aligned}
\end{equation}
Consequenctly, Theorem \ref{thm.L2rate} with $p = \infty$ implies
\begin{equation}\label{est.upper-1}
\begin{aligned}
    & \bigg| \max_{v \in S^{ k}_\e } \frac{\int_{\Omega_{\eta,\e}}\phi_{\eta,\e}^2  (\mathscr{T}_{\eta,\e}(v) - \widetilde{\mathscr{T}}_\eta(v) ) \cdot v }{\int_{\Omega_{\eta,\e}} \phi_{\eta,\e}^2v^2 } \bigg| \\
    & \le \max_{v \in S^{ k}_\e } \frac{ \| \phi_{\eta,\e} (\mathscr{T}_{\eta,\e}(v) - \widetilde{\mathscr{T}}_\eta(v) ) \|_{L^2(\Omega_{\eta,\e})} \| \phi_{\eta,\e} v \|_{L^2(\Omega_{\eta,\e})} }{\| \phi_{\eta,\e} v \|_{L^2(\Omega_{\eta,\e})}^2}  \le C_k \e^\frac18 \eta^\frac{d-2}{8} .
\end{aligned}
\end{equation}


Next, we handle the second term on the right-hand side of \eqref{est.new.muk}. The key observation is that the second term of \eqref{est.new.muk} is exactly the maximin principle for the intermediate eigenvalue problem \eqref{eq.eigen-2} since $\widetilde{\mathscr{T}}_{\eta,\e}$ is a compact self-adjoint operator in $L^2_{\phi_{\eta,\e}}(\Omega_{\eta,\e})$. Thus, it is equal to $1/\tilde{\mu}^k_{\eta,\e}$, where $\tilde{\mu}^k_{\eta,\e}$ is the $k$th eigenvalue of the intermediate eigenvalue problem \eqref{eq.eigen-2}.

On the other hand, we can employ the minimax principle of \eqref{eq.eigen-2} to get
\begin{equation*}
    \tilde{\mu}^k_{\eta,\e} = \min_{\substack{S\subset H^1_{0}(\Omega)\\  \text{dim} S = k} } \max_{v \in S} \frac{ \int_{\Omega} \overline{A}\nabla v\cdot \nabla v}{\int_{\Omega} \phi_{\eta,\e}^2 v^2}.
\end{equation*}

\begin{proposition}\label{prop.tmu-mu}
    For $\e \eta^\frac{d-2}{2}<c_k$ with sufficiently small $c_k$ depending on $k$, $|\tilde{\mu}^k_{\eta,\e} - \mu^k_\eta| \le C_k \e \eta^\frac{d-2}{2}$.
\end{proposition}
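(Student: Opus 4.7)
The plan is to compare the two minimax characterizations
\[
\tilde{\mu}^k_{\eta,\e} = \min_{\substack{S\subset H^1_{0}(\Omega)\\  \dim S = k} } \max_{v \in S} \frac{ \int_{\Omega} \overline{A}_\eta\nabla v\cdot \nabla v}{\int_{\Omega} \phi_{\eta,\e}^2 v^2},\qquad \mu^k_{\eta} = \min_{\substack{S\subset H^1_{0}(\Omega)\\  \dim S = k} } \max_{v \in S} \frac{ \int_{\Omega} \overline{A}_\eta\nabla v\cdot \nabla v}{\int_{\Omega} v^2}.
\]
Since the numerators coincide, everything reduces to a quantitative comparison of the denominators on suitable finite-dimensional test spaces. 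The key ingredient is the flux-corrector identity already used in \eqref{est.rhoeta.alor}: writing $\phi_{\eta,\e}^2-1=\nabla\cdot(\e\Psi(x/\e))$ with $\|\Psi\|_{L^{2^*}}\le C\eta^{(d-2)/2}$ and integrating by parts gives
\[
\Bigl|\int_\Omega(\phi_{\eta,\e}^2-1)\,uw\Bigr|\le C\e\,\eta^{\tfrac{d-2}{2}}\|u\|_{W^{1,d}(\Omega)}\|w\|_{H^1(\Omega)}.
\]

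For the upper bound on $\tilde{\mu}^k_{\eta,\e}$, I would test the minimax with $S_1=\operatorname{span}\{\rho_\eta^1,\ldots,\rho_\eta^k\}$. These eigenfunctions satisfy $\|\rho_\eta^j\|_{W^{2,p}(\Omega)}\le C_{j,p}$ because $\Omega$ is $C^3$ and $\overline{A}_\eta$ is constant. The argument is essentially a verbatim repetition of the computations in Proposition \ref{prop.mue<mu0+}, with two simplifications: there is no cutoff $\theta_\e$ and no corrector term, so for $v=\sum_{j=1}^k\alpha_j\rho_\eta^j$ with $\sum\alpha_j^2=1$ one directly gets $\int_\Omega\overline{A}_\eta\nabla v\cdot\nabla v=\sum\alpha_j^2\mu_\eta^j\le\mu_\eta^k$ and, by the flux-corrector bound applied pairwise, $\int_\Omega\phi_{\eta,\e}^2 v^2=1+O_k(\e\eta^{(d-2)/2})$. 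This forces $\dim S_1=k$ once $\e\eta^{(d-2)/2}<c_k$ and yields $\tilde{\mu}^k_{\eta,\e}\le\mu_\eta^k+C_k\e\eta^{(d-2)/2}$.

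For the matching lower bound $\tilde{\mu}^k_{\eta,\e}\ge\mu_\eta^k-C_k\e\eta^{(d-2)/2}$, I would symmetrically test the minimax for $\mu_\eta^k$ with $S_2=\operatorname{span}\{\tilde{\rho}^1_{\eta,\e},\ldots,\tilde{\rho}^k_{\eta,\e}\}$. The orthonormality in $L^2_{\phi_{\eta,\e}}$ gives $\sum\alpha_j^2\tilde{\mu}^j_{\eta,\e}\le\tilde{\mu}^k_{\eta,\e}$ on $S_2$ once normalized, while the flux-corrector estimate delivers $\int_\Omega v^2=\int_\Omega\phi_{\eta,\e}^2v^2+O_k(\e\eta^{(d-2)/2})=1+O_k(\e\eta^{(d-2)/2})$, provided one can control $\|\tilde{\rho}^j_{\eta,\e}\|_{W^{1,d}(\Omega)}$ independently of $\e$ and $\eta$.

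The main obstacle is therefore proving this uniform regularity of the intermediate eigenfunctions. I would argue as follows. The upper bound obtained above already gives $\tilde{\mu}^j_{\eta,\e}\le C_j$. Since $\tilde{\rho}^j_{\eta,\e}$ solves $-\nabla\cdot(\overline{A}_\eta\nabla\tilde{\rho}^j_{\eta,\e})=\tilde{\mu}^j_{\eta,\e}\phi_{\eta,\e}^2\tilde{\rho}^j_{\eta,\e}$ in $\Omega$ with zero Dirichlet data, and $\overline{A}_\eta$ is a constant uniformly elliptic matrix with $|\overline{A}_\eta-I|\le C\eta^{(d-2)/2}$, a Moser/De Giorgi iteration bootstrapping from the normalization $\|\phi_{\eta,\e}\tilde{\rho}^j_{\eta,\e}\|_{L^2(\Omega)}=1$ (together with $0\le\phi_{\eta,\e}\le C$) yields $\|\tilde{\rho}^j_{\eta,\e}\|_{L^\infty(\Omega)}\le C_j$. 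Then classical Calderón–Zygmund for the constant-coefficient operator $\cL_\eta$ in the smooth domain $\Omega$ produces $\|\tilde{\rho}^j_{\eta,\e}\|_{W^{2,p}(\Omega)}\le C_{j,p}$ for every $p<\infty$, which in particular controls the $W^{1,d}$ norm. Linear independence of $S_2$ in $L^2(\Omega)$ (again requiring $\e\eta^{(d-2)/2}<c_k$) follows from the same flux-corrector perturbation of the $L^2_{\phi_{\eta,\e}}$-orthonormality. Combining the two one-sided estimates gives the claimed bound.
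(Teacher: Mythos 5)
Your proposal is correct and follows essentially the same route as the paper: the paper proves the direction $\mu^k_\eta \le \tilde{\mu}^k_{\eta,\e} + C_k\e\eta^{\frac{d-2}{2}}$ exactly as in your second half (test the minimax for $\mu^k_\eta$ with $\mathrm{span}\{\tilde{\rho}^j_{\eta,\e}\}$, use $\phi_{\eta,\e}^2-1=\nabla\cdot(\e\Psi(x/\e))$ with $\|\Psi\|_{L^{2^*}}\le C\eta^{\frac{d-2}{2}}$, and the uniform $W^{1,p}$ regularity of $\tilde{\rho}^j_{\eta,\e}$ from an elliptic bootstrap), and dismisses the other direction as "similar," which is precisely your first half with the homogenized eigenfunctions as test functions. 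Your extra detail on the regularity bootstrap (Moser/De Giorgi plus Calder\'on--Zygmund for the constant-coefficient operator) just makes explicit what the paper leaves implicit.
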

\begin{proof}
    We only show $\mu^k_\eta \le \tilde{\mu}^k_{\eta,\e} + C_k \e \eta^\frac{d-2}{2}$ (which is sufficient to get the lower bound). The other direction is similar. Recall that $\tilde{\rho}^j_{\eta,\e}$ is the $j$th eigenfunction of \eqref{eq.eigen-2} corresponding to $\tilde{\mu}^j_{\eta,\e}$. By the standard normalization and orthogonality in \eqref{eq.normal}, we have $\| \phi_{\eta,\e}  \tilde{\rho}^j_{\eta,\e}\|_{L^2(\Omega)} = 1$ and
\begin{equation}\label{eq.orthogonal.trho}
    \int_{\Omega} \phi_{\eta,\e}^2 \tilde{\rho}^i_{\eta,\e} \tilde{\rho}^j_{\eta,\e} = \delta_{ij}, \qquad \int_{\Omega} \overline{A} \nabla \tilde{\rho}^i_{\eta,\e} \cdot \nabla \tilde{\rho}^j_{\eta,\e} = \tilde{\mu}^i_{\eta,\e} \delta_{ij}.
\end{equation}
Let $\widetilde{S}^{k}_{\eta,\e} = \text{span}\{ \tilde{\rho}^j_{\eta,\e}: 1\le j \le  k  \}.$
Then \eqref{eq.orthogonal.trho} implies that $\dim \widetilde{S}^{k}_{\eta,\e} = k$ by viewing $\widetilde{S}^{k}_{\eta,\e}$ as a subspace of $H_0^1(\Omega)$.
Hence,
\begin{equation}\label{est.muk0}
    \begin{aligned}
        \mu^k_\eta = \min_{\substack{S\subset H^1_{0}(\Omega)\\  \text{dim} S = k} } \max_{v \in S} \frac{ \int_{\Omega} \overline{A}\nabla v\cdot \nabla v }{\int_{\Omega} v^2} \le \max_{v \in \widetilde{S}^{k}_{\eta,\e}} \frac{ \int_{\Omega} \overline{A}\nabla v\cdot \nabla v}{\int_{\Omega} v^2}.
    \end{aligned}
\end{equation}
Now consider a general $v \in \widetilde{S}^{k}_{\eta,\e}$ given by $v = \sum_{j=1}^k \alpha_j \tilde{\rho}^j_{\eta,\e}$ with $\sum_{j=1}^k \alpha_j^2 = 1$. By \eqref{eq.orthogonal.trho}, we obtain
\begin{equation*}
    \int_{\Omega} \phi_{\eta,\e}^2 v^2 = 1,\qquad \int_{\Omega} \overline{A}\nabla v\cdot \nabla v = \sum_{j=1}^k \alpha_j^2 \tilde{\mu}^j_{\eta,\e} \le \tilde{\mu}^k_{\eta,\e}.
\end{equation*}
Note that the elliptic regularity estimates (with a bootstrap argument) for the equation \eqref{eq.eigen-2} imply $\tilde{\rho}^j_{\eta,\e} \in W^{1,p}(\Omega)$ for any $p<\infty$. Thus, similar to \eqref{est.eigen.L2Lp}, one has
\begin{equation}\label{est.v.w1p}
    \| v \|_{W^{1,p}(\Omega)} \le \sum_{1\le j\le k} |\alpha_j| \| \tilde{\rho}^j_{\eta,\e} \|_{W^{1,p}(\Omega)}  \le \sqrt{k} C_k \Big( \sum_{1\le j\le k} |\alpha_j|^2 \Big)^{1/2} \le C_k \| \phi_{\eta,\e} v \|_{L^2(\Omega)}.
\end{equation}

We now claim
\begin{equation}\label{est.diff.v2}
    \bigg| \int_{\Omega} v^2 - \int_{\Omega} \phi_{\eta,\e}^2 v^2 \bigg| \le C_k \e \eta^\frac{d-2}{2} \int_{\Omega} \phi_{\eta,\e}^2 v^2.
\end{equation}
In fact, by \eqref{eq.Psi}, \eqref{est.Psi} and \eqref{est.v.w1p}, an integration by parts give
\begin{equation*}
\begin{aligned}
    \bigg| \int_{\Omega} v^2 - \int_{\Omega} \phi_{\eta,\e}^2 v^2 \bigg| & = \e \bigg| \int_{\Omega} \Psi_\e \cdot 2v \nabla v \bigg| \\
    &\le C\e \| \Psi_\e \|_{L^{2^*}(\Omega)} \|  v\|_{L^d(\Omega)} \| \nabla v \|_{L^2(\Omega)} \\
    & \le C_k \e \eta^\frac{d-2}{2} \int_{\Omega} \phi_{\eta,\e}^2 v^2.
\end{aligned}
\end{equation*}
This proves \eqref{est.diff.v2}, which yields
\begin{equation*}
    \int_{\Omega} v^2 \ge (1-C_k \e \eta^\frac{d-2}{2}) \int_{\Omega} \phi_{\eta,\e}^2 v^2.
\end{equation*}
By letting $\e \eta^\frac{d-2}{2} \le (2C_k)^{-1}$, the above estimate and \eqref{est.muk0} gives
\begin{equation*}
    \mu^k_\eta \le \frac{1}{1-C_k \e \eta^\frac{d-2}{2}} \max_{v \in \widetilde{S}^{k}_{\eta,\e}} \frac{ \int_{\Omega} \overline{A}\nabla v\cdot \nabla v}{\int_{\Omega} \phi_{\eta,\e}^2 v^2} = \frac{\tilde{\mu}^k_{\eta,\e}}{1-C_k \e \eta^\frac{d-2}{2}} \le \tilde{\mu}^k_{\eta,\e} + C_k \e \eta^\frac{d-2}{2}.
\end{equation*}
This proves the desired result.
\end{proof}

\begin{proposition}\label{prop.mue>mu0-}
    Let $\Omega$ be a bounded $C^2$ domain satisfying the geometric assumption \textbf{A}. Then for $k\ge 1$,
    \begin{equation*}
        \mu_{\eta,\e}^k \ge \mu_{\eta}^k - C_k \e^\frac18 \eta^\frac{d-2}{8}.
    \end{equation*}
\end{proposition}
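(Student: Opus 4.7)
The plan is to assemble the lower bound by combining the three ingredients the author has just set up: the decomposition \eqref{est.new.muk}, the quantitative bound \eqref{est.upper-1}, and the intermediate eigenvalue comparison in Proposition \ref{prop.tmu-mu}.

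First, I would start from \eqref{est.new.muk} and identify the second term on the right-hand side. Since $\widetilde{\mathscr{T}}_{\eta,\e}$ is self-adjoint and compact on $L^2_{\phi_{\eta,\e}}(\Omega_{\eta,\e})$ with eigenvalues $\{1/\tilde{\mu}^k_{\eta,\e}\}$, the maximin expression
\[
\max_{\substack{S\subset L^2_{\phi_{\eta,\e}}(\Omega_{\eta,\e}) \\ \dim S = k}} \min_{v\in S} \frac{\int_{\Omega_{\eta,\e}} \phi_{\eta,\e}^2\, \widetilde{\mathscr{T}}_{\eta,\e}(v)\cdot v}{\int_{\Omega_{\eta,\e}} \phi_{\eta,\e}^2 v^2}
\]
equals exactly $1/\tilde{\mu}^k_{\eta,\e}$. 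Combined with the estimate \eqref{est.upper-1} for the first term, this yields
\[
\frac{1}{\mu_{\eta,\e}^k} \le \frac{1}{\tilde{\mu}^k_{\eta,\e}} + C_k \e^{1/8}\eta^{(d-2)/8}.
\]

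Next, I would invoke Proposition \ref{prop.tmu-mu}, which says $|\tilde{\mu}^k_{\eta,\e} - \mu_\eta^k| \le C_k \e\eta^{(d-2)/2}$, so in particular $\tilde{\mu}^k_{\eta,\e} \ge \mu_\eta^k - C_k \e\eta^{(d-2)/2}$. For $\e\eta^{(d-2)/2}$ sufficiently small relative to $\mu_\eta^k$, this gives
\[
\frac{1}{\tilde{\mu}^k_{\eta,\e}} \le \frac{1}{\mu_\eta^k} + C_k \e\eta^{(d-2)/2},
\]
after absorbing $(\mu_\eta^k)^{-2}$ into the $k$-dependent constant. Chaining the two inequalities, I obtain
\[
\frac{1}{\mu_{\eta,\e}^k} \le \frac{1}{\mu_\eta^k} + C_k \e^{1/8}\eta^{(d-2)/8},
\]
where the weaker exponent $1/8$ dominates.

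Finally, I would invert this to bound $\mu_{\eta,\e}^k$ from below. Using $\frac{1}{a} \le \frac{1}{b} + \delta$ with $a = \mu_{\eta,\e}^k$, $b = \mu_\eta^k$ and $\delta = C_k \e^{1/8}\eta^{(d-2)/8}$, one has $a \ge b/(1+b\delta) \ge b - b^2\delta$, so
\[
\mu_{\eta,\e}^k \ge \mu_\eta^k - (\mu_\eta^k)^2 C_k \e^{1/8}\eta^{(d-2)/8},
\]
and absorbing $(\mu_\eta^k)^2$ into $C_k$ (which is legitimate since $\mu_\eta^k$ depends only on $k$ and the geometric data) produces the stated bound. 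There is no real obstacle here, since all the analytic work has already been done in Section \ref{Sec.3} and Section \ref{sec.4}; the only item deserving care is the mild smallness assumption $\e\eta^{(d-2)/2} \le c_k$ needed so that Proposition \ref{prop.tmu-mu} applies and so that the inversion step from $1/\mu_{\eta,\e}^k$ to $\mu_{\eta,\e}^k$ is quantitatively valid. For larger $\e\eta^{(d-2)/2}$, the estimate is trivial after adjusting $C_k$.
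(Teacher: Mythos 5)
Your proposal is correct and follows essentially the same route as the paper: identify the maximin term in \eqref{est.new.muk} with $1/\tilde{\mu}^k_{\eta,\e}$, apply \eqref{est.upper-1} and Proposition \ref{prop.tmu-mu} to compare $1/\mu_{\eta,\e}^k$ with $1/\mu_\eta^k$, and then invert, absorbing the bounded factors (via Weyl's law) into $C_k$. The only cosmetic difference is that the paper bounds $\mu_\eta^k-\mu_{\eta,\e}^k$ by $C_k\e^{1/8}\eta^{(d-2)/8}\mu_\eta^k\mu_{\eta,\e}^k$ directly while you use $a\ge b-b^2\delta$; both are equivalent bookkeeping.
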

\begin{proof}
    By \eqref{est.new.muk}, \eqref{est.upper-1} and Proposition \ref{prop.tmu-mu},
    \begin{equation*}
        \frac{1}{\mu_{\eta,\e}^k} \le C_k \e^\frac18 \eta^\frac{d-2}{8} + \frac{1}{\tilde{\mu}_{\eta,\e}^k} \le C_k \e^\frac18 \eta^\frac{d-2}{8} + \frac{1}{\mu_{\eta}^k} + \frac{|\tilde{\mu}_{\eta,\e}^k-\mu_{\eta}^k|}{\mu_{\eta}^k \tilde{\mu}_{\eta,\e}^k} \le C_k \e^\frac18 \eta^\frac{d-2}{8} + \frac{1}{\mu_{\eta}^k}.
    \end{equation*}
    Hence,
    \begin{equation*}
        \mu_{\eta}^k - \mu_{\eta,\e}^k \le C_k \e^\frac18 \eta^\frac{d-2}{8} \mu_{\eta}^k \mu_{\eta,\e}^k \le C_k \e^\frac18 \eta^\frac{d-2}{8},
    \end{equation*}
    as desired.
\end{proof}


\subsection{Weyl's law}

Throughout the subsection, we will fix $k\ge 1$ and allow $C_k$ depending on $k$.
The convergence rates between the three eigenvalue problems \eqref{eq.eigen-1}-\eqref{eq.eigen-3} can be summarized as follows (Proposition \ref{prop.mue<mu0+}-\ref{prop.mue>mu0-}):
\begin{equation}\label{est.2rates}
    -C_k \e^\frac18 \eta^\frac{d-2}{8} \le \mu_{\eta,\e}^k - \tilde{\mu}_{\eta,\e}^k \le C_k \e \eta^\frac{d-2}{2} \quad \text{and} \quad |\tilde{\mu}_{\eta,\e}^k - \mu_{\eta}^k| \le C_k \e \eta^\frac{d-2}{2}.
\end{equation}


We first recall the classical Weyl's law for the homogenized operator in a bounded domain without holes.

\begin{lemma}[Hermann Weyl, 1911]\label{lem.Weyl}
    There exist $C_2>C_1>0$ such that for any $j\ge 1$, we have
    \begin{equation*}
        C_1 j^{\frac{2}{d}} \le \mu_{\eta}^j \le C_2 j^{\frac{2}{d}}.
    \end{equation*}
\end{lemma}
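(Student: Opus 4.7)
The plan is to reduce the estimate to the classical Weyl asymptotic for the Dirichlet Laplacian on the fixed bounded domain $\Omega$. The key input is already in hand: $\overline{A}_\eta$ is a \emph{constant} matrix, and as noted in Section~\ref{sec.2.3} and Proposition~\ref{prop.chi}, it satisfies a uniform ellipticity condition independent of $\eta$, i.e. there exists $\Lambda\in(0,1]$ such that $\Lambda |\xi|^2 \le \xi\cdot \overline{A}_\eta \xi \le \Lambda^{-1}|\xi|^2$ for every $\xi\in \R^d$ and every $\eta\in(0,1]$ (in fact $|\overline{A}_\eta - I|\le C\eta^{\frac{d-2}{2}}$, which implies $\Lambda$ can be chosen close to $1$).

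First I would write down the Courant--Fischer min-max characterizations
\[
\mu_\eta^k = \min_{\substack{S\subset H^1_0(\Omega)\\ \dim S=k}}\ \max_{v\in S\setminus\{0\}} \frac{\int_\Omega \overline{A}_\eta \nabla v \cdot \nabla v}{\int_\Omega v^2}, \qquad \lambda^k_\Omega = \min_{\substack{S\subset H^1_0(\Omega)\\ \dim S=k}}\ \max_{v\in S\setminus\{0\}} \frac{\int_\Omega |\nabla v|^2}{\int_\Omega v^2},
\]
where $\lambda^k_\Omega$ denotes the $k$th Dirichlet eigenvalue of $-\Delta$ on $\Omega$. Applying the two-sided ellipticity bound to the Rayleigh quotient of $\mu_\eta^k$ and taking the min-max over the \emph{same} family of $k$-dimensional subspaces, I obtain
\[
\Lambda\, \lambda^k_\Omega \le \mu_\eta^k \le \Lambda^{-1}\, \lambda^k_\Omega \qquad \text{for every } k\ge 1,
\]
with constants independent of $\eta$.

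Next I would invoke the classical Weyl law for the Dirichlet Laplacian on the bounded domain $\Omega$: there exist constants $c_\Omega, C_\Omega > 0$ depending only on $d$ and the geometry of $\Omega$ (through $|\Omega|$) such that $c_\Omega\, k^{2/d} \le \lambda^k_\Omega \le C_\Omega\, k^{2/d}$ for all $k\ge 1$. Combining this with the previous comparison yields the desired double inequality with $C_1 = \Lambda\, c_\Omega$ and $C_2 = \Lambda^{-1}\, C_\Omega$, both independent of $\eta$.

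Since both ingredients (the uniform-in-$\eta$ ellipticity of $\overline{A}_\eta$ and the classical Weyl law) are already in place, I do not anticipate any genuine obstacle here; the proof is really a transparent comparison argument, and the only point worth emphasizing in writing it up is that the $\eta$-independence of $C_1, C_2$ is inherited from the $\eta$-independence of $\Lambda$ that was proved in Proposition~\ref{prop.chi}.
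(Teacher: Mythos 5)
Your argument is correct, and it matches what the paper implicitly relies on: the paper states this lemma without proof, attributing it to the classical Weyl law and simply remarking that the constants are universal, which is precisely the point your min-max comparison makes explicit. Since $\overline{A}_\eta$ is a constant symmetric matrix with $\Lambda|\xi|^2 \le \xi\cdot\overline{A}_\eta\xi \le \Lambda^{-1}|\xi|^2$ uniformly in $\eta$ (by the ellipticity statement in Section~\ref{sec.2.3} together with $|\overline{A}_\eta - I|\le C\eta^{\frac{d-2}{2}}$ from Proposition~\ref{prop.chi}), the Courant--Fischer characterization over the same family of subspaces of $H^1_0(\Omega)$ gives $\Lambda\lambda^k_\Omega \le \mu_\eta^k \le \Lambda^{-1}\lambda^k_\Omega$, and the two-sided bound $c_\Omega k^{2/d}\le \lambda^k_\Omega \le C_\Omega k^{2/d}$ for the Dirichlet Laplacian finishes the proof. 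The only point worth tightening in a write-up is the last citation: Weyl's theorem is an asymptotic statement, so for a bound valid at \emph{every} $j\ge 1$ you should either invoke a non-asymptotic result (e.g.\ the Li--Yau lower bound $\lambda^k_\Omega \ge \tfrac{d}{d+2}\,4\pi^2(\omega_d|\Omega|)^{-2/d}k^{2/d}$, with a matching upper bound by domain monotonicity against an inscribed cube) or note that the asymptotics combined with the positivity of the finitely many low eigenvalues yield the uniform two-sided bound; this is routine and does not affect the structure of your argument.
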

In the above lemma, $C_1$ and $C_2$ are universal constants. The convergence rates in \eqref{est.2rates} and the Weyl's law for the homogenized problem give the distributions of the first $k$ eigenvalues for the degenerate problem and intermediate problem, 
when $\e \eta^\frac{d-2}{2}$ is sufficiently small. In particular, we have the following crucial lemma.

\begin{lemma}[Existence of large common spectral gaps]\label{lem.N1gap}
    Fix $k \ge 1$. There exist $C_k>0$ (depending on $k$) and a universal constant $M>0$ such that whenever $\e \eta^\frac{d-2}{2}< C_k^{-1}$, there exists $N_1 = N_1(k,\e,\eta) \in [k,Mk)$ such that
    \begin{equation}\label{est.LargeGap-1}
        \min\{ \mu_{\eta,\e}^{N_1+1}, \tilde{\mu}_{\eta,\e}^{N_1+1}, \mu_\eta^{N_1+1}\} - \max\{ \mu_{\eta,\e}^{N_1}, \tilde{\mu}_{\eta,\e}^{N_1}, \mu_\eta^{N_1}\} \ge H_k \approx k^{\frac{2-d}{d}},
    \end{equation}
    and
    \begin{equation}\label{est.LargeGap-2}
        \min\Big\{ \frac{1}{\mu_{\eta,\e}^{N_1}} , \frac{1}{\tilde{\mu}_{\eta,\e}^{N_1}} , \frac{1}{\mu_\eta^{N_1}}  \Big\} - \max \Big\{\frac{1}{\mu_{\eta,\e}^{N_1 + 1}},\frac{1}{\tilde{\mu}_{\eta,\e}^{N_1 + 1}}, \frac{1}{\mu_\eta^{N_1 + 1}}\Big\} \ge G_k \approx k^{\frac{-2-d}{d}}.
    \end{equation}
\end{lemma}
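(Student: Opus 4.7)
The plan is to use the Weyl asymptotics in Lemma~\ref{lem.Weyl} to locate an index $N_1 \in [k, Mk)$ at which the homogenized sequence $\{\mu_\eta^j\}$ has a large spectral gap, and then transfer this gap to the other two sequences via the quantitative rates \eqref{est.2rates}.

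First, choose a universal integer $M \ge 2$ depending only on the universal constants $C_1, C_2$ of Lemma~\ref{lem.Weyl}, so that $C_1 M^{2/d} - C_2 \ge 2 C_2$. Then Weyl's law gives
\[
    \mu_\eta^{Mk} - \mu_\eta^k \ge C_1 (Mk)^{2/d} - C_2 k^{2/d} \ge 2 C_2 k^{2/d}.
\]
Writing the left-hand side as a telescoping sum of the $(M-1)k$ consecutive gaps $\mu_\eta^{j+1} - \mu_\eta^j$ for $j \in \{k, \ldots, Mk-1\}$ and applying the pigeonhole principle, there exists $N_1 \in \{k, \ldots, Mk-1\}$ such that
\[
    \mu_\eta^{N_1+1} - \mu_\eta^{N_1} \ge \frac{2 C_2}{M-1}\, k^{(2-d)/d} =: 3 H_k.
\]

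Next, at the indices $N_1$ and $N_1+1$, which are both bounded by $Mk+1$, the suboptimal rates \eqref{est.2rates} yield
\[
    \max\bigl\{|\mu_{\eta,\e}^j - \mu_\eta^j|,\ |\tilde\mu_{\eta,\e}^j - \mu_\eta^j|\bigr\} \le C_k \e^{1/8} \eta^{(d-2)/8}, \qquad j = N_1, N_1+1,
\]
where the constant $C_k$ is taken as the supremum of the index-dependent constants over the finite range $[k, Mk+1]$. Impose the smallness assumption $\e \eta^{(d-2)/2} < C_k^{-1}$, with $C_k$ enlarged so that $C_k \e^{1/8}\eta^{(d-2)/8} \le H_k$. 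Then each of the three $(N_1+1)$-th eigenvalues is at least $\mu_\eta^{N_1+1} - H_k$, while each of the three $N_1$-th eigenvalues is at most $\mu_\eta^{N_1} + H_k$, so the spectral gap of at least $3 H_k$ in the $\mu_\eta$-sequence leaves a common gap of at least $H_k$, proving \eqref{est.LargeGap-1}.

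Finally, \eqref{est.LargeGap-2} follows from \eqref{est.LargeGap-1} via the identity $1/a - 1/b = (b-a)/(ab)$. Let $a$ and $b$ denote the maximum and the minimum appearing in \eqref{est.LargeGap-1}. Weyl's law applied at index $N_1 \in [k, Mk)$, together with the perturbation bound above, shows that $a$ and $b$ both lie in $[c k^{2/d}, C k^{2/d}]$ for universal constants $c, C > 0$, hence $1/a - 1/b \ge H_k / (C^2 k^{4/d}) \gtrsim k^{-(d+2)/d} =: G_k$. The main obstacle in this argument is a bookkeeping one rather than a conceptual one: the constants in \eqref{est.2rates} depend on the index, so one must pre-commit to working in a window $[k, Mk]$ of universal length in order for $C_k$ -- and hence the smallness threshold for $\e \eta^{(d-2)/2}$ -- to depend only on $k$ and not on the a priori unknown position of $N_1$.
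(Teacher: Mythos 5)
Your proof is correct and follows essentially the same route as the paper: choose a universal $M$ from the Weyl constants so that the total gap over $[k,Mk]$ is $\gtrsim k^{2/d}$, pigeonhole to find one large gap $\gtrsim k^{(2-d)/d}$ in the homogenized spectrum, transfer the gap to the other two sequences using the suboptimal rates \eqref{est.2rates} under the smallness hypothesis on $\e\eta^{(d-2)/2}$, and then pass to reciprocals with the Weyl upper bound $\mu_\eta^{N_1+1}\lesssim k^{2/d}$. The closing remark about absorbing the index-dependent constants over the finite window $[k,Mk+1]$ is exactly the bookkeeping the paper handles implicitly.
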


\begin{proof}
     Let $M$ be a universal constant such that $C_1 M^{\frac{2}{d}} > 2C_2$, where $C_1, C_2$ are given in Lemma \ref{lem.Weyl}. The Wyel's law for $\mu_{\eta}^j$ tells us that $C_1 k^{\frac{2}{d}} \le \mu_{\eta}^k \le C_2 k^{\frac{2}{d}}$ and $C_1 (Mk)^{\frac{2}{d}} \le \mu_\eta^{Mk} \le C_2 (Mk)^{\frac{2}{d}}$. Thus,
     \begin{equation*}
         \mu_\eta^{Mk} - \mu_{\eta}^k \ge (C_1 M^{\frac{2}{d}} - C_2) k^\frac{2}{d} > C_2 k^\frac{2}{d},
     \end{equation*}
     by our choice of $M$. By the pigeonhole principle, there exists $N_1 = N_1(k,\e,\eta)  \in [k, Mk)$ such that
     \begin{equation*}
         \mu_\eta^{N_1+1} - \mu_\eta^{N_1} \ge \frac{C_2 k^{\frac{2}{d}}}{(M-1)k} = C_3 k^{\frac{2}{d} -1}.
     \end{equation*}
     As a result, we have found a large spectral gap between two successive eigenvalues (counted with multiplicity) for the homogenized problem.
     By the convergence rates of eigenvalues \eqref{est.2rates}, if $\e \eta^\frac{d-2}{2} < C_k^{-1}$ for sufficiently large $C_k$ depending on $k$, we have \eqref{est.LargeGap-1} with $H_k = \frac{1}{2} C_3 k^{\frac{2}{d} -1}$.

     Using Lemma \ref{lem.Weyl} again, we have
     \begin{equation*}
         \frac{1}{\mu_\eta^{N_1}} - \frac{1}{\mu_\eta^{N_1+1}} = \frac{\mu_\eta^{N_1+1} - \mu_\eta^{N_1}}{ \mu_\eta^{N_1+1} \mu_\eta^{N_1}} \ge \frac{C_3 k^{\frac{2}{d} -1}}{C_1^2 (N_1+1)^{\frac{4}{d}}} = C_4 k^{-\frac{2}{d}-1}.
     \end{equation*}
     Again, by the convergence rates, for $\e \eta^\frac{d-2}{2}<C_k^{-1}$, we obtain \eqref{est.LargeGap-2} with $G_k = \frac12 C_4 k^{-\frac{2}{d}-1}$.
     \end{proof}

\section{Optimal convergence rates}
For convenience in this section, we will use the notations,
\begin{equation*}
    \Ag{f,g}_{\phi_{\eta,\e}} = \int_{\Omega_{\eta,\e}} \phi_{\eta,\e}^2 f g \quad \text{and} \quad \Ag{f,g} = \int_{\Omega} fg,
\end{equation*}
to represent the inner products in $L^2_{\phi_{\eta,\e}}(\Omega)$ and $L^2(\Omega)$, respectively. 
\subsection{First-order approximation by higher regularity and duality}
\begin{theorem}\label{thm.SubOptRate}
Suppose that $\Omega$ is a bounded $C^3$ domain and $\Omega_{\eta,\e}$ satisfies the geometric assumption \textbf{A}. Then for any $f\in W^{1,p}(\Omega)$ with some $p>d$,
\begin{equation}
    \| \mathscr{T}_{\eta,\e} f - \mathscr{T}_{\eta} f \|_{L^2_{\phi_{\eta,\e}}(\Omega_{\eta,\e})} \le C\e^\frac12 \eta^{\frac{d-2}{2}} \| f \|_{W^{1,p}(\Omega)}.
\end{equation}
\end{theorem}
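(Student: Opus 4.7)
The plan is to run a one-step corrector expansion directly between $u_{\eta,\e}:=\mathscr{T}_{\eta,\e}f$ and $u_\eta:=\mathscr{T}_\eta f$, bypassing the intermediate problem so that the suboptimal rate of Theorem \ref{thm.we.flp} can be replaced by the target $\e^{1/2}\eta^{(d-2)/2}$. The crucial new ingredient is the stronger regularity of the homogenized solution: because $f\in W^{1,p}(\Omega)$ with $p>d$, $\Omega$ is of class $C^3$, and $\overline{A}_\eta$ is constant and uniformly elliptic in $\eta$, elliptic regularity gives $u_\eta\in W^{3,p}(\Omega)\hookrightarrow C^{2,\alpha}(\overline\Omega)$ with $\|u_\eta\|_{W^{3,p}(\Omega)}\le C\|f\|_{W^{1,p}(\Omega)}$ uniformly in $\eta$.

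Define
\begin{equation*}
w_{\eta,\e}:=u_{\eta,\e}-u_\eta-\e\chi^\ell_{\eta,\e}\,\mathscr{K}_\e(\partial_\ell u_\eta)\,\theta_\e\in V_{\eta,\e}.
\end{equation*}
The corrector term already satisfies the easy bound
$\|\e\chi_{\eta,\e}\,\mathscr{K}_\e(\nabla u_\eta)\,\theta_\e\|_{L^2_{\phi_{\eta,\e}}(\Omega_{\eta,\e})}\le C\e\,\|\phi_\eta\chi_\eta\|_{L^{2^*}(Y_\eta)}\,\|\nabla u_\eta\|_{L^d(\Omega)}\le C\e\,\eta^{(d-2)/2}\|f\|_{W^{1,p}}$
by Proposition \ref{prop.chi}(i), which is already below the target. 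Consequently, by the weighted Poincar\'e inequality (Proposition \ref{prop.H1e}(iv)), it is enough to prove the seminorm bound $\|\phi_{\eta,\e}\nabla w_{\eta,\e}\|_{L^2(\Omega_{\eta,\e})}\le C\e^{1/2}\eta^{(d-2)/2}\|f\|_{W^{1,p}}$.

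The next step is to adapt the bilinear identity of Lemma \ref{lem.Dwe.Dh} to the present setting, in which $\cL_\eta u_\eta=f$ on $\Omega$ rather than $\cL_\eta u_\eta=\tilde F$ for the zero extension of $\phi_{\eta,\e}^2 f$. The identity acquires an additional residual
$E(h):=\int_{\Omega_{\eta,\e}}\phi_{\eta,\e}^2 fh-\int_\Omega fh=-\int_\Omega(1-\phi_{\eta,\e}^2)fh$,
which I estimate via the flux representation $1-\phi_{\eta,\e}^2=\nabla\cdot(\e\Psi_\e)$ with $\|\Psi_\e\|_{L^{2^*}(\Omega)}\le C\eta^{(d-2)/2}$ from the proof of Theorem \ref{thm.tT0-T0}, getting $|E(h)|\le C\e\eta^{(d-2)/2}\|f\|_{W^{1,p}}\|\nabla h\|_{L^2(\Omega)}$ for all $h\in H^1_0(\Omega)$. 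Combined with the flux-corrector computation of Lemma \ref{lem.Dwe.Dh} (whose RHS now uses $\|\nabla u_\eta\|_{W^{1,p}}\lesssim\|f\|_{W^{1,p}}$ in place of $\|\nabla u_\eta\|_{W^{1,d}}$), this yields for every $h\in H^1_0(\Omega)$
\begin{equation*}
\left|\int_{\Omega_{\eta,\e}}\phi_{\eta,\e}^2\nabla w_{\eta,\e}\cdot\nabla h\right|\le C\e\eta^{\frac{d-2}{2}}\|f\|_{W^{1,p}}\|\nabla h\|_{L^2(\Omega)}+C\e^{1/2}\eta^{\frac{d-2}{2}}\|f\|_{W^{1,p}}\|\nabla h\|_{L^2(\Omega(2\e))}.
\end{equation*}
To close the estimate I then test against $h=w_{\eta,\e}^*$, the harmonic extension of $w_{\eta,\e}$ from $\Omega_{\eta,\e}^\delta$ to $\Omega$, split the squared seminorm as in the proof of Lemma \ref{lem.Dwet}, and bound the bulk piece by the identity above, while controlling the layer integral over $\Omega_{\eta,\e}\setminus\Omega_{\eta,\e}^\delta$ by Lemmas \ref{lem.deltaLayer} and \ref{lem.deltaLayer-2} applied with $\cL_{\eta,\e}u_{\eta,\e}=\phi_{\eta,\e}\cdot(\phi_{\eta,\e}f)$ and $\phi_{\eta,\e}f\in L^p$.

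The main obstacle is to improve the rate from the $\e^{1/8}\eta^{(d-2)/8}$ of Theorem \ref{thm.we.flp} all the way to $\e^{1/2}\eta^{(d-2)/2}$: the additional gain cannot come from balancing in $\delta$ alone but must come from a refined control of the boundary-layer piece $\|\nabla w_{\eta,\e}^*\|_{L^2(\Omega(2\e))}$ in the second summand of the bilinear bound. Here the $C^{2,\alpha}$ regularity of $u_\eta$ up to $\partial\Omega$ is essential: it forces the trace of $\nabla u_\eta$ on the perforated boundary layer to be H\"older continuous, so that nontangential maximal-function estimates of the type used in Lemma \ref{lem.deltaLayer-2} now yield an extra factor of $\e^{1/2}$ on the layer integral. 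Carrying out this refined boundary-layer analysis, then optimizing in $\delta$, is the technical heart of the argument; it is precisely the intrinsic suboptimality of the perforated-domain boundary-layer estimate, noted in the introduction, that prevents one from going below $\e^{1/2}$ in this direct comparison.
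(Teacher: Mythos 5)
Your proposal has the right general ambition (exploit $u_\eta\in W^{3,p}\hookrightarrow C^{2,\alpha}$ and account for the residual coming from the mismatched right-hand sides $\phi_{\eta,\e}^2 f$ versus $f$ via $1-\phi_{\eta,\e}^2=\nabla\cdot(\e\Psi_\e)$), but the step that actually produces the rate $\e^{1/2}\eta^{\frac{d-2}{2}}$ is missing, and the mechanism you sketch for it would not work. In the harmonic-extension scheme of Lemma \ref{lem.Dwet} combined with Lemmas \ref{lem.deltaLayer}--\ref{lem.deltaLayer-2}, the rate is not limited by the boundary layer $\Omega(2\e)$ near $\partial\Omega$ at all; it is limited by (a) the $\delta$-neighborhood of the holes, which contributes $C\delta\big(\|\phi_{\eta,\e}\nabla u_{\eta,\e}\|_{L^2}^2+\|f\|_{L^p}^2\big)$ with no $\e$ or $\eta$ smallness and with no dependence on the regularity of $u_\eta$, and (b) the factor $\delta^{-1}$ coming from $\|\nabla u_{\eta,\e}^*\|_{L^2(\Omega)}\le C\delta^{-1}\|\phi_{\eta,\e}\nabla u_{\eta,\e}\|_{L^2}$ (since one only knows $\phi_{\eta,\e}\gtrsim\delta$ on $\Omega_{\eta,\e}^\delta$), which enters the bilinear bound through $\|\nabla w_{\eta,\e}^*\|_{L^2}$. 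These two effects force $\e^{1/2}\eta^{\frac{d-2}{2}}\delta^{-1}+\delta$ on the right-hand side of the squared energy, and no choice of $\delta$ makes this $\lesssim\e\,\eta^{d-2}$; hence the direct route with the $\mathscr{K}_\e$-smoothed, cutoff corrector and the harmonic-extension test function is structurally capped far above the target, regardless of how H\"older continuous $\nabla u_\eta$ is up to $\partial\Omega$. Your appeal to ``an extra factor of $\e^{1/2}$ on the layer integral'' from $C^{2,\alpha}$ regularity is therefore aimed at the wrong obstruction and is, in any case, asserted rather than proved; this is precisely the technical heart that a proof must supply.

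The paper closes this gap by changing the ansatz rather than refining the test function. It sets $w_{\eta,\e}=u_{\eta,\e}-u_\eta-\e\chi_{\eta,\e}\cdot\nabla u_\eta+v_{\eta,\e}$, where $v_{\eta,\e}$ solves $\cL_{\eta,\e}(v_{\eta,\e})=0$ in $\Omega_{\eta,\e}$ with $v_{\eta,\e}=\e\chi_{\eta,\e}\cdot\nabla u_\eta$ on $\Gamma_{\eta,\e}$; no smoothing $\mathscr{K}_\e$ and no cutoff $\theta_\e$ are used, so $w_{\eta,\e}\in V_{\eta,\e}$ and can be tested against itself, eliminating both the harmonic extension and the $\delta$-layers. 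The residual $\cL_{\eta,\e}(w_{\eta,\e})$ is then written in divergence form with an extra factor $\e$ by introducing new degenerate flux correctors $\Theta_\eta^{j\ell}$ solving $\nabla\cdot(\phi_\eta^2\nabla\Theta_\eta^{j\ell})=\phi_\eta^2\big(-(\bar a_\eta)_{j\ell}+\delta_{j\ell}+\partial_j\chi_\eta^\ell\big)$ in $Y_\eta$, with $\|\phi_\eta\nabla\Theta_\eta^{j\ell}\|_{L^2(Y_\eta)}\le C\eta^{\frac{d-2}{2}}$; this is where $u_\eta\in W^{3,p}$ (i.e.\ $f\in W^{1,p}$, $C^3$ domain) is needed, and it yields the optimal bound $\|\phi_{\eta,\e}\nabla w_{\eta,\e}\|_{L^2}\le C\e\,\eta^{\frac{d-2}{2}}\|f\|_{W^{1,p}}$ (Lemma \ref{lem.OptH1rate}). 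The only $\e^{1/2}$ loss in Theorem \ref{thm.SubOptRate} then comes from the boundary layer $v_{\eta,\e}$ itself, estimated by comparing with $\e\chi_{\eta,\e}\cdot\nabla u_\eta\,\zeta_\e$ (supported in $\Omega(2\e)$) and using the energy estimate plus the weighted Sobolev--Poincar\'e inequality. If you want to salvage your outline, you would need to replace the $\mathscr{K}_\e$/$\theta_\e$/harmonic-extension machinery by some device of this kind that makes the error itself an admissible test function and gains a full factor $\e$ in the interior.
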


Let $u_{\eta,\e} = \mathscr{T}_{\eta,\e} f$ and $u_{\eta} = \mathscr{T}_{\eta} f$. Then $u_{\eta,\e} \in V_{\eta,\e}$ satisfies $\cL_{\eta,\e}(u_{\eta,\e}) = \phi_{\eta,\e}^2 f$ in $\Omega_{\eta,\e}$ and $u_{\eta} \in H^1_0(\Omega)$ satisfies $\cL_\eta(u_{\eta}) =  f$ in $\Omega$. Since $f\in W^{1,p}(\Omega)$ and $\Omega$ is a bounded $C^{3}$ domain, then $u_{\eta} \in W^{3,p}(\Omega)$.

    We redefine
    \begin{equation}
        w_{\eta,\e} = u_{\eta,\e} - u_{\eta} - \e \chi_{\eta,\e} \cdot \nabla u_{\eta} + v_{\eta,\e},
    \end{equation}
    where $v_{\eta,\e}$ is the solution of
    \begin{equation}\label{eq.def.bl}
        \cL_{\eta,\e}(v_{\eta,\e}) = 0 \text{ in } \Omega_{\eta,\e} \quad \text{and} \quad v_{\eta,\e} = \e \chi_{\eta,\e} \cdot \nabla u_{\eta} \text{ on } \Gamma_{\eta,\e}.
    \end{equation}
    It is not hard to see that $v_{\eta,\e} \in H^1_{\phi_{\eta,\e}}(\Omega_{\eta,\e})$. Thus we have $w_{\eta,\e} \in V_{\eta,\e}$ since $w_{\eta,\e} = 0$ on $\Gamma_{\eta,\e}$.

    \begin{lemma}\label{lem.OptH1rate}
        Under the assumptions of Theorem \ref{thm.SubOptRate}, it holds
        \begin{equation}
            \| \phi_{\eta,\e} w_{\eta,\e} \|_{L^2(\Omega_{\eta,\e})} + \| \phi_{\eta,\e} \nabla w_{\eta,\e} \|_{L^2(\Omega_{\eta,\e})} \le C\e \eta^{\frac{d-2}{2}} \| f \|_{W^{1,p}(\Omega)}.
        \end{equation}
    \end{lemma}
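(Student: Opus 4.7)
The plan is to test the weak equation for $w_{\eta,\e}$ against itself and then extract cancellations from the two-scale expansion using the corrector and flux-corrector identities. Thanks to the boundary corrector $v_{\eta,\e}$, the function $w_{\eta,\e}$ lies in $V_{\eta,\e}$ and is therefore admissible as a test function; this removes the need for the cutoff $\theta_\e$ and avoids the boundary layer $\Omega(2\e)$ that produced the suboptimal $\e^{1/2}$ loss in Lemma \ref{lem.Dwe.Dh}. Since $\cL_{\eta,\e}(u_{\eta,\e})=\phi_{\eta,\e}^2 f$ and $\cL_{\eta,\e}(v_{\eta,\e})=0$ weakly, testing gives
\begin{equation*}
\int_{\Omega_{\eta,\e}}\phi_{\eta,\e}^2|\nabla w_{\eta,\e}|^2
= \int_{\Omega_{\eta,\e}}\phi_{\eta,\e}^2 f\,w_{\eta,\e}
 - \int_{\Omega_{\eta,\e}}\phi_{\eta,\e}^2 \nabla\bigl(u_\eta+\e\chi_{\eta,\e}\!\cdot\!\nabla u_\eta\bigr)\cdot\nabla w_{\eta,\e}.
\end{equation*}

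I would then rewrite the second integrand by introducing $a_{ij}(y)=\phi_\eta^2(\delta_{ij}+\partial_i\chi^j_\eta)$, setting $b_{ij}=\bar a_{\eta,ij}-a_{ij}$, and using the flux corrector $\Phi_{kij}\in H^1_{\rm per}(Y)$ with $\Phi_{kij}=-\Phi_{ikj}$ and $b_{ij}=\partial_k\Phi_{kij}$. The corrector equation gives $\partial_i(a_{ij})_\e=0$, whence $\partial_i(b_{ij})_\e=0$ and $\|\Phi_\eta\|_{L^{2^*}(Y_\eta)}\le C\eta^{(d-2)/2}$ by the same kind of estimate that produces Proposition \ref{prop.chi}(i). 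To replace the leading term $\int\bar a_{\eta,ij}\partial_j u_\eta\,\partial_i w_{\eta,\e}$ by $\int f\,w_{\eta,\e}$ (via $-\bar a_{\eta,ij}\partial^2_{ij}u_\eta=f$) and then reconcile $\int f\,w_{\eta,\e}$ with $\int\phi_{\eta,\e}^2 f\,w_{\eta,\e}$, I would invoke the periodic representation $\phi_{\eta,\e}^2-1=\nabla\!\cdot(\e\Psi_\e)$ with $\|\Psi_\e\|_{L^{2^*}(\Omega)}\le C\eta^{(d-2)/2}$ from \eqref{eq.Psi}-\eqref{est.Psi}; one integration by parts transfers the factor $\e$ and costs a derivative of $f$, which is affordable because $f\in W^{1,p}(\Omega)$ and $\Omega\in C^3$ together yield $u_\eta\in W^{3,p}(\Omega)$.

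After these manipulations every leftover term has the template (small periodic function)$\times$(second or third derivative of $u_\eta$)$\times$($\phi_{\eta,\e}\nabla w_{\eta,\e}$), and can be bounded via H\"older with exponents $(2^*,d,2)$ combined with the weighted Sobolev-Poincar\'e inequality of Theorem \ref{thm.WSPI.eta}. Since $\|\Phi_\eta\|_{L^{2^*}}$, $\|\phi_\eta\chi_\eta\|_{L^{2^*}}$, $\|\phi_\eta^2-1\|_{L^{2^*}}$ and $|\overline A_\eta-I|$ are all $O(\eta^{(d-2)/2})$, and an explicit $\e$ is produced by the flux corrector/$\Psi$ expansions, each remainder is bounded by $C\e\eta^{(d-2)/2}\|f\|_{W^{1,p}(\Omega)}\|\phi_{\eta,\e}\nabla w_{\eta,\e}\|_{L^2}$. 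Absorbing the last factor on the left yields the gradient estimate, and the weighted Poincar\'e inequality (Proposition \ref{prop.H1e}(iv)) then delivers the $L^2$ bound.

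The main obstacle I anticipate is the surface integrals on the inner hole boundary $\Sigma_{\eta,\e}$ generated when integrating by parts against $\Phi_\eta$ or $\chi_\eta$: neither object vanishes on $\partial T_\eta$, and the weight $\phi_{\eta,\e}^2$ is not always present to kill these traces. I expect to dispatch them by the same harmonic-extension-plus-thin-layer device used in Lemmas \ref{lem.deltaLayer}-\ref{lem.deltaLayer-2}, combined with the small-scale Lipschitz bounds for $u_{\eta,\e}$ from Lemmas \ref{lem.Local Linfinity.eta}-\ref{lem.bdry Linfinity.eta} and the pointwise corrector estimates of Proposition \ref{prop.chi}; these let one convert the $\Sigma_{\eta,\e}$-surface integrals into volume integrals of the desired order $O(\e\eta^{(d-2)/2})$.
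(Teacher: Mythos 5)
Your opening move is right, and it is the same starting point as the paper: with the boundary corrector $v_{\eta,\e}$ in hand, $w_{\eta,\e}$ is a legitimate test function in $V_{\eta,\e}$, and testing $\cL_{\eta,\e}(w_{\eta,\e})$ against $w_{\eta,\e}$ eliminates the cutoff $\theta_\e$ and the $O(\e^{1/2})$ boundary-layer loss. But the way you then propose to process the resulting error terms would not yield the stated $O(\e\eta^{(d-2)/2})$ rate, and the gap is structural, not just an estimate left to fill in.

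The problem is the choice of flux corrector. You propose to use the unweighted $\Phi_{kij}$ with $b_{ij}=\partial_k\Phi_{kij}$ and the unweighted $\Psi$ with $\phi_{\eta,\e}^2-1=\nabla\cdot(\e\Psi_\e)$. When you integrate these by parts against $w_{\eta,\e}$, two difficulties arise that were harmless in Lemma~\ref{lem.Dwe.Dh} (where the test function was a generic $h\in H^1_0(\Omega)$) but are fatal here. First, the surface integrals over $\Sigma_{\eta,\e}$ do not vanish: $w_{\eta,\e}$ does not vanish on $\partial T_{\eta,\e}$, and neither $\Phi_\eta$, $\Psi$, nor $\chi_\eta$ do either, while the weight $\phi_{\eta,\e}^2$ that would have killed these traces has been stripped away by the decomposition $a_{ij}=\bar a_{\eta,ij}-b_{ij}$. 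Second, and more seriously, the resulting volume terms carry $\nabla w_{\eta,\e}$ \emph{without} the weight $\phi_{\eta,\e}$ — e.g.\ $\e\int(\Phi_{kj\ell})_\e\,\partial_k\partial_\ell u_\eta\,\partial_j w_{\eta,\e}$ and $\e\int\Psi_\e f\,\partial_j w_{\eta,\e}$ — so they are not of the template "(small periodic function)$\times(\nabla^m u_\eta)\times(\phi_{\eta,\e}\nabla w_{\eta,\e})$" as you claim. Only $\phi_{\eta,\e}\nabla w_{\eta,\e}$ is controlled in $L^2$; $\nabla w_{\eta,\e}$ alone is not, and $\Phi_\eta/\phi_\eta$, $\Psi/\phi_\eta$ blow up near $T_\eta$. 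You acknowledge these surface terms and propose to handle them with the harmonic-extension/thin-layer device of Lemmas~\ref{lem.deltaLayer}--\ref{lem.deltaLayer-2}, but that device is precisely what produces the suboptimal $\e^{1/4}\eta^{(d-2)/4}$ (or worse) rate in Theorem~\ref{thm.we.flp}: one trades $\e^{1/2}\eta^{(d-2)/2}\delta^{-1}$ against $\delta$ and never reaches $O(\e\eta^{(d-2)/2})$.

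The missing idea is the paper's \emph{degenerate} flux corrector. After the corrector cancellation, $\cL_{\eta,\e}(w_{\eta,\e})$ is a sum of two terms each carrying an explicit $\phi_{\eta,\e}^2$ in front: $\phi_{\eta,\e}^2\big(-\bar a_{\eta,j\ell}+\delta_{j\ell}+(\partial_j\chi^\ell_\eta)_\e\big)\partial_j\partial_\ell u_\eta$ and $\e\,\partial_j\big(\phi_{\eta,\e}^2\chi^\ell_{\eta,\e}\partial_j\partial_\ell u_\eta\big)$. The paper keeps that weight intact by introducing $\Theta^{j\ell}_\eta\in H^1_{\phi_\eta,{\rm per}}(Y_\eta)$ solving the \emph{degenerate} cell problem
\[
\nabla\cdot\big(\phi_\eta^2\nabla\Theta^{j\ell}_\eta\big)=\phi_\eta^2\big(-\bar a_{\eta,j\ell}+\delta_{j\ell}+\partial_j\chi^\ell_\eta\big)\quad\text{in }Y_\eta,
\]
which is solvable (the right side has mean zero over $Y$ by the definition of $\overline A_\eta$) and satisfies the energy bound $\|\phi_\eta\nabla\Theta^{j\ell}_\eta\|_{L^2(Y_\eta)}\le C\eta^{(d-2)/2}$. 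Writing the first error term as $\nabla\cdot(\e\phi_{\eta,\e}^2(\nabla\Theta)_\e)\partial_j\partial_\ell u_\eta$, the integration by parts against $w_{\eta,\e}$ now produces: no surface term on $\Sigma_{\eta,\e}$ (the $\phi_{\eta,\e}^2$ in the flux vanishes there), no surface term on $\Gamma_{\eta,\e}$ (where $w_{\eta,\e}=0$), and volume terms which factor cleanly as $\e\big(\phi_\eta(\nabla\Theta)_\e\big)\cdot\nabla^2 u_\eta\cdot\big(\phi_{\eta,\e}\nabla w_{\eta,\e}\big)$ and $\e\big(\phi_\eta(\nabla\Theta)_\e\big)\cdot\nabla^3 u_\eta\cdot\big(\phi_{\eta,\e}w_{\eta,\e}\big)$. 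Each is then bounded by H\"older and the weighted Sobolev--Poincar\'e inequality (Theorem~\ref{thm.WSPI.eta}), giving exactly $C\e\eta^{(d-2)/2}\|u_\eta\|_{W^{3,p}}\|\phi_{\eta,\e}\nabla w_{\eta,\e}\|_{L^2}$ and the optimal rate after absorption. Replacing $\Phi$ (and $\Psi$) by $\Theta$ is the step you need; without it, the argument cannot close at $O(\e\eta^{(d-2)/2})$.
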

    This lemma should be compared with Theorem \ref{thm.we.flp}. In Theorem \ref{thm.we.flp}, we only require $f\in L^p(\Omega_{\eta,\e})$ for some $p>d$; but the convergence rate is worse. While in the above lemma, we require $f\in W^{1,p}(\Omega)$ with $p>d$ and have the optimal convergence rate. This improvement, though with additional boundary layers involved, will help us to prove the optimal convergence rates of eigenvalues.
    \begin{proof}[Proof of Lemma \ref{lem.OptH1rate}]
    The proof of this lemma is inspired by \cite{OSY92} and different from Lemma \ref{lem.Dwe.Dh}.
    We first perform a straightforward calculation as follows
    \begin{equation}\label{eq.we.V2}
        \begin{aligned}
            \cL_{\eta,\e}(w_{\eta,\e}) = \phi_{\eta,\e}^2 f + \partial_j (\phi_{\eta,\e}^2 \partial_j u_{\eta}) + \partial_j (\phi_{\eta,\e}^2 (\partial_j \chi^\ell_\eta)_\e \partial_\ell u_{\eta}) + \e \partial_j (\phi_{\eta,\e}^2 \chi^\ell_{\eta,\e} \partial_j \partial_\ell u_{\eta}).
        \end{aligned}
    \end{equation}
    Using the equation for $\chi_\eta$ \eqref{eq.corrector}, we have
    \begin{equation}
        \partial_j (\phi_{\eta,\e}^2 \partial_j u_{\eta}) + \partial_j (\phi_{\eta,\e}^2 (\partial_j \chi^\ell_\eta)_\e \partial_\ell u_{\eta}) = \phi_{\eta,\e}^2 \partial_j^2 u_{\eta} + \phi_{\eta,\e}^2 (\partial_j \chi^\ell_\eta)_\e \partial_j\partial_\ell u_{\eta}.
    \end{equation}
    Thus, by this identity and the equation $\phi_{\eta,\e}^2 f = -\phi_{\eta,\e}^2 (\bar{a}_{\eta})_{j\ell} \partial_j \partial_\ell u_{\eta}$, we obtain from \eqref{eq.we.V2} that
    \begin{equation}\label{eq.weeta-1}
        \cL_{\eta,\e}(w_{\eta,\e}) = \phi_{\eta,\e}^2 (-(\bar{a}_{\eta})_{j\ell} + \delta_{j\ell} + (\partial_j \chi^\ell_\eta)_\e) \partial_j \partial_\ell u_{\eta} + \e \partial_j (\phi_{\eta,\e}^2 \chi^\ell_{\eta,\e} \partial_j \partial_\ell u_{\eta}).
    \end{equation}

    Now we introduce new flux correctors $\Theta_{\eta}^{j\ell} \in H^1_{\phi_{\eta},{\rm per}}(Y_{\eta})$ satisfying the degenerate cell problem
    \begin{equation}\label{eq.Theta}
        \nabla\cdot (\phi_{\eta}^2 \nabla \Theta_\eta^{j\ell}) = \phi_{\eta}^2(-(\bar{a}_{\eta})_{j\ell} + \delta_{j\ell} + (\partial_j \chi^\ell_\eta)),\quad \text{in } Y_\eta.
    \end{equation}
    Note that the above equation is solvable since the right-hand side has mean value zero in $Y$. Moreover, the energy estimate implies
    \begin{equation}\label{est.Theta}
        \| \phi_\eta \nabla \Theta_\eta^{j\ell} \|_{L^2(Y_\eta)}  \le C\eta^\frac{d-2}{2}.
    \end{equation}

    As a result of \eqref{eq.weeta-1} and \eqref{eq.Theta},
    \begin{equation}
    \begin{aligned}
        \cL_{\eta,\e}(w_{\eta,\e}) & = \nabla\cdot (\e \phi_{\eta,\e}^2 (\nabla \Theta^{j\ell}_{\eta})_\e ) \partial_j \partial_\ell u_{\eta} + \e \partial_j (\phi_{\eta,\e}^2 \chi^\ell_{\eta,\e} \partial_j \partial_\ell u_{\eta}) \\
        & = \partial_i (\e \phi_{\eta,\e}^2 (\partial_i \Theta^{j\ell}_{\eta})_\e \partial_j \partial_\ell u_{\eta} ) - \e \phi_{\eta,\e}^2 (\partial_i \Theta^{j\ell}_{\eta})_\e \partial_i \partial_j \partial_\ell u_{\eta} + \e \partial_j (\phi_{\eta,\e}^2 \chi^\ell_{\eta,\e} \partial_j \partial_\ell u_{\eta}).
    \end{aligned}
    \end{equation}
    Integrating this equation against $w_{\eta,\e}$, we have
    \begin{equation}\label{eq.wee}
        \begin{aligned}
            \int_{\Omega_{\eta,\e}} \phi_{\eta,\e}^2 |\nabla w_{\eta,\e}|^2 & = -\e \int_{\Omega_{\eta,\e}} \phi_{\eta,\e} (\nabla \Theta_\eta^{j\ell})_\e \partial_j \partial_\ell u_\eta \cdot \phi_{\eta,\e} \nabla w_{\eta,\e} \\
            & \qquad - \e \int_{\Omega_{\eta,\e}} \phi_{\eta,\e}(\nabla \Theta_\eta^{j\ell})_\e \cdot \nabla \partial_j \partial_\ell u_\eta \phi_{\eta,\e} w_{\eta,\e} \\
            & \qquad - \e \int_{\Omega_{\eta,\e}} \phi_{\eta,\e}\chi_{\eta,\e}^\ell \nabla \partial_\ell u_\eta \cdot \phi_{\eta,\e} \nabla w_{\eta,\e}.
        \end{aligned}
    \end{equation}
    By the H\"{o}lder's inequality and \eqref{est.Theta}, the first integral is bounded by
    \begin{equation}\label{est.wee-1}
    \begin{aligned}
        & \e \| \phi_{\eta,\e} (\nabla \Theta_\eta^{j\ell})_\e  \|_{L^2(\Omega_{\eta,\e})} \| \nabla^2 u_\eta \|_{L^\infty(\Omega)} \| \phi_{\eta,\e} \nabla w_{\eta,\e} \|_{L^2(\Omega_{\eta,\e})} \\
        & \le C\e \eta^\frac{d-2}{2} \| \nabla^2 u_\eta \|_{L^\infty(\Omega)} \| \phi_{\eta,\e} \nabla w_{\eta,\e} \|_{L^2(\Omega_{\eta,\e})}.
    \end{aligned}
    \end{equation}
    The third integral of \eqref{eq.wee} has the same bound as \eqref{est.wee-1}. By Theorem \ref{thm.WSPI.eta}, the second integral of \eqref{eq.wee} is bounded by
    \begin{equation}
    \begin{aligned}
        & \e \| \phi_{\eta,\e} (\nabla \Theta_\eta^{j\ell})_\e  \|_{L^2(\Omega_{\eta,\e})} \| \nabla^3 u_\eta \|_{L^d(\Omega)} \| \phi_{\eta,\e} w_{\eta,\e} \|_{L^{2^*}(\Omega_{\eta,\e})} \\
        & \le C\e \eta^\frac{d-2}{2} \| \nabla^3 u_\eta \|_{L^d(\Omega)} \| \phi_{\eta,\e} \nabla w_{\eta,\e} \|_{L^{2}(\Omega_{\eta,\e})}.
    \end{aligned}       
    \end{equation}
    
    Combining these estimates together with the Sobolev embedding thoerem, we arrive at
    \begin{equation}
        \| \phi_{\eta,\e} \nabla w_{\eta,\e} \|_{L^2(\Omega_{\eta,\e})} \le C\e \eta^\frac{d-2}{2} \| u_{\eta} \|_{W^{3,p}(\Omega)} \le C\e \eta^\frac{d-2}{2} \| f\|_{W^{1,p}(\Omega)},
    \end{equation}
    for $p>d$. The proof is complete by Theorem \ref{thm.WSPI.eta}.
    \end{proof}

\begin{proof}
    [Proof of Theorem \ref{thm.SubOptRate}]
    In view of Lemma \ref{lem.OptH1rate}, it suffices to show
    \begin{equation}\label{est.pf6.1-0}
        \| \phi_{\eta,\e} (\e \chi_{\eta,\e}\cdot \nabla u_{\eta} - v_{\eta,\e}) \|_{L^2(\Omega_{\eta,\e})} \le C\e^\frac12 \eta^{\frac{d-2}{2}} \| f\|_{W^{1,p}(\Omega)}.
    \end{equation}
    Clearly,
    \begin{equation}\label{est.pf6.1-1}
        \| \phi_{\eta,\e} \e \chi_{\eta,\e}\cdot \nabla u_{\eta}  \|_{L^2(\Omega_{\eta,\e})} \le C\e \| \phi_{\eta,\e} \chi_{\eta,\e} \|_{L^2(\Omega_{\eta,\e})} \| \nabla u_{\eta} \|_{L^\infty(\Omega)} \le C\e \eta^{\frac{d-2}{2}} \| f\|_{W^{1,p}(\Omega)}.
    \end{equation}
    To estimate $v_{\eta,\e}$, we consider
    \begin{equation}\label{def.tildev}
        \widetilde{v}_{\eta,\e} = v_{\eta,\e} - \e \chi_{\eta,\e}\cdot \nabla u_{\eta} \zeta_\e,
    \end{equation}
    where $\zeta_\e = 1-\theta_\e$ and $\theta_\e$ is given as in Section \ref{sec.4}. Thus $\zeta_\e = 1$ on $\Gamma_{\eta,\e}$ and $\zeta_\e$ is supported in $\Omega(2\e)$ and $|\nabla \zeta_\e| \le C\e^{-1}$. 
    Then $\widetilde{v}_{\eta,\e} \in V_{\eta,\e}$ and
    \begin{equation}\label{eq.tildev}
        \cL_{\eta,\e}(\widetilde{v}_{\eta,\e}) = -\nabla\cdot (\phi_{\eta,\e}^2 \nabla (\e \chi_{\eta,\e}\cdot \nabla u_{\eta} \zeta_\e)).
    \end{equation}
    By the energy estimate, we have
    \begin{equation}\label{est.tildev-1}
    \begin{aligned}
        \| \phi_{\eta,\e} \nabla \widetilde{v}_{\eta,\e} \|_{L^2(\Omega_{\eta,\e})} & \le C \| \phi_{\eta,\e} \nabla (\e \chi_{\eta,\e}\cdot \nabla u_{\eta} \zeta_\e) \|_{L^2(\Omega_{\eta,\e})} \\
        & \le C\| \phi_{\eta,\e}(\nabla \chi_\eta)_\e\cdot \nabla u_{\eta} \zeta_\e \|_{L^2(\Omega_{\eta,\e})} + C\e \| \phi_{\eta,\e}\chi_{\eta,\e} \cdot \nabla^2 u_{\eta} \zeta_\e \|_{L^2(\Omega_{\eta,\e})} \\
        & \qquad + C\e \| \phi_{\eta,\e}\chi_{\eta,\e} \cdot \nabla u_{\eta} \nabla \zeta_\e \|_{L^2(\Omega_{\eta,\e})} \\
        & \le C\e^\frac12 \| \nabla u_{\eta} \|_{L^\infty(\Omega)} \| \phi_{\eta} \nabla \chi_\eta \|_{L^2(Y_{\eta})} + C\e \| \nabla^2 u_{\eta} \|_{L^d(\Omega)} \| \phi_{\eta} \chi_\eta \|_{L^{2^*}(Y_{\eta})} \\
        & \qquad + C\e^\frac12 \| \nabla u_{\eta} \|_{L^\infty(\Omega)} \| \phi_{\eta} \chi_\eta \|_{L^2(Y_{\eta})} \\
        & \le C\e^\frac12 \eta^{\frac{d-2}{2}} \| f\|_{W^{1,p}(\Omega)}.
    \end{aligned}
    \end{equation}
    The weighted Poincar\'{e} inequality \eqref{est.WSPI} then implies
    \begin{equation}
        \| \phi_{\eta,\e} \widetilde{v}_{\eta,\e} \|_{L^2(\Omega_{\eta,\e})} \le C\| \phi_{\eta,\e} \nabla \widetilde{v}_{\eta,\e} \|_{L^2(\Omega_{\eta,\e})} \le C\e^\frac12 \eta^{\frac{d-2}{2}} \| f\|_{W^{1,p}(\Omega)}.
    \end{equation}
    It follows by \eqref{def.tildev} that
    \begin{equation}\label{est.pf6.1-10}
        \| \phi_{\eta,\e} v_{\eta,\e} \|_{L^2(\Omega_{\eta,\e})} + \| \phi_{\eta,\e} \nabla v_{\eta,\e} \|_{L^2(\Omega_{\eta,\e})} \le C\e^\frac12 \eta^{\frac{d-2}{2}} \| f\|_{W^{1,p}(\Omega)}.
    \end{equation}
    This and \eqref{est.pf6.1-1} lead to \eqref{est.pf6.1-0}.
\end{proof}

The above estimate shows that the boundary layer term $v_{\eta,\e}$ causes the suboptimal rate of convergence. We can improve the convergence rate if we can improve the estimate of $v_{\eta,\e}$. One way used by \cite{OSY92} is the maximum principle, which leads to a rate of $O(\e)$ and the factor $\eta^\frac{d-2}{2}$ is lost (unless the holes do not touch the boundary $\partial \Omega$). In our case, we can only show an optimal convergence rate in a very weak sense by duality. Surprisingly, this weak convergence is sufficient for the optimal convergence rates of eigenvalues.

\begin{lemma}\label{lem.dual.ve}
    Let $v_{\eta,\e}$ be given as above. Let $g \in W^{1,p}(\Omega)$ for some $p>d$. Then
    \begin{equation}\label{est.duality.vg}
        |\Ag{ v_{\eta,\e}, g }_{\phi_{\eta,\e}} | \le C\e \eta^{\frac{d-2}{2}} \| f\|_{W^{1,p}(\Omega)} \| g\|_{W^{1,p}(\Omega)}.
    \end{equation}
\end{lemma}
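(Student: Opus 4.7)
The plan is an Aubin--Nitsche duality argument. Let $w = \mathscr{T}_{\eta,\e}(g) \in V_{\eta,\e}$ solve the dual degenerate problem $\cL_{\eta,\e}(w) = \phi_{\eta,\e}^2 g$ with $w = 0$ on $\Gamma_{\eta,\e}$, and set $\widetilde{w}_\eta = \mathscr{T}_\eta(g)$. Following \eqref{def.tildev}, I decompose $v_{\eta,\e} = \widetilde{v}_{\eta,\e} + E_{\eta,\e}$ with $E_{\eta,\e} := \e \chi_{\eta,\e}\cdot\nabla u_\eta\,\zeta_\e$ and $\widetilde{v}_{\eta,\e} \in V_{\eta,\e}$. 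Since $\zeta_\e$ is supported in $\Omega(2\e)$ and $\|\phi_{\eta,\e}\chi_{\eta,\e}\|_{L^2(\Omega)} \le C\eta^{(d-2)/2}$ by Proposition \ref{prop.chi}, H\"{o}lder's inequality immediately gives $|\Ag{E_{\eta,\e}, g}_{\phi_{\eta,\e}}| \le C\e^{3/2}\eta^{(d-2)/2}\|f\|_{W^{1,p}}\|g\|_{W^{1,p}}$, which is already sharper than \eqref{est.duality.vg}.

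The main piece is $\Ag{\widetilde{v}_{\eta,\e}, g}_{\phi_{\eta,\e}}$. Testing the weak form of the equation for $w$ against $\widetilde{v}_{\eta,\e} \in V_{\eta,\e}$, and then testing the weak equation $\cL_{\eta,\e}(v_{\eta,\e}) = 0$ against $w \in V_{\eta,\e}$ (which yields $\int \phi_{\eta,\e}^2 \nabla v_{\eta,\e} \cdot \nabla w = 0$), I obtain $\Ag{\widetilde{v}_{\eta,\e}, g}_{\phi_{\eta,\e}} = -\int_{\Omega_{\eta,\e}} \phi_{\eta,\e}^2 \nabla E_{\eta,\e} \cdot \nabla w$. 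Expanding $\nabla E_{\eta,\e}$ produces three pieces all supported in $\Omega(2\e)$, involving $(\nabla\chi_\eta)_\e \cdot \nabla u_\eta\, \zeta_\e$, $\e\chi_{\eta,\e}\cdot\nabla^2 u_\eta\, \zeta_\e$, and $\e\chi_{\eta,\e}\cdot\nabla u_\eta\, \nabla\zeta_\e$. Cauchy--Schwarz together with the rescaled bounds $\|\phi_{\eta,\e}(\nabla\chi_\eta)_\e\|_{L^2(\Omega(2\e))} + \|\phi_{\eta,\e}\chi_{\eta,\e}\|_{L^2(\Omega(2\e))} \le C\e^{1/2}\eta^{(d-2)/2}$ (which follow from Proposition \ref{prop.chi} by counting the $\sim\e^{1-d}$ unit cells intersecting $\Omega(2\e)$) reduces each piece to $C\e^{1/2}\eta^{(d-2)/2}\|f\|_{W^{1,p}} \cdot \|\phi_{\eta,\e}\nabla w\|_{L^2(\Omega(2\e))}$; the factor $|\nabla\zeta_\e|\le C\e^{-1}$ in the third piece is absorbed by the $\e$ prefactor.

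The crux is therefore the boundary-layer energy estimate $\|\phi_{\eta,\e}\nabla w\|_{L^2(\Omega(2\e))} \le C\e^{1/2}\|g\|_{W^{1,p}}$, which supplies the extra factor $\e^{1/2}$ needed to reach $\e\eta^{(d-2)/2}$. To prove it I apply Lemma \ref{lem.OptH1rate} to the dual equation, yielding $w = \widetilde{w}_\eta + \e\chi_{\eta,\e}\cdot\nabla\widetilde{w}_\eta - v'_{\eta,\e} + r$, where $v'_{\eta,\e}$ is the dual boundary-layer corrector (solving $\cL_{\eta,\e}(v'_{\eta,\e})=0$ with boundary data $\e\chi_{\eta,\e}\cdot\nabla\widetilde{w}_\eta$ on $\Gamma_{\eta,\e}$) and $r \in V_{\eta,\e}$ satisfies $\|\phi_{\eta,\e}\nabla r\|_{L^2(\Omega_{\eta,\e})} \le C\e\eta^{(d-2)/2}\|g\|_{W^{1,p}}$. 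The dominant term is bounded by $\|\phi_{\eta,\e}\nabla\widetilde{w}_\eta\|_{L^2(\Omega(2\e))} \le \|\nabla\widetilde{w}_\eta\|_{L^\infty(\Omega)}|\Omega(2\e)|^{1/2} \le C\e^{1/2}\|g\|_{W^{1,p}}$ via the embedding $W^{3,p}(\Omega) \hookrightarrow W^{1,\infty}(\Omega)$ for $p>d$; the corrector and dual boundary-layer contributions are bounded by $C\e^{1/2}\eta^{(d-2)/2}\|g\|_{W^{1,p}}$ by the analog of \eqref{est.pf6.1-10} applied to $\widetilde{w}_\eta$; and $r$ contributes even less. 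Combining these bounds yields the target estimate $C\e\eta^{(d-2)/2}\|f\|_{W^{1,p}}\|g\|_{W^{1,p}}$.

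The main obstacle is precisely this boundary-layer bound on $\phi_{\eta,\e}\nabla w$ in $\Omega(2\e)$, which relies on having the full dual first-order expansion available; all remaining steps are bookkeeping with the weighted estimates of Section \ref{sec.2} and Section \ref{sec.4}. Conceptually, the duality trades the smooth-function regularity $\nabla\widetilde{w}_\eta \in L^\infty(\Omega)$ for the thin-volume factor $|\Omega(2\e)|^{1/2} \lesssim \e^{1/2}$, which is exactly what upgrades the suboptimal $O(\e^{1/2}\eta^{(d-2)/2})$ rate of Theorem \ref{thm.SubOptRate} to the sharp $O(\e\eta^{(d-2)/2})$ in the weak duality sense \eqref{est.duality.vg}.
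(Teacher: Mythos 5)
Your proposal is correct and takes essentially the same duality route as the paper: the paper likewise expresses $\Ag{\widetilde{v}_{\eta,\e},g}_{\phi_{\eta,\e}}$ as an energy pairing with the dual solution $h_{\eta,\e}=\mathscr{T}_{\eta,\e}(g)$, invokes Lemma \ref{lem.OptH1rate} together with the dual boundary-layer bound \eqref{est.pf6.1-10} for the first-order expansion of $h_{\eta,\e}$, and gains the extra $\e^{1/2}$ by pairing the $L^\infty$-bounded gradient of the homogenized dual solution with data supported in the thin layer $\Omega(2\e)$. The only difference is bookkeeping: you first use the equation for $v_{\eta,\e}$ (tested against $w\in V_{\eta,\e}$) to reduce everything to $-\int_{\Omega_{\eta,\e}}\phi_{\eta,\e}^2\nabla E_{\eta,\e}\cdot\nabla w$ and then expand $w$ on the layer, whereas the paper expands $\nabla h_{\eta,\e}$ first and applies the equation of $\widetilde{v}_{\eta,\e}$ only to the $\nabla h_\eta$ piece.
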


\begin{proof}
    Let $\widetilde{v}_{\eta,\e}$ be given by \eqref{def.tildev}. Due to \eqref{est.pf6.1-1}, it suffices to prove \eqref{est.duality.vg} for $\widetilde{v}_{\eta,\e}$ in place of $v_{\eta,\e}$. Let $h_{\eta,\e} = \mathscr{T}_{\eta,\e}(g)$ and $h_\eta = \mathscr{T}_{\eta} (g)$. It follows from Lemma \ref{lem.OptH1rate} and \eqref{est.pf6.1-10} that
    \begin{equation}\label{est.dual.he}
        \| \phi_{\eta,\e} \nabla (h_{\eta,\e} - h_\eta - \e \chi_{\eta,\e} \cdot \nabla h_\eta) \|_{L^2(\Omega_{\eta,\e})} \le C\e^\frac12 \eta^\frac{d-2}{2} \| g\|_{W^{1,p}(\Omega)}
    \end{equation}

    Now, observe that
    \begin{equation}
    \begin{aligned}
        \Ag{ \widetilde{v}_{\eta,\e}, g }_{\phi_{\eta,\e}} & = \int_{\Omega_{\eta,\e}} \phi_{\eta,\e}^2 \nabla \widetilde{v}_{\eta,\e}\cdot \nabla h_{\eta,\e} \\
        & = \int_{\Omega_{\eta,\e}} \phi_{\eta,\e}^2 \nabla \widetilde{v}_{\eta,\e}\cdot \nabla ( h_{\eta,\e} - h_\eta - \e \chi_{\eta,\e} \cdot \nabla h_\eta) \\
        & \qquad + \int_{\Omega_{\eta,\e}} \phi_{\eta,\e}^2 \nabla \widetilde{v}_{\eta,\e}\cdot \nabla h_\eta + \int_{\Omega_{\eta,\e}} \phi_{\eta,\e}^2 \nabla \widetilde{v}_{\eta,\e}\cdot \nabla (\e \chi_{\eta,\e} \cdot \nabla h_\eta) \\
        & = I_1 + I_2 + I_3.
    \end{aligned}
    \end{equation}
    By \eqref{est.tildev-1} and \eqref{est.dual.he}, we have
    \begin{equation}
        |I_1| \le C\e \eta^{d-2} \| f\|_{W^{1,p}(\Omega)} \| g\|_{W^{1,p}(\Omega)}.
    \end{equation}
    To estimate $I_2$, we use the variational equation of \eqref{eq.tildev} to get
    \begin{equation}
    \begin{aligned}
        I_2 & = \int_{\Omega_{\eta,\e}} \phi_{\eta,\e}^2 \nabla(\e \chi_{\eta,\e}\cdot \nabla u_{\eta} \zeta_\e) \cdot \nabla h_\eta \\
        & = \int_{\Omega_{\eta,\e}} \phi_{\eta,\e}^2 (\nabla \chi_{\eta}^\ell)_\e \partial_\ell u_{\eta} \zeta_\e \cdot \nabla h_\eta + \int_{\Omega_{\eta,\e}} \phi_{\eta,\e}^2 \e \chi_{\eta,\e}\cdot \nabla^2 u_{\eta} \zeta_\e \cdot \nabla h_\eta \\
        & \qquad + \int_{\Omega_{\eta,\e}} \phi_{\eta,\e}^2 \e \chi_{\eta,\e}\cdot \nabla u_{\eta} \nabla \zeta_\e \cdot \nabla h_\eta\\
        & = I_{2,1} + I_{2,2} + I_{2,3}.
    \end{aligned}
    \end{equation}
    Note that $\zeta_\e$ is supported in $\Omega(2\e)$. Thus
    \begin{equation}
    \begin{aligned}
        |I_{2,1}| & \le \| \phi_{\eta,\e} (\nabla \chi_{\eta}^\ell)_\e \|_{L^2(\Omega_{\eta,\e} \cap \Omega(2\e))} C \e^\frac12 \| \nabla u_\eta \|_{L^\infty(\Omega)} \| \nabla h_\eta \|_{L^\infty(\Omega)} \\
        & \le
        C\e \eta^\frac{d-2}{2} \| \nabla u_\eta \|_{L^\infty(\Omega)} \| \nabla h_\eta \|_{L^\infty(\Omega)}.
    \end{aligned}
    \end{equation}
    The estimates for $I_{2,2}$ and $I_{2,3}$ are similar. Thus,
    \begin{equation}
        |I_2| \le C\e \eta^\frac{d-2}{2} \| f\|_{W^{1,p}(\Omega)} \| g\|_{W^{1,p}(\Omega)}.
    \end{equation}
    By the same argument, we can estimate $I_3$ as
    \begin{equation}
        |I_3| \le C\e \eta^{d-2} \| f\|_{W^{1,p}(\Omega)} \| g\|_{W^{1,p}(\Omega)}.
    \end{equation}
    Combining the estimates of $I_1$-$I_3$, we obtain the desired estimate.
\end{proof}

\begin{theorem}\label{thm.optDualrate}
    Suppose that $\Omega$ is a bounded $C^3$ domain and $\Omega_{\eta,\e}$ satisfies the geometric assumption \textbf{A}. Then for any $f, g\in W^{1,p}(\Omega)$,
\begin{equation}
    | \Ag{\mathscr{T}_{\eta,\e} f - \mathscr{T}_{\eta} f, g }_{\phi_{\eta,\e}} | \le C\e \eta^{\frac{d-2}{2}} \| f \|_{W^{1,p}(\Omega)} \| g \|_{W^{1,p}(\Omega)}.
\end{equation}
\end{theorem}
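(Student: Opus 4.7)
The plan is to start from the first-order ansatz of Lemma \ref{lem.OptH1rate} and test it against $g$ in the weighted inner product, relying on Lemma \ref{lem.dual.ve} to handle the only term whose naive bound is suboptimal. Setting $u_{\eta,\e} = \mathscr{T}_{\eta,\e} f$ and $u_\eta = \mathscr{T}_{\eta} f$, Lemma \ref{lem.OptH1rate} provides the decomposition
\begin{equation*}
u_{\eta,\e} - u_\eta = \e\, \chi_{\eta,\e}\cdot \nabla u_\eta \,-\, v_{\eta,\e} \,+\, w_{\eta,\e},
\end{equation*}
where $v_{\eta,\e}$ is the boundary corrector defined in \eqref{eq.def.bl} and $w_{\eta,\e} \in V_{\eta,\e}$ satisfies the sharp weighted estimate $\|\phi_{\eta,\e} w_{\eta,\e}\|_{L^2(\Omega_{\eta,\e})} \le C\e\,\eta^{(d-2)/2}\|f\|_{W^{1,p}(\Omega)}$. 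The pairing $\Ag{u_{\eta,\e} - u_\eta, g}_{\phi_{\eta,\e}}$ then splits into three contributions, one per summand, which I bound separately.

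For the corrector piece $\Ag{\e\,\chi_{\eta,\e}\cdot\nabla u_\eta,\, g}_{\phi_{\eta,\e}}$, I would apply H\"{o}lder's inequality together with the bound $\|\phi_\eta\chi_\eta\|_{L^2(Y_\eta)} \le C\eta^{(d-2)/2}$ (from Proposition \ref{prop.chi}(i) and the boundedness of $\phi_\eta$) and the regularity $\|\nabla u_\eta\|_{L^\infty(\Omega)} \le C\|f\|_{W^{1,p}(\Omega)}$, which holds on the $C^3$ domain $\Omega$ for $p>d$ by Sobolev embedding. For the remainder piece $\Ag{w_{\eta,\e},\, g}_{\phi_{\eta,\e}}$, Cauchy--Schwarz combined with the $L^2$ estimate for $w_{\eta,\e}$ just quoted is already enough. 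Both pieces yield the target bound $C\e\,\eta^{(d-2)/2}\|f\|_{W^{1,p}}\|g\|_{L^2}$. For the boundary layer piece $\Ag{v_{\eta,\e},\, g}_{\phi_{\eta,\e}}$, I would invoke Lemma \ref{lem.dual.ve} directly, which is tailor-made for this estimate and produces $C\e\,\eta^{(d-2)/2}\|f\|_{W^{1,p}}\|g\|_{W^{1,p}}$. Summing the three contributions and using the continuous embedding $W^{1,p}\hookrightarrow L^2$ completes the proof.

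The only real obstacle---the boundary layer---has therefore been absorbed upstream into Lemma \ref{lem.dual.ve}: without duality one would only have $\|\phi_{\eta,\e} v_{\eta,\e}\|_{L^2(\Omega_{\eta,\e})} \le C\e^{1/2}\eta^{(d-2)/2}\|f\|_{W^{1,p}}$ (cf.\ \eqref{est.pf6.1-10}), which is exactly the mechanism that caps the rate at $\e^{1/2}$ in Theorem \ref{thm.SubOptRate}. The upgrade from $\e^{1/2}$ to $\e$ is achieved inside Lemma \ref{lem.dual.ve} by pairing $v_{\eta,\e}$ against the resolvent $\mathscr{T}_{\eta,\e} g$ and expanding the latter by the same first-order ansatz applied to $g$; this is precisely why $g$ must live in $W^{1,p}(\Omega)$ rather than only in $L^2(\Omega)$ for the weak-type estimate claimed here to hold.
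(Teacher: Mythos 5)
Your proposal is correct and follows exactly the route the paper intends: the paper's own proof is the one-line statement that the theorem follows from Lemma \ref{lem.OptH1rate} and Lemma \ref{lem.dual.ve}, and you have simply unpacked the three-term split $u_{\eta,\e}-u_\eta = \e\chi_{\eta,\e}\cdot\nabla u_\eta - v_{\eta,\e} + w_{\eta,\e}$, using Cauchy--Schwarz with $\|\phi_{\eta,\e}w_{\eta,\e}\|_{L^2}$ and with $\|\phi_{\eta,\e}\chi_{\eta,\e}\|_{L^2}\|\nabla u_\eta\|_{L^\infty}$ for the two easy pieces and Lemma \ref{lem.dual.ve} for the boundary layer piece. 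Your concluding remark about why $g\in W^{1,p}$ is truly needed (only $v_{\eta,\e}$ forces duality via the expansion of $\mathscr{T}_{\eta,\e} g$) accurately captures the mechanism inside Lemma \ref{lem.dual.ve}.
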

\begin{proof}
    This follows readily from Lemma \ref{lem.OptH1rate} and Lemma \ref{lem.dual.ve}.
\end{proof}



\subsection{Almost orthogonality}
In this subsection, we show that the eigenfunctions (of different eigenvalue problems listed in \eqref{eq.eigen-1}-\eqref{eq.eigen-3}) are almost orthogonal if their corresponding eigenvalues are not close to each other.

\begin{lemma}[Suboptimal almost orthogonality]
For any $1\le j, \ell \le k$,
\begin{equation}\label{est.AlmostOrtho}
    | \Ag{ \rho_{\eta,\e}^j, \rho_{\eta}^\ell }_{\phi_{\eta,\e}} | \le \frac{C_k \e^\frac12 \eta^\frac{d-2}{2} \mu_{\eta,\e}^j \mu_{\eta}^\ell }{|\mu_{\eta}^\ell - \mu_{\eta,\e}^j| }.
\end{equation}
\end{lemma}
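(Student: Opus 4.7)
The plan is to exploit a resolvent identity: test the difference $\mathscr{T}_{\eta,\e}(\rho_\eta^\ell) - \mathscr{T}_\eta(\rho_\eta^\ell)$ against $\rho_{\eta,\e}^j$ in the $L^2_{\phi_{\eta,\e}}$ inner product, then apply the quantitative convergence rate Theorem \ref{thm.SubOptRate}. The self-adjointness of $\mathscr{T}_{\eta,\e}$ on $L^2_{\phi_{\eta,\e}}(\Omega_{\eta,\e})$, together with the eigenvalue relations $\mathscr{T}_{\eta,\e}(\rho_{\eta,\e}^j) = (\mu_{\eta,\e}^j)^{-1} \rho_{\eta,\e}^j$ and $\mathscr{T}_{\eta}(\rho_\eta^\ell) = (\mu_\eta^\ell)^{-1} \rho_\eta^\ell$, will convert the test against the difference into an algebraic expression involving $\Ag{\rho_\eta^\ell,\rho_{\eta,\e}^j}_{\phi_{\eta,\e}}$ multiplied by the factor $(\mu_\eta^\ell - \mu_{\eta,\e}^j)/(\mu_{\eta,\e}^j\mu_\eta^\ell)$.

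Concretely, I would compute
\begin{equation*}
\Ag{\mathscr{T}_{\eta,\e}(\rho_\eta^\ell),\rho_{\eta,\e}^j}_{\phi_{\eta,\e}}
= \Ag{\rho_\eta^\ell,\mathscr{T}_{\eta,\e}(\rho_{\eta,\e}^j)}_{\phi_{\eta,\e}}
= \frac{1}{\mu_{\eta,\e}^j}\Ag{\rho_\eta^\ell,\rho_{\eta,\e}^j}_{\phi_{\eta,\e}},
\end{equation*}
and
\begin{equation*}
\Ag{\mathscr{T}_{\eta}(\rho_\eta^\ell),\rho_{\eta,\e}^j}_{\phi_{\eta,\e}}
= \frac{1}{\mu_\eta^\ell}\Ag{\rho_\eta^\ell,\rho_{\eta,\e}^j}_{\phi_{\eta,\e}}.
\end{equation*}
Subtracting gives
\begin{equation*}
\Ag{\rho_\eta^\ell,\rho_{\eta,\e}^j}_{\phi_{\eta,\e}}
= \frac{\mu_{\eta,\e}^j \mu_\eta^\ell}{\mu_\eta^\ell-\mu_{\eta,\e}^j}
\Ag{\mathscr{T}_{\eta,\e}(\rho_\eta^\ell)-\mathscr{T}_{\eta}(\rho_\eta^\ell),\rho_{\eta,\e}^j}_{\phi_{\eta,\e}}.
\end{equation*}

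The remaining step is to bound the inner product on the right via Cauchy--Schwarz in $L^2_{\phi_{\eta,\e}}(\Omega_{\eta,\e})$: the norm of $\rho_{\eta,\e}^j$ is $1$ by normalization, and the norm of the difference is controlled by Theorem \ref{thm.SubOptRate} provided $\rho_\eta^\ell \in W^{1,p}(\Omega)$ for some $p>d$. Since $\Omega$ is $C^3$ and $\rho_\eta^\ell$ solves $-\nabla\cdot(\overline A_\eta \nabla \rho_\eta^\ell) = \mu_\eta^\ell \rho_\eta^\ell$ in $\Omega$ with zero Dirichlet data, a standard bootstrap starting from $\rho_\eta^\ell\in L^2(\Omega)$ with $\ell\le k$ yields $\|\rho_\eta^\ell\|_{W^{2,p}(\Omega)}\le C_k$ for any $p<\infty$, hence
\begin{equation*}
\| \mathscr{T}_{\eta,\e}(\rho_\eta^\ell)-\mathscr{T}_{\eta}(\rho_\eta^\ell)\|_{L^2_{\phi_{\eta,\e}}(\Omega_{\eta,\e})}
\le C\e^{\frac12}\eta^{\frac{d-2}{2}}\|\rho_\eta^\ell\|_{W^{1,p}(\Omega)}
\le C_k\e^{\frac12}\eta^{\frac{d-2}{2}}.
\end{equation*}
Combining gives the claimed inequality.

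There is no serious obstacle; the only delicate point is the interpretation of $\mathscr{T}_\eta(\rho_\eta^\ell)$ as an element of $L^2_{\phi_{\eta,\e}}(\Omega_{\eta,\e})$ via restriction, and the cross-pairing $\Ag{\mathscr{T}_{\eta,\e}(\rho_\eta^\ell),\rho_{\eta,\e}^j}_{\phi_{\eta,\e}}$ requires noting that $\rho_\eta^\ell|_{\Omega_{\eta,\e}}$ lies in $L^2_{\phi_{\eta,\e}}(\Omega_{\eta,\e})$ (which follows from $\rho_\eta^\ell\in L^\infty(\Omega)$ and $\phi_{\eta,\e}\le C$). Everything else is a direct application of the resolvent identity and Theorem \ref{thm.SubOptRate}.
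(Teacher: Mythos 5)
Your proof is correct and follows essentially the same approach as the paper: exploit the self-adjointness of $\mathscr{T}_{\eta,\e}$ together with the eigenvalue relations to derive the resolvent identity $((\mu_{\eta,\e}^j)^{-1}-(\mu_\eta^\ell)^{-1})\Ag{\rho_{\eta,\e}^j,\rho_\eta^\ell}_{\phi_{\eta,\e}} = \Ag{\rho_{\eta,\e}^j,(\mathscr{T}_{\eta,\e}-\mathscr{T}_\eta)\rho_\eta^\ell}_{\phi_{\eta,\e}}$, then apply Theorem \ref{thm.SubOptRate} with $\|\rho_\eta^\ell\|_{W^{1,p}(\Omega)}\le C_k$ and Cauchy--Schwarz. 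The extra remarks you make about the cross-pairing and restriction are sound but not needed at the paper's level of detail.
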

\begin{proof}
    Since $\mathscr{T}_{\eta,\e}$ is self-adjoint in $L^2_{\phi_{\eta,\e}}(\Omega_{\eta,\e})$, we have
    \begin{align}\label{eq.AO-1}
    \begin{aligned}
        (\mu_{\eta,\e}^j)^{-1} \Ag{\rho_{\eta,\e}^j, \rho_{\eta}^\ell}_{\phi_{\eta,\e}} & = \Ag{\mathscr{T}_{\eta,\e}(\rho_{\eta,\e}^j), \rho_{\eta}^\ell}_{\phi_{\eta,\e}} \\
        & = \Ag{\rho_{\eta,\e}^j, \mathscr{T}_{\eta,\e}(\rho_{\eta}^\ell) }_{\phi_{\eta,\e}} \\
        & = \Ag{\rho_{\eta,\e}^j, \mathscr{T}_{\eta}(\rho_{\eta}^\ell) }_{\phi_{\eta,\e}} + \Ag{\rho_{\eta,\e}^j, (\mathscr{T}_{\eta,\e}-\mathscr{T}_{\eta})(\rho_{\eta}^\ell) }_{\phi_{\eta,\e}} \\
        & = (\mu_{\eta}^\ell)^{-1} \Ag{\rho_{\eta,\e}^j, \rho_{\eta}^\ell }_{\phi_{\eta,\e}} + \Ag{\rho_{\eta,\e}^j, (\mathscr{T}_{\eta,\e}-\mathscr{T}_{\eta})(\rho_{\eta}^\ell) }_{\phi_{\eta,\e}}.
    \end{aligned}
    \end{align}
    Thus,
    \begin{equation}\label{eq.AO-2}
        ((\mu_{\eta,\e}^j)^{-1} -  (\mu_{\eta}^\ell)^{-1}) \Ag{\rho_{\eta,\e}^j, \rho_{\eta}^\ell}_{\phi_{\eta,\e}} = \Ag{\rho_{\eta,\e}^j, (\mathscr{T}_{\eta,\e}-\mathscr{T}_{\eta})(\rho_{\eta}^\ell) }_{\phi_{\eta,\e}}.
    \end{equation}
    Using the regularity of $\rho_{\eta}^\ell \in W^{1,p}(\Omega)$, and Theorem \ref{thm.SubOptRate}, we obtain
    \begin{equation*}
        |\Ag{\rho_{\eta,\e}^j, (\mathscr{T}_{\eta,\e}-\mathscr{T}_{\eta})(\rho_{\eta}^\ell) }_{\phi_{\eta,\e}}| \le C_k\e^\frac12 \eta^\frac{d-2}{2}. 
    \end{equation*}
    This and \eqref{eq.AO-2} imply the desired estimate.
\end{proof}

To improve the error of almost orthogonality, we need a suboptimal convergence rates for the eigenfunctions. The following lemma is an abstract result taken from \cite{OSY92}.

\begin{lemma}\label{lem.Abst.spec}
    Let $H$ be a separable Hilbert space endowed with a real-valued inner product $\Ag{\cdot,\cdot}_H$ and the norm $\| \cdot \|_H = \Ag{\cdot,\cdot}_H^\frac12$. Let $T:H\to H$ be a bounded linear compact self-adjoint operator. Suppose there exist $u\in H$ with $\| u\|_{H} = 1$ and $\sigma>0$ such that
    \begin{equation}
        \| Tu - \sigma u \|_H \le \delta,
    \end{equation}
    for some $\delta \ge 0$. Then there exists at least one eigenvalue $\sigma^k$ of $T$ such that
    \begin{equation}
        |\sigma^k - \sigma| \le \delta.
    \end{equation}
    Moreover, for any $L>0$, 
    \begin{equation}\label{est.inf.u-v}
        \inf_{v\in S[\sigma-L,\sigma+L]} \| u - v \|_H \le 2\delta L^{-1},
    \end{equation}
    where $S[\sigma-L,\sigma+L]$ is the eigenspace spanned by the eigenvectors corresponding to the eigenvalues in the band $[\sigma-L,\sigma+L]$.
\end{lemma}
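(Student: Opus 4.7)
The plan is to reduce everything to a diagonal decomposition via the spectral theorem for compact self-adjoint operators. Since $T$ is compact and self-adjoint on the separable Hilbert space $H$, there is a (possibly finite) orthonormal system $\{e_j\}_{j\ge 1}$ of eigenvectors with real eigenvalues $\sigma^j$ (and a kernel contributing $0$), and every $u\in H$ has a Fourier expansion $u=\sum_j \alpha_j e_j+u_0$, where $u_0\in\ker T$. Normalizing gives $\|u\|_H^2=\sum_j|\alpha_j|^2+\|u_0\|_H^2=1$. Applying $T-\sigma I$ term by term (treating the kernel contribution via the eigenvalue $0$), the hypothesis becomes
\begin{equation*}
\|Tu-\sigma u\|_H^2=\sum_j |\sigma^j-\sigma|^2|\alpha_j|^2+\sigma^2\|u_0\|_H^2\le \delta^2.
\end{equation*}

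For the first conclusion I would argue by contradiction: if $|\sigma^k-\sigma|>\delta$ for every eigenvalue $\sigma^k$ (including $\sigma^k=0$ when the kernel is nontrivial), then each coefficient on the left-hand side strictly exceeds $\delta^2|\alpha_j|^2$, respectively $\delta^2\|u_0\|_H^2$, so the sum is strictly greater than $\delta^2\|u\|_H^2=\delta^2$, contradicting the displayed bound. Hence some eigenvalue of $T$ lies within $\delta$ of $\sigma$.

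For the second conclusion, split the index set $\{j\}\cup\{0\}$ according to whether $\sigma^j\in I:=[\sigma-L,\sigma+L]$ (lumping the kernel with the index set $\{0\}$ whose eigenvalue is $0$, so it is in $I$ iff $0\in I$). Let $v:=\sum_{\sigma^j\in I}\alpha_j e_j$ (plus $u_0$ if $0\in I$); this $v$ lies in the eigenspace $S[\sigma-L,\sigma+L]$ and the difference $u-v$ is the orthogonal projection onto the complementary spectral subspace. Therefore
\begin{equation*}
\|u-v\|_H^2=\sum_{\sigma^j\notin I}|\alpha_j|^2(+\,\|u_0\|_H^2\ \text{if }0\notin I)\le L^{-2}\Bigl(\sum_{\sigma^j\notin I}|\sigma^j-\sigma|^2|\alpha_j|^2+\sigma^2\|u_0\|_H^2\mathbf{1}_{0\notin I}\Bigr)\le L^{-2}\delta^2,
\end{equation*}
using $|\sigma^j-\sigma|\ge L$ on the complementary spectral set. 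Taking square roots yields $\|u-v\|_H\le \delta L^{-1}\le 2\delta L^{-1}$, and the infimum in \eqref{est.inf.u-v} is bounded by this quantity.

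There is no real obstacle here beyond bookkeeping the kernel of $T$ (which does not play a role in the hypothesis but must be tracked to justify $\|u\|_H^2=\sum|\alpha_j|^2+\|u_0\|_H^2$ and to include $0$ among the eigenvalues when $T$ is not injective). The factor $2$ in $2\delta L^{-1}$ is slack; the above argument actually gives the sharper bound $\delta L^{-1}$, which explains why the statement as written holds.
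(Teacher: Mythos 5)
Your proof is correct, and it is the standard argument via the spectral theorem for compact self-adjoint operators. Note that the paper does not supply its own proof of this lemma; it simply cites \cite{OSY92}, so there is no paper argument to compare against. Your version is self-contained and handles the only delicate point (the possibly nontrivial and possibly infinite-dimensional kernel of $T$) correctly, by treating $\ker T$ as the eigenspace of the eigenvalue $0$ and observing that it contributes $\sigma^2\|u_0\|_H^2$ to $\|Tu-\sigma u\|_H^2$. Two small remarks: first, for the second conclusion it is worth noting that if infinitely many eigenvalues fall in $[\sigma-L,\sigma+L]$ (which can happen only when $0$ is in this interval, since eigenvalues accumulate only at $0$), your $v$ lies in the closure of the span rather than the span itself; the infimum is unchanged, so the conclusion still holds. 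Second, you are right that the argument actually yields $\|u-v\|_H\le\delta L^{-1}$, strictly better than the stated $2\delta L^{-1}$; the factor $2$ in the lemma is simply slack inherited from the cited reference.
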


For each $k$, let $N_1 = N_1(k,\e,\eta)$ be given as in Lemma \ref{lem.N1gap}. Let $\mathcal{P}_{\eta,\e}^N$ be the spectral projection of $\mathscr{T}_{\eta,\e}$ onto the eigenspace $S_{\eta,\e}^N = \text{span}\{ \rho_{\eta,\e}^1, \rho_{\eta,\e}^2,\dots, \rho_{\eta,\e}^N  \}$, namely, for any $f\in L^2_{\phi_{\eta,\e}}(\Omega_{\eta,\e})$,
\begin{equation*}
    \mathcal{P}_{\eta,\e}^N f = \sum_{j=1}^N \Ag{f, \rho_{\eta,\e}^j}_{\phi_{\eta,\e}} \rho_{\eta,\e}^j.
\end{equation*}
The spectral projection $\mathcal{P}_{\eta,\e}^N$ satisfies $(\mathcal{P}_{\eta,\e}^N)^2 = \mathcal{P}_{\eta,\e}^N$.

Similarly, we can define $\mathcal{P}_\eta^N$ to be the projection of $ \mathscr{T}_\eta^N$ onto the eigenspace $S_\eta^N = \text{span}\{ \rho_\eta^1,\rho_\eta^2,\cdots, \rho_\eta^N \}$.

Let $f = \rho_{\eta}^j$ with $1\le j\le N_1$. Then by Theorem \ref{thm.SubOptRate},
\begin{equation}
    \| \mathscr{T}_{\eta,\e} \rho_{\eta}^j - \mathscr{T}_{\eta} \rho_{\eta}^j \|_{L^2_{\phi_{\eta,\e}}(\Omega_{\eta,\e})} \le C\e^\frac12 \eta^{\frac{d-2}{2}} \| \rho_{\eta}^j \|_{W^{1,p}(\Omega)} \le C_k \e^\frac12 \eta^{\frac{d-2}{2}} .
\end{equation}
Note that $\mathscr{T}_{\eta} \rho_{\eta}^j = (\mu^j_\eta)^{-1} \rho_{\eta}^j$. Then we can apply Lemma \ref{lem.Abst.spec} to $H = L^2_{\phi_{\eta,\e}}(\Omega_{\eta,\e})$ and $T = \mathscr{T}_{\eta,\e}$. Due to the existence of common spectral gap between $(\mu_{\eta,\e}^{N_1})^{-1}$ and $(\mu_{\eta,\e}^{N_1+1})^{-1}$, and between $(\mu_{\eta}^{N_1})^{-1}$ and $(\mu_{\eta}^{N_1+1})^{-1}$, by \eqref{est.inf.u-v} we have
\begin{equation}\label{est.rhoeta.approx}
    \inf_{v\in S_{\eta,\e}^{N_1} } \| \rho_{\eta}^j - v \|_{L^2_{\phi_{\eta,\e}}(\Omega_{\eta,\e})} = \| \rho_{\eta}^j - \mathcal{P}_{\eta,\e}^{N_1} \rho_{\eta}^j \|_{L^2_{\phi_{\eta,\e}}(\Omega_{\eta,\e})} \le C_k  \e^\frac12 \eta^{\frac{d-2}{2}}.
\end{equation}
In other words, in the Hilbert space $L^2_{\phi_{\eta,\e}}(\Omega_{\eta,\e})$, each homogenized eigenfunction $\rho^j_\eta$ with $1\le j\le N_1$ can be well-approximated by a linear combination of $\rho^i_{\eta,\e}$ with $1\le i\le N_1$. Due to the orthogonality, one then can show that each $\rho_{\eta,\e}^i$ can be well approximated by a linear combination of $\rho_{\eta}^j$ with $1\le i, j\le N_1$. Precisely, we have
\begin{lemma}\label{lem.subopt.rhoje}
    For each $1\le j\le N_1$,
    \begin{equation}\label{est.subopt.ef}
    \| \rho_{\eta,\e}^j - \widehat{\mathcal{P}}_{\eta}^{N_1} \rho_{\eta,\e}^j \|_{L^2_{\phi_{\eta,\e}}(\Omega_{\eta,\e})} \le C_k  \e^\frac12 \eta^{\frac{d-2}{2}},
\end{equation}
where
\begin{equation}
    \widehat{\mathcal{P}}_{\eta}^{N_1} f = \sum_{i=1}^{N_1} \Ag{f, \rho_{\eta}^i}_{\phi_{\eta,\e}} \rho_{\eta}^i.
\end{equation}
\end{lemma}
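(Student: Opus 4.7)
The plan is to dualize \eqref{est.rhoeta.approx}, which says each $\rho_\eta^j$ lies $O_k(\e^{1/2}\eta^{(d-2)/2})$-close to $S_{\eta,\e}^{N_1}$, by exploiting the symmetric geometry of the two subspaces inside the Hilbert space $H := L^2_{\phi_{\eta,\e}}(\Omega_{\eta,\e})$. Let $\mathscr{Q}$ denote the $H$-orthogonal projection onto $S_\eta^{N_1}$. Since both $\mathcal{P}_{\eta,\e}^{N_1}$ and $\mathscr{Q}$ are orthogonal projections onto $N_1$-dimensional subspaces of $H$, the classical principal-angle identity gives
\[
\bigl\|(I - \mathscr{Q})\big|_{S_{\eta,\e}^{N_1}}\bigr\|_{\rm op} = \bigl\|(I - \mathcal{P}_{\eta,\e}^{N_1})\big|_{S_\eta^{N_1}}\bigr\|_{\rm op},
\]
so it suffices to bound the right-hand side uniformly on $S_\eta^{N_1}$.

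To do so, I would first observe that by \eqref{est.rhoeta.alor} the Gram matrix $A = (\langle \rho_\eta^i, \rho_\eta^j\rangle_{\phi_{\eta,\e}})_{i,j=1}^{N_1}$ obeys $\|A - I\|_{\rm op} \le C_k\alpha$ with $\alpha := \e\eta^{(d-2)/2}$, so $\{\rho_\eta^j\}_{j=1}^{N_1}$ is approximately $H$-orthonormal and any unit vector $v = \sum_j \beta_j \rho_\eta^j \in S_\eta^{N_1}$ satisfies $|\beta| \le 1 + O_k(\alpha)$. Combined with \eqref{est.rhoeta.approx} applied to each basis vector and a Cauchy--Schwarz sum over at most $N_1 \le Mk$ terms, this upgrades the per-basis-vector estimate to
\[
\bigl\|(I - \mathcal{P}_{\eta,\e}^{N_1})v\bigr\|_H \le C_k \e^{1/2}\eta^{(d-2)/2}\|v\|_H \quad \text{for all } v \in S_\eta^{N_1}.
\]
The principal-angle identity then produces $\|\rho_{\eta,\e}^j - \mathscr{Q}\rho_{\eta,\e}^j\|_H \le C_k\e^{1/2}\eta^{(d-2)/2}$ for each $1 \le j \le N_1$.

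Finally, one must compare $\mathscr{Q}$ with the operator $\widehat{\mathcal{P}}_\eta^{N_1}$, which treats $\{\rho_\eta^i\}$ as if exactly $H$-orthonormal. Setting $c_i := \langle f,\rho_\eta^i\rangle_{\phi_{\eta,\e}}$, one has $\mathscr{Q}f = \sum_i (A^{-1}c)_i \rho_\eta^i$ whereas $\widehat{\mathcal{P}}_\eta^{N_1} f = \sum_i c_i \rho_\eta^i$, so the bound $\|A^{-1} - I\|_{\rm op} \le C_k\alpha$ together with approximate orthonormality of $\{\rho_\eta^i\}$ yields $\|(\mathscr{Q} - \widehat{\mathcal{P}}_\eta^{N_1})f\|_H \le C_k\alpha\|f\|_H$. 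Applied with $f = \rho_{\eta,\e}^j$ (so $\|f\|_H = 1$) and combined with the previous step by the triangle inequality, using $\alpha \le \e^{1/2}\eta^{(d-2)/2}$, this delivers \eqref{est.subopt.ef}.

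The delicate point is the first step, and in particular avoiding a loss in a naive matrix argument: expanding $\|\rho_{\eta,\e}^j - \widehat{\mathcal{P}}_\eta^{N_1}\rho_{\eta,\e}^j\|_H^2$ in the orthonormal basis $\{\rho_{\eta,\e}^i\}$ and using only $\|M^T M - I\|_{\rm op} \le C_k\alpha$ with $M = (\langle \rho_{\eta,\e}^i,\rho_\eta^j\rangle_{\phi_{\eta,\e}})$ produces only a squared bound of size $O_k(\alpha)$, hence the suboptimal rate $\sqrt{\alpha} = \e^{1/2}\eta^{(d-2)/4}$. The principal-angle identity, which geometrically says the largest principal angle between two equidimensional subspaces is the same measured from either side, is precisely what preserves the full square-root improvement from \eqref{est.rhoeta.approx} and gives the claimed bound.
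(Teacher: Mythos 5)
Your proposal is correct, and it reaches the paper's estimate by a genuinely different packaging of the same underlying mechanism. The paper works directly with the matrix $E=(\alpha_\ell^j)$, $\alpha_\ell^j=\langle \rho_\eta^j,\rho_{\eta,\e}^\ell\rangle_{\phi_{\eta,\e}}$: combining \eqref{est.rhoeta.approx} with the near-orthonormality \eqref{est.rhoeta.alor} of $\{\rho_\eta^j\}$ in $L^2_{\phi_{\eta,\e}}(\Omega_{\eta,\e})$, it shows $EE^T=I+O_k(\e^{1/2}\eta^{\frac{d-2}{2}})$, hence $E$ is invertible with $E^{-1}=E^T+O_k(\e^{1/2}\eta^{\frac{d-2}{2}})$, and inverting the relation $\rho_\eta^j\approx\sum_\ell\alpha_\ell^j\rho_{\eta,\e}^\ell$ gives $\rho_{\eta,\e}^\ell\approx\sum_j\alpha_\ell^j\rho_\eta^j$, which is exactly $\widehat{\mathcal{P}}_\eta^{N_1}\rho_{\eta,\e}^\ell$ -- so no separate comparison between the true orthogonal projection and $\widehat{\mathcal{P}}_\eta^{N_1}$ is needed. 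You instead invoke the symmetry of the one-sided gap between equidimensional finite-dimensional subspaces (valid, since $P_W|_V$ and $P_V|_W$ are adjoints of one another, so their smallest singular values coincide when $\dim V=\dim W=N_1<\infty$), upgrade the per-basis-vector bound \eqref{est.rhoeta.approx} to an operator-norm bound on $S_\eta^{N_1}$ at the cost of a harmless $\sqrt{N_1}\le\sqrt{Mk}$ factor, and then replace the genuine orthogonal projection $\mathscr{Q}$ by $\widehat{\mathcal{P}}_\eta^{N_1}$ via the Gram-matrix correction $\|A^{-1}-I\|\le C_k\,\e\,\eta^{\frac{d-2}{2}}$. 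Both routes hinge on the same point -- near-orthogonality is what lets the approximation be reversed without losing the rate -- but yours makes the geometric reason explicit and basis-free, while the paper's explicit inversion of $E$ is more elementary and lands directly on the coefficients appearing in the lemma. Two things worth stating explicitly in your write-up, since they are only implicit: the equidimensionality of $S_\eta^{N_1}$ inside $L^2_{\phi_{\eta,\e}}(\Omega_{\eta,\e})$ and the invertibility of $A$ both require $\e\,\eta^{\frac{d-2}{2}}\le c_k$ (for larger values \eqref{est.subopt.ef} holds trivially after enlarging $C_k$, exactly as in the paper's smallness reduction), and the constants picked up from $N_1\le Mk$ terms are admissible because $C_k$ is allowed to depend on $k$.
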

\begin{proof}
    By \eqref{est.rhoeta.approx}, there exist $\alpha_\ell^j = \Ag{\rho_\eta^j, \rho^\ell_{\eta,\e}}_{\phi_{\eta,\e}}$ such that
    \begin{equation}\label{est.rhoN1}
        \| \rho_{\eta}^j - \sum_{\ell=1}^{N_1} \alpha^j_{\ell} \rho_{\eta,\e}^\ell \|_{L^2_{\phi_{\eta,\e}}(\Omega_{\eta,\e})} \le C_k  \e^\frac12 \eta^{\frac{d-2}{2}}.
    \end{equation}
    It follows that for any $1\le i,j\le N_1$,
    \begin{equation}
        \Ag{\rho_\eta^i, \rho_\eta^j}_{\phi_{\eta,\e}} = \sum_{\ell,\tau = 1}^{N_1} \alpha_\ell^i \alpha_\tau^j \Ag{\rho^\ell_{\eta,\e}, \rho^\tau_{\eta,\e}} + O_k(\e^\frac12 \eta^\frac{d-2}{2}).
    \end{equation}
    Let $E = (\alpha_\ell^j)$ and $Q = \Ag{\rho_\eta^i, \rho_\eta^j}_{\phi_{\eta,\e}}$ (both are $N_1\times N_1$ matrices). Then the last equation can be written as
    \begin{equation}\label{eq.QEEt}
        Q = EE^T + O_k(\e^\frac12 \eta^\frac{d-2}{2}).
    \end{equation}
    We would like to show that $E$ is invertible and $|E^{-1}| \le C_k$. To this end, we notice that $\rho^j_\eta$ are almost mutually orthogonal in $L^2_{\phi_{\eta,\e}}(\Omega_{\eta,\e})$. In fact, by \eqref{est.rhoeta.alor}, we have
    \begin{equation}
        \bigg| \int_{\Omega_{\eta,\e}} \phi_{\eta,\e}^2 \rho_\eta^i \rho_\eta^j - \int_{\Omega} \rho_\eta^i \rho_\eta^j \bigg| \le C_k \e^\frac12 \eta^\frac{d-2}{2}.
    \end{equation}
    Since $\int_{\Omega} \rho_\eta^i \rho_\eta^j = \delta_{ij}$, we have
    \begin{equation}
        Q = I + O_k(\e^\frac12 \eta^\frac{d-2}{2}).
    \end{equation}
    Together with \eqref{eq.QEEt}, we have
    \begin{equation}
        EE^T = I + O_k(\e^\frac12 \eta^\frac{d-2}{2}).
    \end{equation}
    This implies that if $\e^\frac12 \eta^\frac{d-2}{2} < c_k$ for some sufficiently small $c_k$, then $E$ is invertible and 
    \begin{equation}
        E^{-1} = E^T (I +  O_k(\e^\frac12 \eta^\frac{d-2}{2}))^{-1} = E^T + O_k(\e^\frac12 \eta^\frac{d-2}{2}).
    \end{equation}
    Hence, by \eqref{est.rhoN1} and the last equation
    \begin{equation}\label{est.rhol.exp}
        \| \rho_{\eta,\e}^\ell - \sum_{j=1}^{N_1} \alpha^j_\ell \rho^j_\eta \|_{L^2_{\phi_{\eta,\e}}(\Omega_{\eta,\e})} \le C_k \e^\frac12 \eta^\frac{d-2}{2}.
    \end{equation}
    In view of the fact $\alpha_\ell^j = \Ag{\rho_\eta^j, \rho^\ell_{\eta,\e}}_{\phi_{\eta,\e}}$, \eqref{est.rhol.exp} is exactly \eqref{est.subopt.ef}.
\end{proof}

\begin{lemma}[Optimal almost orthogonality]\label{lem.OptAlmostOrtho}
For any $1\le j,\ell \le k$,
    \begin{equation}\label{est.AlmostOrtho-2}
    |\Ag{ \rho_{\eta,\e}^j, \rho_{\eta}^\ell }_{\phi_{\eta,\e}}| \le \frac{C_k \e \eta^\frac{d-2}{2} \mu_{\eta,\e}^j \mu_{\eta}^\ell }{|\mu_{\eta}^\ell - \mu_{\eta,\e}^j| }.
\end{equation}
\end{lemma}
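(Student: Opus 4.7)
\medskip

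\noindent\textbf{Proof proposal for Lemma~\ref{lem.OptAlmostOrtho}.}
The plan is to revisit the key identity from the proof of the suboptimal estimate \eqref{est.AlmostOrtho}, namely
\begin{equation*}
    \big((\mu_{\eta,\e}^j)^{-1}-(\mu_{\eta}^\ell)^{-1}\big)\,\Ag{\rho_{\eta,\e}^j,\rho_{\eta}^\ell}_{\phi_{\eta,\e}} = \Ag{\rho_{\eta,\e}^j,(\mathscr{T}_{\eta,\e}-\mathscr{T}_{\eta})\rho_{\eta}^\ell}_{\phi_{\eta,\e}},
\end{equation*}
and upgrade the estimate of the right-hand side from $O(\e^{1/2}\eta^{(d-2)/2})$ to $O(\e\,\eta^{(d-2)/2})$ by invoking the duality bound in Theorem~\ref{thm.optDualrate} in place of Theorem~\ref{thm.SubOptRate}. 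The obstruction is that Theorem~\ref{thm.optDualrate} requires the first argument to lie in $W^{1,p}(\Omega)$, and the degenerate eigenfunction $\rho_{\eta,\e}^j$ carries no such regularity. The idea is therefore to replace $\rho_{\eta,\e}^j$ by its $L^2_{\phi_{\eta,\e}}$-orthogonal projection $\widehat{\mathcal P}_\eta^{N_1}\rho_{\eta,\e}^j$ onto the $W^{1,p}$-regular span of $\{\rho_\eta^1,\dots,\rho_\eta^{N_1}\}$, and to absorb the resulting remainder using the suboptimal approximation lemma \eqref{est.subopt.ef}.

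Concretely, for $1\le j\le k\le N_1$, I would split
\begin{equation*}
    \Ag{\rho_{\eta,\e}^j,(\mathscr{T}_{\eta,\e}-\mathscr{T}_{\eta})\rho_{\eta}^\ell}_{\phi_{\eta,\e}}
    = \Ag{\widehat{\mathcal P}_\eta^{N_1}\rho_{\eta,\e}^j,(\mathscr{T}_{\eta,\e}-\mathscr{T}_{\eta})\rho_{\eta}^\ell}_{\phi_{\eta,\e}}
    + \Ag{\rho_{\eta,\e}^j-\widehat{\mathcal P}_\eta^{N_1}\rho_{\eta,\e}^j,(\mathscr{T}_{\eta,\e}-\mathscr{T}_{\eta})\rho_{\eta}^\ell}_{\phi_{\eta,\e}}.
\end{equation*}
For the first term, since $\widehat{\mathcal P}_\eta^{N_1}\rho_{\eta,\e}^j=\sum_{i=1}^{N_1}\Ag{\rho_{\eta,\e}^j,\rho_\eta^i}_{\phi_{\eta,\e}}\rho_\eta^i$ with coefficients bounded by Cauchy--Schwarz and each $\rho_\eta^i$ in $W^{2,p}(\Omega)$ by standard elliptic regularity on the $C^3$ domain $\Omega$, one gets $\|\widehat{\mathcal P}_\eta^{N_1}\rho_{\eta,\e}^j\|_{W^{1,p}(\Omega)}\le C_k$; then Theorem~\ref{thm.optDualrate} applied with $f=\widehat{\mathcal P}_\eta^{N_1}\rho_{\eta,\e}^j$ and $g=\rho_\eta^\ell$ yields a bound $C_k\,\e\,\eta^{(d-2)/2}$. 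For the second term, the Cauchy--Schwarz inequality in $L^2_{\phi_{\eta,\e}}$ combined with Lemma~\ref{lem.subopt.rhoje} and Theorem~\ref{thm.SubOptRate} (or just the $L^2$ rate from Theorem~\ref{thm.L2rate} applied to the smooth $\rho_\eta^\ell$) produces
\begin{equation*}
    \|\rho_{\eta,\e}^j-\widehat{\mathcal P}_\eta^{N_1}\rho_{\eta,\e}^j\|_{L^2_{\phi_{\eta,\e}}(\Omega_{\eta,\e})}\cdot \|(\mathscr{T}_{\eta,\e}-\mathscr{T}_{\eta})\rho_\eta^\ell\|_{L^2_{\phi_{\eta,\e}}(\Omega_{\eta,\e})}\le C_k\,\e^{1/2}\eta^{(d-2)/2}\cdot C_k\,\e^{1/2}\eta^{(d-2)/2},
\end{equation*}
which is $O(\e\,\eta^{d-2})$ and hence absorbed.

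Combining both bounds gives $|\Ag{\rho_{\eta,\e}^j,(\mathscr{T}_{\eta,\e}-\mathscr{T}_{\eta})\rho_{\eta}^\ell}_{\phi_{\eta,\e}}|\le C_k\,\e\,\eta^{(d-2)/2}$, and dividing by $|(\mu_{\eta,\e}^j)^{-1}-(\mu_\eta^\ell)^{-1}|=|\mu_\eta^\ell-\mu_{\eta,\e}^j|/(\mu_{\eta,\e}^j\mu_\eta^\ell)$ yields \eqref{est.AlmostOrtho-2}. The main obstacle is really the regularity mismatch for applying the duality estimate; the rest is a two-term split in which the suboptimal approximations bootstrap each other multiplicatively to the optimal rate, and the existence of the large common spectral gap provided by Lemma~\ref{lem.N1gap} is what guarantees the projection $\widehat{\mathcal P}_\eta^{N_1}$ is a good enough substitute.
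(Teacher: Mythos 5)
Your proof follows the paper's argument essentially verbatim: same starting identity \eqref{eq.AO-2}, same split of $\rho_{\eta,\e}^j$ into $\widehat{\mathcal P}_\eta^{N_1}\rho_{\eta,\e}^j$ plus remainder, same use of Theorem~\ref{thm.optDualrate} for the regular part and of Lemma~\ref{lem.subopt.rhoje} together with Theorem~\ref{thm.SubOptRate} for the remainder, giving $C_k\e\eta^{(d-2)/2}+C_k(\e^{1/2}\eta^{(d-2)/2})^2\le C_k\e\eta^{(d-2)/2}$. One small caution: the parenthetical suggestion to use Theorem~\ref{thm.L2rate} instead of Theorem~\ref{thm.SubOptRate} for the second factor does not work — that theorem compares $\mathscr T_{\eta,\e}$ to $\widetilde{\mathscr T}_{\eta,\e}$ (not to $\mathscr T_\eta$) and only gives the rate $\e^{1/8}\eta^{(d-2)/8}$, which combined with the $\e^{1/2}\eta^{(d-2)/2}$ from Lemma~\ref{lem.subopt.rhoje} falls short of $\e\eta^{(d-2)/2}$ — so keep Theorem~\ref{thm.SubOptRate} there.
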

\begin{proof}
    In \eqref{eq.AO-2}, we estimate right-hand side by
    \begin{equation}
    \begin{aligned}
        & |\Ag{\rho_{\eta,\e}^j, (\mathscr{T}_{\eta,\e}-\mathscr{T}_{\eta})(\rho_{\eta}^\ell) }_{\phi_{\eta,\e}}|  \\
        & =  |\Ag{\widetilde{\mathcal{P}}_{\eta}^{N_1} \rho_{\eta,\e}^j, (\mathscr{T}_{\eta,\e}-\mathscr{T}_{\eta})(\rho_{\eta}^\ell) }_{\phi_{\eta,\e}} | + |\Ag{\rho_{\eta,\e}^j - \widetilde{\mathcal{P}}_{\eta}^{N_1} \rho_{\eta,\e}^j, (\mathscr{T}_{\eta,\e}-\mathscr{T}_{\eta})(\rho_{\eta}^\ell) }_{\phi_{\eta,\e}}| \\
        & \le C_k \e \eta^\frac{d-2}{2} + C_k \e^\frac12 \eta^\frac{d-2}{2} \e^\frac12 \eta^\frac{d-2}{2} \\
        & \le C_k \e \eta^\frac{d-2}{2},
    \end{aligned}
    \end{equation}
    where we have used Theorem \ref{thm.optDualrate}, Theorem \ref{thm.SubOptRate} and \eqref{est.subopt.ef}.
\end{proof}

\subsection{Optimal lower bound of eigenvalues}
Recall that $S^N_{\eta,\e} = \text{span} \{ \rho_{\eta,\e}^j : j =1,\cdots, N\}$ and $S^N_\eta = \{ \rho_{\eta}^j: j=1,\cdots, N \}$. 
Previously, we have shown in Proposition \ref{prop.mue<mu0+} the optimal upper bound of the eigenvalues $\mu_{\eta,\e}^k$ for all $k \ge 1$, namely, 
\begin{equation}
    \mu^k_{\eta,\e} \le \mu^k_\eta + C_k \e \eta^{\frac{d-2}{2}}.
\end{equation}
In this subsection, we show the optimal lower bounds of the eigenvalues and thus complete the proof of Theorem \ref{thm.main-EV}.

\begin{proof}[Proof of Theorem \ref{thm.main-EV}]
It suffices to show
\begin{equation}\label{est.OptLower}
    \mu^k_\eta \le \mu_{\eta,\e}^k + C_k \e \eta^{\frac{d-2}{2}}.
\end{equation}
One can employ a similar argument as Subsection \ref{sec.5.2}. Here we use a different argument.
Consider $S_{\eta}[t] = \text{span}\{ \rho_{\eta}^j: \mu_{\eta}^j \le t  \}$.
To show \eqref{est.OptLower}, it suffices to show
\begin{equation}\label{est.dimSeta}
    \text{dim} S_{\eta}[\mu_{\eta,\e}^k + M_k \e \eta^{\frac{d-2}{2}}] \ge k,
\end{equation}
for sufficiently large $M_k$ independent of $\e$ and $\eta$. Since $S_{\eta,\e}^k$ has dimension $k$, if we can show that any element in $S_{\eta,\e}^k$ can be well approximated by the elements of $S_{\eta}[\mu_{\eta,\e}^k + M_k \e \eta^{\frac{d-2}{2}}]$, then the latter must have at least dimension $k$. This can be shown by the following two ingredients.

Ingredient 1: By \eqref{est.subopt.ef}, there exists $N_1 \le N_0 k$ such that for any $1\le j\le k$
\begin{equation}
    \| \rho_{\eta,\e}^j - \widehat{\mathcal{P}}_{\eta}^{N_1} \rho_{\eta,\e}^j \|_{L^2_{\phi_{\eta,\e}}(\Omega_{\eta,\e})} \le C_k \e^\frac12 \eta^\frac{d-2}{2}.
\end{equation}

Ingredient 2: By the optimal orthogonality in Lemma \ref{lem.OptAlmostOrtho}, for any $1\le j\le k$ and $\ell \le N_0 k$ with $\mu_{\eta}^i$ satisfying $\mu_{\eta}^\ell \ge \mu_{\eta,\e}^k + M \e \eta^{\frac{d-2}{2}}$, we have
\begin{equation}
     \Ag{ \rho_{\eta,\e}^j, \rho_{\eta}^\ell }_{\phi_{\eta,\e}} \le \frac{C_k'}{M},
\end{equation}
where $C'_k$ is a constant depending only on $k$ and $M$ is a large number to be chosen later.

From Ingredient 1 and Ingredient 2, we have, for any $M>0$,
\begin{equation}\label{est.rhoj-f}
    \max_{1\le j\le k} \inf_{f\in S_{\eta}[\mu_{\eta,\e}^k + M \e \eta^{\frac{d-2}{2}}]} \| \rho_{\eta,\e}^j - f \|_{L^2_{\phi_{\eta,\e}}(\Omega_{\eta,\e})} \le C_k \e^\frac12 \eta^\frac{d-2}{2} + \frac{N_0 k C_k'}{M}.
\end{equation}
Observe that if $\e^\frac12 \eta^\frac{d-2}{2}$ is sufficiently small and $M$ is sufficiently large depending on $k$, then the right-hand side of the above inequality can be small as well. 

Now we claim: there exists $\gamma_k>0$ such that if
\begin{equation}\label{est.approx.rhoe}
    \max_{1\le j\le k} \inf_{f\in S_{\eta}[\mu_{\eta,\e}^k + M_k \e \eta^{\frac{d-2}{2}}]} \| \rho_{\eta,\e}^j - f \|_{L^2_{\phi_{\eta,\e}}(\Omega_{\eta,\e})} \le \gamma_k,
\end{equation}
then
\begin{equation}
    \text{dim} S_{\eta}[\mu_{\eta,\e}^k + M_k \e \eta^{\frac{d-2}{2}}] \ge k.
\end{equation}
To prove the claim, we assume $\text{dim} S_{\eta}[\mu_{\eta,\e}^k + M_k \e \eta^{\frac{d-2}{2}}] = m$ and show $m\ge k$. Note that \eqref{est.approx.rhoe} implies that for each $1\le j\le k$, there exist $\alpha^j_\ell$ ($1\le \ell \le m$),
\begin{equation}
    \| \rho_{\eta,\e}^j - \sum_{\ell=1}^m \alpha^j_\ell \rho_\eta^\ell \|_{L^2_{\phi_{\eta,\e}}(\Omega_{\eta,\e})} \le \gamma_k.
\end{equation}
It follows
\begin{equation}\label{eq.rhoe.ij}
    \Ag{ \rho_{\eta,\e}^i, \rho_{\eta,\e}^j }_{\phi_{\eta,\e}} = \sum_{\ell,\tau=1}^m \alpha_{\ell}^i \alpha_{\tau}^j \Ag{\rho_\eta^\ell, \rho_\eta^\tau}_{\phi_{\eta,\e}} + O(\gamma_k).
\end{equation}
Let $E = (\alpha_\ell^i) \in \R^{k\times m}$ and $Q = (\Ag{\rho_\eta^\ell, \rho_\eta^\tau}_{\phi_{\eta,\e}}) \in \R^{m\times m}$.
Using the orthogonality of $\rho_{\eta,\e}^j$ in $L^2_{\phi_{\eta,\e}}(\Omega_{\eta,\e})$, \eqref{eq.rhoe.ij} gives
\begin{equation}
    I = EQE^T + O(\gamma_k).
\end{equation}
Note that $I$ is the $k\times k$ identity matrix. Thus $\text{Rank}(I - O(\gamma_k)) = k$ if we let $O(\gamma_k) < 1/k$. Hence, in this case $\text{Rank}(EQE^T) = k$ and therefore we must have $m \ge k$. This proves the claim. 

Consequently, for $\e \eta^\frac{d-2}{2}$ sufficiently small and $M = M_k$ sufficiently large (depending on $k$), the right-hand side of \eqref{est.rhoj-f} is bounded by $\gamma_k \le O(1/k)$ and thus the above claim implies \eqref{est.dimSeta}. The proof is complete.
\end{proof}

\subsection{Convergence of eigenfunctions}
In this subsection, we prove Theorem \ref{thm.main-EF}.

\begin{proof}[Proof of Theorem \ref{thm.main-EF}]
    Fix $k\ge 1$. Let $N_1 = N_1(k,\e,\eta)$ be given as in Lemma \ref{lem.N1gap}. It suffices to consider $t\in (0,1]$ since the case $t>1$ has no improvement compared to $t = 1$. We would like to show
    \begin{equation}\label{est.EF.conv}
        \| \rho^k_{\eta,\e} - \sum_{|\mu_\eta^j - \mu_\eta^k| \le t} \Ag{\rho_{\eta,\e}^k, \rho_\eta^j}_{\phi_{\eta,\e}} \rho_\eta^j \|_{L^2_{\phi_{\eta,\e}}(\Omega_{\eta,\e})} \le C_k (\e^\frac12 \eta^{\frac{d-2}{2}} \wedge \e + t^{-1} \e \eta^{\frac{d-2}{2}}).
    \end{equation}
    In view of $\psi_{\eta,\e}^k = \phi_{\eta,\e} \rho_{\eta,\e}^k$ from Proposition \ref{prop.2scale EF}, we see the above estimate implies the desired estimate \eqref{est.main-EF}.

    To show \eqref{est.EF.conv}, we consider two slightly different approaches leading two different estimates.

    Case 1: The first approach is based on Lemma \ref{lem.subopt.rhoje} and Lemma \ref{lem.OptAlmostOrtho}. In fact, by Lemma \ref{lem.subopt.rhoje} and the triangle inequality, we have
    \begin{equation}\label{est.rhoke.expan}
    \begin{aligned}
        &\| \rho^k_{\eta,\e} - \sum_{|\mu_\eta^j - \mu_\eta^k| \le t} \Ag{\rho_{\eta,\e}^k, \rho_\eta^j}_{\phi_{\eta,\e}} \rho_\eta^j \|_{L^2_{\phi_{\eta,\e}}(\Omega_{\eta,\e})} \\
        &\qquad \le \| \rho^k_{\eta,\e} - \sum_{1\le j\le N_1} \Ag{\rho_{\eta,\e}^k, \rho_\eta^j}_{\phi_{\eta,\e}} \rho_\eta^j \|_{L^2_{\phi_{\eta,\e}}(\Omega_{\eta,\e})}  \\
        & \qquad + \|  \sum\limits_{\substack{ |\mu_\eta^j - \mu_\eta^k| \le t\\ j\ge N_1+1}} \Ag{\rho_{\eta,\e}^k, \rho_\eta^j}_{\phi_{\eta,\e}} \rho_\eta^j \|_{L^2_{\phi_{\eta,\e}}(\Omega_{\eta,\e})} + \|  \sum\limits_{\substack{ |\mu_\eta^j - \mu_\eta^k| > t\\ 1\le j\le N_1}} \Ag{\rho_{\eta,\e}^k, \rho_\eta^j}_{\phi_{\eta,\e}} \rho_\eta^j \|_{L^2_{\phi_{\eta,\e}}(\Omega_{\eta,\e})}
    \end{aligned}
    \end{equation}
    The first term on the right-hand side is handled by \eqref{est.subopt.ef} with a bound $C_k \e^\frac12 \eta^\frac{d-2}{2}$. To estimate the second term, we recall that Lemma \ref{lem.N1gap} yields $\mu_\eta^{N_1+1} - \mu_\eta^{N_1} \ge c_k>0 $. Hence, for $j\ge N_1 + 1$, we have $|\mu_\eta^j - \mu_\eta^k| \ge c_k$. Then by Lemma \ref{lem.OptAlmostOrtho}, we obtain
    \begin{equation}\label{est.rhokj.ortho}
        |\Ag{\rho_{\eta,\e}^k, \rho_\eta^j}_{\phi_{\eta,\e}}| \le C_k \e \eta^\frac{d-2}{2}.
    \end{equation}
    Since we have assumed $t\le 1$, in view of the Weyl's law (Lemma \ref{lem.Weyl}), the number of eigenvalues $\mu_\eta^j$ dropping in $|\mu^j_\eta - \mu_\eta^k| \le t$ depends only on $k$. Hence, \eqref{est.rhokj.ortho} and the triangle inequality imply that the second term on the right-hand side of \eqref{est.rhoke.expan} is bounded by $C_k \e \eta^\frac{d-2}{2}$. Next, we estimate the last term in \eqref{est.rhoke.expan}. We use the fact $|\mu_\eta^j - \mu_\eta^k|>t$, Lemma \ref{lem.OptAlmostOrtho} and a similar argument to conclude that the last term of \eqref{est.rhoke.expan} is bounded by $C_k t^{-1} \e \eta^\frac{d-2}{2}$. Combining the above estimates, we obtain
    \begin{equation}\label{est.EF-case1}
        \| \rho^k_{\eta,\e} - \sum_{|\mu_\eta^j - \mu_\eta^k| \le t} \Ag{\rho_{\eta,\e}^k, \rho_\eta^j}_{\phi_{\eta,\e}} \rho_\eta^j \|_{L^2_{\phi_{\eta,\e}}(\Omega_{\eta,\e})} \le C_k (\e^\frac12 \eta^{\frac{d-2}{2}} + t^{-1} \e \eta^{\frac{d-2}{2}}).
    \end{equation}
    The proof of the first case is complete.

    Case 2: We use the maximum principle to estimate the boundary layer term $v_{\eta,\e}$ defined in \eqref{eq.def.bl}. The maximum principle has been proved in \eqref{est.maximum}. It follows that
    \begin{equation}
        \| v_{\eta,\e} \|_{L^\infty(\Omega_{\eta,\e})} \le \e \| \chi_{\eta,\e}\cdot \nabla u_\eta \|_{L^\infty(\Gamma_{\eta,\e})} \le C\e \| \nabla u_\eta \|_{W^{1,p}(\partial \Omega)},
    \end{equation}
    This together with Lemma \ref{lem.OptH1rate} gives
    \begin{equation}
        \| \mathscr{T}_{\eta,\e} f - \mathscr{T}_{\eta} f \|_{L^2_{\phi_{\eta,\e}}(\Omega_{\eta,\e})} \le C\e \| f \|_{W^{1,p}(\Omega)},
    \end{equation}
    for some $p>d$. This is an alternative estimate of Theorem \ref{thm.SubOptRate}, which, by the same argument as Lemma \ref{lem.subopt.rhoje} and Case 1 above, yields 
    \begin{equation}\label{est.EF-case2}
        \| \rho^k_{\eta,\e} - \sum_{|\mu_\eta^j - \mu_\eta^k| \le t} \Ag{\rho_{\eta,\e}^k, \rho_\eta^j}_{\phi_{\eta,\e}} \rho_\eta^j \|_{L^2_{\phi_{\eta,\e}}(\Omega_{\eta,\e})} \le C_k (\e + t^{-1} \e \eta^{\frac{d-2}{2}}).
    \end{equation}

    Finally, combining the estimates \eqref{est.EF-case1} and \eqref{est.EF-case2}, we obtain the desired estimate \eqref{est.EF.conv} and complete the proof.
\end{proof}

\bibliography{Mybib}
\bibliographystyle{plain}
\medskip
\end{document}